\numberwithin{equation}{section}
\newcommand{\ex}{\mathbb{E}} 
\newcommand{\inp}[2]{\left\langle#1, #2 \right\rangle}
\newcommand{\Om}[1]{\Omega\left( #1\right)}
\newcommand{\OO}[1]{O\left(#1\right)}
\newcommand{\re}{\mathbb{R}}
\newcommand{\eps}{\epsilon}
\newcommand{\xs}{\vx^*}
\newcommand{\g}{\nabla f}
\newcommand{\ee}{\eta}
\newcommand{\mm}{\mu} 
\newcommand{\ccc}{A}
\newcommand{\co}{A_1}
\newcommand{\cbo}{A_2}
\newcommand{\cbt}{A_3}
\newcommand{\ini}{k_0}
\newcommand{\ab}{\xi}
\newcommand{\sgd}{\textsc{Sgd}\xspace}
\newcommand{\singshuf}{\textsc{SingleShuffle}\xspace}
\newcommand{\randshuf}{\textsc{RandomShuffle}\xspace}
\newcommand{\bigo}{\mc O}
\newcommand{\bigto}{\wt {\mc O}}
\newcommand{\bia}{(A\ref{a:1})}
\newcommand{\xx}[2]{\vx^{#1}_{#2}}
\newcommand{\eee}[2]{\eta^{#1}_{#2}}
\newcommand{\va}{\bm{a}}
\newcommand{\vb}{\bm{b}}
\newcommand{\vc}{\bm{c}}
\newcommand{\vd}{\bm{d}}
\newcommand{\vg}{\bm{g}}
\newcommand{\vr}{\bm{r}}
\newcommand{\vt}{\bm{t}}
\newcommand{\vv}{\bm{v}}
\newcommand{\vx}{\bm{x}}
\newcommand{\vy}{\bm{y}}
\newcommand{\mA}{\bm{A}}
\newcommand{\mB}{\bm{B}}
\newcommand{\mM}{\bm{M}}
\newcommand{\mH}{\bm{H}}
\newcommand{\mI}{\bm{I}}
\newcommand{\mS}{\bm{S}}
\newcommand{\mC}{\bm{C}}
\newcommand{\mD}{\bm{D}}
\newcommand{\mR}{\bm{R}}
\newcommand{\mN}{\bm{N}}
\newcommand{\E}{\mathbb{E}}
\newcommand{\prob}{\mathbb{P}}
\newcommand{\reals}{\mathbb{R}} % Real number symbol
\newcommand{\N}{\mathbb{N}}
\newcommand{\mc}[1]{\mathcal{#1}}
\newcommand{\norm}[1]{\left\|{#1}\right\|} % A norm with 1 argument
\newcommand{\norms}[1]{\|{#1}\|} % A norm with 1 argument and normal (small)
\newcommand{\defeq}{:=}
\newcommand{\eqdef}{=:}
\newcommand{\wt}[1]{\widetilde{#1}} % Wide tilde
\newcommand{\half}{\tfrac{1}{2}}
\newcommand{\zeros}{{\mathbf{0}}}
\newcommand{\<}{\left\langle} % Angle brackets
\renewcommand{\>}{\right\rangle}
\renewcommand{\iff}{\Leftrightarrow}
\renewcommand{\choose}[2]{\binom{#1}{#2}}
\DeclareMathOperator{\minimize}{minimize}
\definecolor{LightGray}{gray}{0.9}
\title{SGD with shuffling: optimal rates without component convexity and large epoch requirements}
\author{\name Kwangjun Ahn* \email{kjahn@mit.edu}\\
  \name Chulhee Yun* \email{chulheey@mit.edu}\\
  \name Suvrit Sra \email{suvrit@mit.edu}\\
  \addr{Massachusetts Institute of Technology, Cambridge, MA, USA 02139}
}
\begin{document}

\maketitle
\begingroup\renewcommand\thefootnote{*}
\footnotetext{Authors contributed equally to this paper and are listed alphabetically.}
\endgroup

\begin{abstract}
  We study without-replacement SGD for solving finite-sum optimization problems. 
  Specifically, depending on how the indices of the finite-sum are shuffled, we consider the $\randshuf$ (shuffle at the beginning of each epoch) and $\singshuf$ (shuffle only once) algorithms. 
  First, we establish minimax optimal convergence rates of these algorithms up to poly-log factors. 
  Notably, our analysis is general enough to cover gradient dominated \emph{nonconvex} costs, and does not rely on the convexity of individual component functions unlike existing optimal convergence results. 
  Secondly, assuming convexity of the individual components, we further sharpen the tight convergence results for $\randshuf$ by removing the drawbacks common to all prior arts: large number of epochs required for the results to hold, and extra poly-log factor gaps to the lower bound.
\end{abstract}

%\vspace*{-5pt}
\section{Introduction} \label{sec:int}
%\vspace*{-4pt}
\lettrine[lines=3]{\color{BrickRed}S}{\rm{}tochastic} gradient descent (SGD) \cite{robbins1951stochastic, kiefer1952stochastic} is a widely used optimization method for solving finite-sum optimization problems that arise in many domains such as machine learning:  
\begin{align} \label{opt}
    \minimize_{\vx \in \reals^d} \quad F(\vx) \defeq \frac{1}{n} \sum_{i=1}^n f_i(\vx)\,.  
\end{align}
Given an initial iterate $\vx_0$, at iteration $t\geq1 $, SGD samples a component index $i(t)$ and updates the current iterate using the (sub)gradient $\vg_{i(t)}$ of $f_{i(t)}$ at $\vx_{t-1}$:
%\begin{center}
\begin{equation*}
  \vx_{t} = \vx_{t-1} - \eta_t \vg_{i(t)}\ \text{for some step size $\ee_t>0$}.
\end{equation*}
%\end{center}
There are two versions of SGD, depending on how we sample the index $i(t)$: \emph{with-replacement} and \emph{without-replacement}.
With-replacement SGD samples $i(t)$ uniformly and independently from the set of indices $\{1,\ldots,n\}$; hereafter, we use $\sgd$ to denote with-replacement SGD.
For without-replacement SGD, there are two popular versions in practice.
Calling one pass over the entire set of $n$ components an \emph{epoch}, one version randomly shuffles the indices at each epoch (which we call $\randshuf$), and the other version shuffles the indices only once and reuses that order for all the epochs (which we call $\singshuf$).

In modern machine learning applications, $\randshuf$ and $\singshuf$ are much more widely used than $\sgd$, due to their simple implementations and better empirical performance~\cite{bottou2009curiously,bottou2012stochastic}. However, most theoretical analyses have been devoted to $\sgd$ for its easy-to-analyze setting: each stochastic gradient is an i.i.d.\ unbiased estimate of the full gradient. Whereas for $\randshuf$ and $\singshuf$, not only is each stochastic gradient a \emph{biased} estimate of the full gradient, but each sample $i(t)$ is also \emph{dependent} on the previous samples within the epoch. This dependence poses significant challenges toward analyzing shuffling based SGD. 
Recently, several works have (in part) overcome such challenges and initiated theoretical studies of  $\randshuf$ and $\singshuf$~\cite{gurbuzbalaban2015random, gurbuzbalaban2019incremental, shamir2016without, haochen2018random, nagaraj2019sgd, safran2019good, rajput2020closing, nguyen2020unified}.

\vspace*{-3pt}
\subsection{What is known so far?}
\vspace*{-3pt}
We provide in Table~\ref{tab:results} a comprehensive summary of the known upper and lower bounds for convergence of $\randshuf$. For a similar summary of $\singshuf$, please refer to Table~\ref{tab:resultssing} in Section~\ref{sec:singshuf}.
There are three classes of differentiable functions $F \defeq \frac{1}{n} \sum_{i=1}^n f_i$ considered in the table, in decreasing order of generality: (1) $F$ satisfies the Polyak-{\L}ojasiewicz (P{\L}) condition and $f_i$'s are smooth; (2) $F$ is strongly convex and $f_i$'s are smooth; (3) $F$ is strongly convex quadratic and $f_i$'s are quadratic.
Many existing results additionally assume that the component functions $f_i$'s are convex and/or all the iterates are bounded. 
%See Section~\ref{sec:setup} for precise details.

\begin{table}
\caption{\small A summary of existing convergence rates and our results for $\randshuf$.
All the convergence rates are with respect to the suboptimality of objective function value.
Note that since the function classes become more restrictive as we go down the table, the noted lower bounds are also valid for upper rows, and the upper bounds are also valid for lower rows. In the ``Assumptions'' column, inequalities such as $K \gtrsim \kappa^\alpha$ mark the requirements $K \geq C \kappa^\alpha \log (nK)$ for the bounds to hold, and \bia~denotes the assumption that all the iterates remain in a bounded set (see Assumption~\ref{a:1}).
Also, (LB) stands for ``lower bound.''
For $\singshuf$, please see Table~\ref{tab:resultssing} in Section~\ref{sec:singshuf}.}
\centering
\begin{threeparttable}
\begin{tabular}{ |l |l| l c r|  }
\hline
 \multicolumn{5}{|c|}{Convergence rates for $\randshuf$}\\ 
\hline \multicolumn{2}{|c|}{Settings} &References & Convergence rates & Assumptions\\ 
\hline\hline  
\multirow{4}{1.3cm}[-10pt]{(1) $F$\\  satisfies\\P{\L}\\condition} & \multirow{4}{1.5cm}[-10pt]{$f_i$ smooth} &{Haochen and Sra~\cite{haochen2018random}}\tnote{$\dag$} & $O\Big(\frac{\log^3(nK)}{(nK)^2}+\frac{\log^4(nK)}{K^3}\Big)$& $K\gtrsim \kappa^2$~\& \bia\\
& &{Nguyen et al.~\cite{nguyen2020unified}} &$O\Big(\frac{1}{K^2}\Big)$& $K \geq 1$  \\
  & &\cellcolor{LightGray}{\bf Ours }(Thm~\ref{thm:plconv}) &\cellcolor{LightGray}$O\Big(\frac{\log^3(nK)}{nK^2}\Big)$& \cellcolor{LightGray} $K\gtrsim \kappa$\\
 \cline{3-5}
 & &{Rajput et al.~\cite{rajput2020closing}}& $\Omega\Big(\frac{1}{nK^2}\Big)$ (LB) & const.\ step size\\
 \hline\hline  
\multirow{4}{1.3cm}[-20pt]{(2) $F$\\strongly\\convex} &    $f_i$ smooth
  & \cellcolor{LightGray}{\bf Ours }(Thm~\ref{thm:plconv}) &\cellcolor{LightGray}$O\Big(\frac{\log^3(nK)}{nK^2}\Big)$ & \cellcolor{LightGray} $K\gtrsim \kappa$\\
  \cline{2-5}
  &    \multirow{4}{1.5cm}[-7pt]{$f_i$ smooth\\  convex }  & {Nagaraj et al.~\cite{nagaraj2019sgd}} &$O\Big(\frac{\log^2(nK)}{nK^2}\Big)$& $K \gtrsim \kappa^2$~\& \bia\\
 & & {Mishchenko et al.~\cite{mishchenko2020random}}\tnote{$\ddag$} &$O\Big(\frac{\log^2(nK)}{nK^2}\Big)$& $K \gtrsim \kappa$ \\
  & &\cellcolor{LightGray}{\bf Ours }(Thm~\ref{thm:1}) &\cellcolor{LightGray}$O\Big(\frac{1}{nK^2}\Big)$ & \cellcolor{LightGray} $K\geq 1$~\& \bia\\
  \cline{3-5}
  & &{Rajput et al.~\cite{rajput2020closing}}& $\Omega\Big(\frac{1}{nK^2}\Big)$ (LB) & const.\ step size\\
  \hline\hline
 \multirow{6}{1.3cm}[-16pt]{(3) $F$\\ strongly\\convex\\ quadratic} &    \multirow{2}{1.5cm}[-3pt]{$f_i$ smooth\\quadratic}
 & {G{\"u}rb{\"u}zbalaban et al.~\cite{gurbuzbalaban2015random}}&  $O\Big(\frac{1}{(nK)^2}\Big) +o\Big(\frac{1}{K^2}\Big)$ &asymptotic\\
 & &  \cellcolor{LightGray}{\bf Ours }(Thm~\ref{thm:quadratic}) & \cellcolor{LightGray}$O\Big(\frac{\log^2(nK)}{(nK)^2} + \frac{\log^4(nK)}{nK^3}\Big)$& \cellcolor{LightGray} $K\gtrsim \kappa$\\
 \cline{2-5}
 & \multirow{4}{1.5cm}[-11pt]{$f_i$ smooth\\quadratic \\ convex}   & {Haochen and Sra~\cite{haochen2018random}} & $O\Big(\frac{\log^3(nK)}{(nK)^2} + \frac{\log^4(nK)}{K^3}\Big)$& $K\gtrsim \kappa$~\& \bia\\
 & & {Rajput et al.~\cite{rajput2020closing}}\tnote{*} & $O\Big(\frac{\log^2(nK)}{(nK)^2} + \frac{\log^3(nK)}{nK^3}\Big)$& $K\gtrsim \kappa^2 $~\& \bia\\
 & &\cellcolor{LightGray}{\bf Ours }(Thm~\ref{thm:2})\tnote{*} &\cellcolor{LightGray}$O\Big(\frac{1}{(nK)^2} + \frac{1}{nK^3}\Big)$&\cellcolor{LightGray} $K\geq 1$~\& \bia \\
 \cline{3-5}
 & &\makecell[l]{Safran and Shamir~\cite{safran2019good}}& $\Omega \Big(\frac{1}{(nK)^2} + \frac{1}{nK^3}\Big)$ (LB)& const.\ step size\\
  \hline 
\end{tabular} 
\begin{tablenotes}
\item[$\dag$] additionally assumes Lipschtiz Hessian, which the lower bound \cite{rajput2020closing} does not; hence, the rate does not contradict the lower bound when $K \gtrsim n$.
\item[$\ddag$] also presents a bound with epoch requirement $K \gtrsim \nicefrac{\kappa}{n}$ by assuming \textit{strongly convex} $f_i$'s.
\item[*] does not require that $f_i$'s are quadratic.
\end{tablenotes}
\end{threeparttable}
\label{tab:results}
\end{table}

In this paper, $n$ denotes the number of component functions, $K$ denotes the number of epochs, and $\kappa$ denotes the condition number of the problem: i.e., $\kappa:=\nicefrac{L}{\mu}$ where $L$ is the smoothness constant and $\mu$ is the strong convexity  or P{\L} constant. All the convergence rates are with respect to the suboptimality of objective function value (i.e., $F(x)-F(x^*)$ for a suitable iterate $x$).
In our notation the well-known optimal convergence rate of \sgd is $\OO{\nicefrac{1}{nK}}$, which we will refer to as the baseline.

\paragraph{Initial progress.} 
One of the first works to report progress is due to by G{\"u}rb{\"u}zbalaban, Ozdaglar, and Parrilo~\cite{gurbuzbalaban2015random, gurbuzbalaban2019incremental} for strongly convex $F$ and smooth quadratic $f_i$'s. They prove an asymptotic convergence rate of $\OO{\nicefrac{1}{K^2}}$ for $K$ epochs when $n$ is treated as a constant, both for $\randshuf$ and $\singshuf$. This is indeed an asymptotic improvement over the convergence rate of $\OO{\nicefrac{1}{nK}}$ achieved by $\sgd$.
The scope of the inspiring result in \cite{gurbuzbalaban2015random}, however, does not match the scope of modern machine learning applications for its asymptotic nature and its treatment of $n$ as a constant\footnote{Although one can actually deduce the asymptotic convergence rate of $\OO{\nicefrac{1}{(nK)^2}}+o(\nicefrac{1}{K^2})$ closely following their arguments~\cite[(58)]{gurbuzbalaban2015random}, the lower order term does not show the exact dependence on $n$.}.
Indeed, in modern machine learning, $n$  cannot be regarded as a constant as it is equal to the number of data items in a training set, and the \emph{non-asymptotic} convergence rate is of greater significance as the algorithm is only run a few epochs in practice.

\paragraph{First non-asymptotic results.} Subsequent recent efforts seek to characterize non-asymptotic convergence rates in terms of both $n$ and $K$.
For the same setting as \cite{gurbuzbalaban2015random}, Haochen and Sra~\cite{haochen2018random} develop the first \emph{non-asymptotic} convergence rate of $\OO{\nicefrac{\log^3(nK)}{(nK)^2} +\nicefrac{\log^4(nK)}{K^3}}$ for $\randshuf$ under the condition $K \gtrsim \kappa \log (nK)$. They extend this result also to smooth functions satisfying the P{\L} condition and show the same convergence rate, albeit with an additional Lipschtiz Hessian assumption and a more stringent requirement $K \gtrsim \kappa^2 \log (nK)$.
These rates, however, improve upon the baseline rate $\OO{\nicefrac{1}{nK}}$ only after $\omega(\sqrt{n})$ epochs.

\paragraph{Tight upper and lower bounds.} The non-asymptotic results in~\cite{haochen2018random} are strengthened in follow-up works.
Nagaraj, Jain, and Netrapalli~\cite{nagaraj2019sgd} consider a setting where $F$ is strongly convex and the $f_i$'s are no longer assumed to be quadratic, just convex and smooth. They introduce coupling arguments to prove a non-asymptotic convergence rate of $\OO{\nicefrac{\log^2(nK)}{nK^2}}$ for $\randshuf$, under the epoch requirement $K \gtrsim \kappa^2 \log (nK)$. Note that this rate is better than the baseline  $\OO{\nicefrac{1}{nK}}$ as soon as the condition $K \gtrsim \kappa^2 \log (nK)$ is fulfilled.

The result in~\cite{nagaraj2019sgd} has motivated researchers to revisit the quadratic finite-sum case % of \cite{gurbuzbalaban2015random}
and obtain a convergence rate that has a better dependency on $n$ than that of~\cite{haochen2018random}. % (with a possibly more stringent requirement on $K$ as required by \cite{nagaraj2019sgd}).
The first set of results in this direction are given by Safran and Shamir~\cite{safran2019good}, who develop a lower bound of $\Om{\nicefrac{1}{(nK)^2} +\nicefrac{1}{nK^3}}$ for $\randshuf$ under the assumption of constant step size. They also prove a lower bound of $\Om{\nicefrac{1}{nK^2}}$ for $\singshuf$\footnote{Note that these lower bounds hold for more general function classes as well.} under constant step size, and establish matching upper bounds for the \emph{univariate} case up to poly-logarithmic factors, evidencing that their lower bounds are likely to be tight. 
For $\randshuf$, the question of tightness is settled by Rajput, Gupta, and Papailiopoulos~\cite{rajput2020closing} who establish the non-asymptotic convergence rate of $\OO{\nicefrac{\log^2(nK)}{(nK)^2} +\nicefrac{\log^3(nK)}{nK^3}}$ under the condition $K\gtrsim \kappa^2 \log (nK)$ by building on the coupling arguments in~\cite{nagaraj2019sgd}. Assuming constant step size, they also prove the lower bound $\Om{\nicefrac{1}{nK^2}}$ for strongly convex $F$, showing the tightness (up to poly-logarithmic factors) of the result in \cite{nagaraj2019sgd}.

Moreover, there is a concurrent work by Mishchenko et al.~\cite{mishchenko2020random}, which establishes a refined analysis and improves upon the epoch requirement of \cite{nagaraj2019sgd}  (see Table~\ref{tab:results}).
However, in contrast with our result, their analysis requires the individual components to be convex.
They also demonstrate that one can significantly relax the epoch requirement (to $K \gtrsim \nicefrac{\kappa}{n}$) by additionally assuming that individual components are \emph{strongly} convex.
This is also in contrast with our results to follow, because we prove a tighter rate (without poly-log factors) with epoch requirement $K \geq 1$ only assuming individual component convexity.
Notably, their analysis for strongly convex individual components applies to \singshuf as well, and guarantees the same rate. 
We note that \cite{mishchenko2020random} also provides analyses of general convex and nonconvex costs.

\paragraph{Other related works.} Nguyen et al.~\cite{nguyen2020unified} provide a unified analysis for both $\randshuf$ and $\singshuf$, and prove $\OO{\nicefrac{1}{K^2}}$ convergence rates for $F$ satisfying the P{\L} condition or strong convexity. Although these results do not have epoch requirements, they do \emph{not} beat the baseline rate $\OO{\nicefrac{1}{nK}}$ of $\sgd$ unless $K\gtrsim n$.
Lastly, Shamir~\cite{shamir2016without} considers the case where $F$ is a generalized linear function and shows that without-replacement SGD is not worse than $\sgd$.
His proof techniques use tools from transductive learning theory, and as a result, his results only cover the first epoch.  

\vspace*{-3pt}
\subsection{Limitations of the prior arts} 
\vspace*{-3pt}
Despite such noticeable progress, there are two primary limitations shared by many existing results, including \emph{all} the minimax optimal upper bounds \cite{nagaraj2019sgd, rajput2020closing} known to date:
\begin{list}{{\tiny$\bullet$}}{\leftmargin=1.8em}
  %\vspace*{-7pt}
\item The convergence results assume that the component functions $f_i$'s are convex.
    %and/or the iterates of the algorithm are in a bounded set (so that $f_i$'s are Lipschitz on the set).
    For example, the tight rates on strongly convex $F$ \cite{nagaraj2019sgd} and quadratic $F$ \cite{rajput2020closing} are obtained using coupling arguments showing that each iterate of $\randshuf$ makes progress on par with $\sgd$, which crucially exploits the convexity of individual $f_i$'s. This dependence limits one from extending their results to nonconvex functions.
    \item The upper bounds require that the number of epochs be of the form $K \gtrsim \kappa^{\alpha} \log (nK)$ for some constant $\alpha\geq 1$\footnote{There are some exceptions, but they do not match the lower bounds \cite{nguyen2020unified} or rely on strongly convex components \cite{mishchenko2020random}.}, and have extra poly-logarithmic factors of $nK$. In many cases, these limitations stem from choosing a constant step size $\eta$ in the analysis. More specifically, it turns out that one needs to choose a step size of order $\eta \asymp \nicefrac{\kappa \log(nK)}{ n K}$, while in order to ensure a sufficient progress  during each epoch, one also needs to choose $\eta \lesssim (\kappa^{\alpha-1} n )^{-1}$.
    From these conditions, the requirement $K \gtrsim \kappa^{\alpha} \log (nK)$ and poly-log factors arise (see Section~\ref{ps:1} for details).
\end{list}

\subsection{Summary of our contributions} \label{sec:sum} 
We overcome the limitations pointed out above. Our theorems can be put into three groups: (i)~using techniques that do \emph{not} require individual convexity, we extend the tight convergence bounds of $\randshuf$ to the more general class of nonconvex P{\L} functions, importantly, while also improving the epoch requirements of existing results (Theorems~\ref{thm:plconv} and \ref{thm:quadratic}); (ii)~by adopting varying step sizes, we prove convergence bounds of $\randshuf$ that are \emph{free} of epoch requirements and poly-log factors, this time with convexity (Theorems~\ref{thm:1} and \ref{thm:2}); and (iii)~we also prove a \emph{tight} convergence bound of $\singshuf$ for strongly convex functions without individual convexity (Theorem~\ref{thm:singshuf1})---see Tables~\ref{tab:results} and \ref{tab:resultssing} for a quick comparison with other results. Since the majority of our results are on $\randshuf$, we defer our $\singshuf$ result to Section~\ref{sec:singshuf} of the appendix, to better streamline the flow of the paper.

\begin{list}{{\tiny$\bullet$}}{\leftmargin=1.8em}
\item In Theorem~\ref{thm:plconv}, we prove that if $F$ satisfies the P{\L} condition and  has a nonempty and compact solution set, and $f_i$'s are smooth, then $\randshuf$ converges at the rate $\OO{\nicefrac{\log^3(nK)}{nK^2}}$. This bound holds as soon as $K \gtrsim \kappa \log(nK)$, and they match the lower bounds up to poly-log factors.
Remarkably, Theorem~\ref{thm:plconv} improves upon the epoch requirement $K \gtrsim \kappa^2 \log(nK)$ of an existing bound \cite{nagaraj2019sgd} for $\randshuf$ on smooth strongly convex functions, \emph{without} needing convexity of $f_i$'s.
%To prove this theorem, we decompose the aggregate update over an epoch into the gradient of $F$ and noise, and use concentration bounds to show that the noise is small.
%This result also reveals a surprising fact that for general smooth functions satisfying the  P{\L} condition, the optimal convergence rates are identical for $\randshuf$ and $\singshuf$; at least theoretically, reshuffling is not needed for a further speed-up.
% These proof techniques do not rely on the convexity of $f_i$'s to show \emph{tight} bounds (up to poly-log factors) on a more general function class.

\item In Theorem~\ref{thm:quadratic}, we prove a tight upper bound on $\randshuf$ for strongly convex quadratic functions that improves the existing epoch requirement $K \gtrsim \kappa^2 \log(nK)$ of~\cite{rajput2020closing} to $K \gtrsim \kappa \max \{1, \sqrt{\frac \kappa n }\} \log(nK)$, \emph{without} assuming the $f_i$'s to be convex.
We develop a fine-grained analysis on the expectation over random permutations to overcome issues posed by noncommutativity; for instance, we prove contraction bounds for small step sizes which circumvent the need for a conjectured (now false, see~\cite{lai2020recht}) matrix AM-GM inequality of~\cite{recht2012beneath}.

\item 
Under the additional assumption that the $f_i$'s are convex, we establish the same convergence rates of $\randshuf$ for smooth strongly convex functions (Theorem~\ref{thm:1}) and strongly convex quadratic functions (Theorem~\ref{thm:2}) \emph{without epoch requirements}; i.e., for all $K \geq 1$! The key to obtaining this improvement is to depart from constant step sizes analyzed in most prior works and consider varying step sizes.
To analyze such varying step sizes, we develop  a variant of Chung's lemma (Lemma~\ref{main:2}) that can handle our case where the convergence rate depends on  two parameters $n$ and $K$; this lemma may be of independent interest. Notably, our approach also removes the extra poly-logarithmic factors in the convergence rates.

\item
Finally, we provide a tight convergence analysis for $\singshuf$, again without requiring the convexity of individual component functions. Theorem~\ref{thm:singshuf1} shows that for smooth strongly convex functions, $\singshuf$ converges at the rate $\OO{\nicefrac{\log^3 (nK)}{nK^2}}$ as soon as $K \gtrsim \kappa^2 \log (nK)$. We remark that this rate matches (up to poly-log factors) the existing lower bound $\Om{\nicefrac{1}{nK^2}}$~\cite{safran2019good} proven for a \emph{subclass}, namely, strongly convex quadratic functions.

\end{list}

\vspace*{-4pt}
\section{Problem setup and notation} \label{sec:setup}
\vspace*{-4pt}
We first summarize the notation used in this paper. For a positive integer $a$, we define $[a] \defeq \{1,2,\dots, a\}$; and for integers $a, b$ satisfying $a \leq b$, we let $[a:b] \defeq \{a,a+1,\dots, b-1,b\}$. 
For a vector $\vv$, $\norm{\vv}$ denotes its Euclidean norm.
% For a matrix $\mM$, $\norm{\mM}$ denotes its spectral norm.
Given a function $h(\vx)$, we use $\mc X_h^* \subseteq \reals^d$ to denote its solution set (the set of its global minima). % solution set of $h$, i.e., the set of global minima of $h$. 
We omit the subscript $h$ when it is clear from the context.

For solving  the finite-sum optimization~\eqref{opt} with more than one component ($n\geq2$), we consider $\randshuf$ and $\singshuf$ over $K$ epochs, i.e., $K$ passes over the $n$ component functions.
The distinction between these two  methods lies in the way we shuffle the components at each epoch.
For $\randshuf$, at the beginning of the $k$-th epoch, we pick a random permutation $\sigma_k : [n] \to [n]$ and access component functions in the order $f_{\sigma_k(1)}, f_{\sigma_k(2)}, \dots, f_{\sigma_k(n)}$.
We initialize $\vx_0^1 \defeq \vx_0$, and call $\vx_i^k$ the $i$-th iterate of $k$-th epoch. Then, we update the iterate using the stochastic gradient $\nabla f_{\sigma_k(i)}$ as follows:
\begin{align}\label{update}
    \vx_{i}^k \leftarrow \vx_{i-1}^k - \eee{k}{i} \nabla f_{\sigma_k(i)} (\vx_{i-1}^k) \,, 
\end{align}
where $\eee{k}{i}$ is the step size for the $i$-th iteration of the $k$-th epoch.
We start the next epoch by setting $\vx_0^{k+1} \defeq \vx_{n}^k$. For $\singshuf$, we randomly pick a permutation $\sigma : [n] \to [n]$ at the first epoch, and use the same permutation over all epochs, i.e., $\sigma_k = \sigma$ for $k \in [K]$.

Next, we introduce a standard assumption for analyzing incremental methods, extensively used in the prior works~\cite{tseng1998incremental,gurbuzbalaban2019incremental,haochen2018random,nagaraj2019sgd,rajput2020closing} (see e.g. {\cite[Assumption 3.8]{gurbuzbalaban2019incremental}}):
\begin{assumption}[Bounded iterates assumption] \label{a:1}  We say the bounded iterates assumption holds if  all the iterates $\{\xx{k}{1},\xx{k}{2},\dots,\xx{k}{n}\}_{k\geq 1}$ are uniformly bounded, i.e., stay within  some compact set.  
\end{assumption}
We remark that the bounded iterates assumption is not stringent as one can enforce this assumption by explicit projections~\cite{nagaraj2019sgd} or by adopting adaptive stepsizes~\cite{tseng1998incremental} if projection is undesirable.

\paragraph{Function classes studied in this paper.}
Let $h:\reals^d \to \reals$ be a differentiable function. We say $h$ is \emph{$L$-smooth} if $h(\vy) \leq h(\vx) +\inp{\nabla h(\vx)}{\vy-\vx} + \frac{L}{2}\norm{\vy-\vx}^2$ for all $\vx,\vy\in \reals^d$. We use $C_L^1(\reals^d)$ to denote the class of differentiable and $L$-smooth functions on $\reals^d$. A function $h$ is \emph{$\mu$-strongly convex} if $h(\vy) \geq h(\vx) +\inp{\nabla h(\vx)}{\vy-\vx} + \frac{\mu}{2}\norm{\vy-\vx}^2$ for all $\vx,\vy\in \reals^d$.
Lastly, a function $h$ satisfies the \emph{$\mu$-Polyak-{\L}ojasiewicz condition} (also known as \emph{gradient dominance}) if $\frac{1}{2} \norm{\nabla h(\vx)}^2 \geq \mu(h(\vx)-h^*)$ for any $\vx \in \reals^d$, where $h^*=\min_{\vx}h(\vx)$; we say $h$ is a $\mu$-P{\L} function.

%%% Local Variables:
%%% mode: latex
%%% TeX-master: "1_Main_NeurIPS"
%%% End:
 \vspace*{-4pt}
\section{Tight convergence analysis of \randshuf~for P{\L} functions}
\label{sec:PL}
\vspace*{-4pt}
To prove fast convergence rates of $\randshuf$, we need to characterize the aggregate progress made over one epoch as a whole. A general property of without-replacement SGD is the following observation due to Nedi{\'c} and Bertsekas~\cite[Chapter 2]{nedic2001convergence}:
for an epoch $k$, assuming that the iterates $\{\xx{k}{i}\}_{i=1}^n$ stay close to $\xx{k}{0}$, the aggregate update direction will closely approximate the \emph{full} gradient at $\xx{k}{0}$, i.e.,
\begin{align} \label{obs:per}
    \sum\nolimits_{i=1}^n\g_{\sigma_k(i)} (\xx{k}{i-1})\approx \sum\nolimits_{i=1}^n\g_{\sigma_k(i)} (\xx{k}{0})= \sum\nolimits_{i=1}^n\g_{i} (\xx{k}{0}) = n\nabla F(\xx{k}{0}).
\end{align}
Making this heuristic approximation~\eqref{obs:per} rigorous, we prove the following theorem:
\begin{theorem}[P{\L} class]
\label{thm:plconv}
%Assume that $F = \frac{1}{n}\sum_{i=1}^n f_i$ is $\mu$-P{\L} and its solution set $\mc X^*$ is nonempty and compact. Also, assume each $f_i \in C_L^1(\reals^d)$.
Assume that $F$ is $\mu$-P{\L} and its solution set $\mc X^*$ is nonempty and compact. Also, assume each $f_i \in C_L^1(\reals^d)$.
% Assume that  $f_i$'s are  differentiable, $L$-smooth, and $F$ is $\mu$-P{\L} and its solution set $\mc X^*$ is nonempty and compact.
Consider $\randshuf$ for the number of epochs $K$ satisfying $K \geq 10 \kappa \log (n^{1/2}K)$, step size $\eee{k}{i} = \eta \defeq \frac{2 \log (n^{1/2} K)}{\mu n K}$, and initialization $\vx_{0}$. 
Then, with probability at least $1-\delta$, the following suboptimality bound holds for some $c = O(\kappa^3)$\footnote{Throughout this paper, we adopt the convention that $\kappa = \Theta(\nicefrac{1}{\mu})$, used in prior works \cite{haochen2018random, nagaraj2019sgd}.}:
\begin{equation*}
    \min_{k \in [K+1]} F(\vx_0^k) - F^* \leq 
    \frac{F(\vx_{0})-F^*}{nK^2} 
    +
    \frac{c \cdot \log^2(nK) \log\frac{nK}{\delta}}{nK^2}\,.\vspace*{-5pt}
\end{equation*}
\end{theorem}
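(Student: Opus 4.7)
\textbf{Proof plan for Theorem~\ref{thm:plconv}.} The overall strategy is to view one epoch of \randshuf as a single noisy gradient step at $\xx{k}{0}$ with effective step size $n\eta$, following the Nedi{\'c}--Bertsekas heuristic~\eqref{obs:per}. Concretely, I would write
\[
   \xx{k+1}{0} - \xx{k}{0} \;=\; -n\eta\,\G{\xx{k}{0}} - \eta\,\mathbf{E}_k, \qquad \mathbf{E}_k \defeq \sum_{i=1}^n \bigl[\nabla f_{\sigma_k(i)}(\xx{k}{i-1}) - \nabla f_{\sigma_k(i)}(\xx{k}{0})\bigr],
\]
and try to derive a per-epoch descent inequality of the form $F(\xx{k+1}{0}) - F^* \leq (1 - c_1\mu n\eta)(F(\xx{k}{0}) - F^*) + (\text{small noise})$ which, after $K$ epochs, produces the claimed rate once $\eta$ is calibrated so that $(1 - c_1\mu n\eta)^K \lesssim 1/(nK^2)$.

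\textbf{Within-epoch stability and deterministic control of $\mathbf{E}_k$.} First I would prove by induction on $i$ that $\norm{\xx{k}{i} - \xx{k}{0}} \leq C\eta\bigl(i\norm{\G{\xx{k}{0}}} + \text{lower-order terms in $\eta$}\bigr)$, exploiting only $L$-smoothness of the $f_i$'s (no component convexity needed) together with $L\eta n = O(\kappa^{-1}\log(nK))$ which is small under the epoch requirement $K \gtrsim \kappa\log(n^{1/2}K)$. Applying $L$-smoothness to each term of $\mathbf{E}_k$ then yields $\norm{\mathbf{E}_k} \leq L\sum_i\norm{\xx{k}{i-1} - \xx{k}{0}}$, giving a worst-case bound $\norm{\mathbf{E}_k} \lesssim L\eta n^2 \norm{\G{\xx{k}{0}}} + \text{h.o.t.}$

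\textbf{Descent step via $L$-smoothness and P{\L}.} Plugging the decomposition into the smoothness upper bound for $F$,
\[
F(\xx{k+1}{0}) \leq F(\xx{k}{0}) + \inp{\G{\xx{k}{0}}}{\xx{k+1}{0} - \xx{k}{0}} + \tfrac{L}{2}\norm{\xx{k+1}{0}-\xx{k}{0}}^2,
\]
and splitting the inner product $\inp{\G{\xx{k}{0}}}{\mathbf{E}_k}$ by Young's inequality gives, after using the P{\L} bound $\norm{\G{\xx{k}{0}}}^2 \geq 2\mu(F(\xx{k}{0}) - F^*)$, a recursion of the form
\[
F(\xx{k+1}{0}) - F^* \;\leq\; (1 - c_1\mu n\eta)\bigl(F(\xx{k}{0}) - F^*\bigr) + c_2 \eta \norm{\mathbf{E}_k}^2/n.
\]
The deterministic bound on $\norm{\mathbf{E}_k}$ alone is too crude (it scales like $n^2$), so the remaining task is to show that the random permutation produces cancellation that effectively reduces this to order $n\cdot\text{polylog}$.

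\textbf{High-probability cancellation from the random permutation.} This is the main technical obstacle, since without component convexity I cannot rely on coupling arguments as in \cite{nagaraj2019sgd,rajput2020closing}. I would expose the permutation progressively and construct a martingale: conditioning on the prefix $(\sigma_k(1),\dots,\sigma_k(i-1))$, the deviations $\nabla f_{\sigma_k(i)}(\xx{k}{i-1}) - \nabla f_{\sigma_k(i)}(\xx{k}{0})$ split into a predictable part (whose $\ell^2$ norm I already control via smoothness) and a martingale-difference part. Azuma/Freedman-type concentration on the latter, together with a union bound over the $K$ epochs, would yield $\norm{\mathbf{E}_k}^2 \lesssim \log(nK/\delta)\cdot(\text{stability bound})$ with probability $\geq 1-\delta$, so that the error in the recursion becomes $\tilde O\bigl(\eta^3 n \log(nK/\delta)\cdot(F(\xx{k}{0})-F^*)\bigr)$ plus lower order terms.

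\textbf{Solving the recursion and handling boundedness.} Iterating over $K$ epochs with $\eta = 2\log(n^{1/2}K)/(\mu nK)$ gives a contraction factor $(1-c_1\mu n\eta)^K \leq (n^{1/2}K)^{-2c_1}$, so the initial suboptimality contributes at most $(F(\vx_0) - F^*)/(nK^2)$, and the sum of noise terms contributes $\tilde O(\kappa^3\log^2(nK)\log(nK/\delta)/(nK^2))$ after simplification---matching the claimed bound. Since Assumption~\ref{a:1} is not invoked, I would close a bootstrap: compactness of $\mc X^*$ plus the P{\L} condition gives a sublevel-set characterization, and the per-epoch descent (with high probability) keeps $\xx{k}{0}$ in a fixed sublevel set of $F$, so the required gradient and stability bounds remain uniform in $k$. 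Carrying this inductive boundedness through the union bound while preserving tight constants is the most delicate bookkeeping step.
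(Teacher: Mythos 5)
Your overall route is the same as the paper's (anchor each epoch at $\xx{k}{0}$, write the epoch as a full-gradient step plus a noise term, combine $L$-smoothness with the P{\L} inequality to get a per-epoch contraction, and calibrate $\eta = \frac{2\log(n^{1/2}K)}{\mu nK}$ so the initial gap decays like $\nicefrac{1}{nK^2}$), but there is a genuine gap at the concentration step, which is the heart of the theorem. The cancellation that buys the factor $\nicefrac1n$ is \emph{not} located in the deviations $\nabla f_{\sigma_k(i)}(\xx{k}{i-1}) - \nabla f_{\sigma_k(i)}(\xx{k}{0})$ themselves: conditioned on the prefix of the permutation, each such deviation is essentially determined up to size $L\norm{\xx{k}{i-1}-\xx{k}{0}}$, and deterministically $\norm{\xx{k}{i-1}-\xx{k}{0}} \lesssim \eta\, i\, G$ (not $\eta\, i\,\norm{\G{\xx{k}{0}}}$ as your stability step asserts --- that sharper form is itself a high-probability statement, not a consequence of smoothness alone). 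So in your martingale split, the ``predictable part whose norm I already control via smoothness'' is exactly the part that still scales like $\eta L n^2 G$ after summation, and Azuma/Freedman on the martingale-difference part cannot repair that, because the obstruction is in the predictable part. What is actually needed is concentration of the \emph{anchored partial sums} $\sum_{j\leq i}\nabla f_{\sigma_k(j)}(\xx{k}{0})$: since $\xx{k}{0}$ is independent of $\sigma_k$, these are partial sums of fixed vectors sampled without replacement, and a Hoeffding--Serfling-type inequality (the paper uses the vector-valued version of \cite{schneider2016probability}) gives $\norm{\sum_{j\leq i}\nabla f_{\sigma_k(j)}(\xx{k}{0})}\lesssim G\sqrt{i\log\frac{nK}{\delta}} + i\norm{\G{\xx{k}{0}}}$; feeding this back into the expansion of the epoch (the paper does this cleanly via an exact product expansion and summation by parts) yields the $n^{3/2}G\sqrt{\log}$-plus-$n^2\norm{\G{\xx{k}{0}}}$ bound on the noise, and hence the additive per-epoch error of order $\eta^3 n^2 L^2G^2\log\frac{nK}{\delta}$ that produces the stated noise floor. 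Without redirecting the concentration to these anchored partial sums, your recursion only recovers a $\widetilde O(\nicefrac{1}{K^2})$ rate, not $\widetilde O(\nicefrac{1}{nK^2})$. (Also note the dominant error must be this additive term, not the $(F(\xx{k}{0})-F^*)$-proportional term you highlight, since the latter is absorbed into the contraction.)

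The second gap is the boundedness bookkeeping, and it explains the $\min_{k\in[K+1]}$ in the statement, which your plan does not. Your bootstrap claim that the per-epoch descent keeps $\xx{k}{0}$ in the sublevel set $\{F\leq F(\vx_0)\}$ fails precisely when $F(\xx{k}{0})-F^*$ drops below the noise floor: there the recursion no longer guarantees descent and an escape is possible, so a last-iterate bound cannot be closed this way without an extra assumption (this is exactly why the paper's last-iterate variant invokes Assumption~\ref{a:1}). The paper resolves it with a two-case argument: either every end-of-epoch iterate stays in the sublevel set (on which $G$ is finite by compactness of $\mc X^*$ plus quadratic growth), and the recursion gives the bound for the last iterate, or some iterate escapes, in which case the epoch just before the first escape must already have suboptimality below the noise term, which immediately gives the best-iterate bound. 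You would need this (or an equivalent enlarged-sublevel-set argument with a cumulative-increase control) to make your final step rigorous.
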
 
{\noindent \bf Proof:} See Section~\ref{ps:1} for a proof sketch and Section~\ref{sec:proof-thm-plconv} for the full proof.\qed

\paragraph{Optimality of convergence rate.} It is important to note that Theorem~\ref{thm:plconv} matches the lower bound $\Om{\nicefrac{1}{nK^2}}$~\cite{rajput2020closing} for strongly convex costs, up to poly-logarithmic factors. What is somewhat surprising is that our upper bound holds for a \emph{broader} class of (nonconvex) P{\L} functions compared to this lower bounds.
Since Theorem~\ref{thm:plconv} applies to subclasses of P{\L} functions, it also gives the minimax optimal rates (up to log factors) for smooth strongly convex functions (see Table~\ref{tab:results}).
%Please see Tables~\ref{tab:results} and \ref{tab:resultssing} for a summary of upper and lower bounds.
Notably, Theorem~\ref{thm:plconv} improves the epoch requirement $K \gtrsim \kappa^2 \log(nK)$ of the prior work \cite{nagaraj2019sgd} to $K \gtrsim \kappa \log(n^{1/2}K)$, \emph{without} requiring the convexity of $f_i$'s. Note that in \cite{nagaraj2019sgd}, convexity is crucial in the coupling argument to achieve the tight convergence rate.

%\subsection{Discussion of Theorem~\ref{thm:plconv}}
\begin{remark}[Best iterate v.s.\ last iterate]
\label{rmk:bestvslast}
Note that Theorem~\ref{thm:plconv} is a guarantee for the best iterate, not the last iterate. However, the best iterate is needed only in pathological cases where some early iterate $\xx{k}{0}$ is already too close to the optimum, so that the ``noise'' dominates the updates and makes the last iterate have worse optimality gap than $\xx{k}{0}$.
By assuming bounded iterates (Assumption~\ref{a:1}) in place of the compactness of $\mc X^*$, the convergence rate in Theorem~\ref{thm:plconv} holds for the \emph{last} iterate (see Section~\ref{sec:proof-thm-plconv-outline} for details).
Conversely, for an existing last-iterate bound (e.g., \cite{rajput2020closing}), one can prove a corresponding best-iterate bound without Assumption~\ref{a:1} if $\mc X^*$ is compact.
\end{remark}

\begin{remark}[Similar bound for $\singshuf$] 
In Section~\ref{sec:singshuf}, we show a similar convergence bound $\OO{\nicefrac{\log^3(nK)}{nK^2}}$ for $\singshuf$ on smooth strongly convex functions (Theorem~\ref{thm:singshuf1}).
Interestingly, Theorems~\ref{thm:plconv} and \ref{thm:singshuf1} together demonstrate that the optimal dependences on $n$ and $K$ are identical for $\randshuf$ and $\singshuf$ on smooth strongly convex costs; in other words, for this function class, there is no additional provable gain from reshuffling in terms of the dependence on $n$ and $K$.
%In fact, our proof also works for any reshuffling schemes that lie between $\randshuf$ and $\singshuf$; for instance, our proof covers the scheme where the components are reshuffled every $5$ epochs.
\end{remark}

From the high-probability bound in Theorem~\ref{thm:plconv}, we can derive a corresponding expectation bound.
\begin{corollary}[P{\L} class]
\label{cor:plconv}
Under the setting of Theorem~\ref{thm:plconv}, the following holds for some $c = O(\kappa^3)$:
\begin{equation*}
    \E \bigg[ \min_{k \in [K+1]} F(\vx_0^k)  \bigg]- F^* \leq 
    \frac{3(F(\vx_{0})-F^*)}{2nK^2}
    +
    \frac{c \cdot \log^3(nK)}{nK^2}\,.
\end{equation*}
\end{corollary}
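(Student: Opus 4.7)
The plan is to upgrade the high-probability bound of Theorem~\ref{thm:plconv} to an expectation bound via a routine event-splitting argument. Writing $Y \defeq \min_{k \in [K+1]} F(\vx_0^k) - F^* \geq 0$, the key deterministic observation is the pointwise bound $Y \leq F(\vx_0) - F^*$, which holds simply because the minimum over $k \in [K+1]$ is no larger than the $k=1$ term. This crude almost-sure estimate is exactly what is needed to control the contribution of $\E[Y]$ on the failure event of Theorem~\ref{thm:plconv}, without any additional assumption such as bounded iterates.

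Concretely, first I would instantiate Theorem~\ref{thm:plconv} with $\delta = \tfrac{1}{2nK^2}$. For this choice $\log(nK/\delta) = \log(2n^2K^3) \leq 4\log(nK)$ for all sufficiently large $nK$, so with probability at least $1 - \tfrac{1}{2nK^2}$,
\[
Y \;\leq\; \frac{F(\vx_0) - F^*}{nK^2} + \frac{4c \log^3(nK)}{nK^2}.
\]
Next I would decompose $\E[Y] = \E[Y \mathbf{1}_{\mathrm{succ}}] + \E[Y \mathbf{1}_{\mathrm{fail}}]$. On the success event the display bounds $\E[Y \mathbf{1}_{\mathrm{succ}}]$ directly; on the failure event, combining $Y \leq F(\vx_0) - F^*$ with $\Pr[\mathrm{fail}] \leq \tfrac{1}{2nK^2}$ gives $\E[Y \mathbf{1}_{\mathrm{fail}}] \leq \tfrac{F(\vx_0) - F^*}{2nK^2}$. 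Summing the two contributions produces $\E[Y] \leq \tfrac{3(F(\vx_0) - F^*)}{2nK^2} + \tfrac{4c \log^3(nK)}{nK^2}$, and absorbing the factor $4$ into a new constant $c' = O(\kappa^3)$ reproduces the corollary exactly as stated.

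There is no substantial obstacle in this argument; the only point requiring some thought is the precise choice of $\delta$. Taking $\delta = \tfrac{1}{nK^2}$ would overshoot and yield a factor $2$ in front of $F(\vx_0) - F^*$ instead of $\tfrac{3}{2}$, while much smaller choices of $\delta$ would only inflate the $\log^3(nK)$ prefactor without improving the other term. The choice $\delta = \tfrac{1}{2nK^2}$ precisely balances these effects and gives the cleanest constants.
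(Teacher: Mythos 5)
Your proof is correct, and it reaches the stated constants, but it handles the failure event by a genuinely different (and lighter) route than the paper. The paper fixes $\delta = 1/n$ and, on the complement of the concentration event \eqref{eq:unionboundevent}, re-runs the whole per-epoch analysis of Theorem~\ref{thm:plconv} using the crude deterministic bound $\norm{\vr_k} \leq \eta n^2 L G$ (again splitting into the sublevel-set case and the escape case), which yields a fallback best-iterate bound of order $\kappa^3 \log^2(nK)/K^2$ --- worse by exactly a factor of $n$ --- so that multiplying by $\prob[E^c] \leq 1/n$ restores the $1/(nK^2)$ rate, and the $3/2$ arises from $1 + 1/n \leq 3/2$. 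You instead exploit the fact that the statement concerns the best iterate: since $\vx_0^1 = \vx_0$, the $k=1$ term gives the pointwise bound $\min_{k} F(\vx_0^k) - F^* \leq F(\vx_0) - F^*$, and you compensate for this non-decaying bound by shrinking the failure probability to $\delta = \tfrac{1}{2nK^2}$, so the failure contribution is already of order $1/(nK^2)$; the only price is $\log(nK/\delta) \leq 4\log(nK)$ (which in fact holds for all $n \geq 2$, $K \geq 1$, not merely large $nK$, since $2\log n + \log K \geq \log 2$), and the factor $4$ is absorbed into $c = O(\kappa^3)$. Your argument is shorter and avoids duplicating the analysis, but it leans crucially on the minimum including $k=1$; the paper's heavier fallback computation is what one would need in settings without such a trivial almost-sure bound, e.g.\ for a last-iterate expectation bound under Assumption~\ref{a:1}.
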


So far, we have developed the optimal convergence rate of \randshuf~for P{\L} costs, which turn out to be also optimal for smooth strongly convex costs.
However, there is one case which Theorem~\ref{thm:plconv} does not match the lower bound, namely $\randshuf$ for quadratic costs: the lower bound of $\Omega(\nicefrac{1}{(nK)^2} + \nicefrac{1}{nK^3})$ is proved in \cite{safran2019good}.
Although Rajput et al.~\cite{rajput2020closing} actually obtain this rate, they assume the convexity of each component.
In light of Theorem~\ref{thm:plconv}, it is therefore natural to ask if we can close the gap without assuming convexity of each component.

%%% Local Variables:
%%% mode: latex
%%% TeX-master: "1_Main_NeurIPS"
%%% End:
\vspace*{-3pt}
\section{Tight bound on \randshuf~for quadratic functions}
\label{sec:quad}
\vspace*{-3pt}
%In the previous section, we saw an improvement in the epoch requirement for $\randshuf$ on non-quadratic strongly convex functions, even without convexity of $f_i$'s. However, Theorem~\ref{thm:plconv} does not give a tight bound for quadratic functions, as the optimal rate $\OO{\nicefrac{1}{(nK)^2}+\nicefrac{1}{nK^3}}$ of $\randshuf$ on quadratic functions is different from non-quadratics. Given this, is it also possible to show a similar improvement for quadratic functions? Theorem~\ref{thm:quadratic} gives the answer.
We prove below a tight bound for $\randshuf$ on quadratics without the convexity of $f_i$'s. For simplicity, we assume (without loss) that the global optimum of $F$  is achieved at the origin. 
\begin{theorem}[Quadratic costs]
\label{thm:quadratic}
Assume that $F(\vx) \defeq \frac{1}{n}\sum_{i=1}^n f_i(\vx) = \half \vx^T \mA \vx$ and $F$ is $\mu$-strongly convex. Let $f_i(\vx)\defeq \half \vx^T \mA_i \vx + \vb_i^T \vx$ and $f_i \in C_L^1(\reals^d)$.
Consider $\randshuf$ for the number of epochs $K$ satisfying $K \geq \frac{32}{3} \kappa \max \{1, \sqrt{\frac \kappa n} \} \log (nK)$, step size $\eee{k}{i} = \eta \defeq \frac{2\log (nK)}{\mu n K}$, and initialization $\vx_0$. Then the following bound holds for some $c_1 = O(\kappa^4)$ and $c_2 = O(\kappa^4)$:
\begin{align*}
    \E\left [F(\vx_0^{K+1}) \right] - F^*
    &\leq \frac{2 L\norm{\vx_0-\vx^*}^2}{n^2 K^2}
    + \frac{c_1 \cdot \log^2(nK)}{n^2 K^2} + \frac{c_2 \cdot \log^4(nK)}{nK^3}\,.
\end{align*}  
\end{theorem}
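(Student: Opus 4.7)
\vspace{3pt}

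\noindent\textbf{Proof plan for Theorem~\ref{thm:quadratic}.}
The plan is to exploit the linearity of quadratic dynamics and cast one epoch of $\randshuf$ as an affine stochastic recursion. Unrolling the $n$ updates~\eqref{update} within the $k$-th epoch gives
\[
\vx_0^{k+1} \;=\; \mM_k\,\vx_0^k + \vr_k, \qquad
\mM_k \defeq (\mI - \eta\mA_{\sigma_k(n)})\cdots (\mI - \eta\mA_{\sigma_k(1)}),
\]
with $\vr_k \defeq -\eta \sum_{i=1}^n (\mI - \eta\mA_{\sigma_k(n)})\cdots (\mI - \eta\mA_{\sigma_k(i+1)})\,\vb_{\sigma_k(i)}$. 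Two algebraic identities drive the proof: $\sum_i \mA_i = n\mA$ (from $F = \frac{1}{n}\sum_i f_i$ being purely quadratic), and $\sum_i \vb_i = \zeros$ (from $F$ being centered at the origin). Since the permutations $\{\sigma_k\}$ are i.i.d.\ across epochs, unrolling yields $\E[\|\vx_0^{K+1}\|^2]$ as a matrix-geometric sum whose decay factor is $\E[\mM_k^T\mM_k]$ and whose source term is $\E[\|\vr_k\|^2]$; converting $\|\vx_0^{K+1}\|^2$ into a suboptimality bound via $F(\vx) - F^* \leq \tfrac{L}{2}\|\vx\|^2$ then delivers the claim.

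The first main step is a second-moment contraction of the form $\E_{\sigma_k}[\mM_k^T\mM_k] \preceq (1 - \eta n\mu)\,\mI$. I would derive this by expanding $\mM_k^T\mM_k$ as a polynomial in $\eta$: the first-order coefficient is $-2\sum_i \mA_i = -2n\mA$, which supplies the desired decrease, while the second-order coefficient, after uniform-permutation averaging, reduces using $\E_{\sigma}[\mA_{\sigma(i)}\mA_{\sigma(j)}] = \frac{1}{n(n-1)}\bigl((\sum_i\mA_i)^2 - \sum_i\mA_i^2\bigr)$ for $i\neq j$, producing residuals of order $\eta^2 n^2\|\mA\|^2$ and $\eta^2 n L^2$. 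Dominating these (and the $\eta^3$ terms) by the leading first-order decrease $\eta n\mu$ forces $\eta \lesssim 1/(\kappa n)$ when $\kappa \leq n$ and a more stringent $\eta \lesssim \sqrt{n}/(\kappa^{3/2} n)$ when $\kappa > n$, which, combined with the target $\eta \asymp \log(nK)/(\mu n K)$, yields exactly the epoch requirement $K\gtrsim \kappa\max\{1,\sqrt{\kappa/n}\}\log(nK)$. The second main step is a bias--variance decomposition of $\vr_k$. Using $\sum\vb_i = \zeros$ to rewrite $\vr_k = -\eta\sum_i(\mN_i^k - \mI)\vb_{\sigma_k(i)}$ with $\mN_i^k$ the corresponding suffix product, a first-order expansion of $\mN_i^k - \mI$ and permutation averaging give $\|\E[\vr_k]\| = O(\eta^2 n)$; this bias, accumulated over $K$ epochs and amplified by the $1/(\eta n\mu)$ factor from the geometric series, produces the $\log^2(nK)/(n^2 K^2)$ term. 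For the centered variance, permutation symmetry saves one factor of $n$ over the naive bound and yields $\E\|\vr_k - \E\vr_k\|^2 = O(\eta^4 n^3)$, giving the $\log^4(nK)/(nK^3)$ term after geometric accumulation.

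The principal obstacle is the contraction estimate on $\E[\mM_k^T\mM_k]$, because the products $\mM_k$ involve noncommuting matrices. The natural shortcut would be Recht and R\'{e}'s conjectured matrix AM--GM inequality~\cite{recht2012beneath}, which would have furnished this bound at once; however, that conjecture is now known to be false~\cite{lai2020recht}, so the shortcut is unavailable. My workaround is the explicit finite-order expansion described above, in which every $\eta$-order is tracked and absorbed using $\sum_i \mA_i = n\mA$ together with the smoothness bound $\|\mA_i\|\leq L$; this is the delicate portion of the analysis and is the precise source of the $\sqrt{\kappa/n}$ factor in the epoch condition. Once the contraction and noise bounds are in hand, unrolling the recursion, summing the resulting geometric series, and converting to a function-value bound through $L$-smoothness are routine and yield the three-term bound in the theorem.
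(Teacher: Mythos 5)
Your plan follows the same skeleton as the paper's proof: unroll each epoch into the affine recursion $\vx_0^{k+1}=\mS_k\vx_0^k-\eta\vt_k$, prove a second-moment contraction $\norm{\E[\mS_k^T\mS_k]}\leq 1-\eta n\mu$ by expanding into permutation-averaged noncommutative elementary symmetric polynomials (this is exactly Lemma~\ref{lem:thm1-term1}, and your step-size constraints reproduce $\eta\lesssim\frac{1}{nL}\min\{1,\sqrt{n/\kappa}\}$ and hence the epoch requirement), bound the per-epoch noise using $\sum_i\vb_i=\zeros$ so that $\norm{\E[\vt_k]}=O(\eta n L G)$ and $\E\norm{\vt_k}^2=O(\eta^2 n^3 L^2 G^2\log n)$ (Lemmas~\ref{lem:thm1-sub4} and \ref{lem:thm1-term2}), and accumulate geometrically. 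Up to that point your sketch and the paper coincide.

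The gap is in the accumulation step. Writing $\vx_0^{K+1}=\mS_{K:1}\vx_0-\eta\sum_k\mS_{K:k+1}\vt_k$, the cross terms $\E\<\mS_{K:k+1}\vt_k,\mS_{K:k'+1}\vt_{k'}\>$ with $k<k'$ do \emph{not} reduce to a product of two bias vectors: factoring out independent epochs leaves $\E[\vt_k]^T\,\E[\mS_{k'-1:k+1}]^T\,\E\big[\mS_{k'}^T\,\E[\mS_{K:k'+1}^T\mS_{K:k'+1}]\,\vt_{k'}\big]$, and in the last factor $\mS_{k'}$ and $\vt_{k'}$ are built from the \emph{same} permutation $\sigma_{k'}$, so they are correlated; centering $\vt_{k'}$ (your bias--variance split) does not make this term vanish. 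If you bound this coupled expectation by Cauchy--Schwarz with your variance estimate $\sqrt{\E\norm{\vt_{k'}}^2}=O(\eta n^{3/2})$, the off-diagonal total is of order $\eta^2\sqrt{n}\,L^2G^2/\mu^2\asymp \log^2(nK)/(n^{3/2}K^2)$ up to log factors, which is dominated by neither $\log^2(nK)/(n^2K^2)$ nor $\log^4(nK)/(nK^3)$ once $K\gtrsim\sqrt n$ --- precisely the regime where the $(nK)^{-2}$ term is the point of the theorem. The paper needs a dedicated argument here: Lemma~\ref{lem:thm1-sub5} shows $\norm{\E[\mS_{K:k'}^T\mS_{K:k'+1}\vt_{k'}]}\leq 10(1-\eta n\mu)^{K-k'}\eta n L G$, i.e.\ order $\eta n$ rather than $\eta n^{3/2}$, by running the law-of-total-expectation/$\sum_i\vb_i=\zeros$ argument jointly on the coupled product, together with the first-moment contraction $\norm{\E[\mS_k]}\leq 1-\eta n\mu/2$ of Lemma~\ref{lem:thm1-sub3} to propagate through intermediate epochs (another lemma your plan implicitly needs but never states). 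Your sketch contains neither this coupled bound nor a substitute (one workable alternative is to note that the order-$\eta$ part of $\mS_{k'}$ is the deterministic $-\eta n\mA$, hence $\norm{\mS_{k'}-\E[\mS_{k'}]}=O(\eta^2n^2L^2)$ pointwise, and to center both factors of the coupled term), so as written the argument does not reach the stated $\frac{\log^2(nK)}{n^2K^2}+\frac{\log^4(nK)}{nK^3}$ bound.
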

{\noindent \bf Proof:} See  Section~\ref{ps:2} for a proof sketch and Section~\ref{sec:proof-thm-quadratic} for the full proof. \qed 

\paragraph{Improvements.}
Theorem~\ref{thm:quadratic} improves the prior result~\cite{rajput2020closing} in many ways.
Most importantly, Theorem~\ref{thm:quadratic} does not require $f_i$'s to be convex, an assumption exploited in~\cite{rajput2020closing} for their coupling argument.
Moreover, Theorem~\ref{thm:quadratic} imposes a \emph{milder} epoch requirement: it only assumes $K \geq \frac{32}{3} \kappa \max \{1, \sqrt{\frac \kappa n} \} \log (nK)$,  which is better than $K \geq 128 \kappa^2 \log (nK)$ of \cite{rajput2020closing}.
As long as $\kappa \leq n$, our epoch requirement is $K \gtrsim \kappa \log(nK)$, matching that of the univariate case~\cite{safran2019good}. Lastly, unlike~\cite{rajput2020closing}, Theorem~\ref{thm:quadratic} does not rely on the bounded iterates assumption.

\begin{remark}[Tail averaging tricks] \label{rmk:tail}
Following \cite{nagaraj2019sgd}, we can also obtain a guarantee for the tail average of the iterates $\vx_0^{\lceil K/2 \rceil}, \dots, \vx_0^{K}$, which improves the constants appearing in the bound by a factor of $\kappa$. Due to space limitation, the statement and the proof of this improvement are deferred to Section~\ref{sec:proof-thm-quadratic2}.
\end{remark}

Thus far, we have established the optimal convergence rates of \randshuf~for P{\L} costs, strongly convex costs, and quadratic costs \emph{without} assuming the convexity of individual components, an assumption crucial to the analysis of prior arts~\cite{nagaraj2019sgd,rajput2020closing}. Due to our results, it may seem that there is no additional gain from the convexity of the $f_i$'s.  \emph{Is it really the case?}

%%% Local Variables:
%%% mode: latex
%%% TeX-master: "1_Main_NeurIPS"
%%% End:
\vspace*{-4pt}
\section{Eliminating epoch requirements with varying step sizes}
\label{sec:varyingstep}
\vspace*{-3pt}
%In the previous sections, we have demonstrated for epoch-based shuffling methods that the aggregate update direction over an epoch well-approximates the full gradient, and consequently, such methods enjoy faster convergence rates than \sgd. In particular, Theorems~\ref{thm:plconv} and \ref{thm:quadratic} achieve the rates that were previously obtained by additionally assuming $f_i$'s are convex~\cite{nagaraj2019sgd,rajput2020closing}, while improving their epoch requirements to $K \gtrsim \kappa \log(nK)$. Due to these theorems, it seems that the convexity of $f_i$'s is superfluous, raising the question whether one can further benefit from such an additional assumption.
In this section, we show that the convexity of $f_i$'s \emph{does} lead to gains. In particular, we show how this convexity helps one remove the epoch requirements as well as extra poly-log terms in previous convergence bounds~\cite{haochen2018random,nagaraj2019sgd,rajput2020closing}. The main technical distinction of this sharper result is to depart from constant step sizes and consider varying step sizes. We begin with the strongly convex case:
\begin{theorem}[Strongly convex costs]
\label{thm:1}
%Assume that $f_i$'s are  convex, differentiable, $L$-smooth, and $F$ is $\mu$-strongly convex.
Assume that $F$ is $\mu$-strongly convex, and each $f_i$ is convex and $f_i \in C_L^1(\reals^d)$. Assume that the bounded iterates assumption (Assumption~\ref{a:1}) holds.
For any constant $\alpha>2$, let $\ini:= \alpha \cdot \kappa$, and consider the step sizes $\eee{1}{i} =  \frac{2\alpha/\mm}{\ini+i}$ for $i \in [n]$, and $\eee{k}{i}=  \frac{2\alpha/\mm}{\ini+nk}$ for $k \in [2:K]$ and $i \in [n]$.
Then, for any $K\geq 1$, the following convergence bound holds for \randshuf~with step sizes $\eee{k}{i}$, initialization $\vx_0$, and some  $c_1=O(\kappa^4)$  and $c_2=O(\kappa^\alpha)$: 
\begin{align*} %\label{bd:master}
     \E \left [ F(\vx_0^{K+1}) \right] - F^*  \leq  \frac{c_1 \cdot n}{(\ini+nK)^2}   +\frac{c_2\cdot  \norm{\vx_{0}-\xs}^2}{(\ini+nK)^{\alpha}}   \,.\vspace*{-3pt}
 \end{align*} 
\end{theorem}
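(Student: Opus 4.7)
The plan is to reduce the analysis to a single recursion in the epoch index $k$, derive a per-epoch progress bound via convexity of the $f_i$'s, and then invoke the two-parameter Chung-type lemma (Lemma~\ref{main:2}) that the paper develops specifically for this schedule. I will track $a_k := \E[\|\vx_0^k - \xs\|^2]$ and convert to suboptimality at the end via $L$-smoothness.

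For epochs $k \geq 2$, where $\eta_i^k$ equals the constant $\eta^k := \tfrac{2\alpha/\mu}{k_0 + nk}$, I would start from the single-step identity $\|\vx_i^k - \xs\|^2 = \|\vx_{i-1}^k - \xs\|^2 - 2\eta^k \inp{\nabla f_{\sigma_k(i)}(\vx_{i-1}^k)}{\vx_{i-1}^k - \xs} + (\eta^k)^2 \|\nabla f_{\sigma_k(i)}(\vx_{i-1}^k)\|^2$ and split the gradient as $\nabla f_{\sigma_k(i)}(\vx_{i-1}^k) = \nabla f_{\sigma_k(i)}(\vx_0^k) + [\nabla f_{\sigma_k(i)}(\vx_{i-1}^k) - \nabla f_{\sigma_k(i)}(\vx_0^k)]$ to isolate the ``good'' full-gradient contribution from the within-epoch drift. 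Using Assumption~\ref{a:1} to bound the gradients by some $G$ and $L$-smoothness to give $\|\vx_{i-1}^k - \vx_0^k\| \leq G(i-1)\eta^k$, the drift terms are controlled. After summing over $i \in [n]$, taking expectation over the uniform permutation $\sigma_k$, and applying $\E[\tfrac{1}{n}\sum_i \nabla f_{\sigma_k(i)}(\vx_0^k)] = \nabla F(\vx_0^k)$ together with $\mu$-strong convexity of $F$, I expect the recursion
\begin{align*}
    a_{k+1} \leq \Bigl(1 - \tfrac{c_{\mathrm{con}}\, \alpha\, n}{k_0 + nk}\Bigr) a_k + \frac{c_{\mathrm{var}}\, n^2}{(k_0 + nk)^3}\,.
\end{align*}
The cubic-in-$\eta^k$ variance term (rather than the quadratic one seen for $\sgd$) is the hallmark of the shuffling analysis: the extra factor of $\eta^k$ is precisely what comes from the convexity-assisted cancellation of the $\inp{\nabla f_i(\vx_{i-1}^k)}{\vx_{i-1}^k - \xs}$ cross terms and is what enables beating the $\OO{1/(nK)}$ baseline. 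The first epoch, which uses varying $\eta_i^1 = \tfrac{2\alpha/\mu}{k_0 + i}$, I would treat by chaining single-step contractions $(1 - \mu\eta_i^1)$ in the style of decreasing-step-size $\sgd$; the choice $k_0 = \alpha\kappa$ ensures $\eta_i^1 \leq 1/L$ so that stability holds without any epoch requirement. The resulting bound on $\E[\|\vx_0^2 - \xs\|^2]$ is consistent with plugging $k = 1$ into the main recursion, so the first epoch simply seeds it.

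The main obstacle is solving the recursion. A direct unrolling is subtle because both the contraction rate $\tfrac{\alpha n}{k_0+nk}$ and the variance $\tfrac{n^2}{(k_0+nk)^3}$ scale with $n$, and the ``time index'' $k_0 + nk$ itself depends on $n$; classical Chung-type lemmas handle one-parameter schedules and do not apply directly. This is precisely the setting of Lemma~\ref{main:2}, which, applied to our recursion, produces
\begin{align*}
    a_{K+1} \leq \frac{c_1'\, n}{(k_0 + nK)^2} + \frac{c_2'\, \|\vx_0 - \xs\|^2}{(k_0 + nK)^{\alpha}}\,,
\end{align*}
where the first term is the integrated cumulative variance and the second comes from the telescoping product $\prod_{k=1}^K (1 - \tfrac{c_{\mathrm{con}}\alpha n}{k_0 + nk}) \lesssim (\tfrac{k_0}{k_0+nK})^{\alpha}$ acting on the initial distance. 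Converting to suboptimality via $F(\vx_0^{K+1}) - F^* \leq \tfrac{L}{2}\|\vx_0^{K+1} - \xs\|^2$ and tracking the $\kappa$ dependence of $c_{\mathrm{var}}$, $k_0$, and $L$ then gives the claimed $c_1 = O(\kappa^4)$ and $c_2 = O(\kappa^{\alpha})$, with the bound valid for every $K \geq 1$ since no epoch threshold was ever invoked.
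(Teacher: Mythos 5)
Your overall architecture is the same as the paper's: a per-epoch recursion of the form $\E\norm{\xx{k+1}{0}-\xs}^2 \leq \exp\left(-n\eta_k\mu/2\right)\E\norm{\xx{k}{0}-\xs}^2 + O(n^2\eta_k^3)$ with noise \emph{cubic} in the step size, solved by the two-parameter Chung-type Lemma~\ref{main:2}, with the first epoch seeding the recursion and smoothness converting distances to suboptimality; the resulting constants $c_1=O(\kappa^4)$ and $c_2=O(\kappa^\alpha)$ come out exactly as in the paper. The genuine gap is in the middle: the recursion with cubic noise is the heart of the matter, and the derivation you sketch would not produce it. Summing the single-step identity over $i\in[n]$ leaves the quadratic term $\eta_k^2\sum_{i=1}^n\norm{\nabla f_{\sigma_k(i)}(\xx{k}{i-1})}^2$, of order $n\eta_k^2G^2$, which nothing in your route cancels; and bounding the cross-term drift via $\norm{\xx{k}{i-1}-\xx{k}{0}}\le G(i-1)\eta_k$ costs an additional $O(n^2\eta_k^2G^2)$ per epoch. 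Feeding either of these into Lemma~\ref{main:2} gives at best the \sgd baseline $\OO{\nicefrac{1}{nK}}$ (or worse), not $\OO{\nicefrac{n}{(\ini+nK)^2}}$. Obtaining the $O(n^2\eta_k^3)$ noise requires a concrete mechanism: either the coupling argument of Nagaraj et al.~\cite{nagaraj2019sgd}, which the paper imports wholesale as Propositions~\ref{per:0} and \ref{per:1} (this is precisely where convexity of each $f_i$ enters), or an epoch-level expansion comparing $\xx{k+1}{0}-\xx{k}{0}$ to $-\eta_k n\nabla F(\xx{k}{0})$ in which both the expectation and the second moment of the deviation are controlled by without-replacement concentration. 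Invoking ``convexity-assisted cancellation of the cross terms'' asserts the conclusion without supplying either mechanism, so the step as described would fail.

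A secondary issue: you cannot run one uniform recursion for all $k\geq 2$. With $\eta_k=\frac{2\alpha/\mu}{\ini+nk}$, in early epochs $n\eta_k\kappa L$ can be of order $\kappa^2$, so the higher-order terms in the per-epoch bound (Proposition~\ref{per:1}) are not absorbable and its negative function-value term cannot be discarded; the paper handles $k<8\alpha\kappa^2$ by instead chaining the per-iteration bound (Proposition~\ref{per:0}) within the epoch, and only switches to the per-epoch bound afterwards. The same remark applies to your first-epoch plan: the per-step contraction for shuffled (hence biased) updates is itself a consequence of the coupling result, not of plain strong convexity, and the paper combines it with Lemma~\ref{main:1} there. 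These case distinctions are exactly what make the final bound valid for every $K\geq 1$ with the stated constants.
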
 
{\noindent \bf Proof:} See  Section~\ref{ps:3} for a proof sketch and Section~\ref{app:pf1} for the full proof. \qed

\paragraph{Removing epoch requirements.}
The most important feature of Theorem~\ref{thm:1} is that its rate holds for all $K\geq 1$. This is in stark contrast with the existing minimax optimal result~\cite{nagaraj2019sgd} which requires $K\gtrsim \kappa^2 \log(nK)$. Moreover, the dependency of the leading constant $c_1$ on $\kappa$ is identical with~\cite{nagaraj2019sgd}\footnote{Although the leading constant in \cite[Theorem 1]{nagaraj2019sgd} actually reads $\OO{\kappa^3}$, it is important to note that the result is for the tail-averaged iterate. For the last iterate, one can see that their leading constant becomes $\OO{\kappa^4}$.}.

\paragraph{Removing extra poly-log factors.}
Another notable feature of our bound is that it is not beset with extra poly-log factors appearing in the previous minimax optimal result~\cite{nagaraj2019sgd}, and thereby it closes the poly-log gap between the lower bound~\cite{rajput2020closing}.

With a similar technique, we can also prove the tight convergence rates for the case of quadratic $F$:
\begin{theorem}[Quadratic costs]
\label{thm:2}
Under the setting of Theorem~\ref{thm:1}, we additionally assume that $F$ is quadratic.
For  any constant $\alpha>4$, consider the same varying step sizes as in Theorem~\ref{thm:1}.
Then, for any $K\geq 1$, the following convergence bound holds for \randshuf~with step sizes $\eee{k}{i}$, initialization $\vx_0$, and some $c_1 =O(\kappa^4)$, $c_2=O(\kappa^6)$ and $c_3=O(\kappa^\alpha)$:
\begin{align*} %\label{bd:master2}
     \E \left [ F(\vx_0^{K+1})  \right] - F^* \leq   \frac{c_1}{(\ini+nK)^2} + \frac{c_2 \cdot n^2}{(\ini+nK)^3}    +\frac{c_3\cdot  \norm{\vx_{0}-\xs}^2}{(\ini+nK)^{\alpha}}   \,.
 \end{align*} 
\end{theorem}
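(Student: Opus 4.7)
The plan is to follow the high-level strategy of Theorem~\ref{thm:1}---derive a one-epoch recursion on $\E[F(\vx_0^k)]-F^*$ and solve it using the two-parameter Chung-type lemma (Lemma~\ref{main:2})---while exploiting the additional quadratic structure of $F$ to sharpen the residual ``without-replacement bias'' by a factor of $n$. This replaces the $c_1\cdot n/(\ini+nK)^2$ leading term of Theorem~\ref{thm:1} by $c_1/(\ini+nK)^2+c_2 n^2/(\ini+nK)^3$, matching (up to $\kappa$-dependent constants) the optimal quadratic rate of Theorem~\ref{thm:quadratic} and, crucially, doing so \emph{without} the extra poly-logarithmic factor.

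First, I would derive a sharper per-epoch recursion for $k\geq 2$, where $\eta_k \defeq \eee{k}{i} = \tfrac{2\alpha/\mu}{\ini+nk}$ is constant within each epoch. Writing
\begin{align*}
    \vx_0^{k+1}-\vx_0^k \;=\; -\eta_k \sum_{i=1}^n \nabla f_{\sigma_k(i)}(\vx_{i-1}^k),
\end{align*}
I would Taylor-expand each component gradient around $\vx_0^k$. In the strongly convex (non-quadratic) case underlying Theorem~\ref{thm:1}, the second-order remainder contributes a bias of order $\eta_k^3 n^3$. Since $F$ is now quadratic, $\nabla F$ is affine and $\sum_i(\nabla^2 f_i(\vx)-\nabla^2 F)\equiv 0$ pointwise in $\vx$, so the deviation Hessians $\nabla^2 f_i(\vx)-\nabla^2 F$ are centered in $i$. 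Combining this with $L$-smoothness of the $f_i$'s, $\mu$-strong convexity of $F$, and Assumption~\ref{a:1} yields a recursion of the form
\begin{align*}
    \E[F(\vx_0^{k+1})] - F^* \;\leq\; \bigl(1-\mu n\eta_k+O((\eta_k n)^2)\bigr)\bigl(\E[F(\vx_0^k)]-F^*\bigr) \;+\; c_1'\,\eta_k^3 n^2 \;+\; c_2'\,\eta_k^4 n^4,
\end{align*}
where $c_1', c_2'$ depend polynomially on $\kappa$ and absorb the bounded-iterate diameter.

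Second, I would invoke Lemma~\ref{main:2} on this recursion. With $\eta_k=\tfrac{2\alpha/\mu}{\ini+nk}$ and $\ini=\alpha\kappa$, the contraction factor is $1-\tfrac{2\alpha n}{\ini+nk}$ and the two driving terms scale as $n^2/(\ini+nk)^3$ and $n^4/(\ini+nk)^4$. The lemma integrates these against the contraction to produce all three claimed terms: two bias contributions $c_1/(\ini+nK)^2$ and $c_2 n^2/(\ini+nK)^3$ that together match the quadratic lower bound of~\cite{safran2019good}, plus $c_3\norm{\vx_0-\xs}^2/(\ini+nK)^\alpha$ from the geometric decay of the initial suboptimality. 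The requirement $\alpha>4$ ensures the initial-condition term decays strictly faster than both driving terms; the first epoch (where $\eta_1^i$ varies with $i$) is handled as in Theorem~\ref{thm:1} by folding it into the base case of the recursion.

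The main obstacle is extracting the promised factor-of-$n$ improvement in the bias term \emph{without} requiring each $f_i$ to be quadratic: Theorem~\ref{thm:2} only assumes $F$ is quadratic, so the analysis must work with the $\vx$-dependent centered Hessians $\nabla^2 f_i(\vx)-\nabla^2 F$ rather than with constant matrices as in Theorem~\ref{thm:quadratic}. The crucial cancellation is the permutation identity
\begin{align*}
    \E_{\sigma_k}\!\Bigl[\textstyle\sum_i\bigl(\nabla^2 f_{\sigma_k(i)}(\vx_0^k)-\nabla^2 F\bigr)\sum_{j<i}\nabla f_{\sigma_k(j)}(\vx_0^k)\Bigr] \;=\; 0,
\end{align*}
which kills the would-be $O(n^3)$ cross term and leaves only a centered second-moment contribution of order $O(n^2)$. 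Carrying out this cancellation uniformly in $k$---so that the constants $c_1', c_2'$ in the recursion depend only on $\kappa$, $L/\mu$, and the iterate diameter---is the most delicate step of the argument, and it is where the interplay between the quadratic structure of $F$ and the uniform randomness of $\sigma_k$ is used most forcefully.
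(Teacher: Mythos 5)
There is a genuine gap, and it sits exactly at the factor-of-$n$ saving that distinguishes Theorem~\ref{thm:2} from Theorem~\ref{thm:1}. First, your ``crucial cancellation'' is not an identity: writing $\mH_i \defeq \nabla^2 f_i(\vx_0^k)-\nabla^2 F$ (so $\sum_i \mH_i = 0$) and $\vg_j \defeq \nabla f_j(\vx_0^k)$, a direct computation gives, for each pair $i>j$, $\E_{\sigma_k}[\mH_{\sigma_k(i)}\vg_{\sigma_k(j)}] = \tfrac{1}{n(n-1)}\sum_{a\neq b}\mH_a\vg_b = -\tfrac{1}{n(n-1)}\sum_a \mH_a\vg_a$, so
\begin{align*}
\E_{\sigma_k}\Bigl[\sum_{i}\mH_{\sigma_k(i)}\sum_{j<i}\vg_{\sigma_k(j)}\Bigr] \;=\; -\tfrac12\sum_{a=1}^n \mH_a\vg_a \;\neq\; \zeros
\end{align*}
in general. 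The centering does reduce this mean from the worst-case $O(n^2 LG)$ to $O(nLG)$, but it does not ``kill'' the cross term, and your sketch never quantifies the surviving bias or its interaction with the second-moment (Hoeffding--Serfling-type) contribution, which is where the real work lies. Second, even taking your asserted recursion at face value, its error terms $c_1'\eta_k^3 n^2 + c_2'\eta_k^4 n^4$ are a factor of $n$ too large to yield the theorem. Lemma~\ref{main:2} converts a driving term $\cbo/(\ini+nk)^{\beta+1}$ into a contribution of order $\cbo/\bigl(n(\ini+nK)^{\beta}\bigr)$ (balance the per-epoch contraction $\alpha n/(\ini+nk)$ against the forcing). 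Hence $\eta_k^3 n^2 \asymp n^2/(\ini+nk)^3$ produces $n/(\ini+nK)^2$ --- i.e.\ exactly the Theorem~\ref{thm:1} rate, not the claimed $c_1/(\ini+nK)^2$ --- and $\eta_k^4 n^4$ produces $n^3/(\ini+nK)^3$, a factor $n$ worse than the claimed $c_2 n^2/(\ini+nK)^3$. To reach Theorem~\ref{thm:2} the per-epoch error must be of order $b_3\, n\,\eta_k^3 + b_2\, n^3\eta_k^4$, and establishing that is the whole content of the result.

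The paper does not re-derive such a bound from scratch: it imports the sharpened per-epoch inequality for quadratic $F$ (Proposition~\ref{per:2}, implicit in Rajput et al.~\cite{rajput2020closing}), whose proof uses the coupling argument and hence the convexity of the individual $f_i$'s assumed in Theorem~\ref{thm:1}; your sketch discards that convexity without replacing the mechanism it powers. Two further points you gloss over but which the paper needs: epochs with $k \lesssim \kappa^2$ (not just the first epoch) must be handled separately via the per-iteration bound (Proposition~\ref{per:0}), since the per-epoch bound's negative terms only dominate once $n\eee{k}{}\,\kappa L$ is small; and Lemma~\ref{main:2} must be extended to accommodate two driving terms of different orders (cf.\ \eqref{kk:new}--\eqref{bd:add}). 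These are fixable, but the false cancellation identity and the factor-of-$n$ bookkeeping error mean the proposal, as written, proves at best the Theorem~\ref{thm:1} rate for quadratic $F$.
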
 \vspace{-2pt}
{\noindent \bf Proof:} Similar to Theorem~\ref{thm:1}. See Section~\ref{app:pf2}. \qed

\begin{remark}[Tightness of bounds]
\label{rmk:lowerbound}
We believe that the upper bounds developed in this section are likely tight, though we note that this tightness is not yet guaranteed because the existing lower bounds are all developed under the assumption of constant step sizes. Extension of the lower bounds to varying step sizes would be an interesting future research direction.
\end{remark}

%%% Local Variables:
%%% mode: latex
%%% TeX-master: "1_Main_NeurIPS"
%%% End:
\vspace*{-.3cm}
\section{Proof sketches}\label{sec:pfsk}
\vspace*{-6pt}
\subsection{Proof sketch of Theorem~\ref{thm:plconv}}
\vspace*{-3pt}
\label{ps:1}
As we mentioned in Section~\ref{sec:PL}, the key to obtaining a faster rate is to capture the per-epoch property \eqref{obs:per}.
By decomposing $\g_{\sigma_k(i)} (\xx{k}{i-1})$ into ``signal'' $\g_{\sigma_k(i)} (\xx{k}{0})$ and ``noise,'' we develop the following approximate version of \eqref{obs:per} by carefully expanding the updates \eqref{update} over the $k$-th epoch:
\begin{align}
    \vx_0^{k+1} 
    &= \vx_0^k - \eta n \nabla F(\vx_0^k)  +\eta^2 \vr_k\,,
    \tag{\ref{obs:per}${}^\prime$} \label{obs:perprime}
\end{align}
where the error term $\vr_k$ is defined as the sum $\sum\nolimits_{i=1}^{n-1} \mM_i  \sum\nolimits_{j=1}^i \nabla f_{\sigma_k(j)}(\xx{k}{0})$ for some matrices $\mM_i$'s of bounded spectral norm. % satisfying $\norm{\mM_i}\leq L(1+\eta L)^n$.
Note that without the term $\eta^2 \vr_k$, \eqref{obs:perprime} is precisely equal to gradient descent update with the cost function $F$. %, which is known to achieve a linear convergence rate due to smoothness and P{\L} condition of $F$. 
By smoothness of $F$, we have
\begin{equation}
\label{eq:smthnessbnd-sketch}
\begin{aligned}[b]
    F(\vx_0^{k+1}) - F(\vx_0^{k})
    &\leq \< \nabla F(\vx_0^k), \vx_0^{k+1} - \vx_0^k \> + \tfrac{L}{2} \norm{\vx_0^{k+1} - \vx_0^k}^2\\
    &\leq (-\eta n + \eta^2 n^2 L) \norm{\nabla F(\vx_0^k)}^2 
    + \eta^2 \norm{\nabla F(\vx_0^k)} \norm{\vr_k}
    + L \eta^4 \norm{\vr_k}^2. 
\end{aligned}
\end{equation}
Therefore, it is important to control the norm $\|\vr_k\|$ of the aggregate error term. % $\vr_k$ behaves nicely, i.e., $\norm{\vr_k}$ is not too large. 
We seek sharp bounds on $\norm{\vr_k}$ but cannot invoke a standard concentration inequality as the gradients are sampled \emph{without-replacement}. We overcome this difficulty by applying a vector-valued version of Hoeffding-Serfling\footnote{The scalar-valued version of Hoeffding-Serfling inequality \cite{bardenet2015concentration} was also used in \cite{safran2019good}.} inequality \cite{schneider2016probability} to the partial sums $\sum\nolimits_{j=1}^i \nabla f_{\sigma_k(j)}(\xx{k}{0})$. For each of them, we have
\begin{align*}
  \Bigl\lVert\sum\nolimits_{j=1}^i \nabla f_{\sigma_k(j)}(\xx{k}{0})\Bigr\rVert
  &\leq \Bigl\lVert{\sum\nolimits_{j=1}^i \nabla f_{\sigma_k(j)}(\xx{k}{0})- i \nabla F(\xx{k}{0})}\Bigr\rVert + i\norm{\nabla F(\xx{k}{0})} \lesssim \sqrt{i} + i\norm{\nabla F(\xx{k}{0})},
\end{align*}
% CHARLIE'S LATEST VERSION
% \norm{\sum\nolimits_{j=1}^i \!\!\nabla f_{\sigma_k(j)}(\xx{k}{0})}
% &\leq \norm{\sum\nolimits_{j=1}^i \!\!\nabla f_{\sigma_k(j)}(\xx{k}{0})- i \nabla F(\xx{k}{0})} \!+ i\!\norm{\nabla F(\xx{k}{0})}
% \lesssim \sqrt{i} + i\! \norm{\nabla F(\xx{k}{0})},
with high probability. Applying the union bound and summing over all $i \in [n-1]$, we obtain
\begin{align}\label{bd:norm}
    \norm{\vr_k} \lesssim   n^{3/2}+ n^2 \norm{\nabla F(\vx_0^k)}\,.
\end{align} 
Substituting this bound \eqref{bd:norm} into \eqref{eq:smthnessbnd-sketch}, rearranging the terms, and applying the P{\L} inequality on $\norms{\nabla F(\xx{k}{0})}^2$, we obtain the following per-epoch progress bound, which holds for $\eta \leq \frac{1}{5nL}$:
\begin{align} \label{per:epoch}
  F(\vx_0^{k+1}) - F^* \lesssim (1-\eta n \mu) (F(\vx_0^{k}) - F^*)
  +  \eta^3 n^2.
\end{align}
% \begin{samepage}
% \!\!
Applying \eqref{per:epoch} over all epochs and substituting $\eta = \frac{2 \log (n^{1/2} K)}{\mu n K}$, the convergence rate follows. Substituting $\eta$ to $\eta \leq \frac{1}{5nL}$ gives the epoch requirement. See Section~\ref{sec:proof-thm-plconv} for details. \qed

\vspace{-5pt}
\subsection{Proof sketch of Theorem~\ref{thm:quadratic}}
\vspace*{-3pt}
\label{ps:2}
The proof builds on the techniques from \cite{safran2019good} for one-dimensional quadratic functions.
% \end{samepage}
% \!\!
In place of the per-epoch analysis in Theorem~\ref{thm:plconv}, we recursively apply \eqref{update} all the way from the initial iterate $\xx{1}{0}$ to the last iterate $\xx{K+1}{0}$ and directly bound $\E[\norms{\xx{K+1}{0} -\vx^*}^2]$. 
Indeed, as pointed out by Safran and Shamir~\cite{safran2019good}, the main technical difficulty in extending this approach to higher dimensions comes from the noncommutativity of matrix multiplication which, for example, results in the absence of the matrix AM-GM inequality \cite{recht2012beneath, lai2020recht}.
Through a fine-grained analysis of the expectation over uniform permutation, we overcome the problems posed by the noncommutativity and develop a tight upper bound.
For instance, we prove the following contraction bound (Lemma~\ref{lem:thm1-term1}) as an approximate alternative to the recently disproved matrix AM-GM inequality \cite{lai2020recht}, which holds for small enough $\eta$:
\begin{align} \label{ineq:amgm}
    \norm{\E \left [ \prod\nolimits_{t=1}^n (1-\eta \mA_{\sigma_k(t)}) \prod\nolimits_{t=n}^1 (1-\eta \mA_{\sigma_k(t)}) \right ]} \leq 1-\eta n \mu\,.
\end{align}
Although \eqref{ineq:amgm} only holds for small $\eta$ and is looser than the AM-GM inequality, it is remarkable that this bound holds for \emph{any} $n \geq 2$, especially given the result \cite{lai2020recht} that the matrix AM-GM inequality conjecture breaks as soon as $n = 5$.
%For one-dimensional case, our bound \eqref{ineq:amgm} is inferior to the bound $(1-\eta \mu)^{2n}$ obtained by the scalar AM-GM inequality, for sufficiently small $\eta>0$. Nevertheless, the bound $1-\eta n \mu$ is good enough to show the tight convergence of $\randshuf$. 
Please refer to Section~\ref{sec:proof-thm-quadratic} for the full proof.   \qed 

\vspace*{-3pt}
\subsection{Proof sketch of Theorem~\ref{thm:1}}
\vspace*{-3pt}
\label{ps:3}
First, due to the convexity of $f_i$'s, it turns out that  not only one can characterize the per-epoch progress bound akin to \eqref{per:epoch}, but also the progress made by each iteration (Proposition~\ref{per:0}).
This per-iteration progress bound is due to  \cite{nagaraj2019sgd}, which uses coupling arguments to demonstrate that with convexity, \randshuf~makes progress on par with \sgd. 

Having such a finer control over the progress made by \randshuf, one can imagine that the varying step size choice takes  \emph{aggressive} steps in the initial epochs which in turn results in a warm start.
Despite this simple intuition, it turns out that the rigorous analysis is non-trivial. 
In fact, there is a technical tool called \emph{non-asymptotic Chung's lemma}~\cite{chung1954stochastic} for turning individual progress made at each iteration/epoch into a global convergence bound.
However, as we illustrate in Section~\ref{sec:fail}, the non-asymptotic Chung's lemma does not yield the desired convergence rate; the main issue is that for  \randshuf, the convergence bound needs to capture the right order for \emph{two} parameters $n$ and $K$.
To overcome such a limitation, we develop a variant of Chung's lemma (Lemma~\ref{main:2}) which gives rise to the bound that captures the right order for both $n$ and $K$. See Section~\ref{app:varying} for the full proof. \qed

%%% Local Variables:
%%% mode: latex
%%% TeX-master: "1_Main_NeurIPS"
%%% End:
\vspace*{-3pt}
\section{Conclusion and future work}
\vspace*{-3pt}
Motivated by some limitations in the previous efforts, this paper establishes optimal convergence rates of $\randshuf$ and $\singshuf$.
Notably, our optimal convergence rates are obtained without relying on convex component functions, which are exploited in the prior works~\cite{nagaraj2019sgd,rajput2020closing}. 
We also show that exploiting the convexity of component functions allows for further improvements for $\randshuf$. 
By adopting time-varying step sizes and applying a variant of Chung's lemma, we develop sharper convergence bounds that do not come with any epoch requirement and extra poly-log factors.
We conclude this paper with several interesting open questions:\vspace{-5pt}
\begin{list}{{\tiny$\bullet$}}{\leftmargin=1.8em}
\setlength{\itemsep}{1pt}
\item (\emph{Extending lower bounds}) As noted in Remark~\ref{rmk:lowerbound}, all tight lower bounds known to date hold for constant step sizes and last iterates. It would be interesting to extend these bounds for more general settings, e.g., varying step size and arbitrary linear combination of iterates.
\item (\emph{Optimal rates for Lipschitz Hessian class}) Currently, there is a gap between the lower and upper bounds of $\randshuf$ for the smooth, strongly convex costs with the Lipschitz Hessian assumption: The best known lower bound is $\Om{\nicefrac{1}{(nK)^2} +\nicefrac{1}{nK^3}}$~\cite{safran2019good}, while the best known upper bounds are $\OO{\nicefrac{1}{nK^2}}$ (\cite{nagaraj2019sgd} and ours) and $\OO{\nicefrac{1}{(nK)^2}+\nicefrac{1}{K^3}}$ \cite{haochen2018random}.
Closing this gap would be of interest.
\item (\emph{Other cost functions})  It is also worthwhile to investigate if  without-replacement SGD achieves superior convergence rates over $\sgd$ for other classes of convex or nonconvex functions. 
\item (\emph{Removing epoch requirements}) Our varying step size technique only works under the additional assumption that $f_i$'s are convex (Section~\ref{sec:varyingstep}). It would be interesting to see if such improvements can be made without relying on the convexity assumption, or for more general functions. 
\item (\emph{Superiority of without-replacement for the first epoch}) Is without-replacement SGD faster than $\sgd$ even during the first epoch? It is demonstrated in \cite[Section 7.3]{haochen2018random} that for a special class of cost functions, this is indeed true.
However, even for quadratic functions, showing this is closely tied to the matrix AM-GM inequality~\cite{recht2012beneath}, which was recently proven to be false \cite{lai2020recht}.
\end{list}
  
\section*{Acknowledgements}
All authors acknowledge a support from NSF CAREER grant Number 1846088.
CY also acknowledges Korea Foundation for Advanced Studies for their support.
KA acknowledges Kwanjeong Educational Foundation for their support.
The authors appreciate Itay Safran and Ohad Shamir for catching an error in an initial claim about $\singshuf$, which has been corrected and updated in this revised version.

\bibliographystyle{plainnat}
%\bibliography{ref}

\appendix
\newpage
% \makeatletter
% \renewcommand{\thetheorem}{\thesection.\arabic{theorem}}% Update counter printing
% \@addtoreset{theorem}{subsection}% Reset theorem counter with every new subsection
% \makeatother

% \tableofcontents

%%%%%%%%%%%%%%%%%%%%%%% PLproof
\section{Analysis for P{\L} costs (Proofs of Theorem~\ref{thm:plconv} and Corollary~\ref{cor:plconv})}
\label{sec:proof-thm-plconv}
\subsection{Proof outline}
\label{sec:proof-thm-plconv-outline}
In this section, we present the proof of Theorem~\ref{thm:plconv} and Corollary~\ref{cor:plconv}. 
We first show the existence of the following quantity that will be used throughout the proof: 
\begin{equation*}
G \defeq \sup_{\vx:~ F(\vx)\leq F(\vx_0)} \max_{i\in [n]} \norm{\nabla f_i(\vx)}   \,.
\end{equation*}
With this quantity, as long as all the iterates stay within the sublevel set $\mc S_{\vx_0}:=\{x~:~F(\vx)\leq F(\vx_0)\}$, one can regard each component function $f_i$ as being $G$-Lipschitz. 
This motivates us to consider the following two cases:
\begin{enumerate}
    \item In the first case, we assume that all the end-of-epoch iterates $\vx_0^k$ stay in the sublevel set $\mc S_{\vx_0}$.
    \item In the second case, we assume that there exists an end-of-epoch iterate $\vx_0^k \notin \mc S_{\vx_0}$.
\end{enumerate}  In both cases, we will show that the best end-of-epoch iterate satisfies
\begin{align*}
    \min_{k \in [K+1]} F(\vx_0^{k}) - F^* 
    &\leq \frac{F(\vx_0) - F^*}{nK^2} + 
    \bigo \left ( \frac{L^2 G^2}{\mu^3} \frac{\log^2(n^{1/2}K) \log\frac{nK}{\delta}}{nK^2} \right ),
\end{align*}
with high probability.

\paragraph{Existence of $G$.} Recall that the function $F: \reals^d \to \reals$ is $\mu$-P{\L}, and the set $\mc X^*$ of the global optima of $F$ is nonempty and compact. Also, it is a standard fact \cite[ Theorem~2]{karimi2016linear} that $\mu$-P{\L} functions also satisfy the following quadratic growth: Denoting by $\vx^*$ the closest global optimum to the point $\vx$ (i.e., the projection of $\vx$ onto the solution set $\mc X^*$),
\begin{equation*}
    F(\vx) - F^* \geq 2 \mu \norm{\vx - \vx^*}^2\,.
\end{equation*}
Then, due to the quadratic growth property, it is easy to verify:
\begin{equation*}
    \mc S_{\vx_0} = \{ \vx \in \reals^d \mid F(\vx) \leq F(\vx_0) \} \subset \left \{ \vx \in \reals^d \mid \norm{\vx - \vx^*}^2 \leq \frac{F(\vx_0)-F^*}{2\mu} \right \}.
\end{equation*}
Indeed, the inclusion follows since for any $\vx \in \mc S_{\vx_0}$,  $F(\vx_0) - F^* \geq F(\vx) - F^* \geq 2 \mu \norm{\vx-\vx^*}^2$, which implies $\vx$ is also in the latter set.
Since we assumed that $\mc X^*$ is compact, $\mc S_{\vx_0}$ is also  bounded  and hence compact. 
Since $\nabla f_i$ is continuous on a compact set $\mc S_{\vx_0}$,  there must exist a constant $0 \leq G < \infty$ such that $\norm{\nabla f_i(\vx)} \leq G \text{ for all } i \in [n], \vx \in \mc S_{\vx_0}$.

\paragraph{What if the bounded iterates assumption holds?}
As noted in Remark~\ref{rmk:bestvslast}, if we have the bounded iterates assumption (Assumption~\ref{a:1}), one can prove the same bound for the last iterate $\xx{K+1}{0}$, modulo leading constants. This is because if we have Assumption~\ref{a:1}, we have a compact set $\mc S$ which all the end-of-epoch iterates $\xx{k}{0}$ lie in, which corresponds to the first case of the proof.
More specifically, there exists a constant $0 \leq G' < \infty$ such that
\begin{equation*}
    \norm{\nabla f_i(\vx)} \leq G' \text{ for all } i \in [n], \vx \in \mc S.
\end{equation*}
Thus, the proof for the first case stated in Sections~\ref{sec:pl-1st-1}--\ref{sec:pl-1st-3} goes through, modulo $G$ replaced by $G'$. 
We remark that since   we already have a compact set $\mc S$, we no longer need the additional compactness assumption on $\mc X^*$.

\subsection{The 1st case: characterizing aggregate update over an epoch}
\label{sec:pl-1st-1}
We start by recursively applying the update equations over an epoch.
The key idea in doing so is to decompose the gradient $\nabla f_{\sigma_k(i)}(\vx_{i-1}^k)$ into the ``signal'' $\nabla f_{\sigma_k(i)}(\vx_{0}^k)$ and a noise term:
\begin{align*}
    \nabla f_{\sigma_k(i)} (\vx_{i-1}^k)
    &= \nabla f_{\sigma_k(i)} (\vx_{0}^k) + \nabla f_{\sigma_k(i)} (\vx_{i-1}^k) - \nabla f_{\sigma_k(i)} (\vx_{0}^k)\\
    &= \underbrace{\nabla f_{\sigma_k(i)} (\vx_{0}^k)}_{\eqdef \vg_{\sigma_k(i)}}
    + \underbrace{\left [\int_{0}^1 \nabla^2 f_{\sigma_k(i)}(\vx_{0}^k + t(\vx_{i-1}^k-\vx_{0}^k)) dt \right ]}_{\eqdef \mH_{\sigma_k(i)}} (\vx_{i-1}^k - \vx_{0}^k)\\
    &= \vg_{\sigma_k(i)} + \mH_{\sigma_k(i)} (\vx_{i-1}^k - \vx_0^k),
\end{align*}
where $\nabla^2 f_i(\vx)$ denotes the Hessian of $f_i$ at $\vx$, whenever it exists.
We remark that the integral $\mH_{\sigma_k(i)}$ exists, due to the following reason. Since we assumed that each $f_{\sigma_k(i)} \in C_L^1(\reals^d)$, its gradient $\nabla f_{\sigma_k(i)}$ is Lipschitz continuous, and hence absolutely continuous. This means that $\nabla f_{\sigma_k(i)}$ is differentiable almost everywhere (i.e., $\nabla^2 f_{\sigma_k(i)}(\vx)$ exists a.e.), and the fundamental theorem of calculus for Lebesgue integral holds; hence the integral exists. Note that $\norms{\mH_{\sigma_k(i)}} \leq L$ due to $L$-smoothness of $f_i$'s.
We now substitute this decomposition to the update equations. First,
\begin{equation*}
    \vx_1^k = \vx_0^k - \eta \vg_{\sigma_k(1)}.
\end{equation*}
Substituting this to $\vx_2^k$ gives
\begin{align*}
    \vx_2^k 
    &= \vx_1^k - \eta \nabla f_{\sigma_k(2)} (\vx_1^k) 
    = \vx_1^k - \eta \vg_{\sigma_k(2)} - \eta \mH_{\sigma_k(2)} (\vx_1^k - \vx_0^k)\\
    &= \vx_0^k - \eta \vg_{\sigma_k(1)} - \eta \vg_{\sigma_k(2)} + \eta^2 \mH_{\sigma_k(2)} \vg_{\sigma_k(1)}
    = \vx_0^k -\eta(\mI - \eta \mH_{\sigma_k(2)}) \vg_{\sigma_k(1)} - \eta \vg_{\sigma_k(2)}.
\end{align*}
Repeating this process until $\vx_n^k = \vx_0^{k+1}$, we get
\begin{align*}
    \vx_0^{k+1} 
    &= \vx_0^k - \eta \sum_{j=1}^n \left ( \prod_{t=n}^{j+1} ( \mI - \eta \mH_{\sigma_k(t)} ) \right ) \vg_{\sigma_k(j)}\\
    &= \vx_0^k - \eta n \nabla F(\vx_0^k) - 
    \eta \Bigg [ 
    \sum_{j=1}^n \left ( \prod_{t=n}^{j+1} ( \mI - \eta \mH_{\sigma_k(t)} ) \right ) \vg_{\sigma_k(j)}
    - n\nabla F(\vx_0^k)
    \Bigg ].
\end{align*}
Due to summation by parts, the following identity holds:
\begin{equation*}
    \sum_{j=1}^n a_j b_j = a_n \sum_{j=1}^n b_j - \sum_{i=1}^{n-1}(a_{i+1}-a_i) \sum_{j=1}^i b_j.
\end{equation*}
We apply this to the last term, by substituting $a_j = \prod_{t=n}^{j+1} (\mI-\eta \mH_{\sigma_k(t)})$ and $b_j = \vg_{\sigma_k(j)}$:
\begin{align*}
    &\sum_{j=1}^n \left ( \prod_{t=n}^{j+1} (\mI-\eta \mH_{\sigma_k(t)}) \right ) \vg_{\sigma_k(j)} - n\nabla F(\vx_0^k)\\
    =&~ \sum_{j=1}^n \vg_{\sigma_k(j)} 
    - \sum_{i=1}^{n-1} \left ( \prod_{t=n}^{i+2} (\mI-\eta \mH_{\sigma_k(t)}) - \prod_{t=n}^{i+1} (\mI-\eta \mH_{\sigma_k(t)}) \right ) \sum_{j=1}^i \vg_{\sigma_k(j)} - n\nabla F(\vx_0^k)\\
    =&~ -\eta 
    \underbrace{\sum_{i=1}^{n-1} \left ( \prod_{t=n}^{i+2} (\mI-\eta \mH_{\sigma_k(t)}) \right ) \mH_{\sigma_k(i+1)} \sum_{j=1}^i \vg_{\sigma_k(j)}}_{\eqdef \vr_k}.
\end{align*}

Therefore, we have $\xx{k+1}{0} = \xx{k}{0} - \eta n \nabla F(\xx{k}{0}) + \eta^2 \vr_k$.
By smoothness of $F$, we have
\begin{align}
    &~ F(\vx_0^{k+1}) - F(\vx_0^{k})\nonumber\\
    \leq&~ \< \nabla F(\vx_0^k), \vx_0^{k+1} - \vx_0^k \> + \frac{L}{2} \norm{\vx_0^{k+1} - \vx_0^k}^2\nonumber\\
    \leq&~ -\eta n \norm{\nabla F(\vx_0^k)}^2 +\eta^2 \norm{\nabla F(\vx_0^k)} \norm{\vr_k} + \frac{L\eta^2}{2} \norm{n\nabla F(\vx_0^k) + \eta \vr_k}^2\nonumber\\
    \leq&~ (-\eta n + \eta^2 n^2 L) \norm{\nabla F(\vx_0^k)}^2 
    + \eta^2 \norm{\nabla F(\vx_0^k)} \norm{\vr_k}
    + L \eta^4 \norm{\vr_k}^2, \label{eq:smthnessbnd}
\end{align}
where the last inequality used $\norm{\va+\vb}^2 \leq 2 \norm{\va}^2 + 2\norm{\vb}^2$.

\subsection{The 1st case: bounding noise term using Hoeffding-Serfling inequality}
\label{sec:HS}
It is left to bound $\norm{\vr_k}$. 
We have
\begin{align}
    \norm{\vr_{k}}
    &= \norm{\sum_{i=1}^{n-1} \left ( \prod_{t=n}^{i+2} (\mI-\eta \mH_{\sigma_k(t)}) \right ) \mH_{\sigma_k(i+1)} \sum_{j=1}^i \vg_{\sigma_k(j)}}\nonumber\\
    &\leq \sum_{i=1}^{n-1} \norm{\left ( \prod_{t=n}^{i+2} (\mI-\eta \mH_{\sigma_k(t)}) \right ) \mH_{\sigma_k(i+1)} \sum_{j=1}^i \vg_{\sigma_k(j)}}\nonumber\\
    &\leq L (1+\eta L)^n\sum_{i=1}^{n-1} \norm{\sum_{j=1}^i \vg_{\sigma_k(j)}}.\label{eq:rknormbound}
\end{align}

Where the last step used $\norms{\mH_{\sigma_k(j)}} \leq L$.
Recall from the theorem statement that 
$K \geq 10 \kappa \log (n^{1/2}K)$, and $\eta = \frac{2 \log (n^{1/2} K)}{\mu n K}$. This means that
\begin{equation*}
    \eta L = \frac{2 \kappa \log (n^{1/2}K)}{nK} 
    \leq \frac{1}{5n},
\end{equation*}
which implies $(1+\eta L)^n \leq e^{1/5}$.
Now, we use the Hoeffding-Serfling inequality for bounded random vectors, which is taken from  \cite[Theorem~2]{schneider2016probability}. Note that for any epoch $k$, the permutation $\sigma_k$ is independent of the first iterate $\vx_0^k$ of the epoch. Therefore, we can apply the following bound for partial sums of $\vg_{\sigma_k(i)} \defeq \nabla f_{\sigma_k(i)} (\vx_{0}^k)$:
\begin{restatable}[{\cite[Theorem~2]{schneider2016probability}}]{lemma}{lemhoeffding}
\label{lem:thm1-sub2}
Suppose $n \geq 2$. Let $\vv_1, \vv_2, \dots, \vv_n \in \reals^d$ satisfy $\norm{\vv_j}\leq G$ for all $j$. Let $\bar \vv = \frac{1}{n} \sum_{j=1}^n \vv_j$. Let $\sigma \in \mc \mS_n$ be a uniform random permutation of $n$ elements. Then, for $i \leq n$, with probability at least $1-\delta$, we have
\begin{equation*}
    \norm{\frac{1}{i} \sum_{j=1}^i \vv_{\sigma(j)} - \bar \vv} \leq 
    G \sqrt{\frac{8(1-\frac{i-1}{n}) \log \frac{2}{\delta}}{i}}.
\end{equation*}
\end{restatable}
Recall the mean $\bar \vv = \nabla F(\vx_0^k)$ for our setting. 
Using this concentration inequality, with probability at least $1-\delta$, we have
\begin{equation*}
    \norm{\sum_{j=1}^i \vg_{\sigma_k(j)}} 
    \leq 
    \norm{\sum_{j=1}^i \vg_{\sigma_k(j)} - i\nabla F(\vx_0^k)}
    + i\norm{\nabla F(\vx_0^k)}
    \leq
    G \sqrt{ 8 i \log \frac{2}{\delta}}
    + i\norm{\nabla F(\vx_0^k)}.
\end{equation*}
We apply the union bound for all $i=1, \dots, n-1$ and $k = 1, \dots, K$. After this, we have with probability at least $1-\delta$,
\begin{align}
    \sum_{i=1}^{n-1} \norm{\sum_{j=1}^i \vg_{\sigma_k(j)}} 
    &\leq G \sqrt{ 8 \log \frac{2nK}{\delta}} \sum_{i=1}^{n-1}\sqrt{i} + \norm{\nabla F(\vx_0^k)} \sum_{i=1}^{n-1} i\nonumber\\
    &\leq G \sqrt{ 8 \log \frac{2nK}{\delta}} \int_1^n \sqrt y dy + \frac{n^2}{2} \norm{\nabla F(\vx_0^k)}\nonumber\\
    &\leq \frac{4\sqrt 2 n^{3/2} G}{3} \sqrt{ \log \frac{2nK}{\delta}}  + \frac{n^2}{2} \norm{\nabla F(\vx_0^k)}, \label{eq:unionboundevent}
\end{align}
for each $k \in [K]$.
This then leads to
\begin{align}
    \norm{\vr_{k}} 
    &\leq 
    e^{1/5} L \sum_{i=1}^{n-1} \norm{\sum_{j=1}^i \vg_{\sigma_k(j)}}
    \leq 
    \frac{4\sqrt 2 e^{1/5} n^{3/2} L G}{3} \sqrt{ \log \frac{2nK}{\delta}}  + \frac{e^{1/5} n^2 L}{2} \norm{\nabla F(\vx_0^k)}\nonumber\\
    &\leq \frac{5 n^{3/2} L G}{2} \sqrt{ \log \frac{2nK}{\delta}} + \frac{2 n^2 L}{3} \norm{\nabla F(\vx_0^k)}, \label{eq:rkbound1}
\end{align}

which holds with probability at least $1-\delta$. By $(a+b)^2 \leq 2a^2 + 2b^2$, we also have
\begin{align}
    \norm{\vr_k}^2 \leq \frac{25 n^3 L^2 G^2}{2} \log \frac{2nK}{\delta} + \frac{8 n^4 L^2}{9} \norm{\nabla F(\vx_0^k)}^2. \label{eq:rkbound2}
\end{align}

\subsection{The 1st case: getting a per-epoch progress bound}
\label{sec:pl-1st-3}
Substituting the norm bounds~\eqref{eq:rkbound1} and \eqref{eq:rkbound2} to \eqref{eq:smthnessbnd} and arranging the terms, we get
\begin{align*}
    F(\vx_0^{k+1}) - F(\vx_0^{k})
    \leq&~ \left (-\eta n + \eta^2 n^2 L + \frac{2 \eta^2 n^2 L}{3} + \frac{8 \eta^4 n^4 L^3}{9} \right ) \norm{\nabla F(\vx_0^k)}^2 \\
    &~+ \frac{5 \eta^2 n^{3/2} L G}{2} \norm{\nabla F(\vx_0^k)} \sqrt{ \log \frac{2nK}{\delta}}
    + \frac{25 \eta^4 n^3 L^3 G^2}{2} \log \frac{2nK}{\delta}.
\end{align*}
Using $ab \leq \frac{a^2}{2} + \frac{b^2}{2}$, we can further decompose
\begin{align*}
    \frac{5 \eta^2 n^{3/2} L G}{2} \norm{\nabla F(\vx_0^k)} \sqrt{ \log \frac{2nK}{\delta}}
    &=
    \left ( \frac{\eta^{1/2}n^{1/2}}{2} \norm{\nabla F(\vx_0^k)} \right ) \left ( 5 \eta^{3/2} n L G \sqrt{ \log \frac{2nK}{\delta}}\right )\\
    &\leq \frac{\eta n}{8} \norm{\nabla F(\vx_0^k)}^2
    + \frac{25 \eta^3 n^2 L^2 G^2}{2} \log \frac{2nK}{\delta}.
\end{align*}
Substituting these results back to the above bound and using $1+\eta n L \leq 6/5$ yields
\begin{align*}
    F(\vx_0^{k+1}) - F(\vx_0^{k})
    \leq \left (-\frac{7\eta n}{8} + \frac{5\eta^2 n^2 L}{3} + \frac{8 \eta^4 n^4 L^3}{9} \right ) \norm{\nabla F(\vx_0^k)}^2 
    + 15 \eta^3 n^2 L^2 G^2 \log \frac{2nK}{\delta}.
\end{align*}
Now, since $\eta n L \leq 1/5$, we have
\begin{equation*}
    -\frac{7\eta n}{8} + \frac{5\eta^2 n^2 L}{3} + \frac{8 \eta^4 n^4 L^3}{9} \leq -\frac{\eta n}{2},
\end{equation*}
which follows since $z \mapsto \frac{3}{8}z - \frac{5}{3}z^2 - \frac{8}{9}z^4$ is nonnegative when $0 \leq z \leq 1/5$.
Therefore, we have
\begin{align*}
    F(\vx_0^{k+1}) - F(\vx_0^{k})
    \leq -\frac{\eta n}{2} \norm{\nabla F(\vx_0^k)}^2 
    + 15 \eta^3 n^2 L^2 G^2 \log \frac{2nK}{\delta}.
\end{align*}
Now let us apply the $\mu$-P{\L} inequality on $\norm{\nabla F(\vx_0^k)}^2$. This yields
\begin{align*}
    F(\vx_0^{k+1}) - F^* 
    \leq (1-\eta n \mu) (F(\vx_0^{k}) - F^*)
    + 15 \eta^3 n^2 L^2 G^2 \log \frac{2nK}{\delta}.
\end{align*}
Recursively applying this inequality over $k = 1, \dots, K$ and substituting $\eta = \frac{2 \log (n^{1/2} K)}{\mu n K}$ give\footnote{Note that since we have already taken the union bound over all $i=1, \dots, n-1$ and $k = 1, \dots, K$ in Section~\ref{sec:HS}, additional union bounds are not needed.}
\begin{align*}
    F(\vx_0^{K+1}) - F^* 
    &\leq (1-\eta n \mu)^K (F(\vx_0) - F^*)
    + 15 \eta^3 n^2 L^2 G^2 \log \frac{2nK}{\delta}
    \sum_{k=0}^{K-1} (1-\eta n \mu)^k\\
    &\leq \frac{F(\vx_0) - F^*}{nK^2} + 
    \frac{15 \eta^2 n L^2 G^2}{\mu}\log \frac{2nK}{\delta}\\
    &= \frac{F(\vx_0) - F^*}{nK^2} + 
    \bigo \left ( \frac{L^2 G^2}{\mu^3} \frac{\log^2(n^{1/2}K) \log\frac{nK}{\delta}}{nK^2} \right ).
\end{align*}
Note that this bound certainly holds for the best iterate.

\subsection{The 2nd case: escape implies desired best iterate suboptimality}
Now consider the case where some end-of-epoch iterates $\vx_0^k$ escape the $F(\vx_0)$-sublevel set $\mc S_{\vx_0}$.
First, note that by definition of sublevel sets, if $F(\vx_0^k)$ is monotonically decreasing with $k$, then there is no way $\vx_0^k$ can escape $\mc S_{\vx_0}$. Thus, $\vx_0^k$ escaping $\mc S_{\vx_0}$ implies that $F(\vx_0^k)$ is not monotonically decreasing.
Let $k' \in [2:K+1]$ be the first $k$ such that $\vx_0^{k'} \notin \mc S_{\vx_0}$. This means that for the previous epoch $k'-1$, we must have
\begin{equation}
\label{eq:escapebnd}
    -\eta n \mu (F(\vx_0^{k'-1}) - F^*)
    + 15 \eta^3 n^2 L^2 G^2 \log \frac{2nK}{\delta} > 0 
\end{equation}
because otherwise 
\begin{align*}
    F(\vx_0^{k'}) - F^* 
    \leq (1-\eta n \mu) (F(\vx_0^{k'-1}) - F^*)
    + 15 \eta^3 n^2 L^2 G^2 \log \frac{2nK}{\delta}
    \leq F(\vx_0^{k'-1}) - F^*,
\end{align*}
which means $\vx_0^{k'}\in \mc S_{\vx_0}$. Then, from \eqref{eq:escapebnd}, we get
\begin{align*}
    &~\min_{k \in [K+1]} F(\vx_0^k) - F^*
    \leq
    F(\vx_0^{k'-1}) - F^*\\
    <&~ \frac{15 \eta^2 n L^2 G^2}{\mu} \log \frac{2nK}{\delta} 
    = \bigo \left ( \frac{L^2 G^2}{\mu^3} \frac{\log^2(n^{1/2}K) \log\frac{nK}{\delta}}{nK^2} \right ).
\end{align*}

\subsection{Proof of Corollary~\ref{cor:plconv}}
Let $E$ be the event that the bound \eqref{eq:unionboundevent} holds for all $k \in [K]$, which happens with probability at least $1-\delta$.
The high probability result (Theorem~\ref{thm:plconv}) showed that given this event happens, we have
\begin{equation*}
    \min_{k \in [K+1]} F(\vx_0^k) - F^*
    \leq
    \frac{F(\vx_0) - F^*}{nK^2} + 
    \bigo \left ( \frac{L^2 G^2}{\mu^3} \frac{\log^2(n^{1/2}K) \log\frac{nK}{\delta}}{nK^2} \right ).
\end{equation*}
We now choose $\delta = 1/n$. Given the event $E^c$, we will get a similar bound, worse by a factor of $n$:
\begin{equation*}
    \min_{k \in [K+1]}
    F(\vx_0^{k}) - F^*
    \leq \frac{F(\vx_0) - F^*}{nK^2}
    + \bigo \left ( \frac{L^2 G^2}{\mu^3} \frac{\log^2(nK)}{K^2}\right ),
\end{equation*}
without using the concentration inequality. Taking expectation gives
\begin{align*}
    &\E \left [ \min_{k \in [K+1]}
    F(\vx_0^{k}) - F^* \right ]\\
    =&~
    \E \left [ \min_{k \in [K+1]}
    F(\vx_0^{k}) - F^* \mid E \right ] \prob[E]
    + 
    \E \left [ \min_{k \in [K+1]}
    F(\vx_0^{k}) - F^* \mid E^c \right ] \prob[E^c]\\
    \leq&~
    \frac{3(F(\vx_0) - F^*)}{2nK^2} + 
    \bigo \left ( \frac{L^2 G^2}{\mu^3} \frac{\log^3(nK)}{nK^2} \right ),
\end{align*}
as desired. The rest of the proof derives the bound for $E^c$.

\paragraph{The first case.} The proof goes the same way as in $E$. We first consider the case where all the iterates stay in $\mc S_{\vx_0}$, which corresponds to the first case in the proof of Theorem~\ref{thm:plconv}. We unroll the updates $\vx_i^k$ and obtain the bound \eqref{eq:smthnessbnd}. Then, we bound $\norm{\vr_k}$ directly, without the concentration inequality. From \eqref{eq:rknormbound}, we have
\begin{align*}
    \norm{\vr_k}
    \leq \eta L (1+\eta L)^n\sum_{i=1}^{n-1} \norm{\sum_{j=1}^i \vg_{\sigma_k(j)}}
    \leq \frac{e^{1/5}\eta n^2 L G}{2}
    \leq \eta n^2 L G.
\end{align*}
Substituting this bound to \eqref{eq:smthnessbnd}, we get
\begin{align*}
    F(\vx_0^{k+1}) - F(\vx_0^{k})
    \leq&~ (-\eta n + \eta^2 n^2 L) \norm{\nabla F(\vx_0^k)}^2 
    + \eta \norm{\nabla F(\vx_0^k)} \norm{\vr_k}
    + L \eta^2 \norm{\vr_k}^2\\
    \leq&~
    (-\eta n + \eta^2 n^2 L) \norm{\nabla F(\vx_0^k)}^2 
    + \eta^2 n^2 L G \norm{\nabla F(\vx_0^k)}
    + \eta^4 n^4 L^3 G^2\\
    \leq&~
    \left (-\frac{7\eta n}{8} + \eta^2 n^2 L \right) \norm{\nabla F(\vx_0^k)}^2 
    + 2 \eta^3 n^3 L^2 G^2 
    + \eta^4 n^4 L^3 G^2,
\end{align*}
where the last inequality used $ab \leq \frac{a^2}{2} + \frac{b^2}{2}$ for $a = \frac{\eta^{1/2}n^{1/2}}{2}\norm{\nabla F(\vx_0^k)}$ and $b = 2\eta^{3/2}n^{3/2}LG$.
Now, since $\eta n L \leq 1/5$, we have
\begin{equation*}
    -\frac{7\eta n}{8} + \eta^2 n^2 L \leq -\frac{\eta n}{2},
\end{equation*}
because $z \mapsto \frac{3}{8}z - z^2$ is nonnegative when $0 \leq z \leq 1/5$.
In conclusion, we have
\begin{align*}
    F(\vx_0^{k+1}) - F(\vx_0^{k})
    \leq -\frac{\eta n}{2} \norm{\nabla F(\vx_0^k)}^2 
    + 3 \eta^3 n^3 L^2 G^2.
\end{align*}
Applying the $\mu$-P{\L} inequality, we have
\begin{align*}
    F(\vx_0^{k+1}) - F^*
    \leq (1-\eta n \mu) (F(\vx_0^{k})  - F^*)
    + 3\eta^3 n^3 L^2 G^2.
\end{align*}
Unrolling the inequalities and substituting $\eta = \frac{2 \log (n^{1/2}K)}{\mu n K}$, we get
\begin{align*}
    &~\min_{k \in [K+1]}
    F(\vx_0^{k}) - F^*
    \leq
    F(\vx_0^{K+1}) - F^*\\
    \leq&~
    \frac{F(\vx_0) - F^*}{nK^2}
    + \frac{3\eta^2 n^2 L^2 G^2}{\mu}
    \leq \frac{F(\vx_0) - F^*}{nK^2}
    + \bigo \left ( \frac{L^2 G^2}{\mu^3} \frac{\log^2(nK)}{K^2}\right ).
\end{align*}

\paragraph{The second case.}
Now consider the case where some end-of-epoch iterates satisfy $\vx_0^k \notin \mc S_{\vx_0}$. We can apply the same argument as the second case of Theorem~\ref{thm:plconv} here. 

Let $k'$ be the first such index. Then, this means that $F(\vx_0^{k'})$ is greater than $F(\vx_0^{k'-1})$, which holds only if
\begin{equation*}
    -\eta n \mu (F(\vx_0^{k'-1}) - F^*)
    + 3 \eta^3 n^2 L^2 G^2 > 0.
\end{equation*}
Then, this implies that
\begin{equation*}
    \min_{k \in [K+1]}
    F(\vx_0^{k}) - F^*
    \leq
    F(\vx_0^{k'-1}) - F^*
    < \frac{3\eta^2 n^2 L^2 G^2}{\mu} 
    = \bigo \left ( \frac{L^2 G^2}{\mu^3} \frac{\log^2(nK)}{K^2}\right ).
\end{equation*}

%%%%%%%%%%%%%%%%%%%%%%% quadproof
\section{Analysis on \randshuf~for quadratics (Proof of Theorem~\ref{thm:quadratic})}
\label{sec:proof-thm-quadratic}

\subsection{Additional notation on matrices} 

Prior to the proofs, we introduce additional notation on matrices. 
For a matrix $\mA$,  $\norm{\mA}$  denotes its spectral norm.
For matrices indexed $\mM_1, \mM_2, \dots, \mM_k$ and for any $1 \leq i \leq j \leq k$, we use the shorthand notation for products 
$\mM_{j:i} = \mM_j \mM_{j-1} \dots \mM_{i+1} \mM_i$. In case where $i > j$, we define $\mM_{j:i} = \mI$.
Similarly, $\mM_{j:i}^T$ denotes the product $\mM_i^T \mM_{i+1}^T \dots \mM_{j-1}^T \mM_j^T$.

The proofs of Theorems~\ref{thm:quadratic} and \ref{thm:quadratic2} involve polynomials of matrices. We define the following noncommutative elementary symmetric polynomials, which will prove useful in the proof. For a permutation $\sigma : [n] \to [n]$ and integers $l$, $r$ and $m$ satisfying $1 \leq l \leq r \leq n$ and $m \in [0:n]$,
\begin{equation}
\label{eq:def-elem-poly}
    e_m(\mA_1, \dots, \mA_n; \sigma, l, r)
    \defeq \sum_{l\leq t_1 < t_2 < \dots < t_m \leq r}
    \mA_{\sigma(t_m)} \mA_{\sigma(t_{m-1})} \cdots \mA_{\sigma(t_1)}.
\end{equation}
Whenever it is clear from the context that the arguments are $\mA_1, \dots, \mA_n$ and permutation is $\sigma$, we use a shorthand $\mA_{\sigma[n]}$. Also, the default value of $l$ and $r$ are $l=1$ and $r=n$; so, $e_m(\mA_{\sigma[n]}) \defeq e_m(\mA_1, \dots, \mA_n; \sigma, 1, n)$.

\subsection{Proof outline}
Recall the definitions 
\begin{equation*}
    f_i(\vx)\defeq \half \vx^T \mA_i \vx + \vb_i^T \vx,
    ~
    F(\vx) \defeq \frac{1}{n}\sum_{i=1}^n f_i(\vx) = \half \vx^T \mA \vx,
\end{equation*}
where $f_i$'s are $L$-smooth and $F$ is $\mu$ strongly convex. This is equivalent to saying that $\norm{\mA_i} \leq L$ and $\mA \defeq \frac{1}{n}\sum_{i=1}^n \mA_i \succeq \mu \mI$. Also note that $F$ is minimized at $\vx^* = \zeros$ and $\sum_{i=1}^n \vb_i = \zeros$. We let $G \defeq \max_{i\in[n]} \norm{\vb_i}$. 

The proof goes as follows. We first recursively apply the update equations over all iterations and obtain an equation that expresses the last iterate $\vx_0^{K+1}$ in terms of the initialization $\vx_0^1 = \vx_0$. 
Using such an equation, we will directly bound $\E[\norms{\vx_0^{K+1}-\xs}^2]=\E[\norms{\vx_0^{K+1}-\zeros}^2]$ to get our desired result.

Compute the update equation of $\vx_1^k$ in terms of the initial iterate $\vx_0^k$ of the $k$-th epoch:
\begin{align*}
    \vx_1^k &= \vx_0^k - \eta \nabla f_{\sigma_k(1)} (\vx_0^k)
    =\vx_0^k - \eta (\mA_{\sigma_k(1)} \vx_0^k + \vb_{\sigma_k(1)})\\
    &=(\mI-\eta \mA_{\sigma_k(1)}) \vx_0^k - \eta \vb_{\sigma_k(1)}.
\end{align*}
Substituting this to the update equation of $\vx_2^k$, we get
\begin{align*}
    \vx_2^k &= \vx_1^k - \eta \nabla f_{\sigma_k(2)} (\vx_1^k)\\
    &=(\mI-\eta \mA_{\sigma_k(1)}) \vx_0^k - \eta \vb_{\sigma_k(1)} - \eta ( \mA_{\sigma_k(2)} ((\mI-\eta \mA_{\sigma_k(1)}) \vx_0^k - \eta \vb_{\sigma_k(1)}) + \vb_{\sigma_k(2)})\\
    &=(\mI-\eta \mA_{\sigma_k(2)}) (\mI-\eta \mA_{\sigma_k(1)}) \vx_0^k- \eta \vb_{\sigma_k(2)} - \eta(\mI-\eta \mA_{\sigma_k(2)}) \vb_{\sigma_k(1)}.
\end{align*}
Repeating this, one can write the last iterate $\vx_n^k$ (or equivalently, $\vx_0^{k+1}$) of the $k$-th epoch as the following:
\begin{align}
    \vx_0^{k+1} &= \underbrace{\left [ \prod_{t=n}^{1} (\mI-\eta \mA_{\sigma_k(t)}) \right ]}_{\eqdef \mS_{k}} \vx_0^k 
    - \eta \underbrace{\left [ \sum_{j=1}^n \left ( \prod_{t=n}^{j+1} (\mI-\eta \mA_{\sigma_k(t)}) \right ) \vb_{\sigma_k(j)} \right ]}_{\eqdef \vt_{k}} \nonumber\\
    &= \mS_{k} \vx_0^k - \eta \vt_{k}. \label{eq:sktkdef}
\end{align}
Note that $\mS_{k}$ and $\vt_{k}$ are random variables that solely depend on the $k$-th permutation $\sigma_k$.
Now, repeating this $K$ times, we get the equation for the iterate after $K$ epochs, which is the output of the algorithm we consider in Theorem~\ref{thm:quadratic}:
\begin{align*}
    \vx_0^{K+1} = \left (\prod_{k=K}^1 \mS_{k} \right ) \vx_0^1 - \eta \sum_{k=1}^K \left ( \prod_{t=K}^{k+1} \mS_{t} \right ) \vt_{k}
    =
    \mS_{K:1} \vx_0^1 - \eta \sum_{k=1}^K \mS_{K:k+1} \vt_k.
    %\label{eq:updaterandshuf}
\end{align*}

We aim to provide an upper bound on $\E [\norm{\vx_0^{K+1}}^2]$, where the expectation is over the randomness of permutation $\sigma_1, \dots, \sigma_K$. To this end, using $\norm{\va+\vb}^2 \leq 2\norm{\va}^2 + 2\norm{\vb}^2$,
\begin{align*}
    \norm{\vx_0^{K+1}}^2 
    &\leq 2 \norm { \mS_{K:1} \vx_0^1}^2 + 2 \eta^2 \norm {\sum_{k=1}^K \mS_{K:k+1} \vt_{k}}^2
\end{align*}
where the second term on the RHS can be further decomposed into:
\begin{align*}
    \norm {\sum_{k=1}^K \mS_{K:k+1} \vt_{k}}^2 
    \!\!= 
    \sum_{k=1}^K \norm {\mS_{K:k+1} \vt_{k}}^2 
    \!\!+ 2 \sum_{1\leq k<k'\leq K} \< \mS_{K:k+1} \vt_{k}, 
    \mS_{K:k'+1} \vt_{{k'}} \>.
\end{align*}

The remaining proof bounds each of the terms, which we state as the following three lemmas.
The proofs of Lemmas~\ref{lem:thm1-term1}, \ref{lem:thm1-term2}, and \ref{lem:thm1-term3} are deferred to Sections~\ref{sec:proof-lem-thm1-term1}, \ref{sec:proof-lem-thm1-term2}, and \ref{sec:proof-lem-thm1-term3}, respectively.
\begin{lemma}[1st contraction bound]
\label{lem:thm1-term1}
For any $0 \leq \eta \leq \frac{3}{16 n L}\min \{1, \sqrt{\frac{n}{\kappa}} \}$ and $k \in [K]$,
\begin{align*}
    \norm{\E \left [ \mS_k^T \mS_k \right ]} 
    \leq 1-\eta n \mu.
\end{align*}
% \begin{align*}
% \E \left [
% \norm {\mS_{K:1} \vx_0^1}^2 
% \right ]
% \leq (1-\eta n \mu)^K \norm{\vx_0^1}^2.
% \end{align*}
\end{lemma}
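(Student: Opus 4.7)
The goal is to bound the PSD matrix $\E[\mS_k^T \mS_k]$ where $\mS_k = \prod_{t=n}^{1}(\mI - \eta \mA_{\sigma_k(t)})$; since $\E[\mS_k^T \mS_k]$ is PSD, its spectral norm equals its largest eigenvalue and the claim reduces to the PSD inequality $\E[\mS_k^T \mS_k] \preceq (1 - \eta n \mu) \mI$. The plan is to expand $\mS_k$ as a polynomial in $\eta$ using the noncommutative elementary symmetric polynomials from~\eqref{eq:def-elem-poly}: $\mS_k = \sum_{m=0}^n (-\eta)^m e_m(\mA_{\sigma_k[n]})$, and similarly $\mS_k^T = \sum_{m=0}^n (-\eta)^m e_m(\mA_{\sigma_k[n]})^T$ by the symmetry $\mA_i^T = \mA_i$. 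Multiplying and taking expectation term-by-term exposes two deterministic identities that drive the leading analysis: $e_1 = \sum_t \mA_{\sigma_k(t)} = \sum_i \mA_i = n\mA$ (independent of $\sigma_k$), and $e_2 + e_2^T = (\sum_t \mA_{\sigma(t)})^2 - \sum_t \mA_{\sigma(t)}^2 = n^2\mA^2 - \sum_i \mA_i^2$ (also deterministic). Collecting terms through second order yields
\begin{equation*}
    \E[\mS_k^T \mS_k] = \mI - 2\eta n \mA + \eta^2 \Bigl(2 n^2 \mA^2 - \sum\nolimits_i \mA_i^2\Bigr) + \mR_{\geq 3},
\end{equation*}
where $\mR_{\geq 3}$ collects all contributions of order $\eta^m$ for $m \geq 3$.

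Next I would exploit $\mu$-strong convexity $\mA \succeq \mu \mI$ to rewrite the target as $\eta^2(2n^2\mA^2 - \sum_i \mA_i^2) + \mR_{\geq 3} \preceq 2 \eta n \mA - \eta n \mu \mI$, whose right-hand side carries a PSD margin of at least $\eta n \mu \mI$. For the quadratic piece, I would drop the nonpositive $-\sum_i \mA_i^2$ and use $\mA^2 \preceq L \mA$ to obtain $\eta^2(2n^2\mA^2 - \sum_i \mA_i^2) \preceq 2\eta^2 n^2 L \mA = (\eta n L)\cdot(2\eta n \mA)$; under $\eta n L \leq \tfrac{3}{16}$, this consumes only a small fixed fraction of the $2\eta n \mA$ leading term and leaves a clean margin against the residual $\mR_{\geq 3}$. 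For the residual itself, I would use the uniform bound $\norm{e_m(\mA_{\sigma[n]})} \leq \binom{n}{m} L^m$ together with Vandermonde's identity $\sum_{m_1+m_2=m}\binom{n}{m_1}\binom{n}{m_2} = \binom{2n}{m}$ to get $\norm{\mR_{\geq 3}} \leq \sum_{m \geq 3}(\eta L)^m \binom{2n}{m} = O((\eta n L)^3)$ whenever $\eta n L$ is bounded away from $1/2$. Requiring $O((\eta n L)^3) \lesssim \eta n \mu$ yields $\eta \lesssim (L\sqrt{n\kappa})^{-1}$; combined with $\eta n L \leq \tfrac{3}{16}$ this produces exactly the two-case hypothesis $\eta \leq \tfrac{3}{16 n L}\min\{1, \sqrt{n/\kappa}\}$ of the lemma.

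The main obstacle is keeping the higher-order bookkeeping tight: a naive spectral-norm treatment of the quadratic term $2n^2 \mA^2 - \sum_i \mA_i^2$ would yield a $\kappa$-dependent restriction $\eta \lesssim (\kappa n L)^{-1}$, much stronger than the stated hypothesis. The key is to compare this quadratic piece against $\mA$ (not $\mI$) in PSD sense, so that one factor $\eta n L$ is multiplicatively absorbed into the $2\eta n \mA$ leading term. A secondary subtlety is the careful use of the deterministic identity $e_2 + e_2^T = n^2 \mA^2 - \sum_i \mA_i^2$: the $-\sum_i \mA_i^2$ piece is favorable in the PSD ordering but unfavorable in spectral norm, which is why the PSD-sense bound is essential for the quadratic term while the spectral-norm tail suffices for $\mR_{\geq 3}$. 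This fine-grained handling of the expectation over uniform permutations is exactly the replacement for the now-disproved matrix AM-GM inequality alluded to in the proof sketch.
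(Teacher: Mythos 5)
Your setup through second order matches the paper's (expansion in the noncommutative elementary polynomials, the deterministic identities $\mC_1 = 2n\mA$ and $\mC_2 = 2(n\mA)^2-\sum_i \mA_i^2$), and your PSD absorption of the quadratic term via $\mA^2 \preceq L\mA$ is a legitimate variant of the paper's device of folding these contributions into the truncated exponential $\sum_{m=0}^n \frac{(-2\eta n \mA)^m}{m!}$ evaluated at the eigenvalue $\mu$; either way the $\eta \lesssim (\kappa n L)^{-1}$ trap is avoided. The genuine gap is in your treatment of everything of order $\eta^3$ and higher. Bounding $\norm{\mR_{\geq 3}}$ by $\sum_{m\geq 3}\binom{2n}{m}(\eta L)^m \leq \sum_{m\geq 3}(2\eta n L)^m/m!$ gives a quantity of size $\Theta\big((\eta n L)^3\big)$ with \emph{no} $1/n$ gain, and the resulting constraint $(\eta n L)^3 \lesssim \eta n \mu$ is equivalent to $\eta \lesssim (nL\sqrt{\kappa})^{-1}$, not to $\eta \lesssim (L\sqrt{n\kappa})^{-1}$ as you assert: you are off by a factor $\sqrt{n}$. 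The admissible range you actually prove is strictly smaller than the lemma's in both regimes --- by $\sqrt{\kappa}$ when $\kappa \leq n$ (at $\eta = \frac{3}{16nL}$ your residual bound is an absolute constant $\approx (3/8)^3/6$, while the available margin $\eta n \mu(1-2\eta n L) \asymp 1/\kappa$ vanishes as $\kappa$ grows), and by $\sqrt{n}$ when $\kappa \geq n$ --- so the contraction at the stated step sizes is not established, and downstream the epoch requirement of Theorem~\ref{thm:quadratic} would degrade from $\kappa\max\{1,\sqrt{\kappa/n}\}$ to roughly $\kappa^{3/2}$.

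The missing idea is that beyond order two you never use the randomness of $\sigma_k$: your bound on $\mR_{\geq 3}$ is a per-permutation, worst-case bound, and at that level of crudeness the stated range is out of reach. The paper gains the needed factor of $n$ precisely from the expectation: for $3 \leq m \leq n$ it splits $\E[\mC_m]$ into the distinct-index part, whose expectation equals $\frac{(2n\mA)^m}{m!} - \frac{2^m}{m!}\mN_m$ by the law of total expectation, and the overlapping-index part $\mR_m$; the $\frac{(2n\mA)^m}{m!}$ pieces are absorbed into the truncated exponential (so they contribute at scale $\mu$, not $L$), while $\mR_m$ and $\mN_m$ contain only $\binom{2n}{m}-2^m\binom{n}{m}$ and $n^m-\frac{n!}{(n-m)!}$ terms respectively, which by Lemma~\ref{lem:comb-sub1} are at most $(2n)^{m-1}L^m$ in norm --- one factor of $n$ fewer than your count (orders $m>n$ are likewise $(2n)^{m-1}L^m$ since $\binom{2n}{m}\leq (2n)^{2n-m}$). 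This yields a residual of size $\frac{2}{n}(2\eta n L)^3$, and the condition $\frac{1}{n}(2\eta n L)^3 \lesssim \eta n \mu$ is exactly what produces the $\min\{1,\sqrt{n/\kappa}\}$ branch of the hypothesis. To repair your argument you need to add this expectation-level resummation and counting step for $m \geq 3$ (or else weaken the lemma's step-size range, which changes the theorem it supports).
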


\begin{lemma}
\label{lem:thm1-term2}
%For any $0 \leq \eta \leq \frac{1}{4 n L \sqrt{\kappa}}$ and $k \in [K]$,
For any $0 \leq \eta \leq \frac{3}{16 n L}\min \{1, \sqrt{\frac{n}{\kappa}} \}$ and $k \in [K]$,
\begin{align*}
\E \left [ \norm {\mS_{K:k+1} \vt_{k}}^2 \right ]
\leq 18 (1-\eta n \mu)^{K-k} \eta^2 n^3 L^2 G^2 \log n.
\end{align*}
\end{lemma}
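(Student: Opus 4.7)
The plan has two parts. First I would factor the contraction $(1-\eta n\mu)^{K-k}$ out of the quantity to be bounded. Since $\mS_{K:k+1}$ depends only on $\sigma_{k+1},\ldots,\sigma_K$ while $\vt_k$ depends only on $\sigma_k$, the two factors are independent; conditioning on $\vt_k$ and using the tower rule gives
\[
\E\!\left[\norm{\mS_{K:k+1}\vt_k}^2\right] \;=\; \E\!\left[\vt_k^T \,\E[\mS_{K:k+1}^T\mS_{K:k+1}]\,\vt_k\right].
\]
To bound the inner expectation in the PSD order, I would peel off $\sigma_K$ first: Lemma~\ref{lem:thm1-term1} gives $\norm{\E[\mS_K^T\mS_K]}\le 1-\eta n\mu$, and since $\E[\mS_K^T\mS_K]$ is symmetric PSD this is equivalent to $\E[\mS_K^T\mS_K]\preceq(1-\eta n\mu)\mI$. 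Sandwiching between $\mS_{(K-1):(k+1)}^T$ and $\mS_{(K-1):(k+1)}$ and iterating the same argument (with the independence of the permutations) yields $\E[\mS_{K:k+1}^T\mS_{K:k+1}]\preceq(1-\eta n\mu)^{K-k}\mI$, and therefore
\[
\E\!\left[\norm{\mS_{K:k+1}\vt_k}^2\right] \le (1-\eta n\mu)^{K-k}\,\E\!\left[\norm{\vt_k}^2\right].
\]

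This reduces the lemma to proving $\E[\norm{\vt_k}^2]=O(\eta^2 n^3 L^2 G^2\log n)$. I would obtain this by applying Abel summation to $\vt_k=\sum_{j=1}^n a_j\vb_{\sigma_k(j)}$ with $a_j=\prod_{t=n}^{j+1}(\mI-\eta\mA_{\sigma_k(t)})$, exactly as in the proof of Theorem~\ref{thm:plconv}. The boundary term vanishes because $a_n=\mI$ and $\sum_{j=1}^n\vb_j=\zeros$ (by the assumption $F(\vx)=\tfrac12\vx^T\mA\vx$, which forces $\sum_i\vb_i=\zeros$), leaving
\[
\vt_k=-\eta\sum_{i=1}^{n-1}\Big[\prod_{t=n}^{i+2}(\mI-\eta\mA_{\sigma_k(t)})\Big]\mA_{\sigma_k(i+1)}\,S_i,\qquad S_i\defeq\sum_{j=1}^i\vb_{\sigma_k(j)}.
\]
The step-size constraint $\eta\le\tfrac{3}{16nL}$ yields $(1+\eta L)^n\le e^{3/16}$, so $\norm{\vt_k}\le e^{3/16}\,\eta L\sum_{i=1}^{n-1}\norm{S_i}$.

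Bounding $\E\bigl[(\sum_i\norm{S_i})^2\bigr]$ is where the $\log n$ factor enters. I would apply a weighted Cauchy--Schwarz,
\[
\Bigl(\sum_{i=1}^{n-1}\norm{S_i}\Bigr)^2\le\Bigl(\sum_{i=1}^{n-1}\tfrac{1}{i}\Bigr)\sum_{i=1}^{n-1}i\norm{S_i}^2\le(1+\log n)\sum_{i=1}^{n-1}i\norm{S_i}^2,
\]
and then bound $\E[\norm{S_i}^2]$ via the Hoeffding--Serfling inequality. Because the $\vb_j$'s have mean $\zeros$ (so $\E[S_i]=\zeros$), Lemma~\ref{lem:thm1-sub2} applied to $\vb_{\sigma_k(1)},\ldots,\vb_{\sigma_k(i)}$ gives the sub-Gaussian tail $\Pr[\norm{S_i}^2\ge u]\le 2\exp(-u/(8iG^2))$; integrating yields $\E[\norm{S_i}^2]\le 16iG^2$. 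Combining, $\sum_{i=1}^{n-1}i\,\E[\norm{S_i}^2]\le 16\sum_{i=1}^{n-1}i^2 G^2\lesssim n^3 G^2$, which after multiplying by $\eta^2 L^2(1+\log n)\,e^{3/8}$ produces the stated bound. The main obstacle is purely cosmetic: matching the explicit constant $18$ requires careful accounting of $e^{3/16}$, the harmonic sum $\sum_{i=1}^{n-1}1/i$, and $\sum_{i=1}^{n-1}i^2$, but no new idea is needed beyond the two reductions above.
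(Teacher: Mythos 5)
Your proof is correct, and its first half coincides with the paper's: you peel off the contraction factor exactly as the paper does, using independence of the epochs, the symmetry/PSD-ness of $\E[\mS_K^T\mS_K]$ together with Lemma~\ref{lem:thm1-term1} to get $\E[\mS_{K:k+1}^T\mS_{K:k+1}]\preceq(1-\eta n\mu)^{K-k}\mI$, and then apply the same Abel summation to $\vt_k$ (using $\sum_i\vb_i=\zeros$) to reduce everything to the partial sums $\sum_{j=1}^i\vb_{\sigma_k(j)}$. Where you genuinely diverge is in bounding $\E[\norm{\vt_k}^2]$. The paper keeps Lemma~\ref{lem:thm1-sub2} in high-probability form, takes a union bound over $i=1,\dots,n-1$, and then splits on the good/bad event with $\delta=1/n$ (bounding the complement by the crude estimate $\norm{\sum_{j\le i}\vb_{\sigma_k(j)}}\le iG$), so its $\log n$ comes from $\log(2n/\delta)\le 3\log n$. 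You instead integrate the Hoeffding--Serfling tail to get the clean moment bound $\E\bigl[\norm{\sum_{j\le i}\vb_{\sigma_k(j)}}^2\bigr]\le 16\,i\,G^2$ and extract the $\log n$ from the harmonic weights in Cauchy--Schwarz, avoiding both the union bound and the bad-event bookkeeping. This is a slightly more elementary route for a statement that is purely in expectation (the paper's high-probability formulation is the one that is actually needed elsewhere, e.g.\ in Theorem~\ref{thm:plconv}, which may explain its reuse here). The constant you hedged on does work out: your chain gives $\E[\norm{\vt_k}^2]\le e^{3/8}\eta^2L^2G^2\cdot 16\bigl(\sum_{i=1}^{n-1}\tfrac1i\bigr)\sum_{i=1}^{n-1}i^2\le \tfrac{16e^{3/8}}{3}\bigl(\sum_{i=1}^{n-1}\tfrac1i\bigr)\eta^2n^3L^2G^2$, and since $\tfrac{16e^{3/8}}{3}\approx 7.8$ and $\sum_{i=1}^{n-1}\tfrac1i\le 1+\log(n-1)$, one checks $7.8\bigl(1+\log(n-1)\bigr)\le 18\log n$ for all $n\ge 2$, so the stated bound $18(1-\eta n\mu)^{K-k}\eta^2n^3L^2G^2\log n$ follows with room to spare.
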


\begin{lemma}
\label{lem:thm1-term3}
For any $0 \leq \eta \leq \frac{3}{16 n L}\min \{1, \sqrt{\frac{n}{\kappa}}\}$ and $k, k' \in [K]$ ($k < k'$),
\begin{align*}
\E \left [ 
\< \mS_{K:k+1} \vt_{k}, \mS_{K:k'+1} \vt_{{k'}} \> 
\right ]
\leq 40 \left ( 1 - \frac{\eta n\mu}{2} \right )^{2K-k'-k-1} \eta^2 n^2 L^2 G^2.
\end{align*}
\end{lemma}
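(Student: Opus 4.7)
The approach is to reduce the cross-term expectation to a product of three deterministic norms by sequentially pulling expectations through the matrix products, exploiting the mutual independence of the epoch permutations $\sigma_1,\dots,\sigma_K$. Write the scalar as $\vt_k^T\mS_{k':k+1}^T\mS_{K:k'+1}^T\mS_{K:k'+1}\vt_{k'}$ by splitting $\mS_{K:k+1}=\mS_{K:k'+1}\mS_{k':k+1}$. Since $\sigma_{k'+1:K}$ is independent of $\sigma_{1:k'}$, I replace $\mS_{K:k'+1}^T\mS_{K:k'+1}$ by its expectation $\mM_{k',K}\defeq\E[\mS_{K:k'+1}^T\mS_{K:k'+1}]$; iterating Lemma~\ref{lem:thm1-term1} via the implication ``$\mB\preceq c\mI \Rightarrow \E[\mS^T\mB\mS]\preceq c\,\E[\mS^T\mS]$'' yields $\norm{\mM_{k',K}}\leq(1-\eta n\mu)^{K-k'}$. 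I then split $\mS_{k':k+1}=\mS_{k'}\mS_{k'-1:k+1}$ and extract the remaining per-epoch expectations in three stages: $\E_{\sigma_k}$ turns $\vt_k$ into $\bar\vt\defeq\E[\vt_k]$; $\E_{\sigma_{k+1:k'-1}}$ turns $\mS_{k'-1:k+1}^T$ into $(\bar\mS^T)^{k'-1-k}$ where $\bar\mS\defeq\E[\mS_t]$ is the common single-epoch mean; and finally $\E_{\sigma_{k'}}$ turns $\mS_{k'}^T\mM_{k',K}\vt_{k'}$ into $\vv_{k'}\defeq\E[\mS_{k'}^T\mM_{k',K}\vt_{k'}]$. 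The net identity is
\[
\E\!\left[\langle \mS_{K:k+1}\vt_k,\mS_{K:k'+1}\vt_{k'}\rangle\right] = \bar\vt^T(\bar\mS^T)^{k'-1-k}\vv_{k'},
\]
so the target quantity is at most $\norm{\bar\vt}\cdot\norm{\bar\mS}^{k'-1-k}\cdot\norm{\vv_{k'}}$.

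For the middle factor, applying Jensen's inequality to Lemma~\ref{lem:thm1-term1} gives $\bar\mS^T\bar\mS=\E[\mS]^T\E[\mS]\preceq\E[\mS^T\mS]$, hence $\norm{\bar\mS}\leq\sqrt{1-\eta n\mu}\leq 1-\eta n\mu/2$. For $\norm{\bar\vt}$, I expand $\mU_j\defeq\prod_{t=n}^{j+1}(\mI-\eta\mA_{\sigma(t)})=\mI-\eta\sum_{t>j}\mA_{\sigma(t)}+O(\eta^2)$; the identity $\sum_i\vb_i=\zeros$ (inherited from the normalization $F(\vx)=\tfrac12\vx^T\mA\vx$) kills the $O(1)$ contribution to $\vt_k$, and a direct symmetry computation shows that the first-order piece averages to $\tfrac{\eta}{2}\sum_i\mA_i\vb_i$, of norm at most $\eta nLG/2$. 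Controlling the higher-order tail under the step-size condition yields $\norm{\bar\vt}=O(\eta nLG)$. The same strategy applied to $\mS_{k'}^T\mM_{k',K}\vt_{k'}$---with the factor $\norm{\mM_{k',K}}\leq(1-\eta n\mu)^{K-k'}$ carried through---gives $\norm{\vv_{k'}}=O(\eta nLG)\cdot(1-\eta n\mu)^{K-k'}$. Multiplying the three bounds and using $(1-\eta n\mu)\leq(1-\eta n\mu/2)^2$ to combine exponents produces $O(\eta^2 n^2 L^2 G^2)(1-\eta n\mu/2)^{(k'-1-k)+2(K-k')}=O(\eta^2 n^2 L^2 G^2)(1-\eta n\mu/2)^{2K-k'-k-1}$, matching the claim (the explicit constant $40$ comes from tracking implicit constants).

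The hard part is bounding the higher-order terms in the $\eta$-expansions of $\bar\vt$ and $\vv_{k'}$ tightly in $n$. A naive count at the $m$-th order produces $O(n^{m+1})$ monomials of the form $\mA_{i_1}\cdots\mA_{i_m}\vb_{i_{m+1}}$, which would inflate $\norm{\bar\vt}$ to $O(\eta n^2 LG)$ and blow up the final bound by a factor of $n^2$. The fix is that after averaging over the uniform permutation, the $m$-th order piece reduces, up to a combinatorial prefactor, to $T_m\defeq\sum_{i_1,\dots,i_{m+1}\text{ distinct}}\mA_{i_1}\cdots\mA_{i_m}\vb_{i_{m+1}}$, a noncommutative analogue of the elementary polynomial $e_m$ from the paper. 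Substituting $\sum_i\vb_i=\zeros$ triggers an $n$-fold cancellation in $T_m$ (one can verify this directly for $m=1,2$: $T_1=-\sum_i\mA_i\vb_i$ and $T_2=\sum_i\mA_i^2\vb_i-n\sum_i\mA_i\mA\vb_i-n\mA\sum_i\mA_i\vb_i$), giving $\norm{T_m}\lesssim n^mL^mG$. The step-size bound $\eta\leq\tfrac{3}{16nL}\min\{1,\sqrt{n/\kappa}\}$ then makes the series geometric and dominated by its first term; analogous bookkeeping, with $\mM_{k',K}$ interleaved, handles $\vv_{k'}$.
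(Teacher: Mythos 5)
Your proposal is correct and follows essentially the same route as the paper's own proof: the same factorization of the cross term into $\norm{\E[\vt_k]}\cdot\norm{\E[\mS_1]}^{k'-k-1}\cdot\norm{\E[\mS_{k'}^T\mM\vt_{k'}]}$ using independence of the epoch permutations, the same control $\norm{\mM}\leq(1-\eta n\mu)^{K-k'}$ for $\mM=\E[\mS_{K:k'+1}^T\mS_{K:k'+1}]$ via Lemma~\ref{lem:thm1-term1}, and the same permutation-averaging plus $\sum_i\vb_i=\zeros$ cancellation to obtain the $O(\eta nLG)$ bounds on $\E[\vt_k]$ and $\E[\mS_{k'}^T\mM\vt_{k'}]$ (the paper's Lemmas~\ref{lem:thm1-sub4} and~\ref{lem:thm1-sub5}), followed by the identical exponent bookkeeping. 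The one genuine deviation is a nice shortcut: you get $\norm{\E[\mS_k]}\leq\sqrt{1-\eta n\mu}\leq 1-\eta n\mu/2$ by Jensen applied to Lemma~\ref{lem:thm1-term1}, bypassing the paper's separate elementary-polynomial proof of Lemma~\ref{lem:thm1-sub3} (your illustrative identity for $T_2$ is off by a factor of $2$ on the $\sum_i\mA_i^2\vb_i$ term, which does not affect the claimed scaling).
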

\begin{remark}[Our contraction bounds and the matrix AM-GM inequality conjecture]
Before we continue with the proof, a side remark on the contraction bounds is in order.
In this paper, we prove a number of contraction bounds (Lemmas~\ref{lem:thm1-term1}, \ref{lem:thm1-sub3}, and \ref{lem:thm2-term1}) that circumvents the need for the conjectured matrix AM-GM inequality~\cite{recht2012beneath}, which was proven to be false~\cite{lai2020recht}.
The bounds we provide can be seen as ``weaker'' versions of the AM-GM inequalities, which hold for any number $n$ of matrices but with $\eta$ diminishing with $n$. Whether these weak AM-GM inequalities hold for a broader range of $\eta$ (e.g. $\eta \leq \nicefrac{1}{L}$) or not is left to future investigation.
\end{remark}

By Lemma~\ref{lem:thm1-term1}, we have $\zeros \preceq \E[\mS_{k}^T \mS_{k}] \preceq (1-\eta n \mu) \mI$ for appropriately chosen step size $\eta$. Since any $\mS_{k}^T \mS_{k}$ is independent of $\sigma_1, \dots, \sigma_{k-1}$, we have
\begin{align*}
    \E \left [ \norm {\mS_{K:1} \vx_0^1}^2 \right ]
    &= \E \left [ \left ( \mS_{K:1} \vx_0^1 \right )^T \left ( \mS_{K:1} \vx_0^1 \right ) \right ]\\
    &= \E \left [ \left ( \mS_{K-1:1} \vx_0^1 \right )^T  \E \left [\mS_{K}^T \mS_{K}\right ] \left ( \mS_{K-1:1} \vx_0^1 \right ) \right ]\\
    &\leq (1-\eta n \mu) \E \left [ \left ( \mS_{K-1:1} \vx_0^1 \right )^T \left ( \mS_{K-1:1} \vx_0^1 \right ) \right ]\\
    &\leq (1-\eta n \mu)^2 \E \left [ \left ( \mS_{K-2:1} \vx_0^1 \right )^T \left ( \mS_{K-2:1} \vx_0^1 \right ) \right ]\\
    &\leq \dots 
    \leq (1-\eta n \mu)^K \norm{\vx_0^1}^2.
\end{align*}

By Lemma~\ref{lem:thm1-term2}, we have
\begin{align*}
    \sum_{k=1}^K 
    \E \left [ \norm {\mS_{K:k+1} \vt_{k}}^2 \right ]
    &\leq 18 \eta^2 n^3 L^2 G^2 \log n \sum_{k=1}^K (1-\eta n \mu)^{K-k}
    \leq \frac{18 \eta n^2 L^2 G^2 \log n}{\mu},
\end{align*}
and Lemma~\ref{lem:thm1-term3} implies that
\begin{align*}
    \sum_{1\leq k<k'\leq K} \E \left [ 
    \< \mS_{K:k+1} \vt_{k}, \mS_{K:k'+1} \vt_{{k'}} \> 
    \right ]
    &\leq 40 \eta^2 n^2 L^2 G^2 \sum_{1 \leq k < k' \leq K} \left ( 1 - \frac{\eta n\mu}{2} \right )^{2K-k'-k-1}
    \leq \frac{160 L^2 G^2}{\mu^2}.
\end{align*}
Putting the bounds together, we get
\begin{align*}
    \E[\norm{\vx_0^{K+1}}^2]
    &\leq 2 (1-\eta n \mu)^K \norm{\vx_0^1}^2 
    + \frac{36 \eta^3 n^2 L^2 G^2 \log n}{\mu}
    + \frac{640 \eta^2 L^2 G^2}{\mu^2}.
\end{align*}
Substituting the step size $\eta = \frac{2\log (nK)}{\mu n K}$ into the bound gives
\begin{align*}
    \E[\norm{\vx_0^{K+1}}^2]
    &\leq \frac{ 2 \norm{\vx_0^1}^2 }{n^2 K^2}
    + \bigo \left ( \frac{L^2 G^2 }{\mu^4} \left ( \frac{\log^4(nK)}{nK^3} + \frac{\log^2(nK)}{n^2 K^2} \right) \right ),
\end{align*}
and in terms of the cost values,
\begin{align*}
    \E[F(\vx_0^{K+1}) - F^*]
    &\leq \frac{2 L \norm{\vx_0^1}^2}{n^2 K^2}
    + \bigo \left ( \frac{L^3 G^2 }{\mu^4} \left ( \frac{\log^4(nK)}{nK^3} + \frac{\log^2(nK)}{n^2 K^2} \right) \right ).
\end{align*}
Recall that these bounds hold for $\eta \leq \frac{3}{16 n L}\min \{1, \sqrt{\frac{n}{\kappa}} \}$, so $K$ must be large enough so that
\begin{equation*}
    \frac{2\log (nK)}{\mu n K} \leq \frac{3}{16 n L}\min \left \{1, \sqrt{\frac{n}{\kappa}} \right \}.
\end{equation*}
This gives us the epoch requirement $K \geq \frac{32}{3} \kappa \max \{1, \sqrt{\frac{\kappa}{n}}\} \log (nK) $.

\subsection{Proof of the first contraction bound (Lemma~\ref{lem:thm1-term1})}
\label{sec:proof-lem-thm1-term1}
\subsubsection{Decomposition into elementary polynomials}
For any permutation $\sigma_k$, note that we can expand $\mS_{k}$ in the following way:
\begin{align*}
    \mS_{k}
    = \prod_{t=n}^1 (\mI - \eta \mA_{\sigma_k(t)})
    = \sum_{m=0}^n (-\eta)^m \sum_{1\leq t_1 < \dots < t_m \leq n} \mA_{\sigma_k(t_m)} \cdots \mA_{\sigma_k(t_1)}
    \eqdef
    \sum_{m=0}^n (-\eta)^m e_m(\mA_{\sigma_k[n]}),
\end{align*}
where the noncommutative elementary symmetric polynomial $e_m$ was defined in \eqref{eq:def-elem-poly}.
Using this, we can write
\begin{align}
\label{eq:cmdef}
    \mS_{k}^T \mS_{k}
    =\sum_{m=0}^{2n} (-\eta)^m \underbrace{\sum_{\substack{0\leq m_1 \leq n\\0\leq m_2 \leq n\\m_1+m_2 = m}} e_{m_1}(\mA_{\sigma_k[n]})^T e_{m_2}(\mA_{\sigma_k[n]})}_{\eqdef \mC_m}.
\end{align}
Note $\E [\mS_{k}^T \mS_{k}] = \sum_{m=0}^{2n} (-\eta)^m \E[\mC_m]$. 
In what follows, we will examine the expectation $\E[\mC_m]$ closely, and decompose $\E [\mS_{k}^T \mS_{k}]$ into the sum of $\sum_{m=0}^n \frac{(-2\eta n\mA)^m}{m!}$ and remainder terms. By bounding the spectral norm of $\sum_{m=0}^n \frac{(-2\eta n \mA)^m}{m!}$ and the remainder terms, we will get the desired bound on the spectral norm of $\E [\mS_{k}^T \mS_{k}]$.

\paragraph{Cases $0 \leq m \leq 2$.} 
It is easy to check that $\mC_0 = \mI$ and $\mC_1 = 2e_1(\mA_{\sigma_k[n]}) = 2\sum_{i=1}^n \mA_i = 2n\mA$, regardless of $\sigma_k$. For $\mC_2$, we have
\begin{align*}
    \mC_2 &= 
    e_{2}(\mA_{\sigma_k[n]})^T e_{0}(\mA_{\sigma_k[n]})
    +e_{1}(\mA_{\sigma_k[n]})^T e_{1}(\mA_{\sigma_k[n]})
    +e_{0}(\mA_{\sigma_k[n]})^T e_{2}(\mA_{\sigma_k[n]})\\
    &=
    \sum_{1\leq t_1 < t_2 \leq n} \mA_{\sigma_k(t_1)} \mA_{\sigma_k(t_2)} + 
    \left (\sum_{i=1}^n \mA_i \right )^2 +
    \sum_{1\leq t_1 < t_2 \leq n} \mA_{\sigma_k(t_2)} \mA_{\sigma_k(t_1)}\\
    &= \sum_{i \neq j} \mA_i \mA_j + \left (\sum_{i=1}^n \mA_i \right )^2
    = 2 \left (\sum_{i=1}^n \mA_i \right )^2 - \sum_{i=1}^n \mA_i^2 
    = 2 (n \mA)^2 - \sum_{i=1}^n \mA_i^2,
\end{align*}
again regardless of $\sigma_k$. Note that each $\mA_i^2$ is positive semidefinite even when $\mA_i$ is not.

\paragraph{Cases $3\leq m \leq n$: decomposition of $\mC_m$.}
In a similar way, for $m=3, \dots, n$, we will take expectation $\E[\mC_m]$ and express it as the sum of $\frac{(2n\mA)^m}{m!}$ and the remainder terms.
Now fix any $m \in [3:n]$, and consider any $m_1$ and $m_2$ satisfying $m_1 + m_2 = m$. 
Then, the product of elementary polynomials $e_{m_1}(\mA_{\sigma_k[n]})^T e_{m_2}(\mA_{\sigma_k[n]})$ consists of $\choose{n}{m_1} \choose{n}{m_2}$ terms of the following form:
\begin{equation}
\label{eq:poly-term-form}
    \prod_{i=1}^{m_1} \mA_{\sigma_k(s_i)} \prod_{i=m_2}^1 \mA_{\sigma_k(t_i)},
    \text{ where }
    1 \leq s_1 < \dots < s_{m_1} \leq n,~
    1 \leq t_1 < \dots < t_{m_2} \leq n.
\end{equation}
Among them, $\choose{n}{m_1} \choose{n-m_1}{m_2}$ terms have the property that each of the $s_1, \dots, s_{m_1}$ and $t_1 \dots, t_{m_2}$ is unique; in other words, $\{s_1, \dots, s_{m_1}\} \cap \{t_1, \dots, t_{m_2}\} = \emptyset$. The remaining $\choose{n}{m_1} ( \choose{n}{m_2} - \choose{n-m_1}{m_2} )$ terms have overlapping indices.

Using this observation, we decompose $\mC_m$ into two terms $\mC_m = \mD_m + \mR_m$. Here, $\mD_m$ is a sum of terms in $\mC_m$ with distinct indices $s_1, \dots, s_{m_1}, t_1, \dots, t_{m_2}$ and $\mR_m$ is the sum of the remaining terms.
\begin{equation}
    \mD_m \defeq
    \sum_{\substack{0\leq m_1 \leq n\\0\leq m_2 \leq n\\m_1+m_2 = m}}
    \sum_{\substack{
        1\leq s_1 < \dots < s_{m_1} \leq n\\
        1\leq t_1 < \dots < t_{m_2} \leq n\\
        \text{$s_i$, $t_i$ unique}}} 
    \E \left [ 
    \prod_{i=1}^{m_1} \mA_{\sigma_k(s_i)} \prod_{i=m_2}^1 \mA_{\sigma_k(t_i)}
    \right ],~~
    \mR_m \defeq \mC_m - \mD_m.
    \label{eq:rmdef}
\end{equation}
The matrix $\mC_m$ is a summation of 
\begin{equation*}
    \sum_{\substack{0\leq m_1 \leq n\\0\leq m_2 \leq n\\m_1+m_2 = m}} 
    \choose{n}{m_1} \choose{n}{m_2} = \choose{2n}{m}
\end{equation*}
terms of the form in \eqref{eq:poly-term-form}. The number of terms in $\mD_m$ is
\begin{equation*}
    \sum_{\substack{0\leq m_1 \leq n\\0\leq m_2 \leq n\\m_1+m_2 = m}} 
    \choose{n}{m_1} \choose{n-m_1}{m_2} = 2^m \choose{n}{m},
\end{equation*}
and consequently, $\mR_m$ consists of $\choose{2n}{m} - 2^m \choose{n}{m}$ terms.

\paragraph{Cases $3\leq m \leq n$: expectation of terms in $\mD_m$.}
For any $s_1, s_2, \dots, s_{m_1}, t_1, t_2, \dots, t_{m_2}$ such that each of $s_i$ or $t_i$ is unique, we have
\begin{align*}
    \E\left [ \prod_{i=1}^{m_1} \mA_{\sigma_k(s_i)} \prod_{i=m_2}^1 \mA_{\sigma_k(t_i)} \right ]
    = 
    \E\left [ \prod_{i=1}^{m} \mA_{\sigma_k(i)} \right ],
\end{align*}
due to taking expectation.
We can expand this expectation using the law of total expectation.
\begin{align}
    &\E\left [ \prod_{i=1}^{m} \mA_{\sigma_k(i)} \right ]\nonumber \\
    =&~\sum_{j_1 \in [n]} \mA_{j_1} \E\left [ \prod_{i=2}^{m} \mA_{\sigma_k(i)} \mid \sigma_k(1) = j_1 \right ] \prob[\sigma_k(1) = j_1]\nonumber \\
    =&~\frac{1}{n} \sum_{j_1 \in [n]} \mA_{j_1} \E\left [ \prod_{i=2}^{m} \mA_{\sigma_k(i)} \mid \sigma_k(1) = j_1 \right ]\nonumber \\
    =&~\frac{1}{n(n-1)} 
    \sum_{j_1 \in [n]} \sum_{j_2 \in [n]\setminus \{j_1\}} 
    \mA_{j_1} \mA_{j_2} \E\left [ \prod_{i=3}^{m} \mA_{\sigma_k(i)} \mid \sigma_k(1) = j_1, \sigma_k(2) = j_2 \right ]\nonumber \\
    =&~\dots 
    = \frac{(n-m)!}{n!} 
    \sum_{j_1 \in [n]} \sum_{j_2 \in [n]\setminus \{j_1\}} \dots \sum_{j_m \in [n]\setminus \{j_1, \dots, j_{m-1}\}}
    \prod_{i=1}^m \mA_{j_i}\nonumber \\
    =&~\frac{(n-m)!}{n!} \sum_{\substack{j_1, \dots, j_m \in [n]\\j_1, \dots, j_m \text{ unique}}} \prod_{i=1}^m \mA_{j_i}\nonumber \\
    =&~\frac{(n-m)!}{n!} \left ( \sum_{i=1}^n \mA_i \right )^m - \frac{(n-m)!}{n!} \underbrace{\sum_{\substack{j_1, \dots, j_m \in [n]\\j_1, \dots, j_m \text{ not unique}}} \prod_{i=1}^m \mA_{j_i}}_{\eqdef \mN_m}. \label{eq:nmdef}
\end{align}
Here, we decompose the expectation of $\prod_{i=1}^{m} \mA_{\sigma_k(i)}$ into the difference of $(n\mA)^m$ and $\mN_m$. Note that all $2^m \choose{n}{m}$ terms in $\mD_m$ have the same expectation, identical to the one evaluated above.
Also note that $\mN_m$ is a sum of $n^m - \frac{n!}{(n-m)!}$ terms.

To summarize, we have decomposed the expectation of $\mC_m$ twice, in the following way:
\begin{align*}
    \E [\mC_m] &= \E[\mD_m] + \E[\mR_m]\\
    &=2^m \choose{n}{m} \frac{(n-m)!}{n!} \left ( (n \mA)^m - \mN_m \right ) + \E [\mR_m]\\
    &=\frac{(2n\mA)^m}{m!} - \frac{2^m}{m!} \mN_m + \E [\mR _m].
\end{align*}

\paragraph{Spectral norm bound.}
Up to this point, we obtained the following equations for $\mC_m$'s:
\begin{align*}
    \mC_0 &= \mI,\\
    \mC_1 &= 2n \mA,\\
    \mC_2 &= 2(n\mA)^2 - \sum_{i=1}^n \mA_i^2,\\
    \E[\mC_m] &= \frac{(2n \mA)^m}{m!} - \frac{2^m}{m!} \mN_m + \E [\mR _m], \text{ for } m = 3, \dots, n.
\end{align*}
We substitute these to $\E [\mS_{k}^T \mS_{k}] = \sum_{m=0}^{2n} (-\eta)^m \E[\mC_m]$ and get
\begin{align}
    \E [\mS_{k}^T \mS_{k}] =& \sum_{m=0}^n \frac{(-2\eta n\mA)^m}{m!} - \eta^2 \sum_{i=1}^n \mA_i^2 + \sum_{m=3}^n (-\eta)^m \left (\E [\mR_m] - \frac{2^m}{m!} \mN_m \right ) \nonumber\\
    &+ \sum_{m=n+1}^{2n} (-\eta)^m \E[\mC_m], \label{eq:expss-decomp}
\end{align}
and consequently,
\begin{align*}
    \norm{\E [\mS_{k}^T \mS_{k}]} \leq& \norm{\sum_{m=0}^n \frac{(-2\eta n \mA)^m}{m!}} + \sum_{m=3}^n \eta^m \left (\norm{\E [\mR_m]} + \frac{2^m}{m!} \norm{\mN_m} \right ) \\
    &+ \sum_{m=n+1}^{2n} \eta^m \norm{\E[\mC_m]}.
\end{align*}
In what follows, we will bound each of the norms to get an upper bound.

\subsubsection{Bounding each term of the spectral norm bound}
We first start with $\norm{\sum_{m=0}^n \frac{(-2\eta n \mA)^m}{m!}}$.
Note that for any eigenvalue $s$ of the positive definite matrix $\mA$, the corresponding eigenvalue of $\sum_{m=0}^n \frac{(-2\eta n \mA)^m}{m!}$ is $\sum_{m=0}^n \frac{(-2\eta n s)^m}{m!}$.
Recall $\eta \leq \frac{3}{16 n L}\min \{1, \sqrt{\frac{n}{\kappa}} \} \leq \frac{1}{4nL}$, so $0 \leq 2 \eta n s \leq 1/2$ for any eigenvalue $s$ of $\mA$. Since $t \mapsto \sum_{m=0}^n \frac{(-t)^m}{m!}$ is a positive and decreasing function on $[0,0.5]$ for any $n \geq 2$, the matrix $\sum_{m=0}^n \frac{(-2\eta n \mA)^m}{m!}$ is positive definite and its maximum singular value (i.e., spectral norm) comes from the minimum eigenvalue of $\mA$, hence
\begin{equation*}
    \norm{\sum_{m=0}^n \frac{(-2\eta n \mA)^m}{m!}} \leq \sum_{m=0}^n \frac{(-2\eta n \mu)^m}{m!}.
\end{equation*}

As for $\norm{\E [\mR _m]}$, where $m=3, \dots, n$, recall that $\mR_m$ is a sum of $\choose{2n}{m} - 2^m \choose{n}{m}$ terms, and each of the terms has spectral norm bounded above by $L^m$. Thus,
\begin{align}
    \norm{\E [\mR_m]} \leq \E [\norm{\mR_m}]
    \leq \left ( \choose{2n}{m} - 2^m \choose{n}{m} \right ) L^m \leq (2n)^{m-1} L^m,
    \label{eq:rmbound}
\end{align}
due to Lemma~\ref{lem:comb-sub1}. 
Similarly, $\mN_m$ is a sum of $n^m - \frac{n!}{(n-m)!}$ elements, so using the same lemma,
\begin{align}
    \frac{2^m}{m!} \norm{\mN_m} \leq \frac{2^m}{m!} \left ( n^m - \frac{n!}{(n-m)!} \right ) L^m 
    = \left ( \frac{(2n)^m}{m!} - 2^m \choose{n}{m} \right ) L^m
    \leq (2n)^{m-1} L^m.
    \label{eq:nmbound}
\end{align}

Finally, we consider $\norm{\E[\mC_m]}$ for $m=n+1, \dots, 2n$. It contains $\choose{2n}{m} = \choose{2n}{2n-m}$ terms, and each of the terms have spectral norm bounded above by $L^m$. This leads to
\begin{align}
    \norm{\E[\mC_m]} \leq \choose{2n}{2n-m} L^m 
    \leq (2n)^{2n-m} L^m
    \leq (2n)^{m-1} L^m,
    \label{eq:cmbound}
\end{align}
where the last bound used $2n-m\leq m-1$, which holds for $m = n+1, \dots, 2n$.

\subsubsection{Concluding the proof}
Putting the bounds together, we get
\begin{align*}
    \norm{\E [\mS_{k}^T \mS_{k}]} 
    \leq& 
    \sum_{m=0}^n \frac{(-2\eta n \mu)^m}{m!} 
    + 2 \sum_{m=3}^n \eta^m (2n)^{m-1} L^m 
    + \sum_{m=n+1}^{2n} \eta^m (2n)^{m-1} L^m\\
    \leq& 
    \sum_{m=0}^2 \frac{(-2\eta n \mu)^m}{m!} 
    + \frac{1}{n} \sum_{m=3}^{2n} (2\eta nL)^{m} \\
    \leq& \sum_{m=0}^2 \frac{(-2\eta n \mu)^m}{m!}
    + \frac{1}{n} \frac{(2\eta n L)^3}{1-2\eta n L}\\
    \leq& 1 - 2\eta n \mu + \frac{1}{2} (2 \eta n \mu)^2
    + \frac{2}{n} (2\eta n L)^3.
\end{align*}
Here, we used $2\eta n L \leq 1/2$, and the fact that $1 - t + \frac{t^2}{2} \geq \sum_{m=0}^n \frac{(-t)^m}{m!}$ for all $t \in [0, 0.5]$ and $n \geq 2$. The remaining step is to show that the right hand side of the inequality is bounded above by $1-\eta n \mu$ for $0 \leq \eta \leq \frac{3}{16 n L}\min \{1, \sqrt{\frac{n}{\kappa}} \}$.

Define $z = 2 \eta n L$. Using this, we have
\begin{align*}
    &~1 - 2\eta n \mu + \frac{1}{2} (2 \eta n \mu)^2
    + \frac{2}{n} (2\eta n L)^3 \leq 1-\eta n \mu 
    \text{ for } 0 \leq \eta \leq \frac{3}{16 n L}\min \left \{1, \sqrt{\frac{n}{\kappa}} \right \}\\
    \iff &~
    g(z) \defeq \frac{z}{2 \kappa} - \frac{z^2}{2\kappa^2} - \frac{2z^3}{n} \geq 0
    \text{ for } 0 \leq z \leq \frac{3}{8} \min \left \{1, \sqrt{\frac{n}{\kappa}} \right \},
\end{align*}
so it suffices to show the latter.
One can check that $g(0) = 0$, $g'(0) > 0$ and $g'(z)$ is monotonically decreasing in $z \geq 0$, so $g(z) \geq 0$ holds for $z \in [0, c]$ for some $c > 0$. This also means that if we have $g(c) \geq 0$ for some $c>0$, $g(z) \geq 0$ for all $z \in [0,c]$.

First, consider the case $\kappa \leq n$. Then, $n/\kappa \geq 1$ and $\kappa \geq 1$, so
\begin{align*}
    \frac{z}{2 \kappa} - \frac{z^2}{2\kappa^2} - \frac{2z^3}{n}
    = \frac{1}{2 \kappa} \left ( z - \frac{z^2}{\kappa} - \frac{4z^3}{n/\kappa} \right ) \geq \frac{1}{2 \kappa} \left ( z - z^2 - 4z^3 \right ).
\end{align*}
We can check that the function $z \mapsto z - z^2 - 4z^3$ is strictly positive at $z = \frac{3}{8}$. This means that $g(\frac{3}{8}) > 0$, hence $g(z) \geq 0$ for $0 \leq z \leq \frac{3}{8}$.

Next, consider the case $\kappa \geq n$. In this case, set $z = c \sqrt{\frac n\kappa}$ where $c = \frac{3}{8}$. Then,
\begin{align*}
    \frac{z}{2 \kappa} - \frac{z^2}{2\kappa^2} - \frac{2z^3}{n}
    = \frac{1}{2 \kappa} \left (c \sqrt{\frac n\kappa} - \frac{c^2 n}{\kappa^2} - 4c^3 \sqrt{\frac n \kappa} \right ) 
    \geq \frac{1}{2 \kappa} \left ( (c-4c^3)\sqrt{\frac n\kappa} - c^2 \frac{n}{\kappa} \right ).
\end{align*}
Note that $\sqrt{\frac n \kappa} \leq 1$, and the function $t \mapsto (c-4c^3) t - c^2 t^2 = \frac{21}{128} t - \frac{9}{64} t^2$ is nonnegative on $[0,1]$. Therefore, we have $g(\frac{3}{8} \sqrt{\frac n \kappa}) \geq 0$, so $g(z) \geq 0$ for $0 \leq z \leq \frac{3}{8} \sqrt{\frac n \kappa}$.

\subsection{Proof of Lemma~\ref{lem:thm1-term2}}
\label{sec:proof-lem-thm1-term2}
% Using Lemma~\ref{lem:thm1-sub1}, we get
% \begin{align*}
%     \norm {\left ( \prod_{t=K}^{k+1} \mS_{t} \right ) \vt_{k}}^2
%     \leq (1-\eta n \mu)^{K-k} \norm{\vt_{k}}^2.
% \end{align*}
First, note that since $0 \leq \eta \leq \frac{3}{16 n L}\min \{1, \sqrt{\frac{n}{\kappa}} \}$, Lemma~\ref{lem:thm1-term1} holds and it gives
\begin{align*}
    &~\E \left [ \norm {\mS_{K:k+1}\vt_{k}}^2 \right ]
    = \E \left [ (\mS_{K:k+1} \vt_k)^T (\mS_{K:k+1} \vt_k) \right ]\\
    \leq&~
    (1-\eta n \mu) \E \left [ (\mS_{K-1:k+1} \vt_k)^T (\mS_{K-1:k+1} \vt_k) \right ]
    \leq\dots\leq
    (1-\eta n \mu)^{K-k} \E [\norm{\vt_{k}}^2 ].
\end{align*}
Now, it is left to bound $\E [\norm{\vt_{k}}^2]$. The proof technique follows that of \cite{safran2019good}. We express $\norm{\vt_k}$ as a summation of norms of partial sums of $\vb_{\sigma_k(j)}$ and use a vector-valued version of the Hoeffding-Serfling inequality due to \cite{schneider2016probability}.

Due to summation by parts, the following identity holds:
\begin{equation*}
    \sum_{j=1}^n a_j b_j = a_n \sum_{j=1}^n b_j - \sum_{i=1}^{n-1}(a_{i+1}-a_i) \sum_{j=1}^i b_j.
\end{equation*}
We can apply the identity to $\vt_{k}$, by substituting $a_j = \prod_{t=n}^{j+1} (\mI-\eta \mA_{\sigma_k(t)})$ and $b_j = \vb_{\sigma_k(j)}$:
\begin{align}
    \norm{\vt_{k}}
    &= \norm{\sum_{j=1}^n \left ( \prod_{t=n}^{j+1} (\mI-\eta \mA_{\sigma_k(t)}) \right ) \vb_{\sigma_k(j)}}\nonumber\\
    &= \norm{\sum_{j=1}^n \vb_{\sigma_k(j)} 
    - \sum_{i=1}^{n-1} \left ( \prod_{t=n}^{i+2} (\mI-\eta \mA_{\sigma_k(t)}) - \prod_{t=n}^{i+1} (\mI-\eta \mA_{\sigma_k(t)}) \right ) \sum_{j=1}^i \vb_{\sigma_k(j)}}\nonumber\\
    &= \norm{\eta \sum_{i=1}^{n-1} \left ( \prod_{t=n}^{i+2} (\mI-\eta \mA_{\sigma_k(t)}) \right ) \mA_{\sigma_k(i+1)} \sum_{j=1}^i \vb_{\sigma_k(j)}}\nonumber\\
    &\leq \eta \sum_{i=1}^{n-1} \norm{\left ( \prod_{t=n}^{i+2} (\mI-\eta \mA_{\sigma_k(t)}) \right ) \mA_{\sigma_k(i+1)} \sum_{j=1}^i \vb_{\sigma_k(j)}}
    \leq \eta L (1+\eta L)^n \sum_{i=1}^{n-1} \norm{\sum_{j=1}^i \vb_{\sigma_k(j)}}, \label{eq:rs-sumbyparts}
\end{align}
where the last step used $\norm{\mA_{\sigma_k(j)}} \leq L$. Recall that $\eta \leq \frac{1}{4nL}$, which implies $(1+\eta L)^n \leq e^{1/4}$. 
Now, we use Lemma~\ref{lem:thm1-sub2}, the Hoeffding-Serfling inequality for bounded random vectors. 
We restate the lemma for readers' convenience.
\lemhoeffding*
% \begin{replemma}{lem:thm1-sub2}
% Suppose $n \geq 2$. Let $\vv_1, \vv_2, \dots, \vv_n \in \reals^d$ satisfy $\norm{\vv_j}\leq G$ for all $j$. Let $\bar \vv = \frac{1}{n} \sum_{j=1}^n \vv_j$. Let $\sigma \in \mc \mS_n$ be a permutation of $n$ elements. Then, for $i \leq n$, with probability at least $1-\delta$, we have
% \begin{equation*}
%     \norm{\frac{1}{i} \sum_{j=1}^i \vv_{\sigma(j)} - \bar \vv} \leq 
%     G \sqrt{\frac{8(1-\frac{i-1}{n}) \log \frac{2}{\delta}}{i}}.
% \end{equation*}
% \end{replemma}
Recall that the mean $\bar \vv = \frac{1}{n} \sum_i \vb_i = \zeros$ for our setting, so with probability at least $1-\delta$, we have
\begin{equation*}
    \norm{\sum_{j=1}^i \vb_{\sigma_k(j)}} \leq G \sqrt{ 8 i \log \frac{2}{\delta}}.
\end{equation*}
Using the union bound for all $i=1, \dots, n-1$, we have with probability at least $1-\delta$,
\begin{align}
    \sum_{i=1}^{n-1} \norm{\sum_{j=1}^i \vb_{\sigma_k(j)}} 
    &\leq G \sqrt{ 8 \log \frac{2n}{\delta}} \sum_{i=1}^{n-1}\sqrt{i}
    \leq G \sqrt{ 8 \log \frac{2n}{\delta}} \int_1^n \sqrt y dy\nonumber\\
    &\leq \frac{2G}{3} \sqrt{ 8 \log \frac{2n}{\delta}} n^{3/2}. \label{eq:event}
\end{align}
Substituting this to \eqref{eq:rs-sumbyparts} then leads to
\begin{align*}
    \norm{\vt_{k}}^2 \leq \frac{32 e^{1/2}}{9} \eta^2 n^3 L^2 G^2 \log \frac{2n}{\delta},
\end{align*}
which holds with probability at least $1-\delta$.

Now, set $\delta = 1/n$, and let $E$ be the probabilistic event that \eqref{eq:event} holds. Let $E^c$ be the complement of $E$.
Given $E^c$, directly bounding \eqref{eq:rs-sumbyparts} yields
\begin{equation*}
    \E\left[\norm{\vt_{k}}^2 \mid E^c \right]
    \leq \E \left [ \left ( e^{1/4} \eta L \sum_{i=1}^{n-1} \norm{\sum_{j=1}^i \vb_{\sigma_k(j)}} \right )^2 \mid E^c \right ]
    \leq \frac{e^{1/2} \eta^2 n^4 L^2 G^2}{4}.
\end{equation*}
Finally, putting everything together and using $\log(2n^2) \leq 3 \log n$ (due to $n \geq 2$),
\begin{align*}
    \E\left[\norm{\vt_{k}}^2 \right]
    &= \E\left[\norm{\vt_{k}}^2 \mid E \right] \prob[E]
    + \E\left[\norm{\vt_{k}}^2 \mid E^c \right] \prob[E^c]\\
    &\leq \frac{32 e^{1/2}}{3} \eta^2 n^3 L^2 G^2 \log n
    + \frac{e^{1/2}\eta^2 n^4 L^2 G^2}{4} \frac{1}{n}\\
    &\leq 18 \eta^2 n^3 L^2 G^2 \log n.
\end{align*}

\subsection{Proof of Lemma~\ref{lem:thm1-term3}}
\label{sec:proof-lem-thm1-term3}
Recall that $\mS_{t}$ and $\vt_{t}$ depend only on the permutation $\sigma_t$. Hence, for any $t'\neq t$, $\mS_{t}$ and $\vt_{t}$ are independent of $\mS_{{t'}}$ and $\vt_{{t'}}$.
Recall $k< k'$. Using independence, we can decompose the dot product.
\begin{align*}
\E \left [ 
\< \mS_{K:k+1} \vt_{k}, \mS_{K:k'+1} \vt_{{k'}} \>
\right ]
&= \E \left [
\vt_{k}^T 
\mS_{K:k+1}^T
\mS_{K:k'+1}
\vt_{{k'}}
\right ]\\
&=\E [\vt_{k}]^T 
\E[\mS_{k'-1:k+1}]^T
\E \left [
\mS_{K:k'}^T
\mS_{K:k'+1}
\vt_{{k'}}
\right ]\\
&\leq \norm{
\E[\mS_{k'-1:k+1}]
\E [\vt_{k}] }
\norm{\E \left [
\mS_{K:k'}^T
\mS_{K:k'+1}
\vt_{{k'}}
\right ]}\\
&\leq 
\norm{
\E [\mS_{1}] }^{k'-k-1}
\norm{
\E [\vt_{k}] }
\norm{\E \left [
\mS_{K:k'}^T
\mS_{K:k'+1}
\vt_{{k'}}
\right ]},
\end{align*}
where we used Cauchy-Schwarz inequality.

For the remainder of the proof, we use the following three technical lemmas that bound each of the terms in the product and get to the conclusion. The proofs of Lemmas~\ref{lem:thm1-sub3}, \ref{lem:thm1-sub4}, and \ref{lem:thm1-sub5} are deferred to Sections~\ref{sec:proof-lem-thm1-sub3}, \ref{sec:proof-lem-thm1-sub4}, and \ref{sec:proof-lem-thm1-sub5}, respectively.
\begin{lemma}[2nd contraction bound]
\label{lem:thm1-sub3}
For any $0 \leq \eta \leq \frac{3}{16 n L}\min \{1, \sqrt{\frac{n}{\kappa}}\}$ and any $k \in [K]$,
\begin{equation*}
    \norm{\E[\mS_{k}]} \leq 1-\frac{\eta n \mu}{2}.
\end{equation*}
\end{lemma}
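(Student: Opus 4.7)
My plan is to derive Lemma~\ref{lem:thm1-sub3} as a short corollary of Lemma~\ref{lem:thm1-term1} via Jensen's inequality, rather than redoing a polynomial expansion analogous to that lemma's proof.

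First, I would observe that for any vector $\vv\in\reals^d$, convexity of the squared-norm and Jensen's inequality give
\[
\norm{\E[\mS_k]\vv}^2 = \norm{\E[\mS_k\vv]}^2 \leq \E\!\left[\norm{\mS_k\vv}^2\right] = \vv^T \E[\mS_k^T\mS_k]\vv.
\]
Since the matrices $\E[\mS_k]^T\E[\mS_k]$ and $\E[\mS_k^T\mS_k]$ are both symmetric positive semidefinite, this quadratic-form inequality means $\E[\mS_k]^T\E[\mS_k]\preceq\E[\mS_k^T\mS_k]$, and hence
\[
\norm{\E[\mS_k]}^2 = \norm{\E[\mS_k]^T\E[\mS_k]} \leq \norm{\E[\mS_k^T\mS_k]}.
\]

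Next, I would invoke Lemma~\ref{lem:thm1-term1}, which applies under the \emph{identical} step-size hypothesis $0\leq\eta\leq\frac{3}{16nL}\min\{1,\sqrt{n/\kappa}\}$, to conclude $\norm{\E[\mS_k^T\mS_k]}\leq 1-\eta n\mu$. Combining with the elementary inequality $\sqrt{1-t}\leq 1-t/2$ valid for $t\in[0,1]$ (which follows from $(1-t/2)^2 = 1-t+t^2/4\geq 1-t$; note $\eta n\mu\leq \eta nL\leq 3/16<1$ under our hypothesis), I obtain
\[
\norm{\E[\mS_k]} \leq \sqrt{1-\eta n\mu} \leq 1-\frac{\eta n\mu}{2},
\]
as required.

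There is essentially no obstacle on this route, since all the technical work (handling noncommutativity, the elementary symmetric polynomial decomposition, the $\mR_m$ and $\mN_m$ remainders, and the case analysis $\kappa\lessgtr n$) is already absorbed into Lemma~\ref{lem:thm1-term1}. As a self-contained alternative, one could instead mimic the proof of Lemma~\ref{lem:thm1-term1} by expanding $\mS_k=\sum_{m=0}^n(-\eta)^m e_m(\mA_{\sigma_k[n]})$, using the same law-of-total-expectation computation to derive $\E[e_m(\mA_{\sigma_k[n]})]=\tfrac{1}{m!}((n\mA)^m-\mN_m)$ (note that since the indices within a single $e_m$ are already distinct, only the $\mN_m$-type correction appears and there is no $\mR_m$-type remainder), and then bounding the main term $\sum_{m=0}^n\tfrac{(-\eta n\mA)^m}{m!}$ via the same eigenvalue argument together with the $\mN_m$ spectral-norm estimates supplied by Lemma~\ref{lem:comb-sub1}. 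This direct route is actually simpler than the proof of Lemma~\ref{lem:thm1-term1} (no $\mR_m$ appears), but the Jensen reduction is strictly shorter and is the proof I would present.
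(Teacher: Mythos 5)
Your proof is correct, and it takes a genuinely different route from the paper. The paper proves Lemma~\ref{lem:thm1-sub3} from scratch: it expands $\mS_k=\sum_{m=0}^n(-\eta)^m e_m(\mA_{\sigma_k[n]})$, computes $\E[e_m(\mA_{\sigma_k[n]})]=\frac{1}{m!}\left((n\mA)^m-\mN_m\right)$ via the law of total expectation (exactly as in your sketched ``self-contained alternative,'' with no $\mR_m$-type remainder since indices inside a single $e_m$ are distinct), bounds $\bigl\lVert\sum_{m=0}^n\frac{(-\eta n\mA)^m}{m!}\bigr\rVert$ by an eigenvalue argument, and closes with a case analysis on $\kappa\leq n$ versus $\kappa\geq n$. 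You instead observe that $\E[\mS_k]^T\E[\mS_k]\preceq\E[\mS_k^T\mS_k]$ (Jensen, equivalently $\E[(\mS_k-\E\mS_k)^T(\mS_k-\E\mS_k)]\succeq 0$), so $\norm{\E[\mS_k]}\leq\sqrt{\norm{\E[\mS_k^T\mS_k]}}\leq\sqrt{1-\eta n\mu}\leq 1-\frac{\eta n\mu}{2}$, invoking Lemma~\ref{lem:thm1-term1} under its identical step-size hypothesis; there is no circularity, since the paper's proof of Lemma~\ref{lem:thm1-term1} does not use Lemma~\ref{lem:thm1-sub3}. Each step checks out, including the reduction $\norm{\E[\mS_k]}^2=\norm{\E[\mS_k]^T\E[\mS_k]}\leq\norm{\E[\mS_k^T\mS_k]}$ and the use of $\sqrt{1-t}\leq 1-t/2$ with $t=\eta n\mu\leq\eta nL\leq 3/16$. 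What your reduction buys is brevity and the conceptual point that the second contraction bound is literally a consequence of the first; what the paper's direct computation buys is independence from Lemma~\ref{lem:thm1-term1} and a cleaner handle on the structure of $\E[\mS_k]$ itself (only the $\mN_m$ corrections appear), which could in principle admit a wider step-size range, though as stated both lemmas use the same threshold.
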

\begin{lemma}
\label{lem:thm1-sub4}
For any $0 \leq \eta \leq \frac{1}{2 n L}$ and any $k \in [K]$,
\begin{equation*}
    \norm{\E[\vt_{k}]} \leq 4 \eta n L G.
\end{equation*}
\end{lemma}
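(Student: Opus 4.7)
The plan is to expand $\vt_k$ as a formal polynomial in $\eta$ and exploit the cancellation $\sum_{a=1}^n \vb_a=\zeros$ (which holds because $F$ is minimized at the origin). Write $\vt_k=\sum_{j=1}^n\mM_j\vb_{\sigma_k(j)}$ with $\mM_j\defeq\prod_{t=n}^{j+1}(\mI-\eta\mA_{\sigma_k(t)})$, expand each $\mM_j$ into noncommutative elementary symmetric polynomials in $\mA_{\sigma_k(j+1)},\dots,\mA_{\sigma_k(n)}$, and take expectation over $\sigma_k$. The $m=0$ contribution $\sum_j\E[\vb_{\sigma_k(j)}]$ vanishes because $\bar\vb=\zeros$, so only terms of order $m\geq 1$ in $\eta$ survive.

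For each $m\geq 1$, I would reindex the tuple $(j,t_1,\dots,t_m)$ with $j<t_1<\cdots<t_m\leq n$ as an ordered $(m+1)$-subset of $[n]$. Since the expectation of $\mA_{\sigma_k(t_m)}\cdots\mA_{\sigma_k(t_1)}\vb_{\sigma_k(j)}$ depends only on the distinctness of indices (via the uniform marginal of $\sigma_k$), the combined factor $\binom{n}{m+1}\cdot\tfrac{(n-m-1)!}{n!}=\tfrac{1}{(m+1)!}$ collapses the $m$-th contribution to $\E[\vt_k]$ to
\[\tfrac{(-\eta)^m}{(m+1)!}\sum_{\text{distinct } a_1,\dots,a_{m+1}\in[n]}\mA_{a_{m+1}}\cdots\mA_{a_2}\vb_{a_1}.\]
For $m=1$, using $\sum_a\vb_a=\zeros$, the distinct-tuple sum equals $-\sum_a\mA_a\vb_a$, so this contribution is $\tfrac{\eta}{2}\sum_a\mA_a\vb_a$ with norm at most $\tfrac{\eta nLG}{2}$. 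For $m\geq 2$, I would invoke the zero-sum identity $\sum_{\text{all tuples}}\mA_{a_{m+1}}\cdots\mA_{a_2}\vb_{a_1}=(n\mA)^m\sum_a\vb_a=\zeros$ to replace the distinct-tuple sum by minus the non-distinct one, whose cardinality is at most $\tfrac{m(m+1)}{2}n^m$ (from $1-\prod_{k=1}^m(1-k/n)\leq\sum_{k=1}^m k/n$); since each summand has norm at most $L^m G$, the $m$-th order contributes at most $\tfrac{1}{2(m-1)!}(\eta nL)^m G$ to $\|\E[\vt_k]\|$.

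Summing the resulting series with $y\defeq\eta nL\leq\tfrac12$ gives $\sum_{m\geq 2}\tfrac{y^m}{(m-1)!}=y(e^y-1)\leq 2y^2\leq y$, so the $m\geq 2$ tail contributes at most $\tfrac{\eta nLG}{2}$, and combining with the $m=1$ piece yields $\|\E[\vt_k]\|\leq \eta nLG\leq 4\eta nLG$ with a factor-of-four slack. The main obstacle is that the naive spectral-norm bound $\|\mM_j-\mI\|\leq e^{\eta L(n-j)}-1$ only gives $\|\E[\vt_k]\|\leq\E\|\vt_k\|=O(\eta n^2 LG)$, a factor-$n$ worse than needed; the zero-mean cancellation $\sum_a\vb_a=\zeros$ therefore must be propagated inside the expectation at every order in $\eta$, which is exactly what the elementary-polynomial and distinct-tuple bookkeeping accomplishes.
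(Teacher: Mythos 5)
Your proposal is correct and follows essentially the same route as the paper: expand $\vt_k$ into noncommutative elementary symmetric polynomials, evaluate the expectation over the uniform permutation through distinct index tuples, and exploit $\sum_{i}\vb_i=\zeros$ to gain the crucial factor of order $m/n$ at each order $m$ (the paper realizes this cancellation via the conditional expectation of $\vb_{\sigma_k(j)}$, turning a sum of $n-m$ terms into $m$ terms, while you realize it via the vanishing all-tuple factorized sum and a count of repeated-index tuples --- the same complement trick in slightly different bookkeeping). Your accounting even yields the marginally sharper constant $\eta n L G$ in place of the paper's $4\eta n L G$.
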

\begin{lemma}
\label{lem:thm1-sub5}
For any $0 \leq \eta \leq \frac{3}{16 n L}\min \{1, \sqrt{\frac{n}{\kappa}}\}$ and any $k \in [K]$,
\begin{equation*}
\norm{\E \left [
\mS_{K:k}^T
\mS_{K:k+1}
\vt_{{k}}
\right ]}
\leq 10 (1-\eta n \mu)^{K-k} \eta n L G.
\end{equation*}
\end{lemma}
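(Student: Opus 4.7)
The plan is to use independence of the epoch permutations to peel off the later epochs, then to decompose the resulting single-epoch expectation into a deterministic bias (handled by Lemmas~\ref{lem:thm1-sub3}--\ref{lem:thm1-sub4}) and a small correlation piece. First, since $\mS_{K:k}^T\mS_{K:k+1}=\mS_k^T\mS_{K:k+1}^T\mS_{K:k+1}$ and $\mS_k,\vt_k$ depend only on $\sigma_k$ while $\mS_{K:k+1}$ depends only on $\sigma_{k+1},\dots,\sigma_K$, conditioning on $\sigma_k$ and taking the inner expectation first gives
\begin{equation*}
\E\left[\mS_{K:k}^T\mS_{K:k+1}\vt_k\right]=\E_{\sigma_k}\!\left[\mS_k^T\bm{P}\,\vt_k\right],\qquad \bm{P}:=\E[\mS_{K:k+1}^T\mS_{K:k+1}].
\end{equation*}
Iterating Lemma~\ref{lem:thm1-term1} along the $K-k$ later epochs exactly as in the main body of the proof of Theorem~\ref{thm:quadratic} yields the PSD sandwich $\zeros\preceq\bm{P}\preceq(1-\eta n\mu)^{K-k}\mI$.

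Next I would exploit the fact that $\sum_{t=1}^n\mA_{\sigma_k(t)}=n\mA$ does not depend on $\sigma_k$, so the $O(\eta)$ piece of $\mS_k$ is actually deterministic. Writing $\mS_k=\mI-\eta n\mA+\sum_{m=2}^n(-\eta)^m e_m(\mA_{\sigma_k[n]})$, a binomial bound gives $\norm{\sum_{m=2}^n(-\eta)^m e_m(\mA_{\sigma_k[n]})}\leq(1+\eta L)^n-1-\eta nL\leq C(\eta nL)^2$ under the step-size assumption $\eta nL\leq 3/16$. Substituting this expansion of $\mS_k^T$ into $\E_{\sigma_k}[\mS_k^T\bm{P}\vt_k]$ produces three contributions: the deterministic bias $\bm{P}\,\E[\vt_k]$, a linear correction $-\eta n\mA\,\bm{P}\,\E[\vt_k]$, and a higher-order correlation term. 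The first is bounded by $\norm{\bm{P}}\cdot\norm{\E\vt_k}\leq 4(1-\eta n\mu)^{K-k}\eta nLG$ using Lemma~\ref{lem:thm1-sub4}; the second by submultiplicativity, $\eta nL\cdot\norm{\bm{P}}\cdot 4\eta nLG\leq(1-\eta n\mu)^{K-k}\eta nLG$ (using $\eta nL\leq 1/4$).

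The main obstacle is the remaining correlation contribution. A direct Cauchy--Schwarz using the variance bound $\E\norm{\vt_k}^2\leq 18\eta^2n^3L^2G^2\log n$ (from the proof of Lemma~\ref{lem:thm1-term2}) yields a bound of order $(1-\eta n\mu)^{K-k}(\eta nL)^2\eta n^{3/2}LG\sqrt{\log n}$, which is not uniformly tight enough in the regime $\kappa\lesssim n^{3/2}\sqrt{\log n}$. To sharpen it I would combine (a) the summation-by-parts representation $\vt_k=\eta\sum_{i=1}^{n-1}\bigl(\prod_{t=n}^{i+2}(\mI-\eta\mA_{\sigma_k(t)})\bigr)\mA_{\sigma_k(i+1)}\sum_{j=1}^{i}\vb_{\sigma_k(j)}$ from Lemma~\ref{lem:thm1-term2}'s proof with (b) the cancellation $\sum_i\vb_i=\zeros$---the same mechanism driving the $\eta nLG$ scaling in Lemma~\ref{lem:thm1-sub4}. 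Averaging the partial sums $\sum_{j=1}^i\vb_{\sigma_k(j)}$ over $\sigma_k$ annihilates the leading term in expectation, and only their covariance with the random matrix prefactor survives, summing over $i$ to $O(\eta nLG)$.

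Putting the three contributions together yields at most $10(1-\eta n\mu)^{K-k}\eta nLG$, as claimed. The hardest step is controlling the correlation term: because $\E\norm{\vt_k}^2$ is larger than $\norm{\E\vt_k}^2$ by a factor of order $n\log n$, Cauchy--Schwarz alone is too lossy, and one must use the specific structure of $\vt_k$---namely the cancellation $\sum_i\vb_i=\zeros$ that drives Lemma~\ref{lem:thm1-sub4}---to recover the optimal $\eta nLG$ scaling.
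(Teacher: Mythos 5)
Your setup matches the paper's: peeling off the later epochs by independence to reduce to $\E_{\sigma_k}[\mS_k^T \mM \vt_k]$ with $\mM := \E[\mS_{K:k+1}^T\mS_{K:k+1}]$, $\norm{\mM}\leq(1-\eta n\mu)^{K-k}$ via Lemma~\ref{lem:thm1-term1}, and your treatment of the deterministic pieces ($\mI$ and $-\eta n\mA$) via Lemma~\ref{lem:thm1-sub4} is fine. You also correctly diagnose that Cauchy--Schwarz on the remaining piece is too lossy because $\E\norm{\vt_k}^2$ exceeds $\norm{\E\vt_k}^2$ by a factor of order $n\log n$. But the resolution of exactly that piece is where the entire difficulty of the lemma lives, and your proposal only asserts it: the claim that ``averaging the partial sums $\sum_{j=1}^i\vb_{\sigma_k(j)}$ annihilates the leading term and only their covariance with the random matrix prefactor survives, summing over $i$ to $O(\eta nLG)$'' is the statement to be proved, not an argument. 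The partial sums cannot be averaged separately from the prefactor: the higher-order part of $\mS_k^T$, the matrix $\mM$, the product $\prod_{t=n}^{i+2}(\mI-\eta\mA_{\sigma_k(t)})\mA_{\sigma_k(i+1)}$, and the partial sum all depend on the same permutation $\sigma_k$, and conditioning on the matrix positions does not make the conditional mean of $\vb_{\sigma_k(j)}$ zero --- it makes it $-\tfrac{1}{n-m}\sum_{t\in\{\text{conditioned indices}\}}\vb_t$, which is small only when the conditioning set is small.

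Concretely, the paper's proof handles this by jointly expanding $\mS_k^T\mM\vt_k$ into terms $\vc_{j,m}$ indexed by the position $j$ of $\vb_{\sigma_k(j)}$ and the total matrix degree $m$, and then running a three-way case analysis that is entirely absent from your sketch: (i) terms where $j$ collides with an index of the left factor (the $\vr_{j,m}$ part), which are bounded by direct counting; (ii) terms of degree $m\geq n/2$, where the cancellation factor $\tfrac{m}{n-m}$ is useless (it blows up as $m\to n$) and one instead uses the binomial-ratio bound $\choose{2n}{m}\leq4\choose{2n}{m-1}$; and (iii) terms of degree $m<n/2$, where the law-of-total-expectation computation with $\sum_i\vb_i=\zeros$ yields the $\tfrac{m}{n-m}$ gain, with extra care for overlapping indices between the two matrix blocks. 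Only after all three cases does the geometric sum over $m$ and the sum over $j$ produce the $10(1-\eta n\mu)^{K-k}\eta nLG$ bound. Your proposal mentions the Lemma~\ref{lem:thm1-sub4} cancellation mechanism, which is indeed the right idea for case (iii), but it does not address the correlation with the extra random factor coming from $\mS_k$, the index-overlap issue, or the large-$m$ regime where cancellation gives nothing; as written, the key step would not go through without essentially reconstructing the paper's argument.
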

% Given these lemmas,
% \begin{align*}
% &~\norm{
% \left ( \prod_{t=k'-1}^{k+1} \E[\mS_{t}]\right )
% \E [\vt_{k}] }
% \norm{\E \left [
% \left (\prod_{t=k'}^{K} \mS_{t}^T\right)
% \left (\prod_{t=K}^{k'+1} \mS_{t} \right)
% \vt_{{k'}}
% \right ]}\\
% \leq&~ 40 \left( 1 - \frac{\eta n \mu}{2} \right)^{k'-k-1} (1-\eta n \mu)^{K-k'} \eta^2 n^2 L^2 G^2\\
% \leq&~ 40 \left( 1 - \frac{\eta n \mu}{2} \right)^{2K-k-k'-1} \eta^2 n^2 L^2 G^2,
% \end{align*}
% as desired.
Given these lemmas, we get the desired bound:
\begin{align*}
\E \left [ 
\< \mS_{K:k+1} \vt_{k}, \mS_{K:k'+1} \vt_{{k'}} \>
\right ]
&\leq 40 \left ( 1 - \frac{\eta n\mu}{2} \right )^{k'-k-1} (1-\eta n \mu)^{K-k'} \eta^2 n^2 L^2 G^2\\
&\leq 40 \left ( 1 - \frac{\eta n\mu}{2} \right )^{2K-k'-k-1} \eta^2 n^2 L^2 G^2.
\end{align*}

\subsection{Proof of the second contraction bound (Lemma~\ref{lem:thm1-sub3})}
\label{sec:proof-lem-thm1-sub3}
The proof goes in a similar way as the first contraction bound (Lemma~\ref{lem:thm1-term1}), but is simpler than Lemma~\ref{lem:thm1-term1}. Nevertheless, we recommend the readers to first go over Section~\ref{sec:proof-lem-thm1-term1} before reading this section, because this section borrows quantities defined in Section~\ref{sec:proof-lem-thm1-term1}.

\subsubsection{Decomposition into elementary polynomials}
For any permutation $\sigma_k$, recall that we can expand $\mS_{k}$ in the following way:
\begin{align*}
    \mS_{k}
    = \prod_{t=n}^1 (\mI - \eta \mA_{\sigma_k(t)})
    = \sum_{m=0}^n (-\eta)^m \sum_{1\leq t_1 < \dots < t_m \leq n} \mA_{\sigma_k(t_m)} \cdots \mA_{\sigma_k(t_1)}
    \eqdef
    \sum_{m=0}^n (-\eta)^m e_m(\mA_{\sigma_k[n]}),
\end{align*}
where the noncommutative elementary symmetric polynomial $e_m$ was defined in \eqref{eq:def-elem-poly}.
In what follows, we will examine the expectation $\E[e_m(\mA_{\sigma_k[n]})]$ closely and decompose $\E[S_k]$ into the sum of $\sum_{m=0}^n \frac{(-\eta n\mA)^m}{m!}$ and remainder terms.
%By bounding the spectral norm of $\sum_{m=0}^n \frac{(-2\eta n \mA)^m}{m!}$ and the remainder terms, we will get the desired bound on the spectral norm of $\E [\mS_{k}^T \mS_{k}]$.

\paragraph{Cases $0 \leq m \leq 1$.} 
By definition, $e_0(\mA_{\sigma_k[n]}) = \mI$ and $e_1(\mA_{\sigma_k[n]}) = \sum_{i=1}^n \mA_i = n\mA$, regardless of $\sigma_k$.

\paragraph{Cases $2 \leq m \leq n$.}
Note that each elementary symmetric polynomial $e_m(\mA_{\sigma_k[n]})$ contains $\choose{n}{m}$ terms, and each term is of the form
\begin{equation*}
    \prod_{i=m}^1 \mA_{\sigma_k(t_i)}, \text{ where } 1\leq t_1 < \dots < t_m \leq n.
\end{equation*}
Since the indices $t_1, \dots, t_m$ are guaranteed to be distinct, we have
\begin{equation*}
    \E \left [ \prod_{i=m}^1 \mA_{\sigma_k(t_i)} \right ]
    =
    \E\left [ \prod_{i=1}^{m} \mA_{\sigma_k(i)} \right ].
\end{equation*}
This expectation was evaluated in \eqref{eq:nmdef}:
\begin{align*}
    \E\left [ \prod_{i=1}^{m} \mA_{\sigma_k(i)} \right ]
    &= \frac{(n-m)!}{n!} \left ( \sum_{i=1}^n \mA_i \right )^m - \frac{(n-m)!}{n!} \sum_{\substack{j_1, \dots, j_m \in [n]\\j_1, \dots, j_m \text{ not unique}}} \prod_{i=1}^m \mA_{j_i}\\
    &\eqdef \frac{(n-m)!}{n!} ( n\mA )^m - \frac{(n-m)!}{n!} \mN_m.
\end{align*}
Here, we decompose the expectation of $\prod_{i=1}^{m} \mA_{\sigma_k(i)}$ into the difference of $(n\mA)^m$ and $\mN_m$. Note that all $\choose{n}{m}$ terms in $e_m(\mA_{\sigma_k[n]})$ have the same expectation, identical to the one evaluated above.
Therefore, we have
\begin{align*}
    \E[e_m(\mA_{\sigma_k[n]})] = \choose{n}{m} \frac{(n-m)!}{n!} (( n\mA )^m - \mN_m)
    = \frac{(n\mA)^m}{m!} - \frac{\mN_m}{m!}.
\end{align*}
Here, note one special case, $m = 2$:
\begin{align*}
    \mN_2 \defeq \sum_{\substack{j_1, j_2 \in [n]\\j_1, j_2 \text{ not unique}}} \mA_{j_1} \mA_{j_2}
    = \sum_{i=1}^n \mA_{i}^2,
\end{align*}
which is a sum of positive semi-definite matrices.

\paragraph{Spectral norm bound.}
Up to this point, we obtained the following equations for $e_m(\mA_{\sigma_k[n]})$'s:
\begin{align*}
    e_0(\mA_{\sigma_k[n]}) &= \mI,\\
    e_1(\mA_{\sigma_k[n]}) &= n \mA,\\
    \E[e_2(\mA_{\sigma_k[n]})] &= \half (n \mA)^2 - \half \sum_{i=1}^n \mA_i^2,\\
    \E[e_m(\mA_{\sigma_k[n]})] &= \frac{(n\mA)^m}{m!} - \frac{\mN_m}{m!}, \text{ for } m = 3, \dots, n.
\end{align*}
We substitute these to $\E [\mS_{k}] = \sum_{m=0}^{n} (-\eta)^m \E[e_m(\mA_{\sigma_k[n]})]$ and get
\begin{align*}
    \E [\mS_{k}] =& \sum_{m=0}^n \frac{(-\eta n\mA)^m}{m!} - \frac{\eta^2}{2} \sum_{i=1}^n \mA_i^2 - \sum_{m=3}^n (-\eta)^{m} \frac{\mN_m}{m!},
\end{align*}
and consequently,
\begin{align*}
    \norm{\E [\mS_{k}]} \leq& \norm{\sum_{m=0}^n \frac{(-\eta n \mA)^m}{m!}} + \sum_{m=3}^n \frac{\eta^m}{m!} \norm{\mN_m}.
\end{align*}
In what follows, we will bound each of the norms to get an upper bound.

\subsubsection{Bounding each term of the spectral norm bound}
We first start with $\norm{\sum_{m=0}^n \frac{(-\eta n \mA)^m}{m!}}$.
Note that for any eigenvalue $s$ of the positive definite matrix $\mA$, the corresponding eigenvalue of $\sum_{m=0}^n \frac{(-\eta n \mA)^m}{m!}$ is $\sum_{m=0}^n \frac{(-\eta n s)^m}{m!}$.
Recall $\eta \leq \frac{3}{16 n L}\min \{1, \sqrt{\frac{n}{\kappa}} \} \leq \frac{1}{4nL}$, so $0 \leq \eta n s \leq 1/4$ for any eigenvalue $s$ of $\mA$. Since $t \mapsto \sum_{m=0}^n \frac{(-t)^m}{m!}$ is a positive and decreasing function on $[0,0.25]$ for any $n \geq 2$, the maximum singular value (i.e., spectral norm) of $\sum_{m=0}^n \frac{(-\eta n \mA)^m}{m!}$ comes from the minimum eigenvalue of $\mA$, hence
\begin{equation*}
    \norm{\sum_{m=0}^n \frac{(-\eta n \mA)^m}{m!}} \leq \sum_{m=0}^n \frac{(-\eta n \mu)^m}{m!}.
\end{equation*}

As for $\norm{\mN_m}$ where $m=3, \dots, n$, recall that
$\mN_m$ is a sum of $n^m - \frac{n!}{(n-m)!}$ terms, and each of the terms has spectral norm bounded above by $L^m$. Thus,
\begin{align*}
    \frac{1}{m!} \norm{\mN_m} \leq \frac{1}{m!} \left ( n^m - \frac{n!}{(n-m)!} \right ) L^m 
    = \left ( \frac{n^m}{m!} - \choose{n}{m} \right ) L^m
    \leq \frac{1}{2} n^{m-1} L^m,
\end{align*}
due to Lemma~\ref{lem:comb-sub2}.

\subsubsection{Concluding the proof}
Putting the bounds together, we get
\begin{align*}
    \norm{\E [\mS_{k}]} 
    \leq& 
    \sum_{m=0}^n \frac{(-\eta n \mu)^m}{m!} 
    + \frac{1}{2n} \sum_{m=3}^n (\eta n L)^m\\
    \leq& 
    \sum_{m=0}^2 \frac{(-\eta n \mu)^m}{m!} 
    + \frac{1}{2n} \frac{(\eta n L)^3}{1-\eta n L}\\
    \leq& 1 - \eta n \mu + \frac{1}{2} (\eta n \mu)^2
    + \frac{2}{3n} (\eta n L)^3.
\end{align*}
Here, we used $\eta n L \leq 1/4$, and the fact that $1 - t + \frac{t^2}{2} \geq \sum_{m=0}^n \frac{(-t)^m}{m!}$ for all $t \in [0, 0.25]$ and $n \geq 2$. The remaining step is to show that the right hand side of the inequality is bounded above by $1-\frac{\eta n \mu}{2}$ for $0 \leq \eta \leq \frac{3}{16 n L}\min \{1, \sqrt{\frac{n}{\kappa}} \}$.

Define $z = \eta n L$. Using this, we have
\begin{align*}
    &~1 - \eta n \mu + \frac{1}{2} (\eta n \mu)^2
    + \frac{2}{3n} (\eta n L)^3 \leq 1-\frac{\eta n \mu}{2}
    \text{ for } 0 \leq \eta \leq \frac{3}{16 n L}\min \left \{1, \sqrt{\frac{n}{\kappa}} \right \}\\
    \iff &~
    g(z) \defeq \frac{z}{2 \kappa} - \frac{z^2}{2\kappa^2} - \frac{2z^3}{3n} \geq 0
    \text{ for } 0 \leq z \leq \frac{3}{16} \min \left \{1, \sqrt{\frac{n}{\kappa}} \right \},
\end{align*}
so it suffices to show the latter.
One can check that $g(0) = 0$, $g'(0) > 0$ and $g'(z)$ is monotonically decreasing in $z \geq 0$, so $g(z) \geq 0$ holds for $z \in [0, c]$ for some $c > 0$. This also means that if we have $g(c) \geq 0$ for some $c>0$, $g(z) \geq 0$ for all $z \in [0,c]$.

First, consider the case $\kappa \leq n$. Then, $n/\kappa \geq 1$ and $\kappa \geq 1$, so
\begin{align*}
    \frac{z}{2 \kappa} - \frac{z^2}{2\kappa^2} - \frac{2z^3}{3n}
    = \frac{1}{2 \kappa} \left ( z - \frac{z^2}{\kappa} - \frac{4z^3}{3n/\kappa} \right ) \geq \frac{1}{2 \kappa} \left ( z - z^2 - \frac{4}{3}z^3 \right ).
\end{align*}
We can check that the function $z \mapsto z - z^2 - \frac{4}{3}z^3$ is strictly positive at $z = \frac{3}{16}$. This means that $g(\frac{3}{8}) > 0$, hence $g(z) \geq 0$ for $0 \leq z \leq \frac{3}{8}$.

Next, consider the case $\kappa \geq n$. In this case, set $z = c \sqrt{\frac n\kappa}$ where $c = \frac{3}{16}$. Then,
\begin{align*}
    \frac{z}{2 \kappa} - \frac{z^2}{2\kappa^2} - \frac{2z^3}{3n}
    = \frac{1}{2 \kappa} \left (c \sqrt{\frac n\kappa} - \frac{c^2 n}{\kappa^2} - \frac{4c^3}{3} \sqrt{\frac n \kappa} \right ) 
    \geq \frac{1}{2 \kappa} \left ( \left (c-\frac{4c^3}{3} \right )\sqrt{\frac n\kappa} - c^2 \frac{n}{\kappa} \right ).
\end{align*}
Note that $\sqrt{\frac n \kappa} \leq 1$, and the function $t \mapsto (c-\frac{4}{3}c^3) t - c^2 t^2 = \frac{183}{1024} t - \frac{9}{256} t^2$ is nonnegative on $[0,1]$. Therefore, we have $g(\frac{3}{16} \sqrt{\frac n \kappa}) \geq 0$, so $g(z) \geq 0$ for $0 \leq z \leq \frac{3}{16} \sqrt{\frac n \kappa}$.

\subsection{Proof of Lemma~\ref{lem:thm1-sub4}}
\label{sec:proof-lem-thm1-sub4}
For this lemma, the proof is an extension of Lemma~8 in \cite{safran2019good} from one dimension to higher dimensions.
We use the law of total expectation to unwind the expectation $\E[\vt_k]$, and use $\sum_{i=1}^n \vb_i = \zeros$ to write
\begin{equation*}
    \sum_{i_{m+1} \in [n] \setminus \{i_1, \dots, i_m\}} \vb_{i_{m+1}} = - \sum_{i_{m+1} \in \{i_1, \dots, i_m\}} \vb_{i_{m+1}},
\end{equation*}
which turns a sum of $n-m$ terms into $m$ terms. This trick reduces the bound by a factor of $n$.

Now, expand the expectation of $\vt_k$ as
\begin{align*}
    \E[\vt_{k}]
    &= \E \left [\sum_{j=1}^n \left ( \prod_{t=n}^{j+1} (\mI-\eta \mA_{\sigma_k(t)}) \right ) \vb_{\sigma_k(j)} \right ]
    = \sum_{j=1}^n \E \left [\left ( \prod_{t=n}^{j+1} (\mI-\eta \mA_{\sigma_k(t)}) \right ) \vb_{\sigma_k(j)} \right ]\\
    &= \sum_{j=1}^n \E \left [ \vb_{\sigma_k(j)} 
    + \sum_{m=1}^{n-j} (-\eta)^m \sum_{j+1\leq t_1 < \dots < t_m \leq n}
    \left ( \prod_{i=m}^1 \mA_{\sigma_k(t_i)} \right ) \vb_{\sigma_k(j)} 
    \right ]\\
    &=\sum_{j=1}^n 
    \sum_{m=1}^{n-j} (-\eta)^m 
    \E \left [
    e_m(\mA_{\sigma_k[n]};j+1,n)
    \vb_{\sigma_k(j)} 
    \right ],
\end{align*}
where the elementary polynomial $e_m$ is defined in \eqref{eq:def-elem-poly}.
Now, fix any $t_1, \dots, t_m$ satisfying $j+1\leq t_1 < \dots < t_m \leq n$. Since all the indices $j, t_1, \dots, t_m$ in the product are unique, the expectation is the same for all $\choose{n-j}{m}$ such terms:
\begin{equation*}
    \E \left [
    \left ( \prod_{i=m}^1 \mA_{\sigma_k(t_i)} \right ) \vb_{\sigma_k(j)} 
    \right ]
    =
    \E \left [
    \left ( \prod_{i=1}^m \mA_{\sigma_k(i)} \right ) \vb_{\sigma_k(m+1)} 
    \right ].
\end{equation*}
We can calculate the expectation using the law of total expectation.
\begin{align*}
    &~\E \left [
    \mA_{\sigma_k(1)} \mA_{\sigma_k(t_{2})} \dots \mA_{\sigma_k(m)}  \vb_{\sigma_k(m+1)} 
    \right ]\\
    =&~ \sum_{i_1 \in [n]} \mA_{i_1} \E \left [
    \mA_{\sigma_k(2)} \dots \mA_{\sigma_k(m)}  \vb_{\sigma_k(m+1)} \mid \sigma_k(1) = i_1
    \right ]
    \prob[\sigma_k(t_1) = i_1]
    \\
    =&~ \frac{1}{n} \sum_{i_1 \in [n]} \mA_{i_1} \E \left [
    \mA_{\sigma_k(2)} \dots \mA_{\sigma_k(m)}  \vb_{\sigma_k(m+1)} \mid \sigma_k(1) = i_1
    \right ]\\
    =&~ \frac{1}{n(n-1)}
    \sum_{i_1 \in [n]} 
    \sum_{i_{2} \in [n] \setminus \{i_1\}} 
    \mA_{i_1} \mA_{i_{2}} \E \left [
    \mA_{\sigma_k(3)} \dots \mA_{\sigma_k(m)}  \vb_{\sigma_k(m+1
    )} \mid \sigma_k(1) = i_1, \sigma_k(2) = i_{2}
    \right ]\\
    =&~ \frac{(n-m)!}{n!} 
    \sum_{i_1 \in [n]} 
    %\sum_{i_{2} \in [n] \setminus \{i_1\} }
    \dots
    \sum_{i_{m} \in [n] \setminus \{i_1, \dots, i_{m-1}\}}
    \left (\prod_{l=1}^m \mA_{i_l} \right )
    \E \left [
    \vb_{\sigma_k(m+1)} \mid \sigma_k(1) = i_1, \dots, \sigma_k(m) = i_m
    \right ]\\
    =&~ \frac{(n-m)!}{n!} 
    \sum_{i_1 \in [n]} 
    %\sum_{i_{2} \in [n] \setminus \{i_1\} }
    \dots
    \sum_{i_{m} \in [n] \setminus \{i_1, \dots, i_{m-1}\}}
    \left (\prod_{l=1}^m \mA_{i_l} \right ) 
    \frac{1}{n-m}
    \sum_{i_{m+1} \in [n] \setminus \{i_1, \dots, i_m\}} \vb_{i_{m+1}}\\
    =&~
    -
    \frac{(n-m)!}{n!} 
    \sum_{i_1 \in [n]} 
    %\sum_{i_{2} \in [n] \setminus \{i_1\} }
    \dots
    \sum_{i_{m} \in [n] \setminus \{i_1, \dots, i_{m-1}\}}
    \left (\prod_{l=1}^m \mA_{i_l} \right ) 
    \frac{1}{n-m}
    \sum_{i_{m+1} \in \{i_1, \dots, i_m\}} \vb_{i_{m+1}}.
\end{align*}
As a consequence, we get
\begin{align*}
    \norm{\E \left [
    \left ( \prod_{i=m}^1 \mA_{\sigma_k(t_i)} \right )  \vb_{\sigma_k(j)} 
    \right ]}
    \leq
    \frac{m}{n-m} L^m G,
\end{align*}
for each term in $e_m(\mA_{\sigma_k[n]};j+1,n) \vb_{\sigma_k(j)}$.
Applying this to the norm of $\E[\vt_{k}]$ gives
\begin{align*}
    \norm{\E[\vt_{k}]}
    &\leq
    \sum_{j=1}^n 
    \sum_{m=1}^{n-j} \eta^m 
    \norm{\E \left [
    e_m(\mA_{\sigma_k[n]};j+1,n)
    \vb_{\sigma_k(j)} 
    \right ]}\\
    &\leq
    \sum_{j=1}^n 
    \sum_{m=1}^{n-j} \eta^m \choose{n-j}{m} 
    \frac{m}{n-m} L^m G
    \leq 
    \sum_{j=1}^n 
    \sum_{m=1}^{n-1} \eta^m \choose{n}{m} 
    \frac{m}{n-m} L^m G\\
    &=
    \sum_{j=1}^n 
    \sum_{m=1}^{n-1} \eta^m \choose{n}{m-1} 
    \frac{n-m+1}{n-m} L^m G
    \leq 
    2G 
    \sum_{j=1}^n 
    \sum_{m=1}^{n-1} \eta^m n^{m-1} L^m\\
    &\leq 2nG \frac{\eta L}{1-\eta n L} 
    \leq 4\eta n L G,
\end{align*}
where the last steps used $\eta n L \leq 0.5$.

\subsection{Proof of Lemma~\ref{lem:thm1-sub5}}
\label{sec:proof-lem-thm1-sub5}
\subsubsection{Proof outline}
First, recall that $\mS_{t}$ for $t > k$ is independent of $\sigma_k$. So
\begin{align*}
\E \left [
\mS_{K:k}^T
\mS_{K:k+1}
\vt_{k}
\right ]
= 
\E \Big [ \mS_{k}^T
\underbrace{\E \left [\mS_{K:k+1}^T
\mS_{K:k+1}
\right ]}_{\eqdef \mM}
\vt_{{k}}
\Big ]
= \E \left [ \mS_{k}^T \mM \vt_{k} \right ],
\end{align*}
where $\mM$ is a matrix satisfying $\norm{\mM} \leq (1-\eta n \mu)^{K-k}$ (due to Lemma~\ref{lem:thm1-term1}) that does not depend on $\sigma_k$.
Recall that
\begin{align*}
    \mS_{k}^T
    &= \prod_{t=1}^n (\mI - \eta \mA_{\sigma_k(t)})
    = \sum_{m=0}^n (-\eta)^m e_m(\mA_{\sigma_k[n]})^T,\\
    \vt_{k}
    &= \sum_{j=1}^n \left ( \prod_{t=n}^{j+1} (\mI-\eta \mA_{\sigma_k(t)}) \right ) \vb_{\sigma_k(j)}
    = \sum_{j=1}^n \sum_{m=1}^{n-j} (-\eta)^m 
    e_m(\mA_{\sigma_k[n]};j+1,n) \vb_{\sigma_k(j)},
\end{align*}
where the elementary polynomial $e_m$ is defined in \eqref{eq:def-elem-poly}.
Substituting these into $\mS_{k}^T \mM \vt_{k}$ gives
\begin{align*}
    \mS_{k}^T \mM \vt_{k}
    =
    \sum_{j=1}^n 
    \sum_{m=1}^{2n-j} 
    (-\eta)^m
    \underbrace{
    \sum_{\substack{0\leq m_1 \leq n \\ 1 \leq m_2 \leq n-j \\ m_1+m_2 = m}}
    e_{m_1}(\mA_{\sigma_k[n]})^T
    \mM
    e_{m_2}(\mA_{\sigma_k[n]};j+1,n) \vb_{\sigma_k(j)}
    }_{\eqdef \vc_{j,m}}
    .
\end{align*}
The rest of the proof is decomposing and bounding the vector $\vc_{j,m}$ for each $j=1,\dots, n$ and $m = 1, \dots, 2n-j$ to get the desired bound on the norm of $\E[\mS_{k}^T \mM \vt_{k}]$.

\subsubsection{Decomposing the terms in the vector $\mathbf c_{j,m}$ into three categories}
Now fix any $j \in [n]$ and $m \in [2n-j]$, and consider any $m_1$ and $m_2$ satisfying $m_1 + m_2 = m$. 
Then, the product $e_{m_1}(\mA_{\sigma_k[n]})^T \mM
e_{m_2}(\mA_{\sigma_k[n]};j+1,n) \vb_{\sigma_k(j)}$ in $\vc_{j,m}$ consists of $\choose{n}{m_1} \choose{n-j}{m_2}$ terms of the following form:
\begin{align*}
    &\left ( \prod_{i=1}^{m_1} \mA_{\sigma_k(s_i)} \right )
    \mM
    \left ( \prod_{i=m_2}^1 \mA_{\sigma_k(t_i)} \right ) \vb_{\sigma_k(j)},\\
    \text{ where }&
    1\leq s_1 < \dots < s_{m_1} \leq n, \text{ and }
    j+1\leq t_1 < \dots < t_{m_2} \leq n.
\end{align*}
Among them, $\choose{n-1}{m_1} \choose{n-j}{m_2}$ terms have the property that $j \notin \{s_1, \dots, s_{m_1}\}$. The remaining $\choose{n-1}{m_1-1} \choose{n-j}{m_2}$ terms satisfy $j \in \{s_1, \dots, s_{m_1}\}$.

Using this observation, we decompose $\vc_{j,m}$ into two terms $\vc_{j,m} = \vd_{j,m} + \vr_{j,m}$. Here, $\vd_{j,m}$ is a sum of terms in $\vc_{j,m}$ with $s_1, \dots, s_{m_1}$ that satisfies $j \notin \{s_1, \dots, s_{m_1}\}$, and $\vr_{j,m}$ is the sum of the remaining terms.
\begin{align*}
    \vd_{j,m} &\defeq 
    \sum_{\substack{0\leq m_1 \leq n-1 \\ 1 \leq m_2 \leq n-j \\ m_1+m_2 = m}} 
    \sum_{\substack{1\leq s_1 < \dots < s_{m_1} \leq n \\ j+1\leq t_1 < \dots < t_{m_2} \leq n \\ j \notin \{s_1, \dots, s_{m_1} \}}} 
    \left ( \prod_{i=1}^{m_1} \mA_{\sigma_k(s_i)} \right )
    \mM
    \left ( \prod_{i=m_2}^1 \mA_{\sigma_k(t_i)} \right ) \vb_{\sigma_k(j)},\\
    \vr_{j,m} &\defeq \vc_{j,m} - \vd_{j,m}.
\end{align*}
Then, we will bound the sum of terms in $\vd_{j,m}$ and $\vr_{j,m}$ separately.
There are three categories we consider:
\begin{enumerate}
    \item Bounding $\vr_{j,m}$,
    \item Bounding $\vd_{j,m}$, for $m \geq n/2$,
    \item Bounding $\vd_{j,m}$, for $m < n/2$.
\end{enumerate}
The first two categories are straightforward, and the last category requires the law of total expectation trick. We will first state the bounds for the first two and then move on to the third.

\subsubsection{Directly bounding the first two categories}
For the first category, the norm of each term in $\vr_{j,m}$ can be easily bounded:
\begin{align*}
    \norm{
    \left ( \prod_{i=1}^{m_1} \mA_{\sigma_k(s_i)} \right )
    \mM
    \left ( \prod_{i=m_2}^1 \mA_{\sigma_k(t_i)} \right ) \vb_{\sigma_k(j)}}
    \leq (1-\eta n \mu)^{K-k} L^m G.
\end{align*}
Since there are $\choose{n-1}{m_1-1} \choose{n-j}{m_2}$ terms in $\vr_{j,m}$ for each $m_1$, $m_2$ satisfying $m_1 + m_2 = m$, we have
\begin{align}
    \norm{\vr_{j,m}}
    &\leq
    (1-\eta n \mu)^{K-k} L^m G
    \sum_{\substack{1\leq m_1 \leq n \\ 1 \leq m_2 \leq n-j \\ m_1+m_2 = m}} 
    \choose{n-1}{m_1-1}\choose{n-j}{m_2}\nonumber\\
    &\leq
    (1-\eta n \mu)^{K-k} L^m G
    \sum_{\substack{0\leq m_1 \leq n-1 \\ 0 \leq m_2 \leq n-j \\ m_1+m_2 = m-1}} 
    \choose{n-1}{m_1}\choose{n-j}{m_2}\nonumber\\
    &= (1-\eta n \mu)^{K-k} \choose{2n-j-1}{m-1} L^m G 
    \leq (1-\eta n \mu)^{K-k} (2n)^{m-1} L^m G.  \label{eq:rjmbound}
\end{align}
For the second category where $m \geq n/2$, the norm of each term in $\vd_{j,m}$ can be bounded by $(1-\eta n \mu)^{K-k} L^m G$ in the same way.
Now, since there are $\choose{n-1}{m_1} \choose{n-j}{m_2}$ terms for each $m_1$ and $m_2$, we have
\begin{align*}
    \norm{\vd_{j,m}}
    &\leq
    (1-\eta n \mu)^{K-k} L^m G
    \sum_{\substack{0\leq m_1 \leq n-1 \\ 1 \leq m_2 \leq n-j \\ m_1+m_2 = m}} 
    \choose{n-1}{m_1}\choose{n-j}{m_2}\\
    &\leq
    (1-\eta n \mu)^{K-k} L^m G
    \sum_{\substack{0\leq m_1 \leq n-1 \\ 0 \leq m_2 \leq n-j \\ m_1+m_2 = m}} 
    \choose{n-1}{m_1}\choose{n-j}{m_2}\\
    &= (1-\eta n \mu)^{K-k} \choose{2n-j-1}{m} L^m G 
    \leq (1-\eta n \mu)^{K-k} \choose{2n}{m} L^m G.
\end{align*}
Since $m \geq n/2$, we can upper-bound $\choose{2n}{m}$ with a constant multiple of $\choose{2n}{m-1}$:
\begin{align*}
    \choose{2n}{m} = \frac{2n-m+1}{m} \choose{2n}{m-1} \leq 4\choose{2n}{m-1},
\end{align*}
where the inequality holds because
\begin{align*}
    m \geq n/2 \text{ and } n\geq 2 \Rightarrow 5m \geq 2n + 1 \iff 4m \geq 2n - m + 1.
\end{align*}
Therefore, if $m \geq n/2$,
\begin{equation}
\label{eq:djmbound1}
    \norm{\vd_{j,m}} \leq 4 (1-\eta n \mu)^{K-k} (2n)^{m-1} L^m G.
\end{equation}

\subsubsection{Bounding the third category using the law of total expectation}
We will show a similar bound for $\norm{\E[\vd_{j,m}]}$ in case of $m < n/2$ as well, but the third category requires a bit more care. For $m < n/2$, we need to use the law of total expectation to exploit the fact that $\sum_{i} \vb_i = \zeros$ and reduce a factor of $n$.

Now consider the expectation for a term in $\vd_{j,m}$:
\begin{equation*}
    \E \left [
    \left ( \prod_{i=1}^{m_1} \mA_{\sigma_k(s_i)} \right )
    \mM
    \left ( \prod_{i=m_2}^1 \mA_{\sigma_k(t_i)} \right ) \vb_{\sigma_k(j)}
    \right ].
\end{equation*}
We will use the law of total expectation to bound the norm of this expectation. One thing we should be careful of is that there may be overlapping indices between $\{s_1, \dots, s_{m_1}\}$ and $\{t_1, \dots, t_{m_2}\}$. For now, let us assume that there are no overlapping indices; hence, $s_1, \dots, s_{m_1}, t_1, \dots, t_{m_2}, j$ are all distinct. Then, the expectation can be expanded as the following.
\begin{align*}
    &~\E \left [
    \left ( \prod_{i=1}^{m_1} \mA_{\sigma_k(s_i)} \right )
    \mM
    \left ( \prod_{i=m_2}^1 \mA_{\sigma_k(t_i)} \right ) \vb_{\sigma_k(j)}
    \right ]\\
    =&~\sum_{i_1 \in [n]}
    \E \left [
    \left ( \prod_{i=1}^{m_1} \mA_{\sigma_k(s_i)} \right )
    \mM
    \left ( \prod_{i=m_2}^1 \mA_{\sigma_k(t_i)} \right ) \vb_{\sigma_k(j)}
    \mid \sigma_k(s_1) = i_1 \right ] 
    \prob [\sigma_k(s_1) = i_1]\\
    =&~\frac{1}{n} \sum_{i_1 \in [n]}
    \mA_{i_1}
    \E \left [
    \left ( \prod_{i=2}^{m_1} \mA_{\sigma_k(s_i)} \right )
    \mM
    \left ( \prod_{i=m_2}^1 \mA_{\sigma_k(t_i)} \right ) \vb_{\sigma_k(j)}
    \mid \sigma_k(s_1) = i_1 \right ]\\
    =&~\frac{1}{n(n-1)} \sum_{i_1 \in [n]} \sum_{i_2 \in [n] \setminus \{i_1\}}
    \mA_{i_1} \mA_{i_2}
    \E \left [
    \left ( \prod_{i=3}^{m_1} \mA_{\sigma_k(s_i)} \right )
   \mM
    \left ( \prod_{i=m_2}^1 \mA_{\sigma_k(t_i)} \right ) \vb_{\sigma_k(j)}
    \mid \sigma_k(s_1) = i_1, \sigma_k(s_2) = i_2 \right ]\\
    =&~\frac{(n-m)!}{n!} 
    \sum_{i_1 \in [n]} 
    \dots 
    \sum_{i_{m_1} \in [n] \setminus \{i_1, \dots, i_{m_1-1}\}} 
    \sum_{l_{m_2} \in [n] \setminus \{i_1, \dots, i_{m_1}\}} 
    \dots 
    \sum_{l_{1} \in [n] \setminus \{i_1, \dots, i_{m_1}, l_2, \dots, l_{m_2}\}} \\
    &\quad\quad
    \left ( \prod_{t=1}^{m_1} \mA_{i_t} \right )
    \mM
    \left ( \prod_{t=m_2}^1 \mA_{l_t} \right )
    \E \left [
    \vb_{\sigma_k(j)}
    \mid \sigma_k(s_1) = i_1, \dots, \sigma_k(t_{m_2}) = l_{m_2}
    \right ].
\end{align*}
Here, by $\sum_{t} \vb_t = \zeros$,
\begin{align*}
    \E \left [
    \vb_{\sigma_k(j)}
    \mid \sigma_k(s_1) = i_1, \dots, \sigma_k(t_{m_2}) = l_{m_2}
    \right ]
    &= \frac{1}{n-m} \sum_{t \in [n] \setminus \{i_1, \dots, i_{m_1}, l_1, \dots, l_{m_2}\}} \vb_{t}\\
    &= - \frac{1}{n-m} \sum_{t \in \{i_1, \dots, i_{m_1}, l_1, \dots, l_{m_2}\}} \vb_{t}.
\end{align*}
Putting these together, we can get a bound on the norm of the expectation:
\begin{align*}
    &~\norm{\E \left [
    \left ( \prod_{i=1}^{m_1} \mA_{\sigma_k(s_i)} \right )
    \mM
    \left ( \prod_{i=m_2}^1 \mA_{\sigma_k(t_i)} \right ) \vb_{\sigma_k(j)}
    \right ]}\\
    \leq&~
    \frac{(n-m)!}{n!} 
    \sum_{i_1 \in [n]} 
    \dots 
    \sum_{i_{m_1} \in [n] \setminus \{i_1, \dots, i_{m_1-1}\}} 
    \sum_{l_{m_2} \in [n] \setminus \{i_1, \dots, i_{m_1}\}} 
    \dots 
    \sum_{l_{1} \in [n] \setminus \{i_1, \dots, i_{m_1}, l_2, \dots, l_{m_2}\}} \\
    &\quad\quad
    \norm{
    \left ( \prod_{t=1}^{m_1} \mA_{i_t} \right )
    \mM
    \left ( \prod_{t=m_2}^1 \mA_{l_t} \right )
    \frac{1}{n-m} \sum_{t \in \{i_1, \dots, i_{m_1}, l_1, \dots, l_{m_2}\}} \vb_{t}
    }\\
    \leq&~ (1-\eta n \mu)^{K-k} \frac{m}{n-m} L^m G.
\end{align*}

What if there are overlapping indices between $\{s_1, \dots, s_{m_1}\}$ and $\{t_1, \dots, t_{m_2}\}$?
Suppose the union of the two sets has $\tilde m < m$ elements. Notice that even in this case, $j$ does not overlap with any $s_i$ or $t_i$.
So, we can do a similar calculation and use the $\sum_t \vb_t = \zeros$ trick at the end. This gives
\begin{align*}
    \norm{\E \left [
    \left ( \prod_{i=1}^{m_1} \mA_{\sigma_k(s_i)} \right )
    \mM
    \left ( \prod_{i=m_2}^1 \mA_{\sigma_k(t_i)} \right ) \vb_{\sigma_k(j)}
    \right ]}
    &\leq (1-\eta n \mu)^{K-k} \frac{\tilde m}{n-\tilde m} L^m G\\
    &\leq (1-\eta n \mu)^{K-k} \frac{m}{n-m} L^m G,
\end{align*}
so the same upper bound holds even for the terms with overlapping indices.
Now, since there are $\choose{n-1}{m_1} \choose{n-j}{m_2}$ such terms for each $m_1$ and $m_2$, we have
\begin{align*}
    \norm{\E[\vd_{j,m}]}
    &\leq
    (1-\eta n \mu)^{K-k} \frac{m}{n-m} L^m G 
    \sum_{\substack{0\leq m_1 \leq n-1 \\ 1 \leq m_2 \leq n-j \\ m_1+m_2 = m}}
    \choose{n-1}{m_1}\choose{n-j}{m_2}\\
    &\leq
    (1-\eta n \mu)^{K-k} \frac{m}{n-m} \choose{2n-j-1}{m} L^m G.
\end{align*}
Note that
\begin{align*}
    \frac{m}{n-m} \choose{2n-j-1}{m}
    \leq 
    \frac{m}{n-m} \choose{2n}{m}
    = \frac{2n-m+1}{n-m} \choose{2n}{m-1} 
    \leq 3 \choose{2n}{m-1},
\end{align*}
this is because
\begin{align*}
    m < n/2
    &\Rightarrow
    2m + 1 \leq n\\
    &\iff 2m+1 + (2n-3m) \leq n + (2n-3m)\\
    &\iff \frac{2m+1 + (2n-3m)}{n + (2n-3m)} = \frac{2n-m+1}{3n-3m} \leq 1.
\end{align*}
Thus, for $m < n/2$, we obtain the following bound on $\norm{\E[\vd_{j,m}]}$:
\begin{align}
\label{eq:djmbound2}
    \norm{\E[\vd_{j,m}]} \leq 3 (1-\eta n \mu)^{K-k} (2n)^{m-1} L^m G.
\end{align}

\subsubsection{Concluding the proof}
Finally, using the bounds \eqref{eq:rjmbound}, \eqref{eq:djmbound1}, and \eqref{eq:djmbound2}, we get
\begin{align*}
    \norm{\E[\mS_{k}^T \mM \vt_{k}]}
    &\leq
    \sum_{j=1}^n 
    \sum_{m=1}^{2n-j} 
    \eta^m
    \norm{\E[\vc_{j,m}]}
    \leq
    \sum_{j=1}^n 
    \sum_{m=1}^{2n-j} 
    \eta^m
    (\norm{\E[\vd_{j,m}]}
    +
    \norm{\E[\vr_{j,m}]})\\
    &\leq 
    \sum_{j=1}^n 
    \sum_{m=1}^{2n-j} 
    5 (1-\eta n \mu)^{K-k} (2n)^{m-1} \eta^m L^m G\\
    &\leq 5 (1-\eta n \mu)^{K-k} nG \frac{\eta L}{1-2\eta n L}
    \leq 10 (1-\eta n \mu)^{K-k} \eta n L G,
\end{align*}
where the last inequality used $\eta \leq \frac{1}{4nL}$.

\subsection{Technical lemmas on binomial coefficients}
\begin{lemma}
\label{lem:comb-sub1}
For any $n \in \N$ and $2 \leq m \leq n$,
\begin{align*}
    \choose{2n}{m} - 2^m \choose{n}{m}
    \leq
    \frac{(2n)^m}{m!} - 2^m \choose{n}{m}
    \leq 
    \frac{(2n)^{m-1}}{(m-2)!}.
\end{align*}
\end{lemma}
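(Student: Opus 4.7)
The first inequality is immediate: since $\binom{2n}{m} = \tfrac{(2n)(2n-1)\cdots(2n-m+1)}{m!} \leq \tfrac{(2n)^m}{m!}$, subtracting the common term $2^m \binom{n}{m}$ from both sides yields the left inequality. All the work is in the second inequality.

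For the second inequality, my plan is to reduce it to a clean statement about falling factorials. Using $\binom{n}{m} = \tfrac{n(n-1)\cdots(n-m+1)}{m!}$, we can rewrite $\tfrac{(2n)^m}{m!} - 2^m\binom{n}{m} = \tfrac{2^m}{m!}\bigl(n^m - n(n-1)\cdots(n-m+1)\bigr)$, so after clearing $\tfrac{2^{m-1}}{m!}$ the claim is equivalent to
\begin{equation*}
n^m - \prod_{i=0}^{m-1}(n-i) \;\leq\; \frac{m(m-1)}{2}\, n^{m-1}.
\end{equation*}

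To prove this, I would use a telescoping identity. Define $b_j \defeq n^{m-j}\prod_{i=0}^{j-1}(n-i)$ for $j = 0, 1, \dots, m$, so that $b_0 = n^m$ and $b_m = \prod_{i=0}^{m-1}(n-i)$. A direct computation gives
\begin{equation*}
b_j - b_{j+1} \;=\; n^{m-j-1}\prod_{i=0}^{j-1}(n-i)\cdot\bigl[n - (n-j)\bigr] \;=\; j\cdot n^{m-j-1}\prod_{i=0}^{j-1}(n-i).
\end{equation*}
Summing the telescoping differences $\sum_{j=0}^{m-1}(b_j - b_{j+1}) = b_0 - b_m$ and then bounding each factor $\prod_{i=0}^{j-1}(n-i) \leq n^j$ yields
\begin{equation*}
n^m - \prod_{i=0}^{m-1}(n-i) \;=\; \sum_{j=1}^{m-1} j\cdot n^{m-j-1}\prod_{i=0}^{j-1}(n-i) \;\leq\; \sum_{j=1}^{m-1} j\cdot n^{m-1} \;=\; \frac{m(m-1)}{2}\, n^{m-1},
\end{equation*}
which is exactly the required inequality.

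There is no real obstacle here; the only conceptual step is spotting the right telescoping sequence $b_j$, after which the bound $\prod_{i=0}^{j-1}(n-i) \leq n^j$ finishes the job. The constant $\tfrac{m(m-1)}{2}$ arises naturally as $\sum_{j=1}^{m-1} j$, which matches $\tfrac{m!}{2(m-2)!}$ on the right-hand side of the claim, so the bound is tight in structure.
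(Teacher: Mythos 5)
Your proof is correct, but it takes a different route from the paper. The paper handles the first inequality the same way you do, and then proves the second by induction on $m$: the base case $m=2$ holds with equality, and the inductive step rewrites $\frac{(2n)^{m+1}}{(m+1)!} - 2^{m+1}\binom{n}{m+1}$ in terms of the $m$-th quantity and grinds through an algebraic verification that reduces to checking $m \leq m^2$. You instead factor out $\frac{2^m}{m!}$ to reduce the claim to the falling-factorial bound $n^m - \prod_{i=0}^{m-1}(n-i) \leq \frac{m(m-1)}{2}\,n^{m-1}$, which you prove directly via the telescoping sequence $b_j = n^{m-j}\prod_{i=0}^{j-1}(n-i)$ and the crude bound $\prod_{i=0}^{j-1}(n-i) \leq n^j$; the steps check out, and the $m=2$ case is again an equality, so the two arguments have the same tightness. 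Your approach is arguably cleaner: it avoids induction entirely, and the core inequality you isolate is exactly the common content of both this lemma and the paper's Lemma~\ref{lem:comb-sub2} (which reduces, after factoring out $\frac{1}{m!}$, to the very same falling-factorial bound), so a single telescoping argument would dispatch both, whereas the paper runs two nearly identical inductions. The paper's induction, on the other hand, requires no clever choice of interpolating sequence and is purely mechanical once set up.
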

\begin{proof}
The first inequality is straightforward from
\begin{align*}
    \choose{2n}{m} = \frac{2n(2n-1)\dots(2n-m+1)}{m!} \leq \frac{(2n)^m}{m!}.
\end{align*}
The remaining inequality is shown with mathematical induction.
For the base case ($m = 2$), 
\begin{align*}
    \frac{(2n)^2}{2!} - 2^2 \choose{n}{2}
    = 2n^2 - 4 \frac{n(n-1)}{2} = 2n = \frac{(2n)^{2-1}}{(2-2)!},
\end{align*}
so the inequality holds with equality.
For the inductive case, suppose
\begin{align*}
    \frac{(2n)^m}{m!} - 2^m \choose{n}{m}
    \leq 
    \frac{(2n)^{m-1}}{(m-2)!}
\end{align*}
holds, where $2 \leq m \leq n-1$. Then,
\begin{align*}
    \frac{(2n)^{m+1}}{(m+1)!} - 2^{m+1} \choose{n}{m+1}
    &= \frac{2n}{m+1} \frac{(2n)^m}{m!} 
    - \frac{2(n-m)}{m+1} 2^m \choose{n}{m}\\
    &= \frac{2m}{m+1} \frac{(2n)^m}{m!} 
    + \frac{2(n-m)}{m+1} \left ( \frac{(2n)^m}{m!}  - 2^m \choose{n}{m} \right )\\
    &\leq \frac{2m}{m+1} \frac{(2n)^m}{m!} 
    + \frac{2(n-m)}{m+1} \frac{(2n)^{m-1}}{(m-2)!}\\
    &= \frac{(2n)^{m-1}}{(m+1)(m-2)!} 
    \left ( 
    \frac{4n}{m-1} + 2(n-m)
    \right )\\
    &= \frac{(2n)^{m-1}}{(m+1)(m-2)!} 
    \frac{2mn+2n-2m^2+2m}{m-1} \\
    &= \frac{(2n)^m}{(m-1)!}
    \frac{mn+n-m^2+m}{n(m+1)}.
\end{align*}
It now suffices to check that $\frac{mn+n-m^2+m}{n(m+1)} \leq 1$.
\begin{align*}
    \frac{mn+n-m^2+m}{n(m+1)} \leq 1
    \iff
    mn+n-m^2+m \leq mn + n
    \iff
    m \leq m^2.
\end{align*}
Since $m \geq 2$, the inequality holds. This finishes the proof.
\end{proof}

\begin{lemma}
\label{lem:comb-sub2}
For any $n \in \N$ and $2 \leq m \leq n$,
\begin{align*}
    \frac{n^m}{m!} - \choose{n}{m}
    \leq 
    \frac{n^{m-1}}{2(m-2)!}.
\end{align*}
\end{lemma}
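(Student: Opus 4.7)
The plan is to give a direct proof using the elementary identity
\[
  \frac{n^m}{m!} - \binom{n}{m} = \frac{n^m}{m!}\left(1 - \prod_{i=0}^{m-1}\left(1 - \frac{i}{n}\right)\right),
\]
which follows from expanding $\binom{n}{m} = \frac{n(n-1)\cdots(n-m+1)}{m!}$ and factoring out $n^m/m!$. This reduces the task to upper bounding $1 - \prod_{i=0}^{m-1}(1-i/n)$.

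Since $2 \leq m \leq n$, each factor $1 - i/n$ lies in $[0,1]$, so the Weierstrass product inequality applies:
\[
  \prod_{i=0}^{m-1}\left(1 - \frac{i}{n}\right) \geq 1 - \sum_{i=0}^{m-1} \frac{i}{n} = 1 - \frac{m(m-1)}{2n}.
\]
Plugging this into the identity above yields
\[
  \frac{n^m}{m!} - \binom{n}{m} \leq \frac{n^m}{m!} \cdot \frac{m(m-1)}{2n} = \frac{n^{m-1}}{2(m-2)!},
\]
which is exactly the desired bound, so the proof concludes in one line after establishing the product inequality.

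If one prefers to mirror the inductive style used in Lemma~\ref{lem:comb-sub1}, the same bound can be obtained by induction on $m$: the base case $m=2$ gives equality $\frac{n^2}{2} - \binom{n}{2} = \frac{n}{2}$, and the inductive step uses the recursions $\frac{n^{m+1}}{(m+1)!} = \frac{n}{m+1}\cdot\frac{n^m}{m!}$ and $\binom{n}{m+1} = \frac{n-m}{m+1}\binom{n}{m}$ to write
\[
  \frac{n^{m+1}}{(m+1)!} - \binom{n}{m+1} = \frac{m}{m+1}\cdot\frac{n^m}{m!} + \frac{n-m}{m+1}\left(\frac{n^m}{m!} - \binom{n}{m}\right),
\]
apply the inductive hypothesis to the second summand, and simplify using $-m(m-1) \leq 0$. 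There is no real obstacle in either route; the Weierstrass approach is shorter, while the inductive one matches the neighboring lemma's style. I would go with the Weierstrass-based proof for brevity.
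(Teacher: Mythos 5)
Your proposal is correct, but it takes a genuinely different route from the paper. The paper proves the lemma by induction on $m$, exactly mirroring the inductive proof of Lemma~\ref{lem:comb-sub1}: the base case $m=2$ holds with equality, and the inductive step uses the recursions $\frac{n^{m+1}}{(m+1)!} = \frac{n}{m+1}\cdot\frac{n^m}{m!}$ and $\binom{n}{m+1} = \frac{n-m}{m+1}\binom{n}{m}$, reducing the verification to the elementary inequality $\frac{mn+n-m^2+m}{n(m+1)} \leq 1 \iff m \leq m^2$. Your primary argument instead writes $\binom{n}{m} = \frac{n^m}{m!}\prod_{i=0}^{m-1}\bigl(1-\tfrac{i}{n}\bigr)$ and invokes the Weierstrass product inequality $\prod_{i}(1-x_i) \geq 1-\sum_i x_i$ for $x_i \in [0,1]$, which immediately gives $\frac{n^m}{m!}-\binom{n}{m} \leq \frac{n^m}{m!}\cdot\frac{m(m-1)}{2n} = \frac{n^{m-1}}{2(m-2)!}$; I verified each step and the constants come out exactly right. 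Your route is shorter and more transparent, since it exposes the bound as the first-order term of the product expansion, and it buys extra generality: the identical computation applied to $2^m\binom{n}{m} = \frac{(2n)^m}{m!}\prod_{i=0}^{m-1}\bigl(1-\tfrac{i}{n}\bigr)$ also yields Lemma~\ref{lem:comb-sub1} (giving $\frac{(2n)^{m-1}}{(m-2)!}$), whereas the paper handles the two lemmas by separate inductions. The paper's inductive proof buys only stylistic uniformity with its neighbor. Your secondary inductive sketch is essentially the paper's argument, though your phrase ``simplify using $-m(m-1)\leq 0$'' glosses over the algebra the paper spells out; since your main proof is the Weierstrass one, this does not affect correctness.
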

\begin{proof}
The is shown with mathematical induction.
For the base case ($m = 2$), 
\begin{align*}
    \frac{n^2}{2!} - \choose{n}{2}
    = \frac{n^2}{2} - \frac{n(n-1)}{2} = \frac{n}{2},
\end{align*}
so the inequality holds with equality.
For the inductive case, suppose
\begin{align*}
    \frac{n^m}{m!} - \choose{n}{m}
    \leq 
    \frac{n^{m-1}}{2(m-2)!}
\end{align*}
holds, where $2 \leq m \leq n-1$. Then,
% \begin{align*}
%     \frac{n^{m+1}}{(m+1)!} - \choose{n}{m+1}
%     &= \frac{n}{m+1} \frac{n^m}{m!} 
%     - \frac{n-m}{m+1} \choose{n}{m}\\
%     &= \frac{m}{m+1} \frac{n^m}{m!} 
%     + \frac{n-m}{m+1} \left ( \frac{n^m}{m!}  - \choose{n}{m} \right )\\
%     &\leq \frac{1}{m+1} \frac{n^m}{(m-1)!} 
%     + \frac{n-m}{m+1} \frac{n^{m-1}}{(m-2)!}\\
%     &= \frac{n^{m-1}}{(m+1)(m-2)!} 
%     \left ( 
%     \frac{n}{m-1} + n-m
%     \right )\\
%     &= \frac{n^{m-1}}{(m+1)(m-2)!} 
%     \frac{mn-m^2+m}{m-1} \\
%     &= \frac{n^m}{(m-1)!}
%     \frac{mn-m^2+m}{n(m+1)}.
% \end{align*}
\begin{align*}
    \frac{n^{m+1}}{(m+1)!} - \choose{n}{m+1}
    &= \frac{n}{m+1} \frac{n^m}{m!} 
    - \frac{n-m}{m+1} \choose{n}{m}\\
    &= \frac{m}{m+1} \frac{n^m}{m!} 
    + \frac{n-m}{m+1} \left ( \frac{n^m}{m!}  - \choose{n}{m} \right )\\
    &\leq \frac{1}{m+1} \frac{n^m}{(m-1)!} 
    + \frac{n-m}{m+1} \frac{n^{m-1}}{2(m-2)!}\\
    &= \frac{n^{m-1}}{2(m+1)(m-2)!} 
    \left ( 
    \frac{2n}{m-1} + n-m
    \right )\\
    &= \frac{n^{m-1}}{2(m+1)(m-2)!} 
    \frac{mn+n-m^2+m}{m-1} \\
    &= \frac{n^m}{2(m-1)!}
    \frac{mn+n-m^2+m}{n(m+1)}.
\end{align*}
It now suffices to check that $\frac{mn+n-m^2+m}{n(m+1)} \leq 1$.
\begin{align*}
    \frac{mn+n-m^2+m}{n(m+1)} \leq 1
    \iff
    mn+n-m^2+m \leq mn + n
    \iff
    m \leq m^2.
\end{align*}
Since $m \geq 2$, the inequality holds. This finishes the proof.
\end{proof}

%%%%%%%%%%%%%%%%%%% 2nd thm %%%%%%%%%%%%%%%%%%%%
\section{$\randshuf$: Tail average bound for strongly convex quadratics}
\label{sec:proof-thm-quadratic2}
In this section, we provide details for Remark~\ref{rmk:tail}. We first state the theorem for the tail average iterate, which improves the leading constants of Theorem~\ref{thm:quadratic} by a factor of $\kappa$. We will then provide the proof for Theorem~\ref{thm:quadratic2} in the subsequent subsections.
\begin{theorem}[Tail averaging]
\label{thm:quadratic2}
Assume that $F(\vx) \defeq \frac{1}{n}\sum_{i=1}^n f_i(\vx) = \half \vx^T \mA \vx$ and $F$ is $\mu$-strongly convex. Let $f_i(\vx)\defeq \half \vx^T \mA_i \vx + \vb_i^T \vx$ and $f_i \in C_L^1(\reals^d)$.
 Consider $\randshuf$ for the number of epochs $K$ satisfying  $K \geq 128
\kappa \max \{1, \sqrt{\frac \kappa n} \} \log (nK)$, step size $\eee{k}{i} = \eta \defeq \frac{16 \log (nK)}{\mu n K}$, and initialization $\vx_0$. Then, the following bound holds for some
$c_1 = O(\kappa^3)$, and $c_2 = O(\kappa^3)$:
\begin{align*}
    \E[F(\bar \vx)] - F^*
    &\leq \frac{\mu \norm{\vx_0}^2}{16 n^2 K^2}
    + \frac{c_1 \cdot \log^2(nK)}{n^2K^2} + \frac{c_2 \cdot \log^4(nK)}{nK^3},
\end{align*}
where $\bar \vx$ is the tail average of the iterates 
    $\bar \vx = \frac{\sum_{k=\lceil K/2 \rceil}^K \vx_0^k}{K-\lceil K/2 \rceil + 1}$.
\end{theorem}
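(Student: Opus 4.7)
The plan is to adapt the proof of Theorem~\ref{thm:quadratic} by restarting the recursion at epoch $k_0 \defeq \lceil K/2 \rceil$ and exploiting noise averaging over the tail. Setting $K' \defeq K - k_0 + 1$ and unfolding the per-epoch recursion $\vx_0^{k+1} = \mS_k \vx_0^k - \eta \vt_k$ from~\eqref{eq:sktkdef} for $k \geq k_0$ yields
\begin{align*}
\vx_0^k = \mS_{k-1:k_0} \vx_0^{k_0} - \eta \sum_{j=k_0}^{k-1} \mS_{k-1:j+1} \vt_j.
\end{align*}
Averaging over $k \in [k_0 : K]$ and swapping the order of the double sum gives
\begin{align*}
\bar \vx = \frac{1}{K'} \sum_{k=k_0}^K \mS_{k-1:k_0} \vx_0^{k_0} - \frac{\eta}{K'} \sum_{j=k_0}^{K-1} \bm{W}_j \vt_j, \qquad \bm{W}_j \defeq \sum_{k=j+1}^K \mS_{k-1:j+1}.
\end{align*}
I would then bound $\E[\norm{\bar \vx}^2]$ via $\norm{\va+\vb}^2 \leq 2\norm{\va}^2 + 2\norm{\vb}^2$, controlling the initial-condition piece and the noise piece separately.

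For the initial-condition piece, the prescribed step size $\eta = 16\log(nK)/(\mu n K)$ is eight times larger than in Theorem~\ref{thm:quadratic}, yielding the very strong contraction $(1-\eta n \mu)^{K/2} \leq (nK)^{-8}$. Repeating the contraction argument via Lemma~\ref{lem:thm1-term1} shows $\E[\norm{\mS_{k-1:k_0}\vx_0^{k_0}}^2] \leq (1-\eta n \mu)^{k-k_0}\E[\norm{\vx_0^{k_0}}^2]$, and since $\E[\norm{\vx_0^{k_0}}^2] = \OO{\norm{\vx_0}^2}$ plus lower-order noise (again by Theorem~\ref{thm:quadratic}), the overall contribution is $\OO{(nK)^{-16}\norm{\vx_0}^2}$, comfortably inside the claimed leading term $\mu\norm{\vx_0}^2/(16n^2K^2)$. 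The factor-$\kappa$ saving for the noise piece comes from one key observation: since $\mS_1, \dots, \mS_K$ are i.i.d.\ across epochs, $\E[\bm{W}_j] = \sum_{k=j+1}^K (\E[\mS_1])^{k-j-1}$, and Lemma~\ref{lem:thm1-sub3} combined with a geometric series yields $\norm{\E[\bm{W}_j]} \leq 2/(\eta n \mu)$. Thus in expectation $\bm{W}_j$ behaves as if its norm were $\OO{1/(\eta n \mu)}$ rather than the worst-case $\OO{K}$, which is exactly where the extra factor of $\kappa$ is shaved off.

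Expanding $\E[\norm{\sum_j \bm{W}_j \vt_j}^2]$ into diagonal and off-diagonal parts, the diagonal terms $\sum_j \E[\norm{\bm{W}_j \vt_j}^2]$ are handled as in Lemma~\ref{lem:thm1-term2} with an additional $\OO{1/(\eta n \mu)^2}$ factor from $\bm{W}_j$; combined with the $\eta^2/K'^2$ prefactor, they match the claimed $\log^4(nK)/(nK^3)$ term after substituting $\eta$. For the cross terms $\E[\langle \bm{W}_j \vt_j, \bm{W}_{j'} \vt_{j'}\rangle]$ with $j < j'$, I would extend the bookkeeping of Lemma~\ref{lem:thm1-term3}: each $\bm{W}_j$ contains partial products over some of the same $\sigma_t$'s that appear in $\bm{W}_{j'}$, so expectations cannot be factorized by independence in one shot. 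Instead, I would split each summand of $\bm{W}_j$ according to whether its range ends before or extends past index $j'$, peel off $\E[\mS_t]$ on the independent prefix $\sigma_{j+1}, \dots, \sigma_{j'}$ using Lemma~\ref{lem:thm1-sub3}, and handle the overlapping residual via Lemma~\ref{lem:thm1-sub5}. The hardest step will be exactly this peeling for cross terms; once it is in place, summing over $j < j'$ collapses into a geometric series contributing at the same order as the diagonal. Putting the three pieces together and substituting the step size then yields the stated bound, and the epoch requirement $K \geq 128\kappa \max\{1, \sqrt{\kappa/n}\}\log(nK)$ is exactly the condition ensuring that the eight-times larger $\eta$ still satisfies the $\eta \leq \frac{3}{16nL}\min\{1, \sqrt{n/\kappa}\}$ hypothesis of Lemma~\ref{lem:thm1-term1}.
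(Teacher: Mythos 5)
Your route---restarting at $k_0=\lceil K/2\rceil$, writing $\bar\vx$ explicitly as an initial-condition piece plus $\frac{\eta}{K'}\sum_j \bm{W}_j\vt_j$, and bounding $\E[\norm{\bar\vx}^2]$---cannot deliver the claimed constants $c_1,c_2=O(\kappa^3)$, and the mechanism you credit for the factor-$\kappa$ saving is not the right one. The bound $\norm{\E[\bm{W}_j]}\le 2/(\eta n\mu)$ (via Lemma~\ref{lem:thm1-sub3}) only prevents the worst-case factor $K'$; it does not improve on the last-iterate analysis of Theorem~\ref{thm:quadratic}, whose geometric sums $\sum_k(1-\eta n\mu)^{K-k}$ and $\sum_{k<k'}(1-\eta n\mu/2)^{2K-k-k'-1}$ already contribute exactly those $1/(\eta n\mu)$ and $1/(\eta n\mu)^2$ factors. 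Carrying your plan through, the diagonal part requires the second moment $\norm{\E[\bm{W}_j^T\bm{W}_j]}$ (your first-moment observation is not sufficient here; the bound is obtainable but needs pairwise control of $\E[\mS_{k-1:j+1}^T\mS_{k'-1:j+1}]$), and gives roughly $\frac{\eta^2}{K'^2}\cdot K'\cdot\frac{1}{(\eta n\mu)^2}\cdot\eta^2n^3L^2G^2\log n \asymp \frac{\eta^2 nL^2G^2\log n}{K\mu^2}$, while the cross part is of order $\eta^2L^2G^2/\mu^2$---i.e.\ the same $1/\mu^2$-scale noise as in Theorem~\ref{thm:quadratic} (up to logs). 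Since you bound $\E[\norm{\bar\vx}^2]$, you must then pass to function values via $F(\bar\vx)-F^*\le\frac{L}{2}\norm{\bar\vx}^2$, which reinstates a factor $L$ and lands you at $O(\kappa^4)$ constants, i.e.\ exactly the guarantee of Theorem~\ref{thm:quadratic}; but the entire content of Theorem~\ref{thm:quadratic2} (see Remark~\ref{rmk:tail}) is the improvement of these constants by a factor of $\kappa$. (Minor additional points: the initial-condition piece is $O((nK)^{-8})$ rather than $(nK)^{-16}$, since the $k=k_0$ summand of the average carries no post-$k_0$ contraction---still harmless; and the cross-term ``peeling'' you flag as the hardest step is left without an argument.)

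The paper obtains the $\kappa$ saving by a different mechanism: it stays with a per-epoch analysis and proves a refined contraction bound (Lemma~\ref{lem:thm2-term1}), $\E[\norm{\mS_k\vx_0^k}^2]\le(1-\frac{\eta n\mu}{2})\norm{\vx_0^k}^2-2\eta nF(\vx_0^k)$, whose negative function-value term is retained. Combining with Lemmas~\ref{lem:thm1-sub5} and \ref{lem:thm1-term2} and an AM-GM step yields $\E[\norm{\vx_0^{k+1}}^2]\le(1-\frac{\eta n\mu}{4})\norm{\vx_0^k}^2-2\eta nF(\vx_0^k)+O(\eta^3nL^2G^2/\mu)+O(\eta^4n^3L^2G^2\log n)$; rearranging so that $2\eta n\,\E[F(\vx_0^k)]$ sits on the left, telescoping over $k\in[\lceil K/2\rceil,K]$, dividing by $2\eta n(K-\lceil K/2\rceil+1)$, and applying Jensen bounds $\E[F(\bar\vx)]$ \emph{directly}: the per-epoch noise is divided by $2\eta n$ instead of being divided by $\eta n\mu$ and later multiplied by $L$, which is precisely where the factor $L/\mu=\kappa$ is gained. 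If you wish to salvage your iterate-averaging decomposition, you would need to bound the quadratic form $\E[\bar\vx^T\mA\bar\vx]$ rather than $L\,\E[\norm{\bar\vx}^2]$, exploiting that $\bm{W}_j$ behaves like $(\eta n\mA)^{-1}$ so that $\mA^{1/2}\bm{W}_j$ has expected size $\approx 1/(\eta n\sqrt\mu)$; that is a genuinely harder, second-moment refinement that your proposal does not supply.
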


%\begin{remark} Notice that the leading constants improved by $\kappa$ compared to Theorem~\ref{thm:quadratic}. Notably, the $\kappa$ dependency of the bound in Theorem~\ref{thm:quadratic2} is better than existing results.  To see this, we restrict our attention to strongly convex quadratic functions and assume that  $\kappa \leq n$. The previous bound by \cite{haochen2018random} on quadratic functions was $\bigto (\nicefrac{\kappa^4}{n^2K^2}+\nicefrac{\kappa^4}{K^3}+\nicefrac{\kappa^6}{K^4})$, so $\randshuf$ outperforms $\sgd$ when $K \gtrsim \kappa^{3/2} n^{1/2}$. \cite{rajput2020closing} gave a bound $\bigto (\nicefrac{\kappa^4}{n^2 K^2} + \nicefrac{\kappa^4}{nK^3})$ that holds for $K \gtrsim \kappa^2$, thereby showing $\randshuf$ is better after $\kappa^2$ epochs. In our Theorem~\ref{thm:quadratic2}, we obtained an improved bound $\bigto (\nicefrac{\kappa^3}{n^2 K^2} + \nicefrac{\kappa^3}{nK^3})$ that holds for $K \gtrsim \kappa $. Comparing the rates when $K \asymp \kappa$, we can see that our result readily shows that $\randshuf$ outperforms $\sgd$ after $\bigto(\kappa)$ epochs. \end{remark}

\subsection{Proof outline}
Recall the definitions 
\begin{equation*}
    f_i(\vx)\defeq \half \vx^T \mA_i \vx + \vb_i^T \vx,
    ~
    F(\vx) \defeq \frac{1}{n}\sum_{i=1}^n f_i(\vx) = \half \vx^T \mA \vx,
\end{equation*}
where $f_i$'s are $L$-smooth and $F$ is $\mu$ strongly convex. This is equivalent to saying that $\norm{\mA_i} \leq L$ and $\mA \defeq \frac{1}{n}\sum_{i=1}^n \mA_i \succeq \mu \mI$. Also note that $F$ is minimized at $\vx^* = \zeros$ and $\sum_{i=1}^n \vb_i = \zeros$. We let $G \defeq \max_{i\in[n]} \norm{\vb_i}$. 

In order to get a bound for tail average of the iterates, we need to modify our proof technique a bit. Instead of unrolling all the update equations (as done in Theorem~\ref{thm:quadratic}), we only consider one epoch, and derive a per-epoch improvement bound.
In the Proof of Theorem~\ref{thm:quadratic}, we derived the epoch update equation:
\begin{align*}
    \vx_0^{k+1} &= \underbrace{\left [ \prod_{t=n}^{1} (\mI-\eta \mA_{\sigma_k(t)}) \right ]}_{\eqdef \mS_{k}} \vx_0^k 
    - \eta \underbrace{\left [ \sum_{j=1}^n \left ( \prod_{t=n}^{j+1} (\mI-\eta \mA_{\sigma_k(t)}) \right ) \vb_{\sigma_k(j)} \right ]}_{\eqdef \vt_{k}}\\
    &= \mS_{k} \vx_0^k - \eta \vt_{k}.
\end{align*}
Using this update, the expected distance to the optimum squared $\norms{\vx_0^{k+1}}^2$ given $\vx_0^k$ is
\begin{align*}
    \E[\norm{\vx_0^{k+1}}^2] &= \E[\norm{\mS_{k} \vx_0^k}^2] - 2\eta \E[\< \mS_{k} \vx_0^k, \vt_{k} \>] + \eta^2 \E[\norm{\vt_{k}}^2]\\
    &= \E[\norm{\mS_{k} \vx_0^k}^2] - 2\eta \< \vx_0^k, \E[\mS_{k}^T \vt_{k}] \> + \eta^2 \E[\norm{\vt_{k}}^2]\\
    &\leq \E[\norm{\mS_{k} \vx_0^k}^2] + 2\eta \norm{\vx_0^k}\norm{\E[\mS_{k}^T \vt_{k}]} + \eta^2 \E[\norm{\vt_{k}}^2],
\end{align*}
where the last inequality is due to Cauchy-Schwarz. We now bound each term in the right hand side.
The first term can be bounded by a slight refinement of the first contraction bound (Lemma~\ref{lem:thm1-term1}).
\begin{lemma}[3rd contraction bound]
\label{lem:thm2-term1}
For any $0 \leq \eta \leq \frac{1}{8 n L}\min \{1, \sqrt{\frac{n}{\kappa}} \}$,
\begin{align*}
\E [ \norm{\mS_{k} \vx_0^k}^2 ] \leq \left( 1-\frac{\eta n \mu}{2} \right ) \norm{\vx_0^k}^2 - 2\eta n F(\vx_0^k).
\end{align*}
\end{lemma}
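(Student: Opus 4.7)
}
The plan is to refine the spectral norm bound from Lemma~\ref{lem:thm1-term1} by isolating the linear (in $\eta$) contribution $-2\eta n \mA$ in the quadratic form, which is exactly what produces the $-2\eta n F(\vx_0^k)$ term on the right-hand side. The remaining pieces should then be absorbed into the mild contraction $(1-\eta n\mu/2)\|\vx_0^k\|^2$ using strong convexity of $F$ and the smallness of $\eta$.

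First, I would start with the identity $\E[\|\mS_k\vx_0^k\|^2] = (\vx_0^k)^T \E[\mS_k^T\mS_k]\vx_0^k$ and invoke the decomposition \eqref{eq:expss-decomp} proved en route to Lemma~\ref{lem:thm1-term1}:
\[
\E[\mS_k^T\mS_k] \;=\; \sum_{m=0}^n \frac{(-2\eta n\mA)^m}{m!} \;-\; \eta^2\sum_{i=1}^n \mA_i^2 \;+\; \sum_{m=3}^n (-\eta)^m\!\Big(\E[\mR_m]-\tfrac{2^m}{m!}\mN_m\Big) \;+\; \sum_{m=n+1}^{2n}(-\eta)^m\E[\mC_m].
\]
The key observation is that the polynomial part $\sum_{m=0}^n (-2\eta n\mA)^m/m!$ is a polynomial in $\mA$, so its quadratic form can be analyzed eigenvalue-by-eigenvalue. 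For $t\in[0,1/2]$ and $n\ge 2$ the truncated exponential obeys the alternating-series bound $\sum_{m=0}^n (-t)^m/m! \le 1 - t + t^2/2$, which I would apply at each eigenvalue $s\in[\mu,L]$ of $\mA$ (noting $2\eta n L \le 1/2$ under the stated step-size bound) to obtain
\[
(\vx_0^k)^T\!\sum_{m=0}^n\frac{(-2\eta n\mA)^m}{m!}\vx_0^k \;\le\; \|\vx_0^k\|^2 - 2\eta n (\vx_0^k)^T\mA\vx_0^k + 2\eta^2 n^2 (\vx_0^k)^T\mA^2\vx_0^k.
\]
Using $\mA^2\preceq L\mA$, the last term is at most $4\eta^2 n^2 L F(\vx_0^k)$, and the middle term is exactly $-4\eta n F(\vx_0^k)$, so the polynomial part contributes $\|\vx_0^k\|^2 - 4\eta n(1-\eta n L) F(\vx_0^k)$.

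Next, I would treat the remaining ``noise'' matrix $\mN := \E[\mS_k^T\mS_k] - \sum_{m=0}^n (-2\eta n\mA)^m/m!$ by spectral norm, reusing the same estimates \eqref{eq:rmbound}, \eqref{eq:nmbound}, \eqref{eq:cmbound} from Lemma~\ref{lem:thm1-term1}. The geometric series argument there gives $\|\mN\| \le \eta^2 n L^2 + O(\eta^3 n^2 L^3)$, contributing at most $\|\mN\|\|\vx_0^k\|^2$ to the quadratic form. Combining these, I arrive at
\[
\E[\|\mS_k\vx_0^k\|^2] \;\le\; \|\vx_0^k\|^2 - 4\eta n(1-\eta n L)F(\vx_0^k) + \|\mN\|\,\|\vx_0^k\|^2.
\]

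Finally, I would verify that the right-hand side is bounded by $(1-\eta n\mu/2)\|\vx_0^k\|^2 - 2\eta n F(\vx_0^k)$. Rearranging, this reduces to checking $-2\eta n(1-2\eta n L)F(\vx_0^k) + \|\mN\|\,\|\vx_0^k\|^2 \le -\tfrac{\eta n\mu}{2}\|\vx_0^k\|^2$, and invoking strong convexity $F(\vx_0^k)\ge \tfrac{\mu}{2}\|\vx_0^k\|^2$, it suffices to show $\|\mN\| \le \tfrac{\eta n\mu}{2}(1-4\eta n L)$. The scalar inequality that remains, after substituting $z = \eta n L$, mirrors the polynomial manipulation at the end of Section~\ref{sec:proof-lem-thm1-term1} and splits into the two cases $\kappa\le n$ and $\kappa>n$, where the refined step-size bound $\eta \le \frac{1}{8nL}\min\{1,\sqrt{n/\kappa}\}$ is tight enough to close the argument. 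The main obstacle is booking the constants so that a full $-\tfrac{\eta n\mu}{2}\|\vx_0^k\|^2$ contraction is still available after the linear term has been ``spent'' to produce $-2\eta n F(\vx_0^k)$; this is where the tighter step-size restriction (a factor of two smaller than in Lemma~\ref{lem:thm1-term1}) comes into play.
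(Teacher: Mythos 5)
Your overall strategy (isolate the linear-in-$\eta$ part of $\E[\mS_k^T\mS_k]$ to produce the $-2\eta n F(\vx_0^k)$ term and absorb everything else into a $(1-\eta n\mu/2)$ contraction) is workable, but there is a genuine gap in how you treat the $m=2$ term of the decomposition \eqref{eq:expss-decomp}. That term is $-\eta^2\sum_{i=1}^n\mA_i^2$, which is \emph{negative semidefinite} (each $\mA_i^2\succeq 0$ even when $\mA_i$ is not PSD), so in the quadratic form it can simply be discarded --- this is exactly what the paper does. You instead fold it into the ``noise'' matrix $\mN$ and bound it in spectral norm by $\eta^2 nL^2$, which must then be paid for out of the contraction budget. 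Your own sufficient condition $\|\mN\|\le\tfrac{\eta n\mu}{2}(1-4\eta nL)$ then forces $\eta^2 nL^2\lesssim \tfrac{\eta n\mu}{2}$, i.e.\ $\eta\lesssim \tfrac{1}{\kappa L}$; but the lemma must hold up to $\eta=\tfrac{1}{8nL}\sqrt{n/\kappa}=\tfrac{1}{8L\sqrt{n\kappa}}$ in the regime $\kappa\ge n$, and $\tfrac{1}{8L\sqrt{n\kappa}}\gg\tfrac{1}{\kappa L}$ once $\kappa\gg n$ (already for $\kappa>16n$ the single term $\eta^2 nL^2\|\vx_0^k\|^2$ exceeds the entire $\tfrac{\eta n\mu}{2}\|\vx_0^k\|^2$ budget). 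Even in the regime $\kappa\le n$ the numbers do not close at the top of the stated range: with $z=\eta nL$ the requirement becomes $\tfrac{z}{n}+O(\tfrac{z^2}{n})+\tfrac{2z}{\kappa}\le\tfrac{1}{2\kappa}$, which fails at $z=1/8$, $\kappa=n$. So as written the argument proves the lemma only for a strictly smaller step-size threshold with the wrong $\kappa$-scaling, not for the stated one.

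For comparison, the paper avoids both losses by pulling out only $\eta n\mA$ (not $2\eta n\mA$): it writes $\E[\|\mS_k\vx_0^k\|^2]\le\|\E[\mS_k^T\mS_k]+\eta n\mA\|\,\|\vx_0^k\|^2-2\eta n F(\vx_0^k)$ and bounds the norm by noting that $t\mapsto \tfrac{t}{2}+\sum_{m=0}^n\tfrac{(-t)^m}{m!}$ is positive and decreasing on the relevant range, so the polynomial part is controlled by its value at the \emph{minimum} eigenvalue $\mu$; the quadratic correction is then $2\eta^2n^2\mu^2$ (order $z^2/\kappa^2$) rather than your $\eta^2 n^2 L$-type terms, the NSD term is dropped, and only the $O(\tfrac1n(2\eta nL)^3)$ remainder must be absorbed, which is what makes the $\tfrac{1}{8nL}\min\{1,\sqrt{n/\kappa}\}$ threshold attainable. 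Your route could be repaired by discarding $-\eta^2\sum_i\mA_i^2$ and tightening the geometric-series constant (the margin then closes, though only barely at the boundary), but as proposed the noise bookkeeping is the step that fails.
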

The next two terms can be bounded using Lemmas~\ref{lem:thm1-sub5} and \ref{lem:thm1-term2}:
\begin{align*}
    \norm{\E [\mS_{k}^T \vt_{k}]} \leq 10\eta n L G,~~
    \E [\norm{\vt_{k}}^2] \leq 18 \eta^2 n^3 L^2 G^2 \log n.
\end{align*}
Substituting these bounds, we get
\begin{equation*}
    \E [ \norm{\vx_0^{k+1}}^2 ] 
    \leq \left ( 1-\frac{\eta n \mu}{2} \right ) \norm{\vx_0^k}^2 - 2\eta n F(\vx_0^k) +20 \eta^2 n L G \norm{\vx_0^k} + 18 \eta^4 n^3 L^2 G^2 \log n.
\end{equation*}
We then use the AM-GM inequality $ab \leq \frac{a^2+b^2}{2}$ on $a = \frac{20\sqrt 2 \eta^{3/2} n^{1/2} LG}{\mu^{1/2}}$ and $b = \sqrt{\frac{\eta n \mu}{2}} \norm{\vx_0^k}$, and get
\begin{equation}
\label{eq:thm2-perepoch}
    \E [ \norm{\vx_0^{k+1}}^2 ] 
    \leq \left ( 1-\frac{\eta n \mu}{4} \right ) \norm{\vx_0^k}^2 - 2\eta n F(\vx_0^k) + \frac{400 \eta^3 n L^2 G^2}{\mu} + 18 \eta^4 n^3 L^2 G^2 \log n.
\end{equation}
Now, consider the following rearrangement of \eqref{eq:thm2-perepoch}
\begin{align*}
    2\eta n \E[F(\vx_0^k)] \leq \left ( 1-\frac{\eta n \mu}{4} \right ) \E[\norm{\vx_0^k}^2] - \E[\norm{\vx_0^{k+1}}^2] + \frac{400 \eta^3 n L^2 G^2}{\mu} + 18 \eta^4 n^3 L^2 G^2 \log n.
\end{align*}
Summing up both sides of the inequality for $k = \lceil K/2 \rceil, \dots, K$ gives
\begin{align*}
    2\eta n \sum_{k=\lceil K/2 \rceil}^K \E[F(\vx_0^k)] 
    \leq&~ \left ( 1-\frac{\eta n \mu}{4} \right ) \E \left [\norm{\vx_0^{\lceil K/2 \rceil}}^2 \right ] \\
    &~+ \left ( K - \left \lceil \frac{K}{2} \right \rceil + 1 \right ) \left (\frac{400 \eta^3 n L^2 G^2}{\mu} + 18 \eta^4 n^3 L^2 G^2 \log n \right).
\end{align*}
Unwinding the recursion \eqref{eq:thm2-perepoch} from $k = \lceil K/2 \rceil - 1$ until $k = 1$ (while using $F(x)\geq 0$), we obtain
\begin{align*}
    \E \left [\norm{\vx_0^{\lceil K/2 \rceil}}^2 \right ]
    \leq&~ \left ( 1-\frac{\eta n \mu}{4} \right )^{\lceil K/2 \rceil-1} \norm{\vx_0^1}^2 \\
    &~+ \left ( \left \lceil \frac K 2 \right \rceil -1 \right ) \left ( \frac{400 \eta^3 n L^2 G^2}{\mu} + 18 \eta^4 n^3 L^2 G^2 \log n \right ),
\end{align*}
so by substitution we have
\begin{align*}
    2\eta n \sum_{k=\lceil K/2 \rceil}^K \E[F(\vx_0^k)] 
    \leq \left ( 1-\frac{\eta n \mu}{4} \right )^{\lceil K/2 \rceil} \norm{\vx_0^1}^2
    + \frac{400 \eta^3 n K L^2 G^2}{\mu} + 18 \eta^4 n^3 K L^2 G^2 \log n.
\end{align*}
Now, we take the average of both sides by dividing both sides by $K-\lceil K/2 \rceil + 1$. We then further divide both sides by $2\eta n$ and apply Jensen's inequality to get a bound on the tail average $\bar \vx \defeq \frac{\sum_{k=\lceil K/2 \rceil}^K \vx_0^k}{K-\lceil K/2 \rceil + 1}$.
\begin{align*}
    \E [F(\bar \vx)] &\leq \frac{\sum_{k=\lceil K/2 \rceil}^K \E[F(\vx_0^k)]}{K-\lceil K/2 \rceil + 1} \\
    &\leq \frac{1}{\eta n K} \left ( 1-\frac{\eta n \mu}{4} \right )^{\lceil K/2 \rceil} \norm{\vx_0^1}^2 + \frac{400 \eta^2 L^2 G^2}{\mu} + 18 \eta^3 n^2 L^2 G^2 \log n,
\end{align*}
where the last inequality used $K-\lceil K/2 \rceil + 1 \geq K/2$.
Lastly, substituting $\eta = \frac{16 \log nK}{\mu n K}$ gives us
\begin{equation*}
    \left ( 1-\frac{\eta n \mu}{4} \right )^{\lceil K/2 \rceil}
    = \left ( 1-\frac{4 \log nK}{K} \right )^{\lceil K/2 \rceil}
    \leq \left ( 1-\frac{2 \log nK}{\lceil K/2 \rceil} \right )^{\lceil K/2 \rceil}
    \leq \frac{1}{n^2 K^2}.
\end{equation*}
This results in the bound
\begin{align*}
    \E [F(\bar \vx) - F^*] 
    \leq 
    \frac{\mu \norm{\vx_0^1}^2}{16 n^2 K^2} + 
    \bigo \left ( \frac{L^2 G^2}{\mu^3} \left ( \frac{\log^2(nK)}{n^2K^2} + \frac{\log^4(nK)}{nK^3} \right ) \right ),
\end{align*}
as desired.
Recall that the bound holds for $\eta \leq \frac{1}{8 n L}\min \{1, \sqrt{\frac{n}{\kappa}} \}$, so $K$ must be large enough so that
\begin{equation*}
    \frac{16 \log nK}{\mu n K} \leq \frac{1}{8 n L}\min \left \{1, \sqrt{\frac{n}{\kappa}} \right \}.
\end{equation*}
This gives us the epoch requirement $K \geq 128 \kappa \max \{1, \sqrt{\frac{\kappa}{n}}\} \log nK $.

\subsection{Proof of the third contraction bound (Lemma~\ref{lem:thm2-term1})}
The proof is an extension of the proof of Lemma~\ref{lem:thm1-term1}, so we recommend the authors to read Section~\ref{sec:proof-lem-thm1-term1} before reading this subsection.
From the definiton $F(\vx_0^k) = \half (\vx_0^k)^T \mA \vx_0^k$, we have
\begin{align*}
\E \left [ \norm{\mS_{k} \vx_0^k}^2 \right ] 
&= (\vx_0^k)^T \E [ \mS_{k}^T \mS_{k} ] \vx_0^k\\
&= (\vx_0^k)^T \left ( 
\E [ \mS_{k}^T \mS_{k} ] + \eta n \mA
\right )
\vx_0^k
- \eta n (\vx_0^k)^T \mA \vx_0^k\\
&\leq \norm{\E [ \mS_{k}^T \mS_{k} ] + \eta n \mA} \norm{\vx_0^k}^2
- 2\eta n F(\vx_0^k).
\end{align*}
The remainder of the proof is to bound $\norm{\E [ \mS_{k}^T \mS_{k} ] + \eta n \mA} \leq 1- \frac{\eta n \mu}{2}$ for $0 \leq \eta \leq \frac{1}{8 n L}\min \{1, \sqrt{\frac{n}{\kappa}} \}$.

As seen in \eqref{eq:expss-decomp} (Section~\ref{sec:proof-lem-thm1-term1}), the expectation of $\mS_{k}^T \mS_{k}$ reads
\begin{align*}
    \E [\mS_{k}^T \mS_{k}] =& \sum_{m=0}^n \frac{(-2\eta n\mA)^m}{m!} - \eta^2 \sum_{i=1}^n \mA_i^2 + \sum_{m=3}^n (-\eta)^m \left (\E [\mR_m] - \frac{2^m}{m!} \mN_m \right ) \\
    &+ \sum_{m=n+1}^{2n} (-\eta)^m \E[\mC_m],
\end{align*}
where $\mC_m, \mR_m, \mN_m$ are defined in \eqref{eq:cmdef}, \eqref{eq:rmdef} and \eqref{eq:nmdef}.
Then,
\begin{align*}
    \norm{\E [\mS_{k}^T \mS_{k}] + \eta n \mA} \leq& \norm{\eta n \mA + \sum_{m=0}^n \frac{(-2\eta n \mA)^m}{m!}} + \sum_{m=3}^n \eta^m \left (\norm{\E [\mR_m]} + \frac{2^m}{m!} \norm{\mN_m} \right ) \\
    &+ \sum_{m=n+1}^{2n} \eta^m \norm{\E[\mC_m]}\\
    \leq&\norm{\eta n \mA + \sum_{m=0}^n \frac{(-2\eta n \mA)^m}{m!}}
    + \frac{1}{n} \sum_{m=3}^{2n} (2\eta nL)^{m},
\end{align*}
where the bounds for $\norm{\E[\mR_m]}, \norm{\mN_m}, \norm{\E[\mC_m]}$ are from Eqs~\eqref{eq:rmbound}, \eqref{eq:nmbound}, and \eqref{eq:cmbound}. So, it is left to bound the term $\norm{\eta n \mA + \sum_{m=0}^n \frac{(-2\eta n \mA)^m}{m!}}$.

Note that for any eigenvalue $s$ of the positive definite matrix $\mA$, the corresponding eigenvalue of $\eta n \mA + \sum_{m=0}^n \frac{(-2\eta n \mA)^m}{m!}$ is $\eta n s + \sum_{m=0}^n \frac{(-2\eta n s)^m}{m!}$.
Recall $\eta \leq \frac{1}{8 n L}\min \{1, \sqrt{\frac{n}{\kappa}} \} \leq \frac{1}{8nL}$, so $0 \leq 2 \eta n s \leq 1/4$ for any eigenvalue $s$ of $\mA$. Since $t \mapsto \frac{t}{2} + \sum_{m=0}^n \frac{(-t)^m}{m!}$ is a positive and decreasing function on $[0,1/4]$ for any $n \geq 2$, the matrix $\eta n \mA + \sum_{m=0}^n \frac{(-2\eta n\mA)^m}{m!}$ is positive definite and its maximum singular value (i.e., spectral norm) comes from the minimum eigenvalue of $\mA$, hence
\begin{equation*}
    \norm{\eta n \mA + \sum_{m=0}^n \frac{(-2\eta n\mA)^m}{m!}} \leq \eta n \mu + \sum_{m=0}^n \frac{(-2\eta n \mu)^m}{m!}.
\end{equation*}

Putting the bounds together, we get
\begin{align*}
    \norm{\E [ \mS_{k}^T \mS_{k} ] + \eta n \mA}
    \leq& 
    \eta n \mu + \sum_{m=0}^n \frac{(-2\eta n \mu)^m}{m!} 
    + \frac{1}{n} \sum_{m=3}^{2n} (2\eta nL)^{m}\\
    \leq& 
    \eta n \mu + \sum_{m=0}^2 \frac{(-2\eta n \mu)^m}{m!} 
    + \frac{1}{n} \frac{(2\eta n L)^3}{1-2\eta n L} \\
    \leq& 1 - \eta n \mu + \frac{1}{2} (2 \eta n \mu)^2
    + \frac{2}{n} (2\eta n L)^3.
\end{align*}
Here, we used $2 \eta n L \leq 1/2$, and the fact that $1 - t + \frac{t^2}{2} \geq \sum_{m=0}^n \frac{(-t)^m}{m!}$ for all $t \in [0, 1/4]$ and $n \geq 2$. The remaining step is to show that the right hand side of the inequality is bounded above by $1-\frac{\eta n \mu}{2}$ for $0 \leq \eta \leq \frac{1}{8 n L}\min \{1, \sqrt{\frac{n}{\kappa}} \}$.

Define $z = 2 \eta n L$. Using this, we have
\begin{align*}
    &~1 - \eta n \mu + \frac{1}{2} (2 \eta n \mu)^2
    + \frac{2}{n} (2\eta n L)^3 \leq 1-\frac{\eta n \mu }{2}
    \text{ for } 0 \leq \eta \leq \frac{1}{8 n L}\min \left \{1, \sqrt{\frac{n}{\kappa}} \right \}\\
    \iff &~
    g(z) \defeq \frac{z}{4 \kappa} - \frac{z^2}{2\kappa^2} - \frac{2z^3}{n} \geq 0
    \text{ for } 0 \leq z \leq \frac{1}{4} \min \left \{1, \sqrt{\frac{n}{\kappa}} \right \},
\end{align*}
so it suffices to show the latter.
One can check that $g(0) = 0$, $g'(0) > 0$ and $g'(z)$ is monotonically decreasing in $z \geq 0$, so $g(z) \geq 0$ holds for $z \in [0, c]$ for some $c > 0$. This also means that if we have $g(c) \geq 0$ for some $c>0$, $g(z) \geq 0$ for all $z \in [0,c]$.

First, consider the case $\kappa \leq n$. Then, $n/\kappa \geq 1$ and $\kappa \geq 1$, so
\begin{align*}
    \frac{z}{4 \kappa} - \frac{z^2}{2\kappa^2} - \frac{2z^3}{n}
    = \frac{1}{4 \kappa} \left ( z - \frac{2z^2}{\kappa} - \frac{8z^3}{n/\kappa} \right ) \geq \frac{1}{4 \kappa} \left ( z - 2z^2 - 8z^3 \right ).
\end{align*}
We can check that the function $z \mapsto z - 2z^2 - 8z^3$ is zero at $z = \frac{1}{4}$. This means that $g(z) \geq 0$ for $0 \leq z \leq \frac{1}{4}$.

Next, consider the case $\kappa \geq n$. In this case, set $z = c \sqrt{\frac n\kappa}$ where $c = \frac{1}{4}$. Then,
\begin{align*}
    \frac{z}{4 \kappa} - \frac{z^2}{2\kappa^2} - \frac{2z^3}{n}
    = \frac{1}{4 \kappa} \left (c \sqrt{\frac n\kappa} - \frac{2 c^2 n}{\kappa^2} - 8c^3 \sqrt{\frac n \kappa} \right ) 
    \geq \frac{1}{4 \kappa} \left ( (c-8c^3)\sqrt{\frac n\kappa} - 2 c^2 \frac{n}{\kappa} \right ).
\end{align*}
Note that $\sqrt{\frac n \kappa} \leq 1$, and the function $t \mapsto (c-8c^3) t - 2 c^2 t^2 = \frac{t}{8}  - \frac{t^2}{8}$ is nonnegative on $[0,1]$. Therefore, we have $g(\frac{1}{4} \sqrt{\frac n \kappa}) \geq 0$, so $g(z) \geq 0$ for $0 \leq z \leq \frac{1}{4} \sqrt{\frac n \kappa}$.

%%%%%%%%%%%%%%%%%%%%%%%%%%%%%%%%%%%%%% KJ's part starts here

\section{Analysis of varying step sizes (Proofs of Theorems~\ref{thm:1} and \ref{thm:2})} \label{app:varying}

Throughout this section, since Theorems~\ref{thm:1} and \ref{thm:2} assume the bounded iterates assumption (Assumption~\ref{a:1}) and the $L$-smoothness of $f_i$'s, one can assume that $f_i$'s are Lipschitz continuous.
In particular, one can assume that there exists $G>0$ such that $\norm{\nabla f_i(\xx{k}{j})} \leq G$ for all $i,j\in[n]$ and $k\geq 1$.
\subsection{Preliminaries: existing per-iteration/-epoch bounds}
\label{sec:per}
We first review  the progress bounds for $\randshuf$ developed in Nagaraj, Jain, and Netrapalli~\cite{nagaraj2019sgd}, which are crucial for our varying step sizes analysis.
Note that for $\randshuf$, there are two different types of analyses: 
\begin{enumerate}
    \item Per-iteration analysis where one characterizes the progress made at each  iteration.
    \item Per-epoch analysis where one characterizes the aggregate progress made over one epoch.
\end{enumerate}

For per-iteration analysis,  \cite{nagaraj2019sgd} develops  coupling arguments to prove that the progress made by $\randshuf$ is \emph{not} worse than $\sgd$.
In particular, their coupling arguments demonstrate the closeness in expectation between the iterates of  without- and with-replacement SGD.
The following is a consequence of their coupling argument:
\begin{proposition}[Per-iteration analysis {\cite[implicit in Section A.1]{nagaraj2019sgd}}] \label{per:0}
Assume  for $L,G,\mm>0$ that each component function $f_i$ is convex, $G$-Lipschitz and $L$-smooth and the cost function $F$ is $\mm$-strongly convex.
Then, for any step size for the $(i+1)$-th iteration of the $k$-th epoch such that $\eee{k}{i+1}\leq \frac{2}{L}$, the following bound holds between the adjacent iterates:
\begin{align}
    \ex{}{\norm{\xx{k}{i+1}-\xs}^2}&\leq \left( 1- \eee{k}{i+1}\mm/2 \right)\cdot \ex{}{\norm{\xx{k}{i}-\xs}^2}+ 3(\eee{k}{i+1})^2 G^2  + 4(\eee{k}{i+1})^3 \kappa L
    G^2\,.  \label{perbd:0}
 \end{align}
where the expectation is taken over the randomness within the $k$-th epoch.
\end{proposition}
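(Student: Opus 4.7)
The goal is a one-step progress bound that capitalizes on individual convexity to deliver both the standard \sgd~variance $O(\eta^2 G^2)$ and a bias correction $O(\eta^3 \kappa L G^2)$ that scales with $\eta^3$ rather than $\eta^2$. The plan is to adapt the coupling technique of Nagaraj, Jain, and Netrapalli~\cite{nagaraj2019sgd}, which couples \randshuf~with a fictitious with-replacement \sgd~step whose gradient draw agrees with that of \randshuf~with probability $(n-i)/n$ at iteration $i+1$. This reduces the task to (a) the classical strongly convex one-step descent analysis applied to the coupled \sgd~step, and (b) controlling the residual arising from the rare disagreement between the two samples.

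Concretely, let $S_i \defeq \{\sigma_k(1), \ldots, \sigma_k(i)\}$ denote the set of already-seen indices, and construct a surrogate index $\tilde \tau$ as follows: with probability $(n-i)/n$ set $\tilde \tau = \sigma_k(i+1)$; otherwise set $\tilde \tau$ uniform on $S_i$. A direct check shows that $\tilde \tau$ is uniform on $[n]$ conditional on the history, so $\tilde \vg \defeq \g_{\tilde \tau}(\xx{k}{i})$ is an unbiased estimator of $\nabla F(\xx{k}{i})$ with $\norm{\tilde \vg} \leq G$, while $\vg \defeq \g_{\sigma_k(i+1)}(\xx{k}{i})$ coincides with $\tilde \vg$ on an event of probability $(n-i)/n$. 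Decomposing $\vg = \tilde \vg + (\vg - \tilde \vg)$ and expanding $\norm{\xx{k}{i+1} - \xs}^2$, the ``virtual \sgd'' part $\norm{\xx{k}{i} - \eta \tilde \vg - \xs}^2$ is handled by the classical descent lemma: using convexity and $L$-smoothness of $f_{\tilde \tau}$, $\mu$-strong convexity of $F$, and the assumption $\eta \leq 2/L$, one obtains $(1 - \eta \mu) \norm{\xx{k}{i} - \xs}^2 + \eta^2 G^2$. The cross-term $-2\eta \inp{\vg - \tilde \vg}{\xx{k}{i} - \eta \tilde \vg - \xs}$ is controlled by AM--GM with balance $\eta \mu/2$; this absorbs half of the contraction (hence the final factor $1 - \eta\mu/2$ rather than $1 - \eta\mu$) and leaves a residual proportional to $\tfrac{\eta}{\mu} \ex{}{\norm{\vg - \tilde \vg}^2}$ together with a term of order $\eta^2 G^2$ that can be absorbed into the constant $3$ in front of $\eta^2 G^2$.

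The main technical obstacle is sharpening the residual $\tfrac{\eta}{\mu} \ex{}{\norm{\vg - \tilde \vg}^2}$ down to $O(\eta^3 \kappa L G^2)$ rather than the trivial $O(\eta G^2/\mu)$ that the crude bound $\norm{\vg - \tilde \vg}^2 \leq 4G^2$ would produce; the naïve estimate alone is too weak by a factor of $1/(\eta^2 L^2)$ and would destroy the rate. The improvement exploits that $\vg$ and $\tilde \vg$ are gradients evaluated at the \emph{same} iterate $\xx{k}{i}$, which has drifted only by $\norm{\xx{k}{i} - \xx{k}{0}} = O(i \eta G)$ from the start of the epoch. Invoking $L$-smoothness of each $f_j$ to relate the per-component gradients at $\xx{k}{i}$ to their values at $\xx{k}{0}$, and then averaging over the randomness of the permutation (so that the ``mean'' gradient of unseen indices at $\xx{k}{0}$ equals $\nabla F(\xx{k}{0})$), one is left with a quantity of order $\eta^2 L^2 G^2$, which when multiplied by the $\tfrac{\eta}{\mu}$ prefactor produces the advertised $\eta^3 \kappa L G^2$ correction. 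This is essentially the per-iteration distillation of the per-epoch bookkeeping carried out in Section~A.1 of \cite{nagaraj2019sgd}, and is where the bulk of the care in the argument is required.
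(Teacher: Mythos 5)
Your coupling set-up is fine (the surrogate index $\tilde\tau$ is indeed uniform on $[n]$ given the history, and the virtual-\sgd\ descent step is standard), but the central step of your proposal --- sharpening the residual $\tfrac{\eta}{\mu}\,\ex{}{\norm{\vg-\tilde\vg}^2}$ down to $O(\eta^3\kappa L G^2)$ via ``drift $+$ smoothness $+$ permutation averaging'' --- does not go through, and I do not see how it can along that route. On the disagreement event, which has conditional probability $i/n$ (close to $1$ late in the epoch), $\vg-\tilde\vg$ is the difference of gradients of \emph{two different components evaluated at the same point} $\xx{k}{i}$; smoothness only compares the \emph{same} component at two nearby points, so the $O(i\eta G)$ drift to $\xx{k}{0}$ is irrelevant to this difference, which remains of order $G$. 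Averaging over the permutation cannot help either: it acts on the first moment, not on $\ex{}{\norm{\vg-\tilde\vg}^2}$, which is what your Cauchy--Schwarz/AM--GM step leaves you with; and even the conditional bias $\ex{}{\vg-\tilde\vg\mid \mathcal F_i}=\tfrac{i}{n-i}\bigl(\nabla F(\xx{k}{i})-\tfrac1i\sum_{j\in S_i}\nabla f_j(\xx{k}{i})\bigr)$ is of order $G$ when $i$ is close to $n$ (for $i=n-1$ it equals $\nabla f_{\sigma_k(n)}(\xx{k}{i})-\nabla F(\xx{k}{i})$). So your scheme only yields the trivial $O(\eta G^2/\mu)$ residual you were trying to avoid. (A smaller bookkeeping issue: the extra $\eta^2\norm{\vg-\tilde\vg}^2$ term from expanding the square already exceeds the $3\eta^2G^2$ budget if bounded crudely.)

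The paper does not prove this proposition itself; it imports it from the coupling argument in Section~A.1 of Nagaraj et al., and that argument perturbs the \emph{iterate}, not the sampled index at the current step. By exchangeability one swaps $\sigma_k(i+1)$ with an independent uniform index throughout the prefix of the epoch, so the comparison iterate differs from $\xx{k}{i}$ only in \emph{one} gradient applied at some earlier step; because each update map $\vx\mapsto \vx-\eta\nabla f_j(\vx)$ is nonexpansive when $f_j$ is convex, $L$-smooth and $\eta\le 2/L$ (cocoercivity), the two trajectories stay within $2\eta G$ of each other. Smoothness then converts this $O(\eta G)$ iterate perturbation into an inner-product error of order $\eta L G\norm{\xx{k}{i}-\xs}+\eta G^2$, which after multiplying by $2\eta$ and balancing against $\tfrac{\eta\mm}{2}\norm{\xx{k}{i}-\xs}^2$ produces exactly the $4\eta^3\kappa L G^2$ and $3\eta^2G^2$ terms and the $(1-\eta\mm/2)$ contraction. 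Note that your proposal never uses the convexity of the individual $f_i$'s in an essential way (your descent lemma needs only strong convexity of $F$ and bounded gradients), whereas this nonexpansiveness is precisely where individual convexity and the condition $\eee{k}{i+1}\le 2/L$ enter --- a sign that the key mechanism is missing from your argument.
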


However, with the above analysis, one can only obtain results comparable to  $\sgd$, as manifested in \cite[Theorem 2]{nagaraj2019sgd}.
In order to characterize better progress, one needs to characterize the aggregate progress made over one epoch as a whole:
\begin{proposition}[Per-epoch analysis {\cite[implicit Section 5.1]{nagaraj2019sgd}}] \label{per:1}
Under the same setting as Proposition~\ref{per:0}, let $\ee_{k}\leq \frac{2}{L}$ be the step size for the $k$-th epoch, i.e., $\eee{k}{i}\equiv \ee_{k}$ for $i=1,2,\dots, n$.
Then, the following bound holds between the output of the $k$-th and $(k-1)$-th epochs  $\xx{k+1}{0}$ and $\xx{k}{0}$: 
\begin{align}
\begin{split}
   &\ex{}{\norm{\xx{k+1}{0}-\xs}^2} \leq \left( 1- 3n\ee_{k} \mm /4  + n^2 (\ee_{k})^2L^2   \right)\cdot \norm{\xx{k}{0}-\xs}^2 \\
    &\quad-2n\ee_{k}\left( 1-4n\ee_{k}\kappa L\right) \cdot (\ex{}{F(\xx{k}{0})-F(\xs)})+ 20n^2(\ee_{k})^3\kappa L G^2   + 5n^3(\ee_{k})^4 L^2G^2\,.
    \end{split} \label{perbd:1}
\end{align}
where the expectation is taken over the randomness within the $k$-th epoch.
\end{proposition}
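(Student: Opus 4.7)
The plan is to view one epoch of $\randshuf$ with constant step $\ee_k$ as an approximate gradient descent step of size $n\ee_k$ on $F$ at $\xx{k}{0}$. Unrolling \eqref{update} and using the key deterministic identity $\sum_{i=1}^n \nabla f_{\sigma_k(i)}(\xx{k}{0}) = n\nabla F(\xx{k}{0})$ (which holds because $\sigma_k$ is a permutation) yields the decomposition
\begin{equation*}
\xx{k+1}{0} - \xs = \bigl(\xx{k}{0} - \xs\bigr) - \ee_k n \nabla F(\xx{k}{0}) - \ee_k \vr_k, \qquad \vr_k \defeq \sum_{i=1}^n \bigl[\nabla f_{\sigma_k(i)}(\xx{k}{i-1}) - \nabla f_{\sigma_k(i)}(\xx{k}{0})\bigr].
\end{equation*}
Squaring and taking expectation over $\sigma_k$ splits $\E \norm{\xx{k+1}{0} - \xs}^2$ into a baseline $\norm{\xx{k}{0}-\xs}^2$, a signal cross term $-2n\ee_k\langle \nabla F(\xx{k}{0}), \xx{k}{0}-\xs\rangle$, the signal square $\ee_k^2 \norm{n\nabla F(\xx{k}{0}) + \vr_k}^2$, and a noise cross term $-2\ee_k\langle \xx{k}{0}-\xs, \E\vr_k\rangle$.

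For the signal cross term, $\mu$-strong convexity of $F$ produces $-2n\ee_k(F(\xx{k}{0})-F^*) - n\ee_k\mu\norm{\xx{k}{0}-\xs}^2$. The signal square is handled by smoothness-plus-convexity of $F$: $\norm{\nabla F(\xx{k}{0})}^2 \leq 2L(F(\xx{k}{0})-F^*)$ contributes the $+2n^2\ee_k^2 L (F(\xx{k}{0})-F^*)$ piece that already matches the leading-order $(1-n\ee_k L)$ multiplier of $-2n\ee_k(F(\xx{k}{0})-F^*)$. The noise cross term is absorbed by Young's inequality at weight $\tfrac{n\ee_k\mu}{4}$, producing $+\tfrac{n\ee_k\mu}{4}\norm{\xx{k}{0}-\xs}^2$ (which relaxes the contraction from $-n\ee_k\mu$ to the stated $-\tfrac{3n\ee_k\mu}{4}$) plus a residual $\tfrac{4\ee_k}{n\mu}\E\norm{\vr_k}^2$ to be processed below.

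It therefore remains to bound $\E\norm{\vr_k}^2$. Termwise $L$-smoothness and Cauchy-Schwarz give $\E\norm{\vr_k}^2 \leq nL^2 \sum_{i=1}^n \E\norm{\xx{k}{i-1}-\xx{k}{0}}^2$. Unrolling $\xx{k}{j}-\xx{k}{0} = -\ee_k\sum_{l=1}^j \nabla f_{\sigma_k(l)}(\xx{k}{l-1})$ and splitting each summand as $\nabla f_{\sigma_k(l)}(\xx{k}{0}) + $ smoothness remainder, the principal contribution comes from $\E\norm{\sum_{l=1}^j \nabla f_{\sigma_k(l)}(\xx{k}{0})}^2$. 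This partial sum has mean $j\nabla F(\xx{k}{0})$ and, by the standard variance formula for sampling without replacement with bounded summands, variance at most $jG^2$, giving $j^2\norm{\nabla F(\xx{k}{0})}^2 + j G^2$. Applying $\norm{\nabla F(\xx{k}{0})}^2 \leq L^2\norm{\xx{k}{0}-\xs}^2$ and summing over $i\in[n]$ then feeds back: (i) a $+n^2\ee_k^2 L^2 \norm{\xx{k}{0}-\xs}^2$ correction which supplies the $+n^2\ee_k^2 L^2$ term in the stated contraction factor; (ii) a term of order $\ee_k^3 n^2\kappa L G^2$ (from the $jG^2$ piece divided by $\tfrac{n\mu}{4}$) matching the $20n^2\ee_k^3\kappa L G^2$ error; and (iii) additional $\kappa L$-scaled $(F(\xx{k}{0})-F^*)$ contributions which, combined with the $2n^2\ee_k^2 L (F(\xx{k}{0})-F^*)$ already extracted and using $\ee_k \leq 2/L$, consolidate into the stated $-2n\ee_k(1 - 4n\ee_k\kappa L)$ coefficient. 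The direct quadratic noise $\ee_k^2 \E\norm{\vr_k}^2 \lesssim n^3\ee_k^4 L^2 G^2$ yields the final error; the smoothness-remainder piece generates a self-referential inequality in $D_i \defeq \E\norm{\xx{k}{i-1}-\xx{k}{0}}^2$ that is closed out using Proposition~\ref{per:0} (which is available under the same hypotheses) to confirm that $D_i$ is indeed $O(\ee_k^2 i^2 L^2 \norm{\xx{k}{0}-\xs}^2 + \ee_k^2 i G^2)$ up to absolute constants.

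The main obstacle is precisely this last step: naively bounding $\norm{\xx{k}{i-1}-\xx{k}{0}} \leq \ee_k(i-1)G$ pointwise would give $\E\norm{\vr_k}^2 = O(\ee_k^2 n^4 L^2 G^2)$, producing a final residual of the form $n^4\ee_k^4 L^2 G^2$ that is a factor of $n$ looser than the target $5n^3\ee_k^4 L^2 G^2$. Eliminating this factor forces one to exploit the variance reduction of \emph{without-replacement} sampling in the partial sums $\sum_{l=1}^j \nabla f_{\sigma_k(l)}(\xx{k}{0})$---the same $O(j G^2)$ variance phenomenon that the Hoeffding-Serfling argument in Section~\ref{ps:1} uses in high probability, but here needed only in expectation.
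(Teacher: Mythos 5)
You should first note that the paper itself contains no proof of Proposition~\ref{per:1}: it is imported wholesale from Nagaraj et al.~\cite{nagaraj2019sgd} (``implicit in Section 5.1''), so there is nothing in-paper to compare against line by line. Your strategy---unroll the epoch into the deterministic signal $n\nabla F(\xx{k}{0})$ plus a noise term $\vr_k$, use strong convexity on the signal cross term, and control the noise through the without-replacement variance of the partial sums, $\E\norm{\sum_{l\le j}\bigl(\nabla f_{\sigma_k(l)}(\xx{k}{0})-\nabla F(\xx{k}{0})\bigr)}^2\lesssim jG^2$---is the natural one, and it parallels the paper's own per-epoch analysis for Theorem~\ref{thm:plconv}, where the same partial sums are handled in high probability via Hoeffding--Serfling rather than in expectation. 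Your diagnosis that the pointwise bound $\norm{\xx{k}{i-1}-\xx{k}{0}}\le \ee_k(i-1)G$ is a factor of $n$ too lossy, and that the fix is the without-replacement variance phenomenon, is exactly the right key idea.

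However, as a proof of the stated inequality the sketch has two genuine gaps. First, the consolidation of constants is asserted rather than performed, and as set up it does not go through on the claimed range $\ee_k\le 2/L$: after Young's inequality with weight $n\ee_k\mm/4$, the residual $\tfrac{4\ee_k}{n\mm}\norm{\E\vr_k}^2$ contains a piece of order $n^3\ee_k^3\kappa L^2\,(F(\xx{k}{0})-F^*)$ (equivalently $n^3\ee_k^3\kappa L^3\norm{\xx{k}{0}-\xs}^2$), and the squared smoothness-remainder pieces inside $\ee_k^2\E\norm{\vr_k}^2$ are of order $n^4\ee_k^4L^3(F(\xx{k}{0})-F^*)$ and $n^4\ee_k^4L^2G^2$ or worse; these sit below the stated coefficients $n^2\ee_k^2L^2$, $8n^2\ee_k^2\kappa L$ and $5n^3\ee_k^4L^2G^2$ only under an extra condition of the type $n\ee_k\kappa L\lesssim 1$, which $\ee_k\le 2/L$ does not supply. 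In particular your claim that item (i) of the feedback ``supplies the $+n^2\ee_k^2L^2$ term'' is off by roughly a factor $n\ee_k\kappa L$; you would need either a separate crude argument for the large-step regime (e.g., exploiting that $-2n\ee_k(1-4n\ee_k\kappa L)(F(\xx{k}{0})-F^*)$ is a large positive quantity once $4n\ee_k\kappa L\ge 1$) or a different allocation of terms, and neither is indicated. Second, the proposed closure of the self-referential bound on $D_i\defeq\E\norm{\xx{k}{i-1}-\xx{k}{0}}^2$ ``using Proposition~\ref{per:0}'' does not yield the claimed form $D_i=O(\ee_k^2 i^2 L^2\norm{\xx{k}{0}-\xs}^2+\ee_k^2 iG^2)$: Proposition~\ref{per:0} controls $\E\norm{\xx{k}{i}-\xs}^2$, not distances to $\xx{k}{0}$, and a triangle-inequality conversion destroys the crucial $\ee_k^2$ prefactor, while a Gr\"onwall-type closure needs $(1+\ee_kL)^n=O(1)$, i.e.\ $\ee_k\lesssim 1/(nL)$, again not assumed; the $G$-Lipschitz pointwise bound does close the recursion but reintroduces exactly the oversized remainder terms above. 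So the skeleton is sound and matches the spirit of the paper's other per-epoch arguments, but the stated coefficients and the full step-size range $\ee_k\le 2/L$ are not established by the argument as written.
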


Having these per-iteration/-epoch progress bounds, the final ingredient of the non-asymptotic convergence rate analysis is to turn these bounds into \emph{across-epochs} global convergence bounds.

\subsection{Chung's lemma: an analytic tool for varying stepsize} \label{sec:warmup}

To illustrate our varying step sizes analysis, let us warm up with  the per-iteration progress bound in Proposition~\ref{per:0}.
Since Proposition~\ref{per:0} works for any iterations, one can disregard the epoch structure and simply  denote by $x_t$ the $t$-th iterate and by $\ee_{t}$ the step size used for the $t$-th iteration.
Choosing  $\ee_{t}= \frac{2\alpha}{\mm} \cdot \frac{1}{\ini+t}$ for all $t\geq 1$ with the initial index $\ini$, where we choose $\ini=\alpha \cdot \kappa$ to ensure $\ee_{t}\leq \frac{2}{L}$, the per-iteration bound \eqref{perbd:0} becomes (we also use $(\ee_{t})^3\leq (\ee_{t})^2 \frac{L}{2}$):
\begin{align}
    \ex{}{\norm{\vx_{t+1}-\xs}^2}&\leq \left( 1- \frac{\alpha}{\ini+t+1}  \right)\cdot \ex{}{\norm{\vx_{t}-\xs}^2}+ \frac{\alpha^2 G^2 (12\mm^{-2} +32\kappa^3)}{(\ini+t+1)^2} \,.  \label{bd:vary}
\end{align}
In fact, for the bounds of type \eqref{bd:vary}, there are suitable tools for obtaining convergence rates: versions of \emph{Chung's lemma}~\cite{chung1954stochastic}, developed in the stochastic approximation literature. 
Among the various versions of Chung's lemma, there is a non-asymptotic version~\cite[Lemma 1]{chung1954stochastic}:
\begin{lemma}[Non-asymptotic Chung's lemma] \label{main:1}
Let $\{\ab_{k}\}_{k\geq 0}$ be a sequence of positive real numbers.
Suppose that there exist an initial index $\ini >0$ and real numbers $\ccc>0$, $\alpha >\beta>0$ such that $\ab_{k+1}$  satisfies  the following inequality:
\begin{align} \label{adj}
    \ab_{k+1} \leq  \exp\left( -\frac{\alpha }{\ini+k+1}  \right) \ab_k +\frac{\ccc}{(\ini+k+1)^{\beta +1}}\quad \text{  for any $k\geq 0$}\,. 
\end{align}
Then, for any $K\geq 1$ we have the following bound:
\begin{align} \label{bound:1}
   \ab_K &\leq \exp\left( -\alpha\cdot \sum_{i=1}^K \frac{1}{\ini+i}  \right)\cdot \ab_0 +   \frac{\frac{1}{\alpha-\beta}   e^{\frac{\alpha}{\ini+1}}\cdot \ccc}{(\ini+K)^\beta} +  \frac{  e^{\frac{\alpha}{\ini+ 1}}\cdot \ccc}{(\ini+K)^{\beta+1}}\\
   &\leq \frac{(\ini+1)^\alpha\cdot \ab_0}{(\ini+K)^\alpha} +   \frac{\frac{1}{\alpha-\beta}   e^{\frac{\alpha}{\ini+1}}\cdot \ccc}{(\ini+K)^\beta} +  \frac{  e^{\frac{\alpha}{\ini+ 1}}\cdot \ccc}{(\ini+K)^{\beta+1}}\,. \label{bound:12}
\end{align}
\end{lemma}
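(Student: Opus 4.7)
The plan is to unroll the one-step inequality \eqref{adj} from $k=K-1$ down to $k=0$ and then bound the resulting product and sum via standard sum-to-integral comparisons. After $K$ iterations one obtains
$$a_K \;\leq\; P_K\,a_0 \;+\; A\sum_{j=1}^{K}\frac{P_K/P_j}{(k_0+j)^{\beta+1}},\qquad P_k \defeq \prod_{i=1}^{k}\exp\!\left(-\tfrac{\alpha}{k_0+i}\right).$$
The first piece is exactly $\exp\bigl(-\alpha\sum_{i=1}^K 1/(k_0+i)\bigr)\,a_0$, which is the leading term in \eqref{bound:1}. Applying the integral comparison $\sum_{i=1}^K 1/(k_0+i) \geq \int_1^{K+1}\!\mathrm{d}x/(k_0+x) = \log((k_0+K+1)/(k_0+1))$ then produces the polynomial form $(k_0+1)^\alpha(k_0+K)^{-\alpha}\,a_0$ stated in \eqref{bound:12}.

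For the error sum, the same comparison applied on the index range $\{j+1,\dots,K\}$ yields
$$P_K/P_j \;\leq\; \bigl((k_0+j+1)/(k_0+K+1)\bigr)^\alpha \;\leq\; e^{\alpha/(k_0+1)}\bigl((k_0+j)/(k_0+K+1)\bigr)^\alpha,$$
where the second inequality uses $(1+1/(k_0+j))^\alpha \leq e^{\alpha/(k_0+j)} \leq e^{\alpha/(k_0+1)}$. This reduces the remaining task to controlling
$$\frac{A\,e^{\alpha/(k_0+1)}}{(k_0+K+1)^\alpha}\sum_{j=1}^K (k_0+j)^{\alpha-\beta-1}.$$

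To finish, I would split off the $j=K$ summand, which contributes $(k_0+K)^{\alpha-\beta-1}$ and eventually produces the $A\,e^{\alpha/(k_0+1)}/(k_0+K)^{\beta+1}$ term in \eqref{bound:1}. The main subtlety is the remaining sum $\sum_{j=1}^{K-1}(k_0+j)^{\alpha-\beta-1}$, because the summand is increasing when $\alpha>\beta+1$ but decreasing when $\alpha\in(\beta,\beta+1)$, so the appropriate integral comparison uses different intervals in the two regimes (namely $[1,K]$ versus $[0,K-1]$). Fortunately both cases yield the \emph{same} uniform bound $(k_0+K)^{\alpha-\beta}/(\alpha-\beta)$, which accounts for the $A\,e^{\alpha/(k_0+1)}/((\alpha-\beta)(k_0+K)^{\beta})$ term. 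Assembling the three pieces completes \eqref{bound:1}, and \eqref{bound:12} then follows by replacing the leading exponential factor with its polynomial upper bound noted above.
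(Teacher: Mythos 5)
Your proposal is correct and follows essentially the same route as the paper's proof: unroll the recursion, convert the products of exponentials into ratios of powers via sum--integral comparison (which is where the $e^{\alpha/(\ini+1)}$ slack arises), and bound $\sum_j(\ini+j)^{\alpha-\beta-1}$ by an integral with the same case split on whether $\alpha-\beta-1$ is positive or negative. The only differences are cosmetic (bounding $P_K/P_j$ directly rather than both products separately, and splitting off the $j=K$ summand instead of using the endpoint term of the paper's integral-approximation proposition), so nothing further is needed.
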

\begin{proof}
Unfortunately, the original ``proof'' contains some errors as pointed out by Fabian~\cite[Discussion above Lemma 4.2]{fabian1967stochastic}.
We are able to correct the original proof; for this, see Section~\ref{sec:correct}.
\end{proof} 
Let us apply Lemma~\ref{main:1} to \eqref{bd:vary} as a warm-up.
From \eqref{bd:vary}, one can see that $\ccc$ in Lemma~\ref{main:1} can be chosen as $G^2 (12\mm^{-2} +32\kappa^3)$. Hence, we obtain:
\begin{corollary} \label{cor:ex}
Under the setting of Proposition~\ref{per:0}, let $\alpha>1$ be a constant, and consider the step size $\eee{k}{i}=\frac{2\alpha/\mm}{\ini+n(k-1)+i}$ for  $\ini:= \alpha \cdot \kappa$. Then the following convergence rate holds for any $K\geq 1$:
\begin{align}
    \ex\norm{\xx{K}{0}-\xs}^2 \leq  \frac{(\ini+1)^{\alpha }\norm{\vx_0-\xs}^2}{(\ini+nK)^{\alpha }} + \frac{\frac{e}{{\alpha}-1}\alpha^2 G^2 (12\mm^{-2} +32\kappa^3)}{\ini+nK} + \frac{e\alpha^2 G^2 (12\mm^{-2} +32\kappa^3)}{(\ini+nK)^2}\,. \label{bd:cor}
\end{align}
\end{corollary}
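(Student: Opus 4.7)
The plan is to recast the per-iteration recursion already derived in the warm-up into a single-sequence inequality matching the hypothesis of Lemma~\ref{main:1}, and then invoke that lemma. First I would flatten the two-index iterates into one sequence by defining $\vx_t \defeq \xx{k}{i}$ for $t = n(k-1)+i$, so that $\vx_0 = \xx{1}{0}$ is the initialization and $\vx_{nK}$ is the iterate produced after $K$ full epochs. Under the prescribed step size $\eta_t = \frac{2\alpha/\mu}{\ini + t}$ with $\ini = \alpha\kappa$, every $\eta_t$ satisfies $\eta_t \leq \frac{2\alpha/\mu}{\alpha\kappa} = \frac{2}{L}$, so Proposition~\ref{per:0} applies at each step and produces the recursion \eqref{bd:vary}, namely
\begin{equation*}
  \xi_{t+1} \leq \left(1 - \tfrac{\alpha}{\ini+t+1}\right)\xi_t + \tfrac{A}{(\ini+t+1)^{2}}, \qquad \xi_t \defeq \ex\norm{\vx_t - \xs}^2, \qquad A \defeq \alpha^2 G^2\bigl(12\mu^{-2} + 32\kappa^3\bigr).
\end{equation*}

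Next, I would apply the elementary bound $1 - x \leq e^{-x}$ to rewrite this as
\begin{equation*}
  \xi_{t+1} \leq \exp\!\left(-\tfrac{\alpha}{\ini+t+1}\right)\xi_t + \tfrac{A}{(\ini+t+1)^{2}},
\end{equation*}
which is exactly the hypothesis \eqref{adj} of Lemma~\ref{main:1} with $\beta = 1$. Since the assumption $\alpha > 1 = \beta$ is in force, the lemma applies; substituting $K \mapsto nK$ in \eqref{bound:12} yields
\begin{equation*}
  \xi_{nK} \leq \tfrac{(\ini+1)^\alpha \xi_0}{(\ini+nK)^\alpha} + \tfrac{e^{\alpha/(\ini+1)}\,A/(\alpha-1)}{\ini+nK} + \tfrac{e^{\alpha/(\ini+1)}\,A}{(\ini+nK)^{2}}.
\end{equation*}
Because $\ini + 1 \geq \alpha\kappa \geq \alpha$, we have $e^{\alpha/(\ini+1)} \leq e$, and substituting the value of $A$ reproduces the three terms of \eqref{bd:cor} exactly, under the natural identification $\xx{K}{0} \leftrightarrow \vx_{nK}$ of ``iterate after $K$ epochs.''

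The derivation is mechanical once the warm-up passage from \eqref{perbd:0} to \eqref{bd:vary} and Lemma~\ref{main:1} are in hand. The only step that requires real care is the absorption of the cubic term $4\eta_t^3\kappa L G^2$ of \eqref{perbd:0} into the $O(\eta_t^2)$-scale constant, which uses $\ini+t+1 \geq \ini = \alpha\kappa$ to trade one factor of $\eta_t$ for $1/(\alpha\kappa)$ and is what justifies the $32\kappa^3$ coefficient in $A$. Everything beyond that is bookkeeping: invoke Chung's lemma and simplify the constants.
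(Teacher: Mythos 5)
Your proposal is correct and follows essentially the same route as the paper: flatten the epoch/iteration indices, apply Proposition~\ref{per:0} with $\eta_t=\frac{2\alpha/\mu}{\ini+t}$ to obtain the recursion \eqref{bd:vary} (absorbing the cubic term via $\eta_t\leq 2/L$), pass to the exponential form via $1-x\leq e^{-x}$, and invoke Lemma~\ref{main:1} with $\beta=1$ and final index $nK$, using $e^{\alpha/(\ini+1)}\leq e$. The only delicate points — the $32\kappa^3$ bookkeeping and the identification of the iterate after $nK$ steps with the one appearing in \eqref{bd:cor} — are handled exactly as the paper intends.
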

\noindent Notably, Corollary~\ref{cor:ex} is an improvement over \cite[Theorem 2]{nagaraj2019sgd} as it gets rid of extra poly-logarithmic terms. 
\subsection{An illustrative failed attempt using Chung's lemma} \label{sec:fail}
Now, let us apply Lemma~\ref{main:1} to the per-epoch progress bound (Proposition~\ref{per:1}).
For illustrative purpose, consider an ideal situation where instead of the actual progress bound \eqref{perbd:1}, a nicer epoch progress bound of the following form holds (that is to say, the coefficient of $\ex{}{\norm{y_k-\xs}^2} $ does not contain the higher order error terms):
\begin{align}
   \ex{}{\norm{\xx{k+1}{0}-\xs}^2}&\leq  \left( 1-n\ee_{k}  \mm /2    \right)\cdot \ex{}{\norm{\xx{k}{0}-\xs}^2}  + 20n^2(\ee_{k})^3\kappa L G^2   + 5n^3(\ee_{k})^4 L^2G^2\,. \label{bd:ideal}
\end{align} 
Following the same principle as the previous section, let us take $\ee_k = \frac{2\alpha/\mm}{\ini+ nk}$  for some constant $\alpha>2$.
On the other hand, to make things simpler, let us assume that one can take $\ini=0$. 
Plugging this stepsize into \eqref{bd:ideal},  we obtain the following bound for some constants $c >0$:
\begin{align*}
   \ex{}{\norm{\xx{k+1}{0}-\xs}^2}&\leq  \left( 1-\frac{\alpha }{ k}    \right)\cdot \ex{}{\norm{\xx{k}{0}-\xs}^2}  + \frac{c/n }{k^3}   \,,  
\end{align*} 
which then yields the following non-asymptotic bound due to  Lemma~\ref{main:1}:
\begin{align}
    \ex{}{\norm{\xx{K+1}{0}-\xs}^2}&\leq \OO{\frac{1}{K^{\alpha}}}+\OO{\frac{1}{nK^2}} + \OO{\frac{1}{nK^3}} \,. \label{bd:hyp}
\end{align}
Although the last two terms in \eqref{bd:hyp} are what we desire, the  first term is undesirable.
Even though we choose $\alpha$ large, this bound will still contain the term $O( \nicefrac{1}{K^{\alpha}} )$ which does not match the rate in Theorem~\ref{thm:1}. 
Therefore, for the target convergence bound, one needs other versions of Lemma~\ref{main:1}.

\subsection{A variant of Chung's lemma} \label{sec:tight}

As we have seen in the previous section, Chung's lemma is not enough for capturing the desired convergence rate.
In this section, to capture the right order for both  $n$ and $K$,  we develop a variant of Chung's lemma.

\begin{lemma} \label{main:2}
Let $n>0$ be an integer, and  $\{\ab_{k}\}_{k\geq 0}$ be a sequence of positive real numbers.
Suppose that there exist an initial index $\ini >0$ and real numbers  $\co, \cbo >0$, $\alpha >\beta>0$ and $\eps>0$ such that  the following are satisfied:
\begin{align} 
     \ab_{1} &\leq  \exp\left(-\alpha \sum_{i=1}^{n }\frac{1}{\ini+i}\right)  \ab_{0}  +\co  \quad \text{and} \label{11}\\
        \ab_{k+1} &\leq  \exp \left( -\alpha  \sum_{i=1}^{n } \frac{1}{\ini+nk+i} + \frac{\eps}{k^2}\right)  \ab_k +   \frac{\cbo}{(\ini+n(k+1))^{\beta+1}}\quad \text{  for any $k\geq 1$}\,.  \label{kk}
\end{align}
Then, for any $K\geq 1$ we have the following bound for $c:=e^{\eps\pi^2/6 }$:
\begin{align} \label{bound:2}
   \ab_K \leq  \frac{c(\ini+1)^\alpha\cdot \ab_0}{(\ini+nK)^\alpha} +   \frac{ c\cdot (\ini+n+1)^\alpha\cdot \co}{(\ini+nK)^\alpha}+  \frac{\frac{c}{\alpha-\beta}   e^{\frac{\alpha}{\ini+n+1}}\cdot \cbo}{n(\ini+nK)^\beta} +  \frac{c  e^{\frac{\alpha}{\ini+n+1}}\cdot \cbo}{(\ini+nK)^{\beta+1}}\,.
\end{align}
\end{lemma}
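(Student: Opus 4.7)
The strategy is to unroll the recursion \eqref{kk} from $k=1$ to $k=K-1$, insert the initial bound \eqref{11}, and then separately control (a) the compounded exponential factor acting on $\ab_0$, (b) the factor acting on $\co$, and (c) the weighted sum of the $\cbo$-error terms. Introduce the shorthand
\begin{equation*}
Q_j^K := \prod_{l=j}^{K-1}\exp\Bigl(-\alpha \sum_{i=1}^{n}\tfrac{1}{\ini+nl+i}\Bigr), \qquad Q_K^K := 1.
\end{equation*}
Iterating \eqref{kk} and pulling out the noise factors yields
\begin{equation*}
\ab_K \leq \Bigl(\prod_{l=1}^{K-1}e^{\eps/l^2}\Bigr) Q_1^K\, \ab_1 \;+\; \sum_{k=1}^{K-1} \Bigl(\prod_{l=k+1}^{K-1}e^{\eps/l^2}\Bigr) Q_{k+1}^K \frac{\cbo}{(\ini+n(k+1))^{\beta+1}}.
\end{equation*}
Since $\sum_{l=1}^{\infty}1/l^2 = \pi^2/6$, every product of noise factors is bounded by $c=e^{\eps\pi^2/6}$, which is how the ``per-epoch error'' summed to the finite constant promised in the statement.

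For (a)--(b), I would telescope the double sum inside $Q_1^K$: as $l$ ranges over $[1,K-1]$ and $i$ over $[1,n]$, the index $\ini+nl+i$ hits each integer in $[\ini+n+1,\ini+nK]$ exactly once, so $Q_1^K = \exp(-\alpha\sum_{m=\ini+n+1}^{\ini+nK}1/m)$. Multiplying by the initial factor $\exp(-\alpha\sum_{i=1}^{n}\tfrac{1}{\ini+i})$ from \eqref{11} extends the range down to $m=\ini+1$. The standard integral bound $\sum_{m=a+1}^{b}1/m \geq \ln\tfrac{b+1}{a+1}$ then gives
\begin{equation*}
Q_1^K\cdot\exp\Bigl(-\alpha\sum_{i=1}^{n}\tfrac{1}{\ini+i}\Bigr) \leq \Bigl(\tfrac{\ini+1}{\ini+nK}\Bigr)^{\alpha}, \qquad Q_1^K \leq \Bigl(\tfrac{\ini+n+1}{\ini+nK}\Bigr)^{\alpha},
\end{equation*}
which, after substituting \eqref{11}, produces the first two summands of \eqref{bound:2}.

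For (c), the same telescoping plus integral bound gives $Q_{k+1}^K \leq \bigl(\tfrac{\ini+n(k+1)+1}{\ini+nK}\bigr)^\alpha$. Applying $(x+1)^\alpha\leq x^\alpha e^{\alpha/x}$ and the monotonicity estimate $e^{\alpha/(\ini+n(k+1))}\leq e^{\alpha/(\ini+n+1)}$ (valid for $k\geq 1$), the sum reduces to
\begin{equation*}
\frac{c\,\cbo\, e^{\alpha/(\ini+n+1)}}{(\ini+nK)^\alpha}\sum_{j=2}^{K}(\ini+nj)^{\alpha-\beta-1}.
\end{equation*}
A routine integral-comparison argument, splitting into the cases $\alpha-\beta-1\geq 0$ (use $f(j)\leq\int_j^{j+1}f$ plus the endpoint correction $f(K)$) and $\alpha-\beta-1<0$ (use $f(j)\leq\int_{j-1}^{j}f$), yields the clean bound
\begin{equation*}
\sum_{j=2}^{K}(\ini+nj)^{\alpha-\beta-1} \leq \frac{(\ini+nK)^{\alpha-\beta}}{n(\alpha-\beta)} + (\ini+nK)^{\alpha-\beta-1},
\end{equation*}
whose two terms produce exactly the third and fourth summands of \eqref{bound:2}.

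\textbf{Main obstacle.} The substantive difficulty, already flagged by the failed calculation in Section~\ref{sec:fail}, is that a naive application of the classical non-asymptotic Chung's lemma (Lemma~\ref{main:1}) leaves behind a spurious $1/K^\alpha$ term that dominates the desired $1/(nK^2)$ rate. The remedy is twofold and both parts must happen simultaneously: we iterate \emph{per epoch} so that the contraction sum spans $n$ consecutive indices at a time and telescopes into $\sum_m 1/m$ over a range of length $\Theta(nK)$ (giving the correct $n$-dependence $1/(\ini+nK)^\alpha$ rather than $1/(\ini+K)^\alpha$); and we show that the higher-order per-epoch errors, collected inside the $e^{\eps/l^2}$ factors, are summable and therefore only contribute the bounded multiplicative constant $c=e^{\eps\pi^2/6}$. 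Everything else is careful bookkeeping with integral comparisons.
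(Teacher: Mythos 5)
Your proposal is correct and takes essentially the same route as the paper's proof: unroll the per-epoch recursion, absorb the $e^{\eps/k^2}$ noise factors into the constant $c=e^{\eps\pi^2/6}$ via $\sum_{l\ge 1} l^{-2}=\pi^2/6$, telescope the contraction exponents into harmonic sums over $\Theta(nK)$ consecutive indices bounded by integral comparison, and control $\sum_{j=2}^{K}(\ini+nj)^{\alpha-\beta-1}$ with the same sign-based case split, which yields exactly the four terms of \eqref{bound:2}. The only difference is cosmetic bookkeeping (you bound the forward products $Q_{k+1}^K$ directly and offload the $+1$ offset via $(x+1)^\alpha\le x^\alpha e^{\alpha/x}$, whereas the paper bounds the inverse partial products and the full product separately), and both produce identical factors.
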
 
\begin{proof} 
 See Section~\ref{pf:var}.
\end{proof}
\subsection{Sharper convergence rate for strongly convex costs (Proof of Theorem~\ref{thm:1})} \label{app:pf1}
Now we use Lemma \ref{main:2} to obtain a sharper convergence rate.
Let $\ab_k:=   \ex{}{\norm{\xx{k+1}{0}-\xs}^2}$ for $k\geq 1$ and $\ab_0:=\norm{\vx_0-\xs}^2$.
Let $\alpha>2$ be an arbitrarily chosen constant.
For the first epoch, we take the following iteration-varying step size: $\eee{1}{i}= \frac{2\alpha}{\mm} \cdot \frac{1}{\ini+i}$, where $\ini=\alpha \cdot \kappa$ to ensure $\eee{1}{i}\leq \frac{2}{L}$.
Then, similarly to Corollary~\ref{cor:ex}, yet this time by using the bound \eqref{bound:1} in Lemma~\ref{main:1}, one can  derive the  the following bound:
 \begin{align}
     \ab_1&\leq \exp\left( -\alpha \cdot \sum_{i=1}^n \frac{1}{\ini+i}  \right)\cdot \ab_0 +   \frac{a_1}{\ini+n}  \,, \label{bd:11}
 \end{align}
 where $a_1:=\alpha^2 G^2\cdot [\frac{e}{{\alpha}-1} (12\mm^{-2} +32\kappa^3) + e\alpha^2 G^2 (12\mm^{-1}L^{-1} +32\kappa^2)]$, i.e., $a_1=\OO{\kappa^3}$.

Next, let us establish bounds of the form  \eqref{kk} for the $k$-th epoch for $k\geq 2$.
From the second epoch on, we use the same step size within an epoch.
More specifically, for the $k$-th epoch we choose $\ee_{k,i}\equiv \ee_k =\frac{2\alpha/\mm}{\ini + nk}$.
Let us recall the per-epoch progress bound from Proposition~\ref{per:1}:
\begin{align}
\begin{split}
   &\ab_{k}\leq \left( 1- 3n\ee_{k} \mm /4  + n^2 (\ee_{k})^2L^2   \right)\cdot \ab_{k-1} \\
    &\quad-2n\ee_{k}\left( 1-4n\ee_{k}\kappa L\right) \cdot (\ex{}{F(\xx{k}{0})-F(\xs)})+ 20n^2(\ee_{k})^3\kappa L G^2   + 5n^3(\ee_{k})^4 L^2G^2\,.
    \end{split} \label{perbd:1'}
\end{align}
Since $\ex F(\xx{k}{0})-F(\xs)>0$, one can disregard the second term in the upper bound \eqref{per:1} as long as $4n\ee_k\kappa L < 1$.
If $\ee_k$ small enough that $4n\ee_k\kappa L < 1$ holds, then since we also have $\frac{n\ee_k  \mm}{4}  > n^2 (\ee_k)^2L^2$, the per-epoch bound \eqref{perbd:1}  becomes:
\begin{align}
   \ab_k&\leq  \left( 1-n\ee_k  \mm /2    \right) \ab_{k-1}  + 20n^2(\ee_k)^3\kappa L G^2   + 5n^3(\ee_k)^4 L^2G^2\,. \label{bd:ex2}\\
   &\leq  \exp\left( -n\ee_{k}  \mm /2    \right)\cdot \ab_k  + 20n^2(\ee_{k})^3\kappa L G^2   + 5n^3(\ee_{k})^4 L^2G^2\,. \label{bd:01}
\end{align} 
Since $4n\ee_k\kappa L < 1$ is fulfilled for $k\geq 8\alpha \kappa^2$ (note that for  $k\geq 8\alpha \kappa^2$, $nk > 8 \alpha \kappa^2 n = (2\alpha/\mu) \cdot 4 n\kappa L$), we conclude that  \eqref{bd:01} holds for $k\geq 8\alpha \kappa^2$.

For $k< 8\alpha \kappa^2$, recursively applying Proposition~\ref{per:0}  with the fact $(n\ee_{k})^{-1}\leq 4\kappa L +L/(2n)$ implies:
\begin{align}
    \ab_{k}&\leq \exp\left( - n\ee_{k}\mm/2 \right)\cdot \ab_{k-1}+ 3n^2(\ee_{k})^3G^2 (4\kappa L +L/(2n))  + 4n(\ee_{k})^3 \kappa L
    G^2\,.  \label{bd:02}
 \end{align}
 Therefore, combining \eqref{bd:01} and \eqref{bd:02}, we obtain the following bound which holds for any $k\geq 2$:
 \begin{align}
    \ab_{k}&\leq \exp\left( - n\ee_{k}\mm/2 \right)\cdot\ab_{k-1} + a_2\cdot n^2(\ee_{k})^3 \,,  \label{bd:03}
 \end{align}
 where $a_2:= 12\kappa L G^2 + (3L/2+4\kappa LG^2)/n+20\kappa LG^2 + 5\mu^2G^2/8$, i.e., $a_2=\OO{\kappa}$.
 Let us modify the coefficient of $\ab_k$ in \eqref{bd:03} so that it fits into the form of \eqref{kk} in Lemma~\ref{main:2}.
 First note that $ \exp\left( - n\ee_{k}\mm/2 \right) =\exp\left( - \alpha n/(\ini+nk) \right)$.
 Now, this expression can be modified as 
 \begin{align*}
     \exp\left[-\alpha  \cdot\sum_{i=1}^n  \frac{1}{\ini+n(k-1)+i}+ \alpha \cdot\sum_{i=1}^n \left(  \frac{1}{\ini+n(k-1)+i}- \frac{1}{\ini+nk} \right) \right]\,,
\end{align*}     
which is then upper bounded by $\exp\left[-\alpha  \cdot\sum_{i=1}^n  \frac{1}{\ini+n(k-1)+i}+ \frac{\alpha }{(k-1)^2} \right]$.
Thus, \eqref{bd:03} can be rewritten as:
 \begin{align}
     \ab_{k} &\leq  \exp\left(-\alpha \cdot\sum_{i=1}^n  \frac{1}{\ini+n(k-1)+i}+ \frac{\alpha }{(k-1)^2}\right)\cdot \ab_{k-1} + \frac{8a_2 \alpha^3 n^2\mu^{-3}}{(\ini + nk)^3}\,. \label{bd:kk} 
 \end{align}
 Now applying Lemma~\ref{main:2} with \eqref{bd:11} and \eqref{bd:kk} implies the following result:

\subsection{Sharper convergence rate for quadratic costs (Proof of Theorem~\ref{thm:2})}
 \label{app:pf2}
 
Now let us use again Lemma \ref{main:2} to obtain a sharper convergence rate. We follow the notations in Section~\ref{app:pf1}.
Again, we use the following bound (which we derived in \eqref{bd:11} in the main text) for the first recursive inequality \eqref{11} in Lemma~\ref{main:2}:
\begin{align*}
     \ab_1&\leq \exp\left( -\alpha \cdot \sum_{i=1}^n \frac{1}{\ini+i}  \right)\cdot \ab_0 +   \frac{a_1}{\ini+n}  \,,  
 \end{align*}
 where $a_1:=\alpha^2 G^2\cdot [\frac{e}{{\alpha}-1} (12\mm^{-2} +32\kappa^3) + e\alpha^2 G^2 (12\mm^{-1}L^{-1} +32\kappa^2)]$, i.e., $a_1=\OO{\kappa^3}$.

For the second recursive inequalities \eqref{kk} in Lemma~\ref{main:2}, in order to obtain better convergence rate,  we use the following improved per-epoch bound for quadratic costs due to  Rajput, Gupta, and Papailiopoulos~\cite{rajput2020closing}:
\begin{proposition}[{\cite[implicit in Appendix A]{rajput2020closing}}] \label{per:2}
Under the setting of Proposition~\ref{per:0}, assume further that  $F$ is  quadratic. 
Then for any step size for the $k$-th epoch $\ee_k\leq \frac{2}{L}$, the following bound holds between the output of the $k$-th and $k-1$-th epochs  $\xx{k+1}{0}$ and $\xx{k}{0}$:   
\begin{align}
\begin{split}
    \ex{}{\norm{\xx{k+1}{0}-\xs}^2}  &\leq \left( 1-  3n\ee_{k} \mm/2 +5n^2 (\ee_{k})^2 L^2   + 8 n^3 (\ee_{k
    })^3 \kappa L^3  \right) \norm{\xx{k}{0}-\xs}^2 \\
    &\quad+10 n^3 (\ee_{k})^4 L^2 G^2+40n^4(\ee_{k})^5\kappa L^3G^2+ 32n(\ee_{k})^3 \kappa L G^2\,.
\end{split} \label{perbd:2}
\end{align}
where the expectation is taken over the randomness within the $k$-th epoch.
\end{proposition}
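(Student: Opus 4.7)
The plan is to reduce Proposition~\ref{per:2} to the three building blocks already established for quadratic costs in Appendix~\ref{sec:proof-thm-quadratic}. Since $F$ is quadratic with $\xs = \zeros$, the $k$-th epoch unfolds linearly into the closed form $\vx_0^{k+1} = \mS_k \vx_0^k - \eta_k \vt_k$ with $\mS_k, \vt_k$ as in \eqref{eq:sktkdef}, so that
\begin{align*}
\E[\|\vx_0^{k+1}\|^2]
= (\vx_0^k)^T \E[\mS_k^T \mS_k] \vx_0^k
- 2\eta_k \langle \vx_0^k, \E[\mS_k^T \vt_k] \rangle
+ \eta_k^2 \E[\|\vt_k\|^2],
\end{align*}
and I would bound each of the three expectations using the noncommutative elementary-polynomial machinery of Sections~\ref{sec:proof-lem-thm1-term1}--\ref{sec:proof-lem-thm1-sub5}.

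For the first term, I would repeat the expansion of $\E[\mS_k^T \mS_k]$ into $\sum_m (-\eta_k)^m \E[\mC_m]$ but keep the $m=1,2$ coefficients explicit rather than absorbing them into a spectral-norm bound. The $m=1$ piece $-2\eta_k n \mA$ contributes $-2\eta_k n (\vx_0^k)^T \mA \vx_0^k \leq -2\eta_k n \mu \|\vx_0^k\|^2$ (using $\mA \succeq \mu \mI$). The $m=2$ piece $\eta_k^2 (2(n\mA)^2 - \sum_i \mA_i^2)$ has spectral norm $\leq 3 n^2 L^2$, yielding a $+5 n^2 \eta_k^2 L^2 \|\vx_0^k\|^2$ correction once a few low-order remainder pieces are folded in. The $m \geq 3$ geometric tail is bounded exactly as in Section~\ref{sec:proof-lem-thm1-term1} and produces the $+8 n^3 \eta_k^3 \kappa L^3$ coefficient (the $\kappa$ appearing because the tail's natural $L^3$ scale is rewritten in terms of $\kappa L^2 \mu$ to line up with the later Chung-lemma application).

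For the cross term, specializing the analysis of Lemma~\ref{lem:thm1-sub5} to the case where the intermediate matrix $\mM$ is the identity gives $\|\E[\mS_k^T \vt_k]\| = O(\eta_k n L G)$; then Cauchy--Schwarz plus an AM-GM split of the form $2\eta_k \|\vx_0^k\| \cdot O(\eta_k n L G) \leq \tfrac{\eta_k n \mu}{2}\|\vx_0^k\|^2 + O(\eta_k^3 n \kappa L G^2)$ contributes an extra $+\tfrac{\eta_k n \mu}{2}\|\vx_0^k\|^2$, which turns the leading contraction from $1-2\eta_k n \mu$ into the stated $1-\tfrac{3}{2}\eta_k n \mu$ and produces the $32 n \eta_k^3 \kappa L G^2$ additive-noise term. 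For the pure-noise third term, I would adapt the proof of Lemma~\ref{lem:thm1-term2}, using the crude deterministic estimate $\|\sum_{j \leq i} \vb_{\sigma_k(j)}\| \leq nG$ in place of Hoeffding--Serfling (which avoids an unneeded $\log n$ factor); unpacking $(1+\eta_k L)^n$ cleanly yields the $10 n^3 \eta_k^4 L^2 G^2 + 40 n^4 \eta_k^5 \kappa L^3 G^2$ contributions.

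The hardest part will be matching the exact numerical constants $\{3/2, 5, 8, 10, 32, 40\}$ rather than some generic absolute-constant versions; these are determined by the precise way Taylor remainders and AM-GM splits are bookkept in the noncommutative polynomial expansions. While no new conceptual ideas beyond the elementary-polynomial machinery of Sections~\ref{sec:proof-lem-thm1-term1}--\ref{sec:proof-lem-thm1-sub5} should be required, reproducing the statement verbatim demands a careful, error-prone combinatorial accounting, for which I would follow Appendix~A of Rajput et al.~\cite{rajput2020closing} as a template.
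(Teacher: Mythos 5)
The paper does not prove Proposition~\ref{per:2}: it imports it from Rajput et al.~\cite[Appendix~A]{rajput2020closing}, whose derivation, like that of Proposition~\ref{per:0}, is a \emph{coupling argument} comparing the \randshuf trajectory against an auxiliary with-replacement \sgd{} trajectory and crucially exploiting the \emph{convexity of each $f_i$}. This is a genuinely different route from the one you propose. The noncommutative-elementary-polynomial machinery of Sections~\ref{sec:proof-lem-thm1-term1}--\ref{sec:proof-lem-thm1-sub5} is the technique the paper develops precisely for the track where component convexity is removed (Theorem~\ref{thm:quadratic}); Proposition~\ref{per:2} and Theorem~\ref{thm:2} belong to the complementary track that retains convexity of the $f_i$'s. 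You have conflated the two tracks.

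There is also a concrete reason the substitution would fail, not merely a stylistic mismatch: Proposition~\ref{per:2} only assumes that $F$ is quadratic; it inherits from Proposition~\ref{per:0} that each $f_i$ is convex, Lipschitz and smooth, but does \emph{not} assume the $f_i$'s themselves are quadratic (see the star footnote in Table~\ref{tab:results} for Rajput et al.\ and Theorem~\ref{thm:2}). The closed-form unfolding $\vx_0^{k+1} = \mS_k \vx_0^k - \eta_k \vt_k$ in \eqref{eq:sktkdef}, and everything built on it (Lemmas~\ref{lem:thm1-term1}--\ref{lem:thm1-sub5}), requires $\nabla f_{\sigma_k(i)}$ to be affine, i.e.\ $f_i(\vx)=\tfrac12\vx^T\mA_i\vx+\vb_i^T\vx$. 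When the $f_i$'s are merely smooth, the analogous unfolding (as carried out in Section~\ref{sec:pl-1st-1}) replaces each fixed $\mA_{\sigma_k(t)}$ by a path-dependent integral Hessian that depends on the iterates and hence on $\sigma_k$ itself, which destroys the law-of-total-expectation computations over uniform permutations that the elementary-polynomial decomposition relies on. Your plan therefore cannot yield Proposition~\ref{per:2} under its stated hypotheses; the actual derivation is Rajput et al.'s convexity-based coupling analysis, which you only invoke at the end as a ``template'' for constants rather than as the core argument.
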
 
\noindent For $k> 16 \alpha\kappa^2$, we have $n\ee_{k} < \frac{1}{8}\frac{\mm}{L^2}$.
Using this bound, it is straightforward to check that  \eqref{perbd:2} can be simplified into:
\begin{align}
     \ab_{k}  \leq \exp  \left( - n\ee_{k}\mm/2   \right) \ab_{k-1}+15 n^3 (\ee_{k})^4 L^2 G^2   + 32n(\ee_{k})^3 \kappa L G^2\,. \label{new:perbd}
\end{align}
For $k< 16\alpha \kappa^2$, recursively applying Proposition~\ref{per:0}  with the fact $(n\ee_{k+1})^{-1}\leq 8\kappa L +L/(2n)$ implies:
\begin{align}
    \ab_{k}&\leq \exp\left( - n\ee_{k}\mm/2 \right)\cdot \ab_k+ 3n^3(\ee_{k})^4G^2 (8\kappa L +L/(2n))^2  + 4n(\ee_{k})^3 \kappa LG^2\,.  \label{new:perbd2}
 \end{align}
 Therefore, combining \eqref{new:perbd} and \eqref{new:perbd2}, we obtain the following bound which holds for any $k\geq 1$:
 \begin{align}
    \ab_{k}&\leq \exp\left( - n\ee_{k}\mm/2 \right)\cdot\ab_{k-1} + b_2\cdot n^3(\ee_{k})^4 +b_3\cdot n(\ee_{k})^3 \,,  \label{new:combine}
 \end{align}
 where $b_2:= 15 L^2 G^2 +3G^2  (8\kappa L +L/(2n))^2$ and $b_3:= 32 \kappa LG^2 $, i.e., $b_2=O(\kappa^2)$ and $b_3= \OO{\kappa}$.
Following Section~\ref{sec:tight}, one can similarly modify the coefficient of $\ab_k$ in \eqref{new:combine} to obtain  the following for $k\geq 2$:
\begin{align}
     \ab_{k} &\leq  \exp\left(-\alpha \cdot\sum_{i=1}^n  \frac{1}{\ini+n(k-1)+i}+ \frac{\alpha }{(k-1)^2}\right)\cdot \ab_k + \frac{16 b_2 \alpha^4 n^3\mu^{-4}}{(\ini + nk)^4} +\frac{8b_3 \alpha^3 n\mu^{-3}}{(\ini + nk)^3} \label{new:kk} 
 \end{align}
 However, one can notice that \eqref{new:kk} is not quite of the form \eqref{kk}, and Lemma~\ref{main:2} is not directly applicable to this bound.
 In fact, we need to make some modifications in Lemma~\ref{main:2}.
First, for $\cbt>0$ and $\gamma>0$, there is an additional term to the recursive relations \eqref{kk}: for any $k\geq1$, the new recursive relations now read
\begin{align} \label{kk:new}
    \ab_{k+1} \leq  \exp \left( -\alpha  \sum_{i=1}^{n } \frac{1}{\ini+nk+i} + \frac{\eps}{k^2}\right)  \ab_k +   \frac{\cbo}{(\ini+n(k+1))^{\beta+1}}+\frac{\cbt}{(\ini+n(k+1))^{\gamma+1}}\,. 
\end{align}
It turns out that for these additional terms in the recursive relations, one can use the same techniques to prove that  the corresponding global convergence bound \eqref{bound:2} has the following additional terms:
\begin{align}\label{bd:add}
       \frac{\frac{c}{\alpha-\beta}   e^{\frac{\alpha}{\ini+n+1}}\cdot \cbt }{n(\ini+nK)^\gamma} +  \frac{c  e^{\frac{\alpha}{\ini+n+1}}\cdot \cbt }{(\ini+nK)^{\gamma+1}}\,.
\end{align}
 Now using this modified version of Lemma~\ref{main:2},   the proof is completed.

\section{Proofs of the versions of Chung's lemma (Lemmas~\ref{main:1} and \ref{main:2})} \label{sec:correct}
  We begin by introducing an elementary fact that we will use throughout the proofs:
\begin{proposition}[Integral approximation; see e.g. {\cite[Theorem~14.3]{lehman2010mathematics}})] \label{pro:approx}
Let $f:\re^+ \to \re^+$ be a non-decreasing continuous function.
Then, for any integers $1\leq m <n$, $\int_m^n f(x)dx +f(m) \leq \sum_{i=m}^n f(i) \leq \int_m^n f(x)dx +f(n)$.
Similarly, if $f$ is non-increasing, then for any integers $1\leq m <n$, $\int_m^n f(x)dx +f(n) \leq \sum_{i=m}^n f(i) \leq \int_m^n f(x)dx +f(m)$.
\end{proposition}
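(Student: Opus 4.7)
The plan is a standard Riemann-sum comparison argument. I would first fix a non-decreasing continuous $f : \mathbb{R}^+ \to \mathbb{R}^+$ and, for each integer $i$, compare $\int_i^{i+1} f(x)\,dx$ to the endpoint values $f(i)$ and $f(i+1)$. Since $f$ is non-decreasing, for every $x \in [i, i+1]$ we have $f(i) \leq f(x) \leq f(i+1)$, so integrating over this unit-length interval yields the two-sided bound
\[
f(i) \;\leq\; \int_i^{i+1} f(x)\,dx \;\leq\; f(i+1).
\]

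Next I would sum these unit-interval bounds over $i = m, m+1, \ldots, n-1$. By additivity of the integral this telescopes to
\[
\sum_{i=m}^{n-1} f(i) \;\leq\; \int_m^n f(x)\,dx \;\leq\; \sum_{i=m}^{n-1} f(i+1) \;=\; \sum_{i=m+1}^{n} f(i).
\]
From here both desired inequalities follow by a one-line rearrangement: adding $f(n)$ to the left inequality gives $\sum_{i=m}^n f(i) \leq \int_m^n f(x)\,dx + f(n)$, and adding $f(m)$ to the right inequality gives $\int_m^n f(x)\,dx + f(m) \leq \sum_{i=m}^n f(i)$.

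For the non-increasing case I would simply repeat the argument with the inequalities reversed, or equivalently apply the non-decreasing result to $-f$ (after a harmless reduction handling non-negativity, since only the differences matter in the comparison). The only subtlety worth noting in the write-up is that continuity is used solely to ensure Riemann-integrability on each $[i,i+1]$; monotonicity alone would already suffice. There is no real obstacle here — the entire proof is a short chain of Riemann-sum inequalities — so I would present it as a brief lemma-style proof and move on.
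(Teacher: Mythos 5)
Your proof is correct and is exactly the standard Riemann-sum comparison argument. The paper does not prove this proposition at all—it cites it as a known fact from the Lehman--Leighton--Meyer lecture notes (their Theorem~14.3)—and that reference uses the same unit-interval endpoint bounds and telescoping you describe, so there is no divergence in approach. Your parenthetical remarks are also on point: monotonicity alone already gives Riemann integrability so continuity is not essential, and the non-increasing case is most cleanly handled by reversing the inequalities directly (the $-f$ trick needs a slight disclaimer since $-f$ leaves $\reals^+$, but as you note, that is cosmetic).
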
  

\noindent We first prove Lemma~\ref{main:1}, and hence proving  the non-asymptotic Chung's lemma~\cite[Lemma 1]{chung1954stochastic} which has an incorrect original proof.
\subsection{A correct proof of Chung's lemma (Proof of Lemma~\ref{main:1})}
We first restate the lemma for reader's convenience:
\begin{lemma}[Restatement from Section~\ref{sec:warmup}] 
Let $\{\ab_{k}\}_{k\geq 0}$ be a sequence of positive real numbers.
Suppose that there exist an initial index $\ini >0$ and real numbers $\ccc>0$, $\alpha >\beta>0$ such that $\ab_{k+1}$  satisfies  the following inequality:
\begin{align} \label{adj:a}
    \ab_{k+1} \leq  \exp\left( -\frac{\alpha }{\ini+k+1}  \right) \ab_k +\frac{\ccc}{(\ini+k+1)^{\beta +1}}\quad \text{  for any $k\geq 0$}\,. 
\end{align}
Then, for any $K\geq 1$ we have the following bound:
\begin{align} \label{bound:1:a}
   \ab_K &\leq \exp\left( -\alpha\cdot \sum_{i=1}^K \frac{1}{\ini+i}  \right)\cdot \ab_0 +   \frac{\frac{1}{\alpha-\beta}   e^{\frac{\alpha}{\ini+1}}\cdot \ccc}{(\ini+K)^\beta} +  \frac{  e^{\frac{\alpha}{\ini+ 1}}\cdot \ccc}{(\ini+K)^{\beta+1}}\\
   &\leq \frac{(\ini+1)^\alpha\cdot \ab_0}{(\ini+K)^\alpha} +   \frac{\frac{1}{\alpha-\beta}   e^{\frac{\alpha}{\ini+1}}\cdot \ccc}{(\ini+K)^\beta} +  \frac{  e^{\frac{\alpha}{\ini+ 1}}\cdot \ccc}{(\ini+K)^{\beta+1}}\,. \label{bound:12:a}
\end{align}
\end{lemma}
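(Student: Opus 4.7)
The plan is to unroll the recurrence \eqref{adj:a} from $k=0$ up to $K-1$ and then control every exponential and summation that appears via the integral comparison in Proposition~\ref{pro:approx}. Iterating the recurrence gives
\begin{equation*}
    \ab_K \leq \exp\!\Big(-\alpha \sum_{i=1}^K \tfrac{1}{\ini+i}\Big)\ab_0 \;+\; \sum_{k=1}^K \frac{\ccc\cdot P_k}{(\ini+k)^{\beta+1}},
    \qquad P_k \defeq \exp\!\Big(-\alpha \sum_{j=k+1}^K \tfrac{1}{\ini+j}\Big),
\end{equation*}
with the convention $P_K = 1$. The first term is already the leading term of \eqref{bound:1:a}; applying Proposition~\ref{pro:approx} to the non-increasing map $x\mapsto 1/(\ini+x)$ on $[1,K]$ gives $\sum_{i=1}^K 1/(\ini+i)\geq \ln((\ini+K)/(\ini+1))+1/(\ini+K)$, so the same exponential is also bounded by $((\ini+1)/(\ini+K))^\alpha$, delivering the leading term of the weaker bound \eqref{bound:12:a}.

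To control the remaining sum, Proposition~\ref{pro:approx} applied to $1/(\ini+x)$ on $[k+1,K]$ yields $P_k\leq ((\ini+k+1)/(\ini+K))^\alpha$ (verified directly for $k\in\{K-1,K\}$). Using $(\ini+k+1)^\alpha \leq (\ini+k)^\alpha(1+1/(\ini+k))^\alpha \leq (\ini+k)^\alpha\, e^{\alpha/(\ini+1)}$ for $k\geq 1$, the sum collapses to
\begin{equation*}
    \sum_{k=1}^K \frac{\ccc\cdot P_k}{(\ini+k)^{\beta+1}}
    \;\leq\;\frac{\ccc\,e^{\alpha/(\ini+1)}}{(\ini+K)^\alpha}\sum_{k=1}^K (\ini+k)^{\alpha-\beta-1},
\end{equation*}
and what remains is to show the estimate $\sum_{k=1}^K (\ini+k)^{\alpha-\beta-1} \leq (\ini+K)^{\alpha-\beta}/(\alpha-\beta) + (\ini+K)^{\alpha-\beta-1}$.

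The main obstacle, and exactly where Fabian's critique of Chung's original argument bites, is that the correct application of Proposition~\ref{pro:approx} depends on the sign of $\alpha-\beta-1$; a uniform integral approximation misses the right denominator factor. I would therefore split into two cases. When $\alpha-\beta-1\geq 0$ the integrand is non-decreasing, and Proposition~\ref{pro:approx} yields $\sum\leq \int_1^K(\ini+x)^{\alpha-\beta-1}\,dx + (\ini+K)^{\alpha-\beta-1}$; when $-1<\alpha-\beta-1<0$ the integrand is non-increasing, and the elementary pointwise estimate $(\ini+k)^{\alpha-\beta-1}\leq \int_{k-1}^k (\ini+x)^{\alpha-\beta-1}\,dx$ summed over $k$ yields $\sum\leq \int_0^K(\ini+x)^{\alpha-\beta-1}\,dx$. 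In both regimes the integral evaluates to at most $(\ini+K)^{\alpha-\beta}/(\alpha-\beta)$, giving the displayed bound. Substituting back produces exactly the two error terms in \eqref{bound:1:a}, and \eqref{bound:12:a} follows by replacing the leading exponential with the geometric bound established above.
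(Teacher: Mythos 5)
Your proposal is correct and follows essentially the same route as the paper's proof: unroll the recursion, convert the exponential products into ratios $\left(\tfrac{\ini+k}{\ini+K}\right)^{\alpha}$ via the integral-approximation proposition (picking up the $e^{\alpha/(\ini+1)}$ factor), and bound $\sum_{k=1}^K (\ini+k)^{\alpha-\beta-1}$ by $\tfrac{(\ini+K)^{\alpha-\beta}}{\alpha-\beta}+(\ini+K)^{\alpha-\beta-1}$ with a sign-dependent case split. The only differences are cosmetic: you bound the tail product $P_k$ directly rather than factoring it as $\prod_{j=1}^K a_j\cdot(\prod_{j=1}^k a_j)^{-1}$, and you merge the $\alpha=\beta+1$ case into the non-decreasing regime, which the paper treats as a separate third case.
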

 For simplicity, let us define the following quantities for $k\geq 1$: 
\begin{align*}
a_{k}:=\exp\left(- \frac{\alpha }{k_0+ k}  \right) ~~\text{and}~~ c_k:=\frac{\ccc}{(\ini+  k)^{\beta+1}}\,.
\end{align*}  
Using these notations,  the recursive relation \eqref{adj:a}  becomes:
\begin{align} 
\ab_{k+1} &\leq  a_{k+1} \cdot \ab_k +c_{k+1} \quad \text{for any integer $k\geq 1$.} \label{adj2}
\end{align}
After recursively applying \eqref{adj2} for $k=0,1,2,\dots, K-1$, one obtains the following bound:
\begin{align}\label{6:int2}
    \ab_K \leq \ab_0  \prod_{j=1}^{K}a_j   + \left(\prod_{j=1}^{K}a_j \right)\cdot\left[\sum_{k=1}^K  \left(\prod_{j=1}^{k}a_j \right)^{-1} c_{k} \right] \,.
\end{align}
%Using the fact $\sum_{i\geq 1}{i^{-2}} =\frac{\pi^2}{6}$,  we have $1\leq \prod_{j=1}^k b_j\leq e^{\eps\pi^2/6 }$ for any $k\geq 1$. Letting $c:=  e^{\eps\pi^2/6 }$, one can upper and lower bound the product of $b_j$'s in \eqref{6:int} to obtain:
Now let us upper and lower bound the product of $a_j$'s.
Note that 
\begin{align*}
    \prod_{j=1}^ka_j = \exp \left(-\alpha \sum_{i=1}^{k} \frac{1}{\ini+i}\right)\quad \text{for any $k\geq 1$.}
\end{align*} 
Using Proposition~\ref{pro:approx} with $f(x) = \frac{1}{\ini+x}$, we get
\begin{align*}
     \log \frac{\ini+ k}{\ini+1} \leq \sum_{i=1}^{k} \frac{1}{\ini+i} \leq \log \frac{\ini+ k}{\ini+1} +  \frac{1}{\ini+1}\,.
\end{align*}
Using these upper and lower bounds, one can conclude:
\begin{align} \label{6:est1}
    e^{-\frac{\alpha}{\ini+1}}\left(\frac{\ini+1}{\ini+k} \right)^\alpha  \leq \prod_{j=1}^ka_j \leq \left(\frac{\ini+1}{\ini+k} \right)^\alpha \,.
\end{align}
Therefore, we have
\begin{align*}
   \sum_{k=1}^K  \left(\prod_{j=1}^{k}a_j\right)^{-1} c_{k} &\leq e^{\frac{\alpha}{\ini+1}} \sum_{k=1}^K \left(\frac{\ini+k}{\ini+1} \right)^\alpha  \cdot \frac{\ccc}{(\ini+k)^{\beta+1}} = \frac{ e^{\frac{\alpha}{\ini+1}} \cdot \ccc}{(\ini+1)^\alpha}\cdot \sum_{k=1}^K  (\ini+k)^{\alpha-\beta-1} \,.
\end{align*}
Applying Proposition~\ref{pro:approx} with $f(x)= (\ini+x)^{\alpha-\beta-1}$ to the above, since $\frac{1}{\alpha-\beta}(\ini+x)^{\alpha-\beta}$ is an anti-derivative of $f$, we obtain the following upper bounds:
\begin{align*}\frac{ e^{\frac{\alpha}{\ini+1}} \cdot \ccc}{(\ini+1)^\alpha}\cdot  \begin{cases}
   \frac{1}{\alpha-\beta}\left( (\ini+K)^{\alpha-\beta} -  (\ini+1)^{\alpha-\beta}\right) + (\ini+K)^{\alpha-\beta-1} , &\text{if }\alpha>\beta+1,\\  
   K, &\text{if }\alpha=\beta+1,\\ 
   \frac{1}{\alpha-\beta}\left( (\ini+K)^{\alpha-\beta} -  (\ini+1)^{\alpha-\beta}\right) + (\ini+1)^{\alpha-\beta-1}  &\text{if }\alpha<\beta+1.   \end{cases}  
\end{align*}
Combining all three cases, we conclude:
\begin{align} \label{crucial:1}
     \sum_{k=1}^K  \left(\prod_{j=1}^{k}a_j\right)^{-1} c_{k} &\leq \frac{ e^{\frac{\alpha}{\ini+1}} \cdot \ccc}{(\ini+1)^\alpha}\cdot \left( \frac{(\ini+K)^{\alpha-\beta}}{\alpha-\beta}+ (\ini+K)^{\alpha-\beta-1}\right)\,.
\end{align}
Indeed, for the cases $\alpha>\beta-1$ and $\alpha=\beta+1$, the above upper bound follows immediately; for the case $\alpha<\beta+1$, note (from the assumption $\alpha>\beta$) that $\alpha-\beta \in (0,1)$, which implies $-\frac{1}{\alpha-\beta} (\ini+1)^{\alpha-\beta} +(\ini+1)^{\alpha-\beta-1} < - (\ini+1)^{\alpha-\beta} +(\ini+1)^{\alpha-\beta-1} = -(\ini+1)^{\alpha-\beta-1} \cdot \ini <0 $, which then implies the desired upper bound.

Plugging \eqref{crucial:1} back to \eqref{6:int2} and using  \eqref{6:est1} to upper bound $\prod_{j=1}^{K}a_j$, we obtain:
\begin{align*}
    \ab_K &\leq \ab_0  \prod_{j=1}^{K}a_j   + \left(\prod_{j=1}^{K}a_j \right)\cdot\frac{ e^{\frac{\alpha}{\ini+1}} \cdot \ccc}{(\ini+1)^\alpha}\cdot \left( \frac{(\ini+K)^{\alpha-\beta}}{\alpha-\beta}+ (\ini+K)^{\alpha-\beta-1}\right)\\
    &\leq \ab_0  \prod_{j=1}^{K}a_j   + \left(\frac{\ini+1}{\ini+K} \right)^\alpha\cdot\frac{ e^{\frac{\alpha}{\ini+1}} \cdot \ccc}{(\ini+1)^\alpha}\cdot \left( \frac{(\ini+K)^{\alpha-\beta}}{\alpha-\beta}+ (\ini+K)^{\alpha-\beta-1}\right)\\
    &\leq \exp\left( -\alpha\cdot \sum_{i=1}^K \frac{1}{\ini+i}  \right)\cdot \ab_0 +   \frac{\frac{1}{\alpha-\beta}   e^{\frac{\alpha}{\ini+1}}\cdot \ccc}{(\ini+K)^\beta} +  \frac{  e^{\frac{\alpha}{\ini+ 1}}\cdot \ccc}{(\ini+K)^{\beta+1}}\,,
\end{align*}
which is precisely \eqref{bound:1:a}.
Using \eqref{6:est1} once again to upper bound the term $\exp( -\alpha\cdot \sum_{i=1}^K \frac{1}{\ini+i}  )$,  we obtain \eqref{bound:12:a}, which completes the proof.
 
\subsection{Proof of Lemma~\ref{main:2}} \label{pf:var}
We first restate the lemma for reader's convenience:
 \begin{lemma}[Restatement from Section~\ref{sec:tight}]
Let $n>0$ be an integer, and  $\{\ab_{k}\}_{k\geq 0}$ be a sequence of positive real numbers.
Suppose that there exist an initial index $\ini >0$ and real numbers  $\co, \cbo >0$, $\alpha >\beta>0$ and $\eps>0$ such that  the following are satisfied:
\begin{align} 
     \ab_{1} &\leq  \exp\left(-\alpha \sum_{i=1}^{n }\frac{1}{\ini+i}\right)  \ab_{0}  +\co  \quad \text{and} \label{11:a}\\
        \ab_{k+1} &\leq  \exp \left( -\alpha  \sum_{i=1}^{n } \frac{1}{\ini+nk+i} + \frac{\eps}{k^2}\right)  \ab_k +   \frac{\cbo}{(\ini+n(k+1))^{\beta+1}}\quad \text{  for any $k\geq 1$}\,.  \label{kk:a}
\end{align}
Then, for any $K\geq 1$ we have the following bound for $c:=e^{\eps\pi^2/6 }$:
\begin{align} \label{bound:2:a}
   \ab_K \leq  \frac{c(\ini+1)^\alpha\cdot \ab_0}{(\ini+nK)^\alpha} +   \frac{ c\cdot (\ini+n+1)^\alpha\cdot \co}{(\ini+nK)^\alpha}+  \frac{\frac{c}{\alpha-\beta}   e^{\frac{\alpha}{\ini+n+1}}\cdot \cbo}{n(\ini+nK)^\beta} +  \frac{c  e^{\frac{\alpha}{\ini+n+1}}\cdot \cbo}{(\ini+nK)^{\beta+1}}\,.
\end{align}
\end{lemma}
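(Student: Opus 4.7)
}
The plan is to follow the template of the corrected proof of Lemma~\ref{main:1} (Section~\ref{sec:correct}), but carefully track two new features: (i) each ``step'' of the recursion now compresses $n$ increments of the underlying index $m = k_0+n(k-1)+i$, and (ii) the extra $\eps/k^2$ exponents accumulate over all epochs but are summable, so they can be absorbed into a single multiplicative constant $c = e^{\eps\pi^2/6}$.

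First I would set
\[
a_1 \defeq \exp\!\Bigl(-\alpha \sum_{i=1}^n \tfrac{1}{k_0+i}\Bigr), \qquad
a_k \defeq \exp\!\Bigl(-\alpha \sum_{i=1}^n \tfrac{1}{k_0+n(k-1)+i} + \tfrac{\eps}{(k-1)^2}\Bigr)\ \text{for }k\ge 2,
\]
so that \eqref{11:a}--\eqref{kk:a} read $\ab_1 \le a_1\ab_0 + A_1$ and $\ab_k \le a_k \ab_{k-1} + c_k$ with $c_k \defeq A_2/(k_0+nk)^{\beta+1}$. Unrolling the recursion gives
\[
\ab_K \le \Bigl(\prod_{j=1}^K a_j\Bigr)\ab_0 \;+\; \Bigl(\prod_{j=2}^K a_j\Bigr)A_1 \;+\; \sum_{k=2}^K\Bigl(\prod_{j=k+1}^K a_j\Bigr)c_k.
\]
The three summands on the right are exactly the four terms of the claimed bound \eqref{bound:2:a} (the sum produces the last two).

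Next I would estimate the products. The key telescoping observation is
\[
\sum_{j=\ell+1}^K \sum_{i=1}^n \tfrac{1}{k_0+n(j-1)+i} \;=\; \sum_{m=n\ell+1}^{nK}\tfrac{1}{k_0+m},
\]
which converts the nested ``epoch'' sums into a single index sum to which Proposition~\ref{pro:approx} applies directly (this is the mechanism by which the ``$n$ iterations per epoch'' structure is exchanged for the $nK$ in the denominator of the final bound). Combining this with $\sum_{j\ge 1}1/j^2 \le \pi^2/6$ to absorb the $\eps$ terms yields
\[
\prod_{j=1}^K a_j \le c\Bigl(\tfrac{k_0+1}{k_0+nK}\Bigr)^\alpha,\qquad
\prod_{j=2}^K a_j \le c\Bigl(\tfrac{k_0+n+1}{k_0+nK}\Bigr)^\alpha,
\]
which account for the first two terms of \eqref{bound:2:a}.

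For the sum term I would mimic the final part of the proof of Lemma~\ref{main:1}. Bounding $\prod_{j=k+1}^K a_j \le c(k_0+nk+1)^\alpha/(k_0+nK)^\alpha$ (lower-bounding $\sum_{m=nk+1}^{nK}1/(k_0+m)$ by its integral), and then using
\[
\Bigl(\tfrac{k_0+nk+1}{k_0+nk}\Bigr)^{\!\alpha} \;\le\; \exp\!\Bigl(\tfrac{\alpha}{k_0+nk}\Bigr) \;\le\; \exp\!\Bigl(\tfrac{\alpha}{k_0+n+1}\Bigr) \quad (k\ge 2,\ n\ge 1),
\]
one reduces the sum to $\sum_{k=2}^K (k_0+nk)^{\alpha-\beta-1}$. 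Applying Proposition~\ref{pro:approx} to this sum, with antiderivative $(k_0+nx)^{\alpha-\beta}/(n(\alpha-\beta))$, produces precisely the $1/(n(k_0+nK)^\beta)$ leading term and a $1/(k_0+nK)^{\beta+1}$ correction, giving the last two terms of \eqref{bound:2:a}.

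The main technical nuisance will be the case analysis $\alpha \gtreqless \beta+1$ in bounding $\sum_{k=2}^K (k_0+nk)^{\alpha-\beta-1}$ by $\tfrac{1}{n(\alpha-\beta)}(k_0+nK)^{\alpha-\beta} + (k_0+nK)^{\alpha-\beta-1}$. This is the same obstruction encountered in the proof of Lemma~\ref{main:1}, and I would resolve it by the same trick: in the case $\alpha<\beta+1$ (so $\alpha-\beta\in(0,1)$), the boundary contribution at $k=2$ is negative after combining with $-(k_0+2n)^{\alpha-\beta}/(n(\alpha-\beta))$, so one can simply discard both. With that, the four terms assemble into exactly \eqref{bound:2:a}, finishing the proof.
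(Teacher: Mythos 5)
Your proposal is correct and follows essentially the same route as the paper's proof: unroll the recursion, telescope the per-epoch harmonic sums into a single sum over $nK$ indices, absorb the $\eps/k^2$ exponents into the constant $c=e^{\eps\pi^2/6}$, and bound $\sum_{k=2}^K(\ini+nk)^{\alpha-\beta-1}$ via the integral-approximation proposition with the same three-case analysis (including the same sign trick when $\alpha<\beta+1$). The only differences are organizational—you bound each tail product $\prod_{j=k+1}^K a_j$ directly (picking up the $e^{\alpha/(\ini+n+1)}$ factor through the $(\ini+nk+1)/(\ini+nk)$ shift), whereas the paper factors out $\prod_{j=2}^K a_j$ and lower-bounds partial products—which is an equivalent bookkeeping.
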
 
 The proof is generally analogous to that of Lemma~\ref{main:1}, while some distinctions are required so that the final bound captures the desired dependencies on the two parameters $n$ and $K$.
 To simplify notations, let us define the following quantities for $k\geq 1$: 
\begin{align*}
a_{k}:=\exp\left(- \alpha \cdot \sum_{i=1}^n \frac{1}{\ini+n(k-1)+i}  \right),~~ b_k:=\exp\left(\frac{\eps}{(k-1)^2} \right),~~\text{and}~~ c_k:=\frac{\cbo}{(\ini+nk)^{\beta+1}}\,.
\end{align*}  
Using these notations,  the recursive relations \eqref{11} and \eqref{kk} become:
\begin{align}
\ab_1 &\leq a_1\cdot \ab_0 +\co \label{112}\\
     \ab_{k+1} &\leq  a_{k+1}b_{k+1}\cdot \ab_k +c_{k+1} \quad \text{for any integer $k\geq 1$.} \label{kk2}
\end{align}
Recursively applying \eqref{kk2} for $k=1,2,\dots, K-1$ and then \eqref{112}, we obtain:
\begin{align}\label{int}
    \ab_K \leq a_1\ab_0  \prod_{j=2}^{K}a_jb_j  + \left(\prod_{j=2}^{K}a_jb_j\right)\cdot\left[ \co+ \sum_{k=2}^K  \left(\prod_{j=2}^{k}a_jb_j\right)^{-1} c_{k} \right] \,.
\end{align}
Note taht from the fact $\sum_{i\geq 1}{i^{-2}} =\frac{\pi^2}{6}$, one can upper and lower bound the product of $b_j$'s:
\begin{align}
    1\leq \prod_{i=2}^Kb_i \leq \exp \left(\sum_{i=2}^K\frac{\eps}{(i-1)^2}\right) \leq \exp\left( \eps \pi^2/6\right)\,. \label{prod:b}
\end{align}
Applying \eqref{prod:b} to \eqref{int}, we obtain the following bound (recall that $c:= e^{ \eps \pi^2/6}$):
\begin{align}\label{int2}
    \ab_K \leq c \ab_0 \prod_{j=1}^{K}a_j  + c \prod_{j=2}^{K}a_j\cdot\left[ \co+ \sum_{k=2}^K  \left(\prod_{j=2}^{k}a_j\right)^{-1} c_{k} \right]  \,.
\end{align} 
To obtain upper and lower bounds on the product of $a_j$'s, again note  that for any $2\leq k$, \begin{align*}
    \prod_{j=2}^ka_j = \exp \left(-\alpha \cdot \sum_{i=1}^{(k-1)n} \frac{1}{\ini+n+i}\right)\,,
\end{align*} which can then be estimated as follows using Proposition~\ref{pro:approx} similarly to \eqref{6:est1}:
\begin{align} \label{est1}
   e^{-\frac{\alpha}{\ini+n+1}} \left(\frac{\ini+n+1}{\ini+nk} \right)^\alpha  \leq \prod_{j=2}^ka_j \leq \left(\frac{\ini+n+1}{\ini+nk} \right)^\alpha\,.
\end{align}
Therefore, we have
\begin{align*}
   \sum_{k=2}^K  \left(\prod_{j=2}^{k}a_j\right)^{-1} c_{k} &\leq   e^{\frac{\alpha}{\ini+n+1}}\sum_{k=2}^K \left(\frac{\ini+nk}{\ini+n+1} \right)^\alpha  \cdot \frac{\cbo}{(\ini+nk)^{\beta+1}}\\
   &= \frac{  e^{\frac{\alpha}{\ini+n+1}}\cdot \cbo}{(\ini+n+1)^\alpha}\cdot \sum_{k=2}^K  (\ini+nk)^{\alpha-\beta-1} \,.
\end{align*}
Applying Proposition~\ref{pro:approx} with $f(x)= (\ini+nx)^{\alpha-\beta-1}$ to the above, since $\frac{1}{n(\alpha-\beta)}(\ini+nx)^{\alpha-\beta}$ is an anti-derivative of $f$, we obtain the following upper bounds:
\begin{align*}
    \frac{ e^{\frac{\alpha}{\ini+n+1}}\cdot \cbo}{(\ini+n+1)^\alpha}\cdot  \begin{cases}
   \frac{1}{n(\alpha-\beta)}\left( (\ini+nK)^{\alpha-\beta} -  (\ini+2n)^{\alpha-\beta}\right) + (\ini+nK)^{\alpha-\beta-1} , &\text{if }\alpha>\beta+1,\\  
   K-1, &\text{if }\alpha=\beta+1,\\ 
   \frac{1}{n(\alpha-\beta)}\left( (\ini+nK)^{\alpha-\beta} -  (\ini+2n)^{\alpha-\beta}\right) + (\ini+2n)^{\alpha-\beta-1}  &\text{if }\alpha<\beta+1.   \end{cases}  
\end{align*}
Akin to \eqref{crucial:1}, one can combining all three cases and conclude:
\begin{align*}
     \sum_{k=2}^K  \left(\prod_{j=2}^{k}a_j\right)^{-1} c_{k} &\leq \frac{e^{\frac{\alpha}{\ini+n+1}}\cdot \cbo}{(\ini+n+1)^\alpha}\cdot\left( \frac{(\ini+nK)^{\alpha-\beta}}{n(\alpha-\beta)}+ (\ini+nK)^{\alpha-\beta-1}\right)\,.
\end{align*}
Plugging this back to \eqref{int2}, and using \eqref{est1} to upper bound the product of $a_j$'s,  we obtain:
\begin{align*}
    \ab_K &\leq c \ab_0 \prod_{j=1}^{K}a_j  + c \prod_{j=2}^{K}a_j\cdot\left[ \co+ \frac{e^{\frac{\alpha}{\ini+n+1}}\cdot \cbo}{(\ini+n+1)^\alpha}\cdot\left( \frac{(\ini+nK)^{\alpha-\beta}}{n(\alpha-\beta)}+ (\ini+nK)^{\alpha-\beta-1}\right)\right] \\
    &\leq c\ab_0 \prod_{j=1}^{K}a_j  + c \left(\frac{\ini+n+1}{\ini+nK} \right)^\alpha \cdot\left[ \co+ \frac{e^{\frac{\alpha}{\ini+n+1}}\cdot \cbo}{(\ini+n+1)^\alpha}\cdot\left( \frac{(\ini+nK)^{\alpha-\beta}}{n(\alpha-\beta)}+ (\ini+nK)^{\alpha-\beta-1}\right)\right]\\
     &= c\ab_0 \prod_{j=1}^{K}a_j + \frac{ c\cdot (\ini+n+1)^\alpha\cdot \co}{(\ini+nK)^\alpha}+  \frac{\frac{c}{\alpha-\beta}   e^{\frac{\alpha}{\ini+n+1}}\cdot \cbo}{n(\ini+nK)^\beta} +  \frac{c  e^{\frac{\alpha}{\ini+n+1}}\cdot \cbo}{(\ini+nK)^{\beta+1}}\,.
\end{align*} 
Now similarly to \eqref{est1}, one obtains the upper bound $\prod_{j=1}^Ka_j \leq \left(\frac{\ini+n+1}{\ini+nK} \right)^\alpha$,  which together with the last expression deduces \eqref{bound:2:a} and hence completes the proof.

\section{Tight convergence bound for \singshuf}
\label{sec:singshuf}
\begin{table}
\caption{\small A summary of existing convergence rates and our results for $\singshuf$.
All the convergence rates are with respect to the suboptimality of objective function value.
Note that since the function classes become more restrictive as we go down the table, the noted lower bounds are also valid for upper rows, and the upper bounds are also valid for lower rows. In the ``Assumptions'' column, inequalities such as $K \gtrsim \kappa^\alpha$ mark the requirements $K \geq C \kappa^\alpha \log (nK)$ for the bounds to hold, and \bia~denotes the assumption that all the iterates remain in a bounded set (see Assumption~\ref{a:1}).
Also, (LB) stands for ``lower bound.''}

\centering
\begin{threeparttable}
\begin{tabular}{ |l |l| l c r| }
\hline
 \multicolumn{5}{|c|}{Convergence rates for $\singshuf$}\\ 
\hline \multicolumn{2}{|c|}{Settings} &References & Convergence rates & Assumptions\\ 
\hline\hline
\multirow{2}{1.3cm}[-4pt]{(1) $F$ P{\L}}      &\multirow{2}{1.5cm}[-4pt]{$f_i$ smooth\\Lipschitz}
 &{Nguyen et al.~\cite{nguyen2020unified}} &$O\Big(\frac{1}{K^2}\Big)$& $K \geq 1$ \\
 \cline{3-5}
 & &\makecell[l]{Safran and Shamir~\cite{safran2019good} }&$\Omega \Big(\frac{1}{nK^2}\Big)$ (LB)& const.\ step size\\
 \hline\hline  
\multirow{4}{1cm}[-10pt]{(2) $F$\\ strongly\\convex} & \multirow{2}{1.5cm}[-4pt]{$f_i$ smooth} & {Nguyen et al.~\cite{nguyen2020unified}} &$O\Big(\frac{1}{K^2}\Big)$& $K \geq 1$ \\
& & \cellcolor{LightGray}{\bf Ours }(Thm~\ref{thm:singshuf1}) &\cellcolor{LightGray}$O\Big(\frac{\log^3(nK)}{nK^2}\Big)$& \cellcolor{LightGray} $K\gtrsim \kappa^2$\\
\cline{2-5}
& \multirow{2}{1.5cm}[-4pt]{$f_i$ smooth\\convex} & {G{\"u}rb{\"u}zbalaban et al.~\cite{gurbuzbalaban2019incremental}} & $O\Big(\frac{1}{K^2}\Big)$ & asymptotic~\& \bia\\
& & {Mishchenko et al.~\cite{mishchenko2020random}}\tnote{$\dag$} &$O\Big(\frac{\log^{2}(nK)}{nK^2}\Big)$& $K \gtrsim \nicefrac{\kappa}{n}$ \\
\cline{3-5}
& &\makecell[l]{Safran and Shamir~\cite{safran2019good} }& $\Omega\Big(\frac{1}{nK^2}\Big)$ (LB)& const.\ step size\\
  \hline\hline
 \multirow{4}{1.3cm}[-10pt]{(3) $F$\\strongly\\convex\\ quadratic} & $f_i$ smooth &\cellcolor{LightGray}{\bf Ours }(Thm~\ref{thm:singshuf1}) &\cellcolor{LightGray}$O\Big(\frac{\log^3(nK)}{nK^2}\Big)$& \cellcolor{LightGray} $K\gtrsim \kappa^2$\\ 
 \cline{2-5}
 &\multirow{3}{1.5cm}[-7pt]{$f_i$ smooth\\quadratic\\convex} &{G{\"u}rb{\"u}zbalaban et al.~\cite{gurbuzbalaban2019incremental}}\tnote{$\ddag$} & $O\Big(\frac{1}{K^2}\Big)$ & asymptotic \\
& &{Safran and Shamir~\cite{safran2019good}}& $O\Big(\frac{\log^4(nK)}{nK^2}\Big)$&  $d=1$, $K \gtrsim \nicefrac{\kappa}{n}$\\
 \cline{3-5}
 & &\makecell[l]{Safran and Shamir~\cite{safran2019good}}& $\Omega\Big(\frac{1}{nK^2}\Big)$ (LB)&  const.\ step size\\
  \hline
\end{tabular}
\begin{tablenotes}
\item[$\dag$] additionally requires \textit{strong convexity} of $f_i$'s.
\item[$\ddag$] does not require that $f_i$'s are convex.
\end{tablenotes}
\end{threeparttable}
\label{tab:resultssing}
\end{table}

In this section, we provide a tight convergence bound for $\singshuf$ on smooth strongly convex functions, which also holds for strongly convex quadratic functions.
\begin{theorem}[Strongly convex costs]
\label{thm:singshuf1}
Assume that $F$ is $\mu$-strongly convex and each $f_i \in C_L^1(\reals^d)$.
Consider $\singshuf$ for the number of epochs $K$ satisfying $K \geq 10 \kappa^2 \log (n^{1/2}K)$, step size $\eee{k}{i} = \eta \defeq \frac{2 \log (n^{1/2} K)}{\mu n K}$, and initialization $\vx_{0}$.
Then, the following bound holds for some $c = O(\kappa^4)$:
\begin{equation*}
    \E [F(\vx_0^{K+1})] - F^* \leq 
    \frac{2L \norm{\vx_0 - \vx^*}^2}{nK^2} 
    +
    \frac{c \cdot \log^3(nK)}{nK^2}\,.
\end{equation*}
\end{theorem}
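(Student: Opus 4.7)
The plan is to follow the template of the $\randshuf$ proof of Theorem~\ref{thm:plconv} but adapted to the fact that $\singshuf$ reuses a single permutation $\sigma$ across all epochs. First, since $F$ is $\mu$-strongly convex and each $f_i$ is $L$-smooth, the sublevel set $\mc S_{\vx_0} \defeq \{\vx : F(\vx) \leq F(\vx_0)\}$ is compact, and in particular $G^* \defeq \max_i\lVert\nabla f_i(\vx^*)\rVert < \infty$. Repeating the Taylor expansion from Section~\ref{sec:pl-1st-1} over one epoch (now with the fixed $\sigma$ in place of $\sigma_k$) gives
\begin{equation*}
    \vx_0^{k+1} = \vx_0^k - \eta n \nabla F(\vx_0^k) + \eta^2 \vr_k,
    \qquad
    \vr_k = \sum_{i=1}^{n-1}\left(\prod_{t=n}^{i+2}(\mI-\eta \mH_{\sigma(t)}^k)\right)\mH_{\sigma(i+1)}^k \sum_{j=1}^{i}\nabla f_{\sigma(j)}(\vx_0^k),
\end{equation*}
followed by the smoothness inequality \eqref{eq:smthnessbnd-sketch}.

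The key departure from the $\randshuf$ proof is bounding $\lVert\vr_k\rVert$: because $\sigma$ is fixed across epochs, we cannot apply Hoeffding--Serfling freshly at each $k$. Instead, we decouple the $\vx_0^k$ dependence by writing
\begin{equation*}
    \sum_{j=1}^{i}\nabla f_{\sigma(j)}(\vx_0^k) = \sum_{j=1}^{i}\nabla f_{\sigma(j)}(\vx^*) + \sum_{j=1}^{i}\bigl[\nabla f_{\sigma(j)}(\vx_0^k)-\nabla f_{\sigma(j)}(\vx^*)\bigr].
\end{equation*}
The first sum depends only on $\sigma$ (not on any iterate), so a \emph{single} application of the vector Hoeffding--Serfling inequality (Lemma~\ref{lem:thm1-sub2}) together with a union bound over $i \in [n-1]$ gives $\lVert\sum_{j=1}^i \nabla f_{\sigma(j)}(\vx^*)\rVert \lesssim G^*\sqrt{i\log(n/\delta)}$ simultaneously for all epochs. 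The second sum is bounded by $iL\lVert\vx_0^k-\vx^*\rVert$ via $L$-smoothness. Summing over $i$ and using $\lVert\mH\rVert\leq L$ then yields $\lVert\vr_k\rVert \leq A + B\lVert\vx_0^k-\vx^*\rVert$ with $A = O(n^{3/2}LG^*\sqrt{\log(n/\delta)})$ and $B = O(n^2 L^2)$.

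Substituting this into \eqref{eq:smthnessbnd-sketch} and using strong convexity, i.e., $\lVert\vx_0^k - \vx^*\rVert^2 \leq 2(F(\vx_0^k)-F^*)/\mu$ and $\lVert\nabla F(\vx_0^k)\rVert^2 \geq 2\mu(F(\vx_0^k)-F^*)$, the main technical obstacle arises from the cross term $\eta^2 B \lVert\nabla F\rVert\,\lVert\vx_0^k-\vx^*\rVert$. The naive bound $\lVert\nabla F\rVert \leq L\lVert\vx_0^k-\vx^*\rVert$ gives a $(F-F^*)$-leak of order $\eta^2 n^2 L^3/\mu$, forcing the prohibitive requirement $K \gtrsim \kappa^3\log(nK)$. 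To recover the claimed $\kappa^2$ threshold, we instead apply AM--GM with a balanced parameter $c_2 = \Theta(\eta n)$:
\begin{equation*}
    \eta^2 B\lVert\nabla F\rVert\,\lVert\vx_0^k-\vx^*\rVert \leq \frac{\eta^2 n^2}{2c_2}\lVert\nabla F\rVert^2 + \frac{\eta^2 c_2 B^2}{2n^2}\lVert\vx_0^k-\vx^*\rVert^2.
\end{equation*}
With $c_2 = \eta n$, the first piece is absorbed into the $-\eta n\lVert\nabla F\rVert^2$ contraction, and the second contributes a leak of order $\eta^3 n^3 L^4/\mu$ into the $(F-F^*)$ coefficient. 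This is dominated by $\eta n\mu$ precisely when $\eta n L \lesssim 1/\kappa$; substituting the stated $\eta = 2\log(n^{1/2}K)/(\mu nK)$ yields exactly $K \gtrsim \kappa^2 \log(n^{1/2}K)$. A parallel (easier) check shows the $L\eta^4\lVert\vr_k\rVert^2$ term contributes a $(F-F^*)$-leak of order $\eta^4 n^4 L^5/\mu$, which is automatically dominated under the same step-size regime.

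Putting the pieces together gives a per-epoch contraction of the form
\begin{equation*}
    F(\vx_0^{k+1}) - F^* \leq (1 - c\,\eta n\mu)\bigl(F(\vx_0^k) - F^*\bigr) + C\,\eta^3 n^2 L^2 (G^*)^2 \log(n/\delta).
\end{equation*}
Unrolling across $K$ epochs, substituting $\eta$, and using $F(\vx_0)-F^* \leq \tfrac{L}{2}\lVert\vx_0-\vx^*\rVert^2$ converts the initial error term to $2L\lVert\vx_0-\vx^*\rVert^2/(nK^2)$, while the noise floor sums to $O(\kappa^4\log^2(nK)\log(n/\delta)/(nK^2))$. Finally, the high-probability bound is converted to the stated expectation bound exactly as in Corollary~\ref{cor:plconv} by choosing $\delta = 1/n$ and bounding the complementary event deterministically; the log from the union bound combines with the $\log^2$ from $\eta^2$ to produce the claimed $\log^3(nK)/(nK^2)$ rate. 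To ensure the iterates used in the analysis actually remain in $\mc S_{\vx_0}$, we invoke the same two-case escape argument as in the proof of Theorem~\ref{thm:plconv}, which for the last-iterate bound follows from the fact that the deterministic contraction on the sublevel set together with bounded noise prevents escape except in favorable low-suboptimality events.
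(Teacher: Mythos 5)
Your route is genuinely different from the paper's. The paper does an end-to-end analysis: it writes each epoch as $\vx_0^{k+1}=\wt{\mS}_k\vx_0^k-\eta\wt{\vt}_k$, unrolls all $K$ epochs, uses a \emph{deterministic} spectral contraction $\norm{\wt{\mS}_k}\le 1-\tfrac{\eta n\mu}{2}$ valid for every permutation once $\eta\le\tfrac{1}{5nL\kappa}$ (Lemma~\ref{lem:thmsing-term2-1}), and applies Hoeffding--Serfling once, to partial sums of $\nabla f_{\sigma(j)}(\vx^*)$ inside the accumulated noise (Lemma~\ref{lem:thmsing-term2-2}). You instead run a per-epoch function-value recursion in the style of Theorem~\ref{thm:plconv}, but your re-centering of the concentration at $\vx^*$ is exactly the paper's key trick, and your decomposition $\norm{\vr_k}\le A+B\norm{\vx_0^k-\vx^*}$ with $B=O(n^2L^2)$ correctly locates the source of the $\kappa^2$ epoch threshold. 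A byproduct you did not exploit: since $A$ involves only gradients at $\vx^*$ (so $G^*<\infty$ trivially, no compactness needed) and the $B$-term is handled by quadratic growth globally, your recursion holds for every epoch on the single high-probability event, with no sublevel-set confinement; the closing escape-argument paragraph is therefore unnecessary, and as invoked it would in any case only yield a best-iterate guarantee, whereas dropping it gives the last-iterate bound directly.

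There is, however, a concrete quantitative gap in the cross-term balancing. With $c_2=\eta n$, your Young split returns $\tfrac{\eta n}{2}\norm{\nabla F(\vx_0^k)}^2$ to the budget; together with the $\eta^2n^2L\norm{\nabla F(\vx_0^k)}^2$ term (which can be as large as $\tfrac{\eta n}{5}\norm{\nabla F(\vx_0^k)}^2$ under $\eta nL\le\tfrac{1}{5\kappa}$) and the $\tfrac{\eta n}{8}$ give-away for the $A$-cross term, the surviving coefficient on $\norm{\nabla F(\vx_0^k)}^2$ is strictly above $-\tfrac{\eta n}{2}$, so the strong-convexity (P{\L}) step only yields a contraction factor $(1-c\,\eta n\mu)$ with $c$ strictly below $1$. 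Because $\eta=\tfrac{2\log(n^{1/2}K)}{\mu nK}$ is calibrated precisely so that $(1-\eta n\mu)^K\lesssim\tfrac{1}{nK^2}$, any $c<1$ degrades the unrolled initial-error term to order $(nK^2)^{-c}$, which does not give the claimed $\tfrac{2L\norm{\vx_0-\vx^*}^2}{nK^2}$; your final recursion hides this behind the unspecified constant $c$. The repair is routine but necessary: choose the Young parameter so that the $\norm{\nabla F(\vx_0^k)}^2$ give-away is a small fraction of $\eta n$ (say $\tfrac{\eta n}{8}$), accepting a larger constant in the leak of order $\tfrac{\eta^3n^3L^4}{\mu}(F(\vx_0^k)-F^*)$; under $K\gtrsim\kappa^2\log(n^{1/2}K)$ one has $\eta nL\kappa\le\tfrac{1}{5}$, so this leak (and the analogous $L\eta^4B^2$ leak) is still a small fraction of $\eta n\mu$, and the full $(1-\eta n\mu)$ contraction survives. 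With that adjustment, the remaining steps---unrolling, the single union bound over $i\in[n-1]$ via Lemma~\ref{lem:thm1-sub2}, and the $\delta=1/n$ expectation conversion with a deterministic fallback---go through and recover the stated rate up to constants.
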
 
{\noindent \bf Proof:} The proof technique builds on the proof of Theorem~\ref{thm:quadratic}, using the idea of the end-to-end analysis from \cite{safran2019good}. See the subsequent subsections for the full proof.\qed

\paragraph{Optimality of convergence rate.} 
Theorem~\ref{thm:singshuf1} provides a tight (up to poly-log factors) bound that matches the known lower bound $\Om{\nicefrac{1}{nK^2}}$~\cite{safran2019good}, which was proven for strongly convex quadratic functions.
Since Theorem~\ref{thm:singshuf1} applies to subclasses of smooth strongly convex functions, it also gives the minimax optimal rate (up to log factors) for strongly convex quadratic functions (see Table~\ref{tab:resultssing}). Note that the theorem does \emph{not} require the convexity of component functions or bounded iterates assumption (Assumption~\ref{a:1}), in the same spirit as our $\randshuf$ results (Theorems~\ref{thm:plconv} and \ref{thm:quadratic}).

\begin{remark}[$\randshuf$ v.s.\ $\singshuf$]
It is often conjectured that $\randshuf$ performs better than $\singshuf$ due to multiple shuffling. The class of strongly convex quadratic functions aligns with this intuition, because there is a gap between optimal convergence rates $\bigto (\nicefrac{1}{(nK)^2}+\nicefrac{1}{nK^3})$ ($\randshuf$) and $\bigto (\nicefrac{1}{nK^2})$ ($\singshuf$) for quadratic functions.
In contrast, for a broader class of smooth strongly convex functions, Theorems~\ref{thm:plconv} and \ref{thm:singshuf1} reveal a rather surprising fact: the optimal rates of $\randshuf$ and $\singshuf$ have the same dependence on $n$ and $K$.
Although Theorem~\ref{thm:singshuf1} shows the same rate in $n$ and $K$ as Theorem~\ref{thm:plconv}, we note that its epoch requirement $K\gtrsim \kappa^2 \log(n^{1/2}K)$ is worse than Theorem~\ref{thm:plconv} by a factor of $\kappa$; however, it matches the epoch requirement of the existing bound for $\randshuf$~\cite{nagaraj2019sgd}.
\end{remark}

\begin{remark}[Proof techniques]
The Hoeffding-Serfling inequality used in the proof of Theorem~\ref{thm:plconv} for $\randshuf$ requires that the vectors $\nabla f_{i}(\vx_0^k)$'s, to which we apply the Hoeffding-Serfling inequality, have to be independent of the permutation $\sigma_k$.
This is no longer true for $\singshuf$, because in $\singshuf$, once a permutation $\sigma$ is fixed, it is used over and over again; the iterates become dependent on the choice of $\sigma$, hence rendering a direct extension of Theorem~\ref{thm:plconv} to $\singshuf$ impossible.
For the proof of Theorems~\ref{thm:singshuf1}, we instead take an end-to-end approach following \citep{safran2019good}.
Taking this approach, we apply the Hoeffding-Serfling inequality to the vectors $\nabla f_i(\vx^*)$'s, i.e., gradients evaluated at the global minimum $\vx^*$, which are independent of permutations sampled by the algorithm.
This way, we can prove a bound for $\singshuf$.
In fact, this proof technique can be easily extended to any reshuffling schemes that lie between $\randshuf$ and $\singshuf$, modulo some additional union bounds. For instance, our proof can be extended to the scheme where the components are reshuffled every $5$ epochs.
\end{remark}

\begin{remark}[Possible improvements for quadratics]
Notice that if the component functions $f_i$'s are quadratic, then their Hessians are constant, which implies that the matrix $\mS_k$ \eqref{eq:sktkdef} that appears in the update equation of $\randshuf$ is now constant ($\mS_k = \mS$) over epochs of $\singshuf$. We believe that leveraging this fact could lead to a tighter epoch requirement than Theorem~\ref{thm:singshuf1}. However, proving such an epoch requirement demands deriving a contraction bound that is more involved than the ones proven for $\randshuf$ (e.g., Lemma~\ref{lem:thm1-term1}), because one has to now bound $\norm{E [(\mS^K)^T \mS^K]}$, in place of $\norm{E [\mS^T \mS]}$. We leave this refinement to future work.
\end{remark}

\subsection{Proof outline}
The proof of Theorem~\ref{thm:singshuf1} builds on the proof of Theorem~\ref{thm:quadratic} presented in Section~\ref{sec:proof-thm-quadratic}.
We first recursively apply the update equations over all iterations and obtain an equation that expresses the last iterate $\vx_0^{K+1}$ in terms of the initialization $\vx_0^1 = \vx_0$.
By proving new lemmas in a similar flavor to the ones developed in Section~\ref{sec:proof-thm-quadratic}, we will bound $\E[\norms{\vx_0^{K+1}-\xs}^2]$ to get our desired result.

Since the algorithm is $\singshuf$, we fix the permutation $\sigma$ and use it for all epochs. If the component functions $f_i$'s were quadratic functions as in Theorem~\ref{thm:quadratic}, $\mS_k$ and $\vt_k$ \eqref{eq:sktkdef} defined in the proof of Theorem~\ref{thm:quadratic} would have been \emph{constant} over epochs of $\singshuf$, given the choice of $\sigma$; however, this is \emph{not} true in the non-quadratic case, because the Hessians of $f_i$'s are not constant. We have to take this into account in the proof.

Throughout the proof, we assume without loss of generality that the global minimum is achieved at $\vx^* = \zeros$. That is, $\sum_{i=1}^n \nabla f_i(\zeros) = \zeros$. We define $G \defeq \max_{i \in [n]} \norm{\nabla f_i(\zeros)}$.

We first decompose the gradient estimate $\nabla f_{\sigma(i)} (\vx_{i-1}^k)$ at the $i$-th iteration of the $k$-th epoch ($i \in [n], k \in [K]$) into a sum of three different parts:
\begin{align*}
    \nabla f_{\sigma(i)} (\vx_{i-1}^k)
    =& \nabla f_{\sigma(i)} (\zeros) + \nabla f_{\sigma(i)} (\vx_{0}^k) - \nabla f_{\sigma(i)} (\zeros) + \nabla f_{\sigma(i)} (\vx_{i-1}^k) - \nabla f_{\sigma(i)} (\vx_{0}^k)\\
    =& \nabla f_{\sigma(i)} (\zeros) 
    + \underbrace{\left [\int_{0}^1 \nabla^2 f_{\sigma(i)}(t \vx_{0}^k) dt \right ]}_{\eqdef \mA_{\sigma(i)}^k} \vx_{0}^k
    + \underbrace{\left [\int_{0}^1 \nabla^2 f_{\sigma(i)}(\vx_{0}^k + t(\vx_{i-1}^k-\vx_{0}^k)) dt \right ]}_{\eqdef \mB_{\sigma(i)}^k} (\vx_{i-1}^k - \vx_{0}^k)\\
    =& \nabla f_{\sigma(i)} (\zeros) +\mA_{\sigma(i)}^k \vx_0^k + \mB_{\sigma(i)}^k (\vx_{i-1}^k - \vx_0^k).
\end{align*}
As discussed in Section~\ref{sec:pl-1st-1}, the integrals $\mA_{\sigma(i)}^k$ and $\mB_{\sigma(i)}^k$ exist due to smoothness of $f_{\sigma(i)}$'s.
Note that $\norms{\mA_{\sigma(i)}^k}\leq L$ and $\norms{\mB_{\sigma(i)}^k}\leq L$ due to $L$-smoothness of $f_{\sigma(i)}$'s, and $\frac{1}{n} \sum_{i=1}^n \mA_{\sigma(i)}^k \succeq \mu \mI$ due to $\mu$-strong convexity of $F$.

Plugging this into the update equation of $\vx_1^k$, we get
\begin{align*}
    \vx_1^k &= \vx_0^k - \eta \nabla f_{\sigma(1)} (\vx_0^k)
    =\vx_0^k - \eta (\nabla f_{\sigma(1)} (\zeros) + \mA_{\sigma(1)}^k \vx_0^k)\\
    &=(\mI-\eta \mA_{\sigma(1)}^k) \vx_0^k - \eta \nabla f_{\sigma(1)}(\zeros).
\end{align*}
Substituting this to the update equation of $\vx_2^k$, 
\begin{align*}
    \vx_2^k &= \vx_1^k - \eta \nabla f_{\sigma(2)} (\vx_1^k)\\
    &=\vx_1^k - \eta (\nabla f_{\sigma(2)} (\zeros) + \mA_{\sigma(2)}^k \vx_0^k + \mB_{\sigma(2)}^k (\vx_1^k - \vx_0^k))\\
    &=(\mI-\eta \mB_{\sigma(2)}^k) [(\mI-\eta \mA_{\sigma(1)}^k) \vx_0^k - \eta \nabla f_{\sigma(1)}(\zeros)]- \eta\nabla f_{\sigma(2)}(\zeros) - \eta \mA_{\sigma(2)}^k \vx_0^k + \eta \mB_{\sigma(2)}^k \vx_0^k\\
    &=[\mI-\eta \mA_{\sigma(2)}^k - \eta (\mI-\eta \mB_{\sigma(2)}^k) \mA_{\sigma(1)}^k]\vx_0^k 
    - \eta \nabla f_{\sigma(2)}(\zeros) - \eta(\mI-\eta \mB_{\sigma(2)}^k) \nabla f_{\sigma(1)} (\zeros).
\end{align*}
Repeating this, one can write the last iterate $\vx_n^k$ (or equivalently, $\vx_0^{k+1}$) of the epoch as the following:
\begin{align*}
    \vx_0^{k+1} &= \underbrace{\left [\mI - \eta \sum_{j=1}^n \left ( \prod_{t=n}^{j+1} (\mI-\eta \mB_{\sigma(t)}^k) \right ) \mA_{\sigma(j)}^k \right ]}_{\eqdef \wt \mS_{k}} \vx_0^k 
    - \eta \underbrace{\left [ \sum_{j=1}^n \left ( \prod_{t=n}^{j+1} (\mI-\eta \mB_{\sigma(t)}^k) \right ) \nabla f_{\sigma(j)} (\zeros) \right ]}_{\eqdef \wt \vt_{k}}\\
    &= \wt \mS_{k} \vx_0^k - \eta \wt \vt_{k}.
\end{align*}
Now, repeating this $K$ times, we get the equation for the iterate after $K$ epochs, which we take as the output of the algorithm:
\begin{align*}
    \vx_0^{K+1} = \left (\prod_{k=K}^1 \wt \mS_{k} \right ) \vx_0^1 - \eta \sum_{k=1}^K \left ( \prod_{t=K}^{k+1} \wt \mS_{t} \right ) \wt \vt_{k} 
    = \wt \mS_{K:1} \vx_0^1 - \eta \sum_{k=1}^K \wt \mS_{K:k+1} \wt \vt_k.
\end{align*}

We aim to get an upper bound on $\E [\norms{\vx_0^{K+1}}^2]$, where the expectation is over the randomness of permutation $\sigma$. To this end, using $\norm{\va+\vb}^2 \leq 2\norm{\va}^2 + 2\norm{\vb}^2$,
\begin{align}
    \norm{\vx_0^{K+1}}^2 
    &\leq 2 \norm { \wt \mS_{K:1} \vx_0^1}^2 + 2 \eta^2 \norm {\sum_{k=1}^K \wt \mS_{K:k+1} \wt \vt_{k}}^2. \label{eq:ssbound}
\end{align}

The remaining proof bounds each of the terms, using the following two lemmas.
The proofs of Lemmas~\ref{lem:thmsing-term2-1} and \ref{lem:thmsing-term2-2} are deferred to Sections~\ref{sec:proof-lem-thmsing-term2-1}, and \ref{sec:proof-lem-thmsing-term2-2}, respectively.
% The remaining proof bounds each of the terms, using the following three lemmas.
% The proofs of Lemmas~\ref{lem:thmsing-term1}, \ref{lem:thmsing-term2-1} and \ref{lem:thmsing-term2-2} are deferred to Sections~\ref{sec:proof-lem-thmsing-term1}, \ref{sec:proof-lem-thmsing-term2-1}, and \ref{sec:proof-lem-thmsing-term2-2}, respectively.
% \begin{lemma}
% \label{lem:thmsing-term1}
% For any $0 \leq \eta \leq \frac{1}{5 n L \kappa}$, any permutation $\sigma$, and $k \in [K]$, we have
% \begin{align*}
% \norm {\wt \mS_{k}^T \wt \mS_{k}} 
% \leq 1-\eta n \mu.
% \end{align*}
% \end{lemma}
\begin{lemma}
\label{lem:thmsing-term2-1}
For any $0 \leq \eta \leq \frac{1}{5 n L \kappa}$, any permutation $\sigma$, and $k \in [K]$, we have
\begin{align*}
\norm {\wt \mS_{k}} 
\leq 1-\frac{\eta n \mu}{2}.
\end{align*}
\end{lemma}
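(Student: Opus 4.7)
\textbf{Proof proposal for Lemma~\ref{lem:thmsing-term2-1}.} The plan is to view $\wt \mS_k$ as a perturbation of an ``ideal'' gradient-descent step on the averaged Hessian and show that the perturbation is negligible for the step size in question. Concretely, let $\bar \mA^k := \frac{1}{n}\sum_{i=1}^n \mA_{\sigma(i)}^k$ and decompose
\begin{equation*}
    \wt \mS_k
    = \bigl(\mI - \eta n \bar \mA^k\bigr)
    - \eta \sum_{j=1}^n \Biggl[\prod_{t=n}^{j+1} (\mI - \eta \mB_{\sigma(t)}^k) - \mI \Biggr] \mA_{\sigma(j)}^k
    \;\eqdef\; \bigl(\mI - \eta n \bar \mA^k\bigr) + \mathrm{Err}.
\end{equation*}

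First I would handle the ideal part. Because $\bar \mA^k = \int_0^1 \nabla^2 F(t\vx_0^k)dt$ and $F$ is $\mu$-strongly convex and $L$-smooth, $\bar \mA^k$ is symmetric with eigenvalues in $[\mu,L]$; crucially, this step does \emph{not} use convexity of the individual $f_i$'s, only of $F$. Since $\eta nL \leq \frac{1}{5\kappa}\leq \frac15$, the eigenvalues of $\mI - \eta n \bar \mA^k$ lie in $[1-\eta nL,\,1-\eta n\mu]\subset(0,1)$, giving $\norm{\mI-\eta n\bar\mA^k} = 1-\eta n\mu$.

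Next I would bound $\norm{\mathrm{Err}}$. Using $\norms{\mB_{\sigma(t)}^k}\leq L$ and expanding each product,
\begin{equation*}
    \Bigl\lVert \prod_{t=n}^{j+1}(\mI - \eta \mB_{\sigma(t)}^k) - \mI \Bigr\rVert
    \leq (1+\eta L)^{n-j} - 1,
\end{equation*}
so with $u=\eta L$ and using $\norms{\mA_{\sigma(j)}^k}\leq L$,
\begin{equation*}
    \norm{\mathrm{Err}} \leq \eta L \sum_{j=1}^{n-1}\bigl[(1+u)^{n-j}-1\bigr]
    = \eta L\left(\tfrac{(1+u)^n-1}{u}-n\right).
\end{equation*}
Since $\eta nL\leq \frac15$, the bound $(1+u)^n-1 \leq e^{nu}-1 \leq nu + (nu)^2$ gives $\tfrac{(1+u)^n-1}{u}-n\leq n^2 u$, hence $\norm{\mathrm{Err}} \leq \eta^2 n^2 L^2$.

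Finally I would combine the two estimates: $\norm{\wt \mS_k} \leq 1 - \eta n\mu + \eta^2 n^2 L^2$, and invoking the assumption $\eta\leq \frac{1}{5nL\kappa} = \frac{\mu}{5nL^2}$ yields $\eta^2 n^2 L^2 \leq \tfrac{\eta n\mu}{5} \leq \tfrac{\eta n\mu}{2}$, which concludes $\norm{\wt \mS_k}\leq 1-\tfrac{\eta n\mu}{2}$. The only mildly delicate step is the geometric-series estimate for $\norm{\mathrm{Err}}$; getting the quadratic $\eta^2 n^2 L^2$ scaling (as opposed to a worse $\eta n^2 L^2$ or $\eta^2 n^3 L^3$) is what forces the step size to be of the form $\mathcal{O}(1/(nL\kappa))$ rather than $\mathcal{O}(1/(nL))$, and is the reason the epoch requirement of Theorem~\ref{thm:singshuf1} carries an extra factor of $\kappa$ compared to Theorem~\ref{thm:plconv}.
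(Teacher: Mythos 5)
Your proof is correct and follows essentially the same route as the paper's: both isolate the zeroth- and first-order part $\mI - \eta n \bar\mA^k$ (whose norm is at most $1-\eta n\mu$ since $\bar\mA^k \succeq \mu\mI$) and show the remaining terms have norm $O(\eta^2 n^2 L^2)$, which the condition $\eta \leq \frac{1}{5nL\kappa}$ absorbs into $\frac{\eta n\mu}{2}$. The only difference is bookkeeping — you bound the remainder via the telescoped prefix-product estimate $(1+\eta L)^{n-j}-1$, while the paper expands $\wt\mS_k$ into modified elementary symmetric polynomials and counts terms, giving the same quadratic error $\tfrac{5}{4}(\eta n L)^2$ (also, your "$=$" for $\norm{\mI-\eta n\bar\mA^k}$ should be "$\leq$", a cosmetic point).
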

\begin{lemma}
\label{lem:thmsing-term2-2}
For any $0 \leq \eta \leq \frac{1}{5nL\kappa}$,
\begin{align*}
\E \left [ 
\norm {\sum_{k=1}^K \wt \mS_{K:k+1} \wt \vt_{k}}^2 
\right ]
\leq \frac{66 n L^2 G^2 \log n}{\mu^2}.
\end{align*}
\end{lemma}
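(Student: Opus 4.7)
The plan is to carry out an end-to-end analysis in the spirit of the proof of Theorem~\ref{thm:quadratic}. Without loss of generality assume $\xs = \zeros$, so that by $L$-smoothness it suffices to bound $\E[\norm{\vx_0^{K+1}}^2]$ and then convert to suboptimality via $F(\vx) - F^* \leq \tfrac{L}{2}\norm{\vx}^2$. First I decompose each gradient evaluation as $\nabla f_{\sigma(i)}(\vx_{i-1}^k) = \nabla f_{\sigma(i)}(\zeros) + \mA_{\sigma(i)}^k \vx_0^k + \mB_{\sigma(i)}^k(\vx_{i-1}^k - \vx_0^k)$, where $\mA_{\sigma(i)}^k$ and $\mB_{\sigma(i)}^k$ are integrated Hessians of spectral norm at most $L$, and telescope the updates of one epoch into $\vx_0^{k+1} = \wt\mS_k\vx_0^k - \eta\wt\vt_k$. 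Iterating across $K$ epochs then yields $\vx_0^{K+1} = \wt\mS_{K:1}\vx_0 - \eta\sum_{k=1}^K \wt\mS_{K:k+1}\wt\vt_k$, and $\norm{\va+\vb}^2 \leq 2\norm{\va}^2 + 2\norm{\vb}^2$ cleanly splits the task into a ``signal'' term and a ``noise'' term.

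For the signal term, I will establish a deterministic contraction $\norm{\wt\mS_k} \leq 1 - \tfrac{\eta n \mu}{2}$ (Lemma~\ref{lem:thmsing-term2-1}). Writing $\wt\mS_k = \mI - \eta n\bar{\mA}^k - \eta\sum_{j=1}^n\bigl[\prod_{t=n}^{j+1}(\mI-\eta\mB_{\sigma(t)}^k) - \mI\bigr]\mA_{\sigma(j)}^k$ with $\bar{\mA}^k := \tfrac1n\sum_i \mA_{\sigma(i)}^k \succeq \mu\mI$ (by $\mu$-strong convexity of $F$), the leading term has spectral norm at most $1-\eta n\mu$ while the remainder is $O(\eta^2 n^2 L^2)$; requiring $\eta \lesssim 1/(nL\kappa)$ absorbs the remainder into the slack and yields the claimed contraction. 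This condition, combined with $\eta = 2\log(n^{1/2}K)/(\mu nK)$, produces exactly the epoch requirement $K \gtrsim \kappa^2\log(n^{1/2}K)$, and applying the contraction $K$ times bounds the signal term by $\norm{\vx_0}^2/(nK^2)$.

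For the noise term $\E\bigl[\bigl\lVert\sum_k \wt\mS_{K:k+1}\wt\vt_k\bigr\rVert^2\bigr]$ (Lemma~\ref{lem:thmsing-term2-2}), the crucial observation is that $\wt\vt_k$ is a weighted sum of $\nabla f_{\sigma(j)}(\zeros)$, and the vectors $\{\nabla f_i(\zeros)\}_i$ are \emph{fixed} (independent of $\sigma$), have mean zero (since $\sum_i \nabla f_i(\zeros) = \zeros$), and are bounded by some $G$. The vector-valued Hoeffding--Serfling inequality (Lemma~\ref{lem:thm1-sub2}) therefore applies directly to the partial sums $\sum_{j\leq i}\nabla f_{\sigma(j)}(\zeros)$ arising after a summation-by-parts step on $\wt\vt_k$, yielding $\E[\norm{\wt\vt_k}^2] \lesssim \eta^2 n^3 L^2 G^2 \log n$. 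Combining with $\norm{\wt\mS_{K:k+1}} \leq (1-\tfrac{\eta n\mu}{2})^{K-k}$ via weighted Cauchy--Schwarz (or equivalently expanding the square and absorbing cross terms into the geometric decay) gives $\E[\norm{\cdots}^2] \lesssim nL^2 G^2 \log n/\mu^2$. Scaling by $2\eta^2 \cdot (L/2)$ with $\eta = \Theta(\log(nK)/(\mu nK))$ produces the $O(\log^3(nK)/(nK^2))$ noise bound, completing the proof.

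The main obstacle is the loss of independence across epochs: unlike $\randshuf$, where distinct epochs use independent permutations and cross-term arguments such as Lemma~\ref{lem:thm1-term3} collapse via independence, $\singshuf$ reuses the single permutation $\sigma$ throughout, so the fine-grained permutation-averaging contractions of Lemma~\ref{lem:thm1-term1} are unavailable for $\wt\mS_k$. One must instead settle for the cruder deterministic norm bound above, which is precisely what forces the stricter $K\gtrsim \kappa^2$ epoch requirement (as opposed to $K\gtrsim \kappa$ in Theorem~\ref{thm:plconv}). A secondary subtlety is that concentration must be anchored at the permutation-independent point $\zeros$ rather than at the iterate $\vx_0^k$ (as was possible under $\randshuf$), which is what motivates the three-way gradient decomposition in the first step.
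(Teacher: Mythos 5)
Your proposal is correct and follows essentially the same route as the paper's proof: summation by parts on $\wt\vt_k$, the vector Hoeffding--Serfling inequality applied to partial sums of the permutation-independent vectors $\nabla f_i(\zeros)$ (with the usual high-probability/complement split to turn it into an expectation bound), and the deterministic contraction $\norm{\wt\mS_k}\le 1-\tfrac{\eta n\mu}{2}$ summed geometrically. The only cosmetic difference is that you package the concentration as a per-epoch second-moment bound $\E[\norms{\wt\vt_k}^2]\lesssim \eta^2 n^3 L^2 G^2\log n$ and recombine via weighted Cauchy--Schwarz, whereas the paper bounds all $\norm{\wt\vt_k}$ simultaneously on a single event (they are controlled by the same $k$-independent partial sums) before squaring; both yield the stated $O(nL^2G^2\log n/\mu^2)$ bound.
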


Since Lemma~\ref{lem:thmsing-term2-1} holds for any permutation $\sigma$ and $k \in [K]$ (for $\eta \leq \frac{1}{5 n L \kappa}$), we have 
% \begin{align*}
%     \norm {\wt \mS_{K:1} \vx_0^1}^2
%     &= \left ( \wt \mS_{K:1} \vx_0^1 \right )^T \left ( \wt \mS_{K:1} \vx_0^1 \right )
%     = \left ( \wt \mS_{K-1:1} \vx_0^1 \right )^T (\wt \mS_{K}^T \wt \mS_{K}) \left ( \wt \mS_{K-1:1} \vx_0^1 \right )\\
%     &\leq (1-\eta n \mu) \left ( \wt \mS_{K-1:1} \vx_0^1 \right )^T \left ( \wt \mS_{K-1:1} \vx_0^1 \right )
%     \leq \dots 
%     \leq (1-\eta n \mu)^K \norm{\vx_0^1}^2.
% \end{align*}
\begin{align*}
    \norm {\wt \mS_{K:1} \vx_0^1}^2
    &\leq \left ( \prod_{k=1}^K \norm{\wt \mS_{k}}^2 \right ) \norm{\vx_0^1}^2
    \leq \left ( 1 - \frac{\eta n \mu}{2} \right )^{2K} \norm{\vx_0^1}^2.
\end{align*}
The second term is bounded by Lemma~\ref{lem:thmsing-term2-2}, which uses Lemma~\ref{lem:thmsing-term2-1} in its proof.

Substituting these bounds to \eqref{eq:ssbound}, we have
\begin{align*}
    \E [\norms{\vx_0^{K+1}}^2] \leq
    2 \left ( 1 - \frac{\eta n \mu}{2} \right )^{2K} \norm{\vx_0^1}^2 
    + \frac{132 \eta^2 n L^2 G^2 \log n}{\mu^2}.
\end{align*}
Now substitute the step size $\eta = \frac{2 \log (n^{1/2} K)}{\mu n K}$. Then, we get
\begin{align*}
    \E [\norms{\vx_0^{K+1}}^2] \leq
    \frac{2 \norm{\vx_0^1}^2}{nK^2}
    + \bigo \left ( \frac{L^2 G^2 \log^3(nK)}{\mu^4 nK^2} \right ),
\end{align*}
and in terms of the function value,
\begin{align*}
    \E [F(\vx_0^{K+1})-F^*] \leq
    \frac{2 L\norm{\vx_0^1}^2}{nK^2}
    + \bigo \left ( \frac{L^3 G^2 \log^3(nK)}{\mu^4 nK^2} \right ).
\end{align*}
Recall that the bound holds for $\eta \leq \frac{1}{5nL\kappa}$, so $K$ must be large enough so that
\begin{equation*}
    \frac{2 \log (n^{1/2} K)}{\mu n K} \leq \frac{1}{5nL\kappa}.
\end{equation*}
This gives us the epoch condition $K \geq 10 \kappa^2 \log (n^{1/2}K)$.

\subsection{Proof of Lemma~\ref{lem:thmsing-term2-1}}
\label{sec:proof-lem-thmsing-term2-1}
\paragraph{Decomposition into (modified) elementary polynomials.}
We expand $\wt \mS_{k}$ in the following way:
\begin{align*}
    \wt \mS_{k}
    = \mI - \eta \sum_{j=1}^n \left ( \prod_{t=n}^{j+1} (\mI-\eta \mB_{\sigma(t)}^k) \right ) \mA_{\sigma(j)}^k
    = \sum_{m=0}^n (-\eta)^m 
    \underbrace{
    \sum_{1\leq t_1 < \dots < t_m \leq n} \mB_{\sigma(t_m)}^k \cdots \mB_{\sigma(t_2)}^k  \mA_{\sigma(t_1)}^k,
    }_{\eqdef \wt e_{m}}
\end{align*}
where $\wt e_m$ be viewed as a modified version of noncommutative elementary polynomial \eqref{eq:def-elem-poly}.
Since $k$ and $\sigma$ are fixed in this section, we use $\mA$ to denote the mean $\frac{1}{n} \sum_{i=1}^n \mA_{\sigma(i)}^k$. Recall that by definition of $\mA_{\sigma(i)}^k$'s and strong convexity of $F \defeq \frac{1}{n}\sum_i f_i$, we have $\mA \succeq \mu \mI$.
In what follows, we will decompose $\wt \mS_{k}$ into the sum of $1 - \eta n \mA$ and remainder terms. By bounding the spectral norm of $1 - \eta n \mA$ and the remainder terms, we will get the desired bound on the spectral norm of $\wt \mS_{k}$.

\paragraph{Spectral norm bound.}
It is easy to check that $\wt e_0 = \mI$ and $\wt e_1 = \sum_{i=1}^n \mA_{\sigma(i)}^k = n\mA$, so 
\begin{align*}
    \wt \mS_k
    = \mI - \eta n \mA + \sum_{m=2}^{n} (-\eta)^m \wt e_m,
\end{align*}
and we get the spectral norm bound
\begin{align}
    \norm{\wt \mS_k}
    \leq \norm{\mI - \eta n \mA} + \sum_{m=2}^{n} \eta^m \norm{\wt e_m}. \label{eq:ss-specnormbd}
\end{align}
It is now left to bound each of the norms.

\paragraph{Bounding each term of the spectral norm bound.}
First, note that for any eigenvalue $s$ of the positive definite matrix $\mA$, the corresponding eigenvalue of $\mI - \eta n \mA$ is $1 - \eta n s$. Recall $\eta \leq \frac{1}{5nL\kappa}\leq \frac{1}{5nL}$, so $\eta n s \leq 1/5$ for any eigenvalue $s$ of $\mA$. Since the function $t \mapsto 1 - t$ is positive and decreasing on $[0,0.2]$, the maximum singular value (i.e., spectral norm) of $\mI - \eta n \mA$ comes from the minimum eigenvalue of $\mA$. Hence,
\begin{equation*}
    \norm{\mI - \eta n \mA}
    \leq 1 - \eta n \mu.
\end{equation*}

Next, consider $\norms{\wt e_m}$. It contains $\choose{n}{m}$ terms, and each of the terms have spectral norm bounded above by $L^m$. This gives
\begin{equation*}
    \norm{\wt e_m} \leq \choose{n}{m} L^m \leq (nL)^m.
\end{equation*}

\paragraph{Concluding the proof.}
Substituting the bounds to \eqref{eq:ss-specnormbd} yields
\begin{align*}
    \norm{\wt \mS_k} 
    &\leq
    1 - \eta n \mu + \sum_{m=2}^{n} (\eta n L)^m
    \leq
    1 - \eta n \mu + \frac{(\eta n L)^2}{1-\eta n L}
    \leq 
    1 - \eta n \mu + \frac{5}{4}(\eta n L)^2,
\end{align*}
where the last inequality used $\eta n L \leq 1/5$.
The remaining step is to show that the right hand side of the inequality is bounded above by $1-\frac{\eta n \mu}{2}$ for $0 \leq \eta \leq \frac{1}{5nL\kappa}$.

Define $z = \eta n L$. Using this, we have
\begin{align*}
    &~1 - \eta n \mu + \frac{5}{4}(\eta n L)^2 \leq 1-\frac{\eta n \mu}{2}
    \text{ for } 0 \leq \eta \leq \frac{1}{5nL\kappa}\\
    \iff &~
    g(z) \defeq \frac{z}{2 \kappa} - \frac{5z^2}{4} \geq 0
    \text{ for } 0 \leq z \leq \frac{1}{5\kappa},
\end{align*}
so it suffices to show the latter.
One can check that $g(0) = 0$, $g'(0) > 0$ and $g'(z)$ is monotonically decreasing in $z \geq 0$, so $g(z) \geq 0$ holds for $z \in [0, c]$ for some $c > 0$. This also means that if we have $g(c) \geq 0$ for some $c>0$, $g(z) \geq 0$ for all $z \in [0,c]$.

Consider $z = \frac{1}{5\kappa}$. Substituting to $g$ gives
\begin{align*}
    g\left (\frac{1}{5\kappa} \right)
    &= \frac{1}{10\kappa^2} - \frac{1}{20 \kappa^2} = \frac{1}{20 \kappa^2} > 0.
\end{align*}
This means that $g(z) \geq 0$ for $0 \leq z \leq \frac{1}{5\kappa}$, hence proving the lemma.

\subsection{Proof of Lemma~\ref{lem:thmsing-term2-2}}
\label{sec:proof-lem-thmsing-term2-2}
First, note that if $0 \leq \eta \leq \frac{1}{5nL\kappa}$, Lemma~\ref{lem:thmsing-term2-1} tells us that the following holds for any $k \in [K]$ and any underlying permutation $\sigma$:
\begin{equation*}
    \norm{\wt \mS_{k}} \leq 1-\frac{\eta n \mu}{2}. 
\end{equation*}
Therefore, for any permutation $\sigma$, we have
\begin{align}
    \norm {\sum_{k=1}^K \wt \mS_{K:k+1} \wt \vt_{k}}^2 
    \leq
    \left (\sum_{k=1}^K \norm{\wt \mS_{K:k+1} \wt \vt_{k}} \right )^2 
    \leq 
    \left (\sum_{k=1}^K \left ( 1 - \frac{\eta n \mu}{2} \right )^{K-k} \norm{\wt \vt_{k}} \right )^2. \label{eq:normsqbound}
\end{align}
Now, it is left to bound the right hand side of the inequality \eqref{eq:normsqbound}, which involves $\norms{\wt \vt_k}$. The proof technique used to bound $\norms{\wt \vt_k}$ is similar to Lemma~\ref{lem:thm1-term2}; we use the Hoeffding-Serfling inequality \citep{schneider2016probability} and union bound.

Due to summation by parts, the following identity holds, even when multiplication of $a_j$ and $b_j$ is noncommutative:
\begin{equation*}
    \sum_{j=1}^n a_j b_j = a_n \sum_{j=1}^n b_j - \sum_{i=1}^{n-1}(a_{i+1}-a_i) \sum_{j=1}^i b_j.
\end{equation*}
We can apply the identity to $\wt \vt_{k}$, by substituting $a_j = \prod_{t=n}^{j+1} (\mI-\eta \mB_{\sigma(t)}^k)$ and $b_j = \nabla f_{\sigma(j)} (\zeros)$:
\begin{align}
    \norm{\wt \vt_{k}}
    &= \norm{\sum_{j=1}^n \left ( \prod_{t=n}^{j+1} (\mI-\eta \mB_{\sigma(t)}^k) \right ) \nabla f_{\sigma(j)} (\zeros)}\nonumber\\
    &= \norm{\sum_{j=1}^n \nabla f_{\sigma(j)} (\zeros) 
    - \sum_{i=1}^{n-1} \left ( \prod_{t=n}^{i+2} (\mI-\eta \mB_{\sigma(t)}^k) - \prod_{t=n}^{i+1} (\mI-\eta \mB_{\sigma(t)}^k) \right ) \sum_{j=1}^i \nabla f_{\sigma(j)} (\zeros)}\nonumber\\
    &= \norm{\eta \sum_{i=1}^{n-1} \left ( \prod_{t=n}^{i+2} (\mI-\eta \mB_{\sigma(t)}^k) \right ) \mB_{\sigma(i+1)}^k \sum_{j=1}^i \nabla f_{\sigma(j)} (\zeros)}\nonumber\\
    &\leq \eta \sum_{i=1}^{n-1} \norm{\left ( \prod_{t=n}^{i+2} (\mI-\eta \mB_{\sigma(t)}^k) \right ) \mB_{\sigma(i+1)}^k \sum_{j=1}^i \nabla f_{\sigma(j)} (\zeros)}
    \leq \eta L (1+\eta L)^n \sum_{i=1}^{n-1} \norm{\sum_{j=1}^i \nabla f_{\sigma(j)} (\zeros)}, \label{eq:ss-sumbyparts}
\end{align}
where the last step used $\norms{\mB_{\sigma(t)}^k} \leq L$. Recall that $\eta \leq \frac{1}{5nL\kappa} \leq \frac{1}{5nL}$, which implies that $(1+\eta L)^n \leq e^{1/5}$.
Also, note that the right hand side of the inequality now does \emph{not} depend on $k$. Thus, any bound on the norm of partial sums $\norms{\sum_{j=1}^i \nabla f_{\sigma(j)} (\zeros)}$ applies to \emph{all} $\wt \vt_{k}$. Next, we use the Hoeffding-Serfling inequality (Lemma~\ref{lem:thm1-sub2}) for bounded random vectors.
We restate the lemma here, for readers' convenience.
\lemhoeffding*

Recall that $\bar \vv = \frac{1}{n} \sum_{j=1}^n \nabla f_j(\zeros) = \zeros$ in our setting, so with probability at least $1-\delta$, we have
\begin{equation*}
    \norm{\sum_{j=1}^i \nabla f_{\sigma(j)} (\zeros) }  \leq G \sqrt{ 8 i \log \frac{2}{\delta}}.
\end{equation*}
Using the union bound for all $i=1, \dots, n-1$, we have with probability at least $1-\delta$,
\begin{align}
\label{eq:event2}
    \sum_{i=1}^{n-1} \norm{\sum_{j=1}^i \nabla f_{\sigma(j)} (\zeros)} 
    \leq G \sqrt{ 8 \log \frac{2n}{\delta}} \sum_{i=1}^{n-1}\sqrt{i}
    \leq G \sqrt{ 8 \log \frac{2n}{\delta}} \int_1^n \sqrt y dy
    \leq \frac{2G}{3} \sqrt{ 8 \log \frac{2n}{\delta}} n^{3/2}.
\end{align}
Substituting this to \eqref{eq:ss-sumbyparts} leads to the following bound that holds for all $k \in [K]$, without having to invoke any union bounds over different $k$'s:
\begin{align*}
    \norm{\wt \vt_{k}} \leq \frac{4\sqrt 2 e^{1/5}}{3} \eta n^{3/2} L G \sqrt{\log \frac{2n}{\delta}}.
\end{align*}
Using this bound, we can bound the right hand side of \eqref{eq:normsqbound} as follows:
\begin{align*}
    \left ( \sum_{k=1}^K \left ( 1 - \frac{\eta n \mu}{2} \right )^{K-k} \norm{\wt \vt_{k}} \right )^2
    &\leq 
    \left ( \frac{4\sqrt 2 e^{1/5}}{3} \eta n^{3/2} L G \sqrt{\log \frac{2n}{\delta}} \sum_{k=1}^K \left ( 1 - \frac{\eta n \mu}{2} \right )^{K-k} \right )^2\\
    &\leq 
    \left ( \frac{8\sqrt 2 e^{1/5} n^{1/2} L G}{3 \mu} \sqrt{\log \frac{2n}{\delta}} \right )^2\\
    &= \frac{128 e^{2/5} n L^2 G^2}{9\mu^2} \log \frac{2n}{\delta}.
\end{align*}
which holds with probability at least $1-\delta$.

Now, set $\delta = 1/n$, and let $E$ be the probabilistic event that \eqref{eq:event2} holds. Let $E^c$ be the complement of $E$.
Given the event $E^c$, directly bounding \eqref{eq:ss-sumbyparts} leads to
\begin{equation*}
    \norm{\wt \vt_{k}}
    \leq e^{1/5} \eta L \sum_{i=1}^{n-1} \norm{\sum_{j=1}^i \nabla f_{\sigma(j)} (\zeros)}
    \leq \frac{e^{1/5} \eta n^2 L G}{2},
\end{equation*}
which yields the following bound on \eqref{eq:normsqbound}, conditional on $E^c$:
\begin{align*}
    \left ( \sum_{k=1}^K \left ( 1 - \frac{\eta n \mu}{2} \right )^{K-k} \norm{\wt \vt_{k}} \right )^2
    &\leq 
    \left ( \frac{e^{1/5}\eta n^2 L G}{2} \sum_{k=1}^K \left ( 1 - \frac{\eta n \mu}{2} \right )^{K-k} \right )^2
    \leq 
    \left ( \frac{e^{1/5} n L G}{\mu} \right )^2
    = \frac{e^{2/5} n^2 L^2 G^2}{\mu^2}.
\end{align*}

Finally, putting everything together and using $\log \frac{2n}{\delta} = \log (2n^2) \leq 3 \log n$,
\begin{align*}
    \E \left [ \norm {\sum_{k=1}^K \wt \mS_{K:k+1} \wt \vt_{k}}^2 \right ]
    &= \E\left[\norm {\sum_{k=1}^K \wt \mS_{K:k+1} \wt \vt_{k}}^2  \mid E \right] \prob[E]
    + \E\left[\norm {\sum_{k=1}^K \wt \mS_{K:k+1} \wt \vt_{k}}^2  \mid E^c \right] \prob[E^c]\\
    &\leq \frac{128 e^{2/5} n L^2 G^2 \log n}{3\mu^2}
    + \frac{e^{2/5} n^2 L^2 G^2}{\mu^2} \frac{1}{n}\\
    &\leq \frac{66 n L^2 G^2 \log n}{\mu^2},
\end{align*}
which finishes the proof.

\end{document}